\newtheorem{theorem}{Theorem}[section] 
\newtheorem{lemma}[theorem]{Lemma}     
\newtheorem{corollary}[theorem]{Corollary}
\newtheorem{proposition}[theorem]{Proposition}
\theoremstyle{definition}
\newtheorem{definition}[theorem]{Definition}
\newtheorem{example}[theorem]{Example}
\theoremstyle{remark}
\newtheorem{remark}[theorem]{Remark}
\numberwithin{equation}{section}
\newcommand{\C}{\mathbb{C}}
\newcommand{\R}{\mathbb{R}}
\newcommand{\N}{\mathbb{N}}
\newcommand{\de}{\partial}
\newcommand{\eps}{\varepsilon}
\newcommand{\phe}{\varphi}
\title[Toeplitz operators and Carleson measures]
 {Toeplitz operators and Carleson measures in strongly pseudoconvex domains} 
\author[Marco Abate]{Marco Abate}
\address{Marco Abate\\ Dipartimento di Matematica\\ Universit\`a di Pisa\\ Largo Pontecorvo 5, 56127 Pisa\\ Italy.} \email{abate@dm.unipi.it}
\author[Jasmin Raissy]{Jasmin Raissy*}
\address{Jasmin Raissy\\ Dipartimento Di Matematica e Applicazioni\\ Universit\`a degli Studi di Milano Bi\-coc\-ca\\ Via R. Cozzi 53, 20125 Milano\\ Italy. } \email{jasmin.raissy@unimib.it}
\author[Alberto Saracco]{Alberto Saracco}
\address{Alberto Saracco\\ Dipartimento di Matematica\\ Universit\`a di Parma\\ Parco Area delle Scienze 53/A, 43124 Parma\\ Italy. } \email{alberto.saracco@unipr.it}
\thanks{2010 Mathematics Subject Classification: 32A36 (primary), 32A25, 32Q45, 32T15, 46E22, 46E15, 47B35 (secondary).}
\thanks{$^{*}$Partially supported by FSE, Regione Lombardia.}
\begin{document}

\begin{abstract}
We study mapping properties of Toeplitz operators associated to a finite positive Borel measure on a bounded strongly pseudoconvex domain $D\subset\subset\C^n$. In particular, we give sharp conditions on the measure ensuring that the associated Toeplitz operator maps the Bergman space $A^p(D)$ into~$A^r(D)$ with~$r>p$, generalizing and making more precise results by \v Cu\v ckovi\'c and McNeal. To do so, we give a geometric characterization of Carleson measures
and of vanishing Carleson measures of weighted Bergman spaces in terms of the intrinsic Kobayashi geometry of the domain, generalizing to this setting results obtained by Kaptano\u glu for the unit ball. 
\end{abstract}

\maketitle

\section{Introduction}
Let $X$ be a Hilbert algebra, $Y\subset X$ a Hilbert subspace, and $P\colon X\to Y$ the orthogonal projection. If $f\in X$ is given, the \emph{Toeplitz operator of symbol~$f$} is
the operator $T_f\colon X\to Y$ given by $T_f(g)=P(fg)$.

In complex analysis, the most important orthogonal projection is the \emph{Bergman projection}
of $L^2(D)$ onto $A^2(D)$, where $D\subset\subset\C^n$ is a  bounded domain, 
$A^p(D)=L^p(D)\cap\mathcal{O}(D)$ is the \emph{Bergman space} of $L^p$ holomorphic functions, and $\mathcal{O}(D)$ is the space of holomorphic functions on~$D$. The Bergman projection~$B$ is an integral operator of the form
\[
Bf(z)=\int_D K(z,w) f(w)\,d\nu(w)\;,
\]
where $K\colon D\times D\to\C$ is the \emph{Bergman kernel,} and $\nu$ denotes the Lebesgue measure. The Bergman projection has often been used to produce holomorphic functions having specific additional (e.g., growth) properties; to do so it has been necessary to study its mapping properties on more general Banach spaces, for instance $L^p$ spaces or H\"older spaces. 
This has been done for large classes of domains, e.g., strongly pseudoconvex domains and finite type domains (see \cite{PS}, \cite{AhS}, \cite{McS}, \cite{Bu}), where enough information on the 
boundary behavior of the Bergman kernel is known.  

It turns out that usually the Bergman projection maps continuously the given Banach spaces into themselves; and this is the best one can expect, because for each $p\ge 1$ in~$A^2(D)$ there are functions belonging to~$A^p(D)$ but to no~$A^q(D)$ for any $q>p$, and the Bergman projection is the identity on~$A^2(D)$. However, in many applications an operator creating holomorphic functions with better growth conditions might be useful. Thus \v Cu\v ckovi\'c and McNeal in \cite{CMc} suggested using special Toeplitz operators of the form
\[
T_{\delta^\eta}f(z)=B(\delta^\eta f)=\int_D K(z,w) f(w)\delta(w)^\eta\,d\nu(w)\;,
\]
where $\eta>0$ and $\delta(w)=d(w,\de D)$ is the euclidean distance from the boundary, and they were able to prove the following result:

\begin{theorem}[\textbf{\cite[Theorem 1.2]{CMc}}]
\label{th:CMc}
Let $D\subset\subset\C^n$ be a bounded strongly pseudoconvex domain, and let $\eta\ge 0$.
\begin{itemize}
\item[(a)] If $0\le\eta< n+1$, then:
\begin{itemize}
\item[(i)] if $1< p< \infty$ and $\frac{n+1}{n+1-\eta}<\frac{p}{p-1}$, then
$T_{\delta^\eta}\colon L^p(D)\to L^{p+G}(D)$ continuously, where $G=p^2\big/\left(\frac{n+1}{\eta}-p\right)$;
\item[(ii)] if $1< p< \infty$ and $\frac{n+1}{n+1-\eta}\ge\frac{p}{p-1}$, then $T_{\delta^\eta}\colon L^p(D)\to L^r(D)$ continuously for all $p\le r<\infty$.
\end{itemize}
\item[(b)] If $\eta\ge n+1$, then $T_{\delta^\eta}\colon L^1(D)\to L^\infty(D)$ continuously.
\end{itemize}
\end{theorem}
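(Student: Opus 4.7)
The plan is to treat $T_{\delta^\eta}$ as a fractional-integral-type operator on~$D$ and to follow the Hardy--Littlewood--Sobolev template. The condition $(n+1)/(n+1-\eta)<p/(p-1)$ in (a)(i) is equivalent to $1/p>\eta/(n+1)$, and the target exponent $r=p+G$ satisfies the HLS-type identity
\[
\frac{1}{r}=\frac{1}{p}-\frac{\eta}{n+1},
\]
so $T_{\delta^\eta}$ should behave like a fractional integral of order $\eta/(n+1)$ with respect to the $(n+1)$-dimensional non-isotropic geometry at the boundary of~$D$. The analytic input is Fefferman's boundary asymptotics of the Bergman kernel on strongly pseudoconvex domains (and the refinements of Kerzman--Stein and McNeal), from which one extracts the Forelli--Rudin-type estimate
\[
\int_D |K(z,w)|^\alpha\,\delta(w)^\beta\,d\nu(w)\asymp \delta(z)^{\beta-(\alpha-1)(n+1)}
\]
valid for $\beta>-1$ in the subcritical range $(\alpha-1)(n+1)>\beta$, with logarithmic or bounded behaviour in the critical/supercritical cases.

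The endpoint $p=1$ of (a)(i) is then immediate from Minkowski's integral inequality,
\[
\|T_{\delta^\eta}f\|_r\le\int_D\|K(\cdot,w)\|_r\,\delta(w)^\eta\,|f(w)|\,d\nu(w);
\]
the Forelli--Rudin formula with $\alpha=r$ and $\beta=0$ gives $\|K(\cdot,w)\|_r\asymp\delta(w)^{-(n+1)(r-1)/r}$, so the choice $r=r_0:=(n+1)/(n+1-\eta)$ cancels the $\delta$-weight and yields $T_{\delta^\eta}\colon L^1\to L^{r_0}$ with $1/r_0=1-\eta/(n+1)$, matching the HLS relation. To cover the full range in (a)(i) I would either interpolate this endpoint with a second HLS-line endpoint---extracted by a symmetric Minkowski bound applied to the transposed kernel $K(z,w)\delta(z)^\eta$---or apply a weighted Schur test with test function $\phi(w)=\delta(w)^t$: feeding Forelli--Rudin into the two Schur conditions reduces them to linear inequalities in~$t$, whose simultaneous solvability is exactly $\eta<(n+1)/p$, after which Riesz--Thorin delivers $T_{\delta^\eta}\colon L^p\to L^{p+G}$. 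Case (a)(ii) is the ``supercritical'' regime $\eta\ge(n+1)/p$ where the HLS exponent is infinite, and the same Schur argument still admits a nonempty interval of~$t$ for every finite $r\ge p$. Part (b) is immediate from the pointwise bound $|K(z,w)|=O(\delta(w)^{-(n+1)})$ combined with the weight~$\delta(w)^\eta$: when $\eta\ge n+1$ the integral kernel is uniformly bounded on $D\times D$, so $\|T_{\delta^\eta}f\|_\infty\le C\|f\|_1$.

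The hard part will be the passage from the integral Forelli--Rudin estimates on the ball---where they are classical---to a general strongly pseudoconvex domain, and the bookkeeping required to match the range of admissible Schur weights~$t$ exactly with the stated range of~$(p,\eta)$, especially at the supercritical boundary in (a)(ii) where endpoint (BMO-type) behaviour must be converted into continuity into every $L^r$. The approach advertised in the introduction of the present paper bypasses this bookkeeping by reformulating $\delta^\eta\,d\nu$ as a Carleson measure for a weighted Bergman space and reading Toeplitz $L^p\to L^r$ boundedness off a Kobayashi-ball characterization of such measures, which in particular treats (a)(i) and (a)(ii) uniformly.
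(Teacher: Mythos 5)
Your analytic backbone coincides with the paper's: the ``Forelli--Rudin'' estimate you invoke is exactly Theorem~\ref{BKp}, proved there by the very route you describe (the Fefferman/McNeal bound \eqref{eqKF} followed by iterated one-variable integrations, after \cite{CMc}); your part (b) via $|K(z,w)|\preceq\delta(w)^{-(n+1)}$ is the paper's appeal to Theorem~\ref{BKp}.(iv); and your $L^1\to L^{r_0}$ Minkowski endpoint is sound. The gap is in the central off-diagonal step for $1<p<\infty$, where neither of your two alternatives closes as stated. The interpolation route cannot reach the sharp exponent: the natural second anchor $L^{(n+1)/\eta}\to L^\infty$ sits exactly at the critical case $\beta=(n+1)(p'-1)$ of the kernel estimate, where the integral diverges logarithmically, so that endpoint is unavailable; replacing it by $L^{p_1}\to L^\infty$ with $p_1>(n+1)/\eta$ (which does hold) and interpolating against $L^1\to L^{r_0}$ yields a segment of strictly smaller slope than the HLS line through $(1,1/r_0)$, hence only $T_{\delta^\eta}\colon L^p\to L^r$ for some $r<p+G$. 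This is the classical endpoint obstruction for fractional integration. The Schur route is garbled in a different way: the two-condition Schur test with $\phi=\delta^t$ produces only diagonal bounds $L^p\to L^p$, and a subsequent Riesz--Thorin step cannot manufacture the off-diagonal gain $r=p+G>p$ from diagonal estimates.

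What is actually needed --- and what the paper does in Theorem~\ref{Toeptre}, transferred to $L^p$ by Remark~\ref{rem:Lp} and assembled in Corollary~\ref{cor:CMc} after observing that $\delta^\eta\nu$ is $\bigl(1+\frac{\eta}{n+1}\bigr)$-Carleson --- is the off-diagonal (Okikiolu-type) version of the Schur scheme: write $|K(\zeta,w)|=|K(\zeta,w)|^{1/s}|K(\zeta,w)|^{1/s'}$ for an auxiliary $s\in[p,r]$, apply H\"older with exponents $p,p'$, bound $\int_D|K(\zeta,w)|^{p'/s'}\delta(w)^\eta\,d\nu(w)$ by Theorem~\ref{BKp} to extract a power of $\delta(\zeta)$, then apply Minkowski's integral inequality for the exponent $r/p\ge1$ and Theorem~\ref{BKp} a second time in the $\zeta$-variable; the sharp relation $\frac1r=\frac1p-\frac{\eta}{n+1}$ then falls out of condition \eqref{eq:cond} with the optimal choice $s=p$, and case (a)(ii) is the regime where the third line of \eqref{eq:cases} applies for every finite $r$. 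You have all the ingredients (the kernel estimates and Minkowski's inequality) but never combine them in this order, and that combination is precisely the ``bookkeeping'' your sketch defers; as written, the sharp case (a)(i) is not proved.
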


Related operators have also been studied (but only for $D=B^n$, the unit ball of~$\C^n$, and without discussing possible improvements in growth conditions) in \cite{FR}, \cite{Ka} and \cite{KZ}; in particular \cite{Ka} deals also with the problem of deciding when such operators are compact. 

\v Cu\v ckovi\'c and McNeal raised the question of whether the gain in the exponents obtained in Theorem~\ref{th:CMc} is optimal, and gave examples showing that this is the case when $n=1$; but they left open the problem for $n>1$. As a consequence of our work (see Theorem~\ref{th:corunoint} below), we shall be able to answer this question; more precisely, we shall be able to characterize completely, in a class of operators even larger than the one considered by \v Cu\v ckovi\'c and McNeal, the operators giving a specific gain in the exponents.

To express our results, let us first introduce the larger class of operators we are interested in. Given a finite positive Borel measure $\mu$ on~$D$, the \emph{Toeplitz operator} associated to~$\mu$ is given by
\[
T_\mu f(z)=\int_D K(z,w) f(w)\,d\mu(w)\;;
\]
clearly the Toeplitz operators $T_{\delta^\eta}$ considered by \v Cu\v ckovi\'c and McNeal are the Toeplitz operators associated to the measures $\delta^\eta\nu$. Similar operators were considered by Kaptano\u glu \cite{Ka} on the unit ball of~$\C^n$, and by
Schuster and Varolin \cite{ScV} in the setting of weighted Bargmann-Fock spaces on~$\C^n$.
They noticed relationships between the mapping properties of the Toeplitz operator~$T_\mu$ and Carleson properties of the measure~$\mu$ (still without considering possible gains in integrability). In this paper we shall precise, extend and considerably generalize these relationships in the setting of bounded strongly pseudoconvex domains; to do so we shall also prove a geometrical characterization of Carleson measures for weighted Bergman spaces.

Carleson measures have been introduced by Carleson \cite{C} in his celebrated solution of the corona problem in the unit disk of the complex plane, and, since then, have become an important tool in analysis, and an interesting object of study \emph{per se.} Let $A$ be a Banach space of holomorphic functions on a domain $D\subset\C^n$; given $p\ge 1$, a finite positive Borel measure $\mu$ on~$D$ is a \emph{Carleson measure} of~$A$ (for~$p$)
if there is a continuous inclusion $A\hookrightarrow L^p(\mu)$, that is there exists a constant $C>0$ such that
\[
\forall f\in A \qquad \int_D |f|^p\,d\mu\le C\|f\|_A^p\;;
\]
we shall furthermore say that $\mu$ is a \emph{vanishing Carleson measure} of~$A$ if the
inclusion $A\hookrightarrow L^p(\mu)$ is compact.

Carleson studied this property taking as Banach space $A$ the Hardy spaces $H^p(\Delta)$,
and proved that a finite positive Borel measure~$\mu$ is a Carleson measure of~$H^p(\Delta)$ for~$p$ if and only if there exists a constant $C>0$ such that
$\mu(S_{\theta_0,h})\le C h$ for all sets
\[
S_{\theta_0,h}=\{re^{i\theta}\in\Delta\mid 1-h\le r<1,\ |\theta-\theta_0|\le h\}
\]
(see also \cite{D}); in particular the set of Carleson measures of $H^p(\Delta)$ does not depend
on~$p$. 

In this paper we are however more interested in Carleson measures for Bergman spaces. 
In 1975, Hastings \cite{H} (see also Oleinik and Pavlov \cite{OP} and Oleinik \cite{O}) proved a similar characterization for the Carleson measures of the Bergman spaces~$A^p(\Delta)$, still expressed in terms of the sets~$S_{\theta,h}$. Later on, 
Cima and Wogen \cite{CW} have characterized Carleson measures for Bergman spaces in the unit ball $B^n\subset\C^n$, and Cima and Mercer \cite{CM} characterized Carleson measures of Bergman spaces in strongly pseudoconvex domains, showing in particular that the set of Carleson 
measures of $A^p(D)$ is independent of~$p\ge 1$, a typical feature of this subject. 

Cima and Mercer's characterization of Carleson measures of Bergman spaces is expressed using
suitable generalizations of the sets $S_{\theta,h}$; for our aims, it will be more useful a different characterization, expressed in terms of the intrinsic Kobayashi geometry of the domain. Given $z_0\in D$ and $0<r<1$, let $B_D(z_0,r)$ denote the ball of center~$z_0$ and radius~$\frac{1}{2}\log\frac{1+r}{1-r}$ for the Kobayashi distance~$k_D$ of~$D$ (that is,
of radius $r$ with respect to the pseudohyperbolic distance $\rho=\tanh(k_D)$; see Section~2 for the necessary definitions). Then it is possible to prove (see Luecking~\cite{Lu} for $D=\Delta$,
Duren and Weir~\cite{DW} and Kaptano\u glu~\cite{Ka} for $D=B^n$, and our previous paper~\cite{AS} for $D$ strongly pseudoconvex) that a finite positive measure~$\mu$ is a Carleson measure of~$A^p(D)$ for~$p$ if and only if for some (and hence all) $0<r<1$ there is a constant $C_r>0$
such that
\[
\mu\bigl(B_D(z_0,r)\bigr) \le C_r \nu\bigl(B_D(z_0,r)\bigr)
\]
for all $z_0\in D$. (The proof of this equivalence in~\cite{AS} relied on Cima and Mercer's characterization~\cite{CM}; in this paper we shall instead give a proof independent of~\cite{CM}, and of a more general result: see Theorem~\ref{carthetaCarluno}.)

Thus we have a geometrical characterization of Carleson measures of Bergman spaces, and it turns out that this \emph{geometrical} characterization is crucial for the study of the mapping properties of Toeplitz operators; but first (see also \cite{Ka}) it is necessary to widen the class of Carleson measures under consideration. Given $\theta>0$, we say that a finite positive Borel measure~$\mu$ is a
(geometric) \emph{$\theta$-Carleson measure} if for some (and hence all) $0<r<1$ there is a constant $C_r>0$
such that
\[
\mu\bigl(B_D(z_0,r)\bigr) \le C_r \nu\bigl(B_D(z_0,r)\bigr)^\theta
\]
for all $z_0\in D$; and we shall say
that $\mu$ is a (geometric) \emph{vanishing $\theta$-Carleson measure} if for some (and hence all) $0<r<1$ the quotient $\mu\bigl(B_D(z_0,r)\bigr)/\nu\bigl(B_D(z_0,r)\bigr)^\theta$ tends to~0
as $z_0\to\de D$.
In particular, a 1-Carleson measures is a usual Carleson measures of~$A^p(D)$, and we shall prove (see Theorem~\ref{th:ztre}) that $\theta$-Carleson measures are exactly the 
Carleson measures of suitably weighted Bergman spaces. Furthermore, it is easy to see that
when $D=B^n$ a $q$-Carleson measure in the sense of~\cite{Ka} is a $(1+\frac{q}{n+1})$-Carleson measure in our sense.

As a first example of the kind of results we shall be able to prove, we have the following theorem
(see Corollary~\ref{th:coruno}), implying in particular Theorem~\ref{th:CMc}, and answering
\v Cu\v ckovi\'c and McNeal's question about sharpness of the exponents:

\begin{theorem}
\label{th:corunoint}
Let $D\subset\subset\C^n$ be a bounded strongly pseudoconvex domain. 
Let $\mu$ be a finite positive Borel measure on $D$, and take $1< p< r<+\infty$. Then the following statements are equivalent:
\begin{itemize}
\item[(i)] $T_\mu\colon A^p(D)\to A^r(D)$ continuously (respectively, compactly);
\item[(ii)] $\mu$ is (respectively, vanishing) $\left(1+\frac{1}{p}-\frac{1}{r}\right)$-Carleson.
\end{itemize}
\end{theorem}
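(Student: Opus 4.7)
The plan is to reduce the statement to a duality estimate for a bilinear form on Bergman spaces, and then to handle each direction by combining H\"older's inequality with the geometric characterization of Carleson measures for weighted Bergman spaces already established earlier in the paper (Theorem~\ref{th:ztre} together with the Luecking-type characterization discussed in the introduction). Writing $1/r+1/r'=1$ and using that $Bg=g$ on holomorphic $g$, Fubini's theorem yields the key identity
\[
\langle T_\mu f,\,g\rangle_{A^2}=\int_D f(z)\,\overline{g(z)}\,d\mu(z)\qquad(f\in A^p(D),\ g\in A^{r'}(D)),
\]
so that the continuity of $T_\mu\colon A^p(D)\to A^r(D)$ is equivalent to the boundedness of the bilinear form $(f,g)\mapsto\int_D f\bar g\,d\mu$ on $A^p\times A^{r'}$.

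For (ii)$\Rightarrow$(i), set $\theta=1+1/p-1/r$ and
\[
\alpha=p\theta,\qquad\beta=r'\theta,
\]
so that $\tfrac1\alpha+\tfrac1\beta=\tfrac{1/p+1/r'}{\theta}=1$. H\"older's inequality in $L^1(\mu)$ gives
\[
\left|\int_D f\bar g\,d\mu\right|\le\|f\|_{L^\alpha(\mu)}\,\|g\|_{L^\beta(\mu)}.
\]
By the characterization announced in Theorem~\ref{th:ztre}, a $\theta$-Carleson measure is precisely one for which the embeddings $A^p\hookrightarrow L^{p\theta}(\mu)=L^\alpha(\mu)$ and $A^{r'}\hookrightarrow L^{r'\theta}(\mu)=L^\beta(\mu)$ are continuous; plugging these estimates in yields the desired bound and hence the continuity of $T_\mu$.

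For (i)$\Rightarrow$(ii) I would test the bilinear inequality on the diagonal $f=g=K(\cdot,z_0)$ for an arbitrary $z_0\in D$. The identity above then reads
\[
\int_D|K(z,z_0)|^2\,d\mu(z)\le C\,\|K(\cdot,z_0)\|_{A^p}\,\|K(\cdot,z_0)\|_{A^{r'}}.
\]
The standard $L^q$-estimates for the Bergman kernel in a bounded strongly pseudoconvex domain give $\|K(\cdot,z_0)\|_{A^q}\asymp K(z_0,z_0)^{1-1/q}$ for every $1<q<\infty$, while restricting the left-hand side to a Kobayashi ball $B_D(z_0,s)$ (for any fixed $0<s<1$) and using $|K(z,z_0)|\asymp K(z_0,z_0)$ on $B_D(z_0,s)$ together with $K(z_0,z_0)\asymp\nu(B_D(z_0,s))^{-1}$ gives $\mu(B_D(z_0,s))\le C\,\nu(B_D(z_0,s))^\theta$ (the exponent $2-1/p-1/r'=1-1/p+1/r=2-\theta$ dropping out of $K(z_0,z_0)$ matches the required $\theta$).

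For the compact case I would adapt the same scheme to the \emph{normalized} kernels $k_{z_0}=K(\cdot,z_0)/\|K(\cdot,z_0)\|_{A^p}$: these form a bounded family in $A^p$ that converges to $0$ uniformly on compact subsets, hence weakly, as $z_0\to\partial D$. Compactness of $T_\mu$ forces $\|T_\mu k_{z_0}\|_{A^r}\to 0$, and testing against the analogously normalized kernel in $A^{r'}$ yields $\mu(B_D(z_0,s))/\nu(B_D(z_0,s))^\theta\to 0$; the reverse implication is obtained by a standard splitting $\mu=\mu_1+\mu_2$ with $\mu_1$ compactly supported in $D$ (producing a finite-rank-like, hence compact, operator) and $\mu_2$ carrying an arbitrarily small vanishing tail, controlled via the sufficiency estimate above. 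The main obstacle I foresee is ensuring that the kernel asymptotics $\|K(\cdot,z_0)\|_{A^q}\asymp K(z_0,z_0)^{1-1/q}$ and the pointwise lower bound $|K(z,z_0)|\gtrsim K(z_0,z_0)$ on $B_D(z_0,s)$ are available \emph{uniformly in $z_0$} up to $\partial D$ in the strongly pseudoconvex setting; these are consequences of the Fefferman/Phong--Stein expansion of the Bergman kernel and should already be encoded in the tools used in the companion result Theorem~\ref{carthetaCarluno}.
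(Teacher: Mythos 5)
Your necessity direction (i)$\Rightarrow$(ii) is essentially the paper's own argument: testing the pairing on $f=g=K(\cdot,z_0)$ is exactly the Berezin-transform identity of Proposition~\ref{converse}, and the kernel asymptotics and the lower bound on Kobayashi balls you worry about at the end are precisely Theorem~\ref{BKp} and Lemma~\ref{piu}, so that part (including the compact case via weak null sequences of normalized kernels) is sound and matches Theorem~\ref{Toepinvgen}. The sufficiency direction, however, has a genuine gap at its central step. You write that ``by the characterization announced in Theorem~\ref{th:ztre}, a $\theta$-Carleson measure is precisely one for which $A^p(D)\hookrightarrow L^{p\theta}(\mu)$ and $A^{r'}(D)\hookrightarrow L^{r'\theta}(\mu)$ are continuous.'' That is not what Theorem~\ref{th:ztre} (or Theorem~\ref{carthetaCarluno}) says: the paper characterizes $\theta$-Carleson measures by the embedding $A^q\bigl(D,(n+1)(\theta-1)\bigr)\hookrightarrow L^q(\mu)$ with the \emph{same} exponent $q$ on both sides and a \emph{weighted} Bergman space on the left, not by an embedding of the unweighted $A^p$ into $L^{p\theta}(\mu)$ with a gain in the exponent. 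The embedding you need is true for $\theta\ge 1$ (it follows, for instance, from Theorem~\ref{carthetaCarluno} applied with exponent $p\theta$ combined with the pointwise bound $|f(z)|\preceq\delta(z)^{-(n+1)/p}\|f\|_p$ coming from Lemmas~\ref{due} and~\ref{sei}, or directly by a Luecking-type lattice argument using $\sum_k a_k^\theta\le\bigl(\sum_k a_k\bigr)^\theta$), but it is a different statement that you must prove; as written the proof has a hole exactly where the Carleson hypothesis enters.

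A second, lesser issue: reducing continuity of $T_\mu\colon A^p(D)\to A^r(D)$ to boundedness of the bilinear form on $A^p\times A^{r'}$ requires $\|h\|_{A^r}\approx\sup\{|\langle h,g\rangle| : g\in A^{r'},\ \|g\|_{r'}\le 1\}$, i.e.\ the duality $(A^r)^*\cong A^{r'}$, which rests on the $L^{r'}$-boundedness of the Bergman projection on strongly pseudoconvex domains (Phong--Stein). That is correct but is a nontrivial external ingredient the paper deliberately avoids: its proof of (ii)$\Rightarrow$(i) (Theorem~\ref{Toeptre}) instead splits $|K(\zeta,w)|=|K(\zeta,w)|^{1/s}|K(\zeta,w)|^{1/s'}$, applies H\"older against $d\mu$, and uses Minkowski's integral inequality with Theorem~\ref{BKp} to land directly in $L^r(D)$. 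Likewise, your splitting $\mu=\mu_1+\mu_2$ for the compact sufficiency direction needs the continuity estimate to be quantitative in the Carleson constant of the tail $\mu_2$; the paper sidesteps this by factoring $T_\mu$ through the compact inclusion $A^p(D)\hookrightarrow L^p(\delta^\alpha\mu)$ furnished by Theorem~\ref{Carleson}. With the exponent-gaining embedding proved and the duality made explicit, your scheme does go through and is arguably more transparent, but in its present form it is not a complete proof.
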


It is not difficult to prove (see Lemma~\ref{changeCarl}) that $\delta^\eta\nu$ is a $\left(1+\frac{\eta}{n+1}\right)$-Carleson measure; since
$\frac{1}{p}-\frac{1}{p+G}=\frac{\eta}{n+1}$ if and only if $G=p^2\bigg/\left(\frac{n+1}{\eta}-p\right)$,
it follows (see Corollary~\ref{corToeptre} for more details, explaining in particular why, in this case, mapping properties for~$A^p$ spaces imply mapping properties for~$L^p$ spaces) that Theorem~\ref{th:corunoint} 
does imply Theorem~\ref{th:CMc} and gives the best gain in integrability. 

The measures $\delta^\eta\nu$ are just one example of $\theta$-Carleson measures; a completely different kind of examples is provided by uniformly discrete sequences. A sequence $\{z_j\}\subset D$ is \emph{uniformly discrete} if there exists $\eps>0$ such that $k_D(z_j,z_k)\ge\eps$ for all $j\ne k$. Then (see Theorem~\ref{dtre}) it turns out that a sequence $\{z_j\}\subset D$ is a finite union of uniformly discrete sequences if and only if $\sum_j\delta(z_j)^{(n+1)\theta}\delta_{z_j}$ is a 
$\theta$-Carleson measure, where $\delta_{z_j}$ is the Dirac measure in~$z_j$, and as a consequence of Theorem~\ref{th:corunoint}
we obtain the following

\begin{corollary}
\label{th:coroundisc}
Let $D\subset\subset\C^n$ be a bounded strongly pseudoconvex domain, take $1<p<r<+\infty$
and let $\{z_j\}\subset D$ be a sequence of points in~$D$. Then the following statements are
equivalent:
\begin{itemize}
\item[(i)] $\{z_j\}$ is a finite union of uniformly discrete sequences;
\item[(ii)] for every $f\in A^p(D)$ the function
\[
g(z)=\sum_j K(z,z_j)f(z_j)\delta(z_j)^{(n+1)(1+\frac{1}{p}-\frac{1}{r})}
\]
belongs to~$A^r(D)$.
\end{itemize}
\end{corollary}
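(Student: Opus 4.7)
The plan is to reduce both (i) and (ii) to a single statement about a suitable Toeplitz operator. Set $\theta=1+\frac{1}{p}-\frac{1}{r}>1$ and let
\[
\mu=\sum_j\delta(z_j)^{(n+1)\theta}\delta_{z_j}
\]
be the corresponding atomic positive Borel measure on $D$. Evaluating against the Bergman kernel gives
\[
T_\mu f(z)=\int_D K(z,w)f(w)\,d\mu(w)=\sum_j K(z,z_j)f(z_j)\delta(z_j)^{(n+1)\theta}=g(z)\;,
\]
so (ii) is precisely the assertion that $T_\mu$ sends $A^p(D)$ into $A^r(D)$ as a set map, with $T_\mu f=g$. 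By Theorem~\ref{dtre}, condition~(i) is equivalent to $\mu$ being a $\theta$-Carleson measure, and by Theorem~\ref{th:corunoint} the latter is equivalent to $T_\mu\colon A^p(D)\to A^r(D)$ being continuous. Hence the implication (i)$\Rightarrow$(ii) follows at once.

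For the converse (ii)$\Rightarrow$(i), I would upgrade (ii) from bare well-definedness to the continuity of $T_\mu$ via the closed graph theorem, after which the same chain of equivalences delivers~(i). Suppose $f_n\to f$ in $A^p(D)$ and $T_\mu f_n\to h$ in $A^r(D)$; since Bergman norm convergence dominates uniform convergence on compact subsets of $D$, one has $f_n(z_j)\to f(z_j)$ for every $j$ and $(T_\mu f_n)(z)\to h(z)$ for every $z\in D$. A termwise passage to the limit in the series defining $T_\mu f_n(z)$ would then identify $h$ with $T_\mu f$, closing the graph.

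The main technical point is justifying this termwise passage to the limit. Using the standard pointwise estimate $|f_n(z_j)|\leq C\delta(z_j)^{-(n+1)/p}\|f_n\|_{A^p}$ available on strongly pseudoconvex domains, the summands are dominated by a uniform constant multiple of $|K(z,z_j)|\delta(z_j)^{(n+1)(1-1/r)}$, so a dominated convergence argument closes the graph provided this majorant is absolutely summable in $j$ for each fixed $z\in D$. I expect this summability to be the delicate step: a natural route is to derive it by testing hypothesis~(ii) against a suitable auxiliary function in $A^p(D)$ (for instance a power of a normalized Bergman kernel), thereby bootstrapping the pointwise well-definedness of $T_\mu$ provided by (ii) into the absolute convergence needed to invoke the closed graph theorem.
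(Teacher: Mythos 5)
Your overall reduction is exactly the one the paper intends: the corollary is stated in the introduction as an immediate consequence of Theorem~\ref{dtre} (which identifies (i) with the measure $\mu=\sum_j\delta(z_j)^{(n+1)\theta}\delta_{z_j}$ being $\theta$-Carleson, legitimate here since $1<\theta=1+\frac{1}{p}-\frac{1}{r}<2$) and of Theorem~\ref{th:corunoint}, with no further details supplied. Your direction (i)$\Rightarrow$(ii) is complete and correct, and invoking the closed graph theorem in (ii)$\Rightarrow$(i) to upgrade the set-theoretic mapping property to continuity is the right move.

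The gap is in your plan for actually closing the graph. You want the majorant $\sum_j|K(z,z_j)|\,\delta(z_j)^{(n+1)(1-\frac{1}{r})}$ to be finite, and you propose to extract this from hypothesis~(ii) by testing against a single auxiliary function such as a power of a normalized Bergman kernel. This cannot work: a function $f\in A^p(D)$ realizes the extremal growth $|f(w)|\approx\delta(w)^{-(n+1)/p}\|f\|_p$ only near one base point (e.g.\ $k_{z_0}/\|k_{z_0}\|_p$ is extremal only at $z_0$ and decays at the other $z_k$), so no single test function recovers the factor $\delta(z_j)^{-(n+1)/p}$ at every $z_j$ simultaneously; testing against $f\equiv1$ yields only $\sum_j|K(z,z_j)|\delta(z_j)^{(n+1)\theta}<\infty$, which is strictly weaker because $\theta>1-\frac{1}{r}$. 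Fortunately the majorant is not needed. For fixed $z$ the partial sums $\Lambda_{z,N}(f)=\sum_{j\le N}K(z,z_j)f(z_j)\delta(z_j)^{(n+1)\theta}$ are continuous linear functionals on $A^p(D)$ (finite combinations of point evaluations, which are bounded by Lemma~\ref{th:basiclemma}), and by (ii) they converge for every $f\in A^p(D)$; the Banach--Steinhaus theorem then makes $f\mapsto T_\mu f(z)$ a bounded functional on $A^p(D)$ for each fixed~$z$. Hence if $f_n\to f$ in $A^p(D)$ and $T_\mu f_n\to h$ in $A^r(D)$, then $T_\mu f_n(z)\to T_\mu f(z)$ and also $T_\mu f_n(z)\to h(z)$ (norm convergence implies pointwise convergence, again by Lemma~\ref{th:basiclemma}), so $h=T_\mu f$ and the graph is closed. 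With this replacement your argument is complete.
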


On the other hand, $\sum_j\delta(z_j)^{(n+1)\theta}\delta_{z_j}$ is a vanishing $\theta$-Carleson measure if and only if $\{z_j\}$ is a finite sequence; 
see Theorem~\ref{vandtre}.

To link $\theta$-Carleson measures and mapping properties 
of Toeplitz operators we use three main tools. The first one is a detailed study of the intrinsic (Kobayashi) geometry of strongly pseudoconvex domains, as performed in our previous paper~\cite{AS} and summarized in Section~2. The second one is a precise estimate (see Theorem~\ref{BKp}) of the integrability
properties of the Bergman kernel, done adapting techniques developed by McNeal and Stein~\cite{McS} and \v Cu\v ckovi\'c and McNeal~\cite{CMc}. The third one is a characterization of
(vanishing) $\theta$-Carleson measures involving both the Bergman kernel and an interpretation of $\theta$-Carleson measures as usual Carleson measures for weighted Bergman spaces.
Here, given $\beta\in\R$, the \emph{weighted Bergman space}
$A^p(D,\beta)$ is~$L^p(\delta^\beta \nu)\cap\mathcal{O}(D)$
endowed with the norm
\[
\|f\|_{p,\beta}=\left[\int_D |f(\zeta)|^p\delta^\beta(\zeta)\,d\nu(\zeta)\right]^{1/p}\;.
\]
To express our results, for each~$z_0\in D$ we shall denote by $k_{z_0}\colon D\to\C$ the 
\emph{normalized Bergman kernel} defined by 
\[
k_{z_0}(z)=\frac{K(z,z_0)}{\sqrt{K(z_0,z_0)}}=\frac{K(z,z_0)}{\|K(\cdot,z_0)\|_2}\;.
\]
The \emph{Berezin transform} of a finite positive Borel measure~$\mu$ on~$D$ is
the function $B\mu\colon D\to\R^+$ given by
\[
B\mu(z)=\int_D |k_z(w)|^2\,d\mu(w)\;.
\]
Then we shall be able to prove (see Theorems~\ref{carthetaCarluno} and~\ref{carthetaCarldue} for more details) the following characterization of $\theta$-Carleson measures, generalizing the characterization of Carleson measures given in~\cite{AS}:

\begin{theorem}
\label{th:ztre}
Let $D\subset\subset\C ^n$ be a bounded strongly
pseudoconvex domain. Then for each $1-\frac{1}{n+1}<\theta<2$ the following assertions are
equivalent:
\begin{itemize}
\item[(i)] $\mu$ is a Carleson measure of $A^p\bigl(D,(n+1)(\theta-1)\bigr)$, that is
$A^p\bigl(D,(n+1)(\theta-1)\bigr)\hookrightarrow L^p(\mu)$ continuously, for some (and hence all)
$p\in[1,+\infty)$;
\item[(ii)] $\mu$ is $\theta$-Carleson;
\item[(iii)] there exists $C>0$ such that $B\mu(z)\le C\delta(z)^{(n+1)(\theta-1)}$ for all $z\in D$.
\end{itemize}
\end{theorem}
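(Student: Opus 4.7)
The plan is to prove the equivalence cyclically, as (ii)$\Rightarrow$(i)$\Rightarrow$(iii)$\Rightarrow$(ii). The main ingredients are the submean value property for $|f|^p$ on Kobayashi balls, the quantitative geometry of Section~2 (on $B_D(z_0,r)$ one has $\delta\asymp\delta(z_0)$, $\nu(B_D(z_0,r))\asymp\delta(z_0)^{n+1}$, and $D$ admits a covering by such balls with bounded multiplicity), and the sharp weighted kernel integrability estimate of Theorem~\ref{BKp}, together with $K(z,z)\asymp\delta(z)^{-(n+1)}$.

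For (ii)$\Rightarrow$(i), fix $f\in A^p(D,(n+1)(\theta-1))$ and a covering $\{B_D(z_j,r)\}$ with finite multiplicity. The submean inequality and the comparability of $\delta$ on each ball yield
\[
|f(z_j)|^p\,\nu(B_D(z_j,r))\le C\,\delta(z_j)^{-(n+1)(\theta-1)}\int_{B_D(z_j,r)}|f|^p\delta^{(n+1)(\theta-1)}\,d\nu.
\]
Rewriting (ii) as $\mu(B_D(z_j,r))\le C\,\delta(z_j)^{(n+1)(\theta-1)}\nu(B_D(z_j,r))$ and summing over $j$ gives $\int_D|f|^p\,d\mu\le C\|f\|_{p,(n+1)(\theta-1)}^p$, which is (i).

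For (i)$\Rightarrow$(iii), specialize to $p=2$ and test the Carleson inequality on the normalized reproducing kernel $k_z$. The computation reduces to
\[
\|k_z\|_{2,(n+1)(\theta-1)}^2=K(z,z)^{-1}\int_D|K(z,w)|^2\delta(w)^{(n+1)(\theta-1)}\,d\nu(w),
\]
and Theorem~\ref{BKp} identifies the weighted integral as comparable to $\delta(z)^{(n+1)(\theta-1)-(n+1)}$, so the two factors combine to $\|k_z\|_{2,(n+1)(\theta-1)}^2\asymp\delta(z)^{(n+1)(\theta-1)}$; plugging into (i) yields $B\mu(z)\le C\delta(z)^{(n+1)(\theta-1)}$. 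The restriction $1-\tfrac{1}{n+1}<\theta<2$ is precisely what places the weight exponent $(n+1)(\theta-1)$ in the range $(-1,n+1)$ where Theorem~\ref{BKp} produces sharp two-sided control. For (iii)$\Rightarrow$(ii), I use the near-diagonal lower bound $|K(z_0,w)|\ge c\,K(z_0,z_0)$ for $w\in B_D(z_0,r)$ (a consequence of the uniform non-vanishing of the normalized kernel near the diagonal together with the Kobayashi size estimates of Section~2); this gives $|k_{z_0}(w)|^2\ge c\,\delta(z_0)^{-(n+1)}$ on the ball, so restricting the Berezin integral there yields
\[
\mu(B_D(z_0,r))\le C\,\delta(z_0)^{n+1}B\mu(z_0)\le C\delta(z_0)^{(n+1)\theta}\asymp\nu(B_D(z_0,r))^\theta.
\]

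The main obstacle is the weighted kernel integral used in (i)$\Rightarrow$(iii): one needs the sharp two-sided comparability of $\int_D|K(z,w)|^2\delta(w)^{(n+1)(\theta-1)}\,d\nu(w)$ to $\delta(z)^{(n+1)(\theta-1)-(n+1)}$, and this is precisely the analytic content of Theorem~\ref{BKp} together with its range of admissible weights. Once that is in hand, the other two implications are geometric consequences of the Kobayashi-ball machinery. The vanishing version follows by the same chain of implications, replacing each uniform estimate with its boundary-vanishing counterpart and using that compactness of an inclusion into $L^p(\mu)$ is equivalent to $\mu(B_D(z_0,r))/\nu(B_D(z_0,r))^\theta\to 0$ along $z_0\to\partial D$.
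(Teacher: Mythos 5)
Your argument follows the paper's own route almost step for step: the implication (ii)$\Rightarrow$(i) via the submean estimate on Kobayashi balls plus a finite-multiplicity covering is exactly the proof of (vi)$\Rightarrow$(i) in Theorem~\ref{carthetaCarluno}; the passage from the Carleson inclusion to the Berezin bound by testing on $k_z$ and invoking Theorem~\ref{BKp} is the proof of (i)$\Rightarrow$(ii) in Theorem~\ref{carthetaCarldue}; and (iii)$\Rightarrow$(ii) via the near-diagonal lower bound $|k_{z_0}(w)|^2\succeq\delta(z_0)^{-(n+1)}$ on $B_D(z_0,r)$ is Lemma~\ref{piu} used exactly as in the paper (you should note, as the paper does, that this lower bound is only available for $\delta(z_0)<\delta_r$, the remaining region being trivial since $\mu$ is finite). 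One small overstatement: you say you need \emph{two-sided} comparability of $\int_D|K(z,w)|^2\delta(w)^{(n+1)(\theta-1)}\,d\nu(w)$ with $\delta(z)^{(n+1)(\theta-1)-(n+1)}$, but only the upper bound is used, and that is all Theorem~\ref{BKp} asserts for finite $p$.

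There is, however, one genuine gap created by your cyclic arrangement. Statement (i) reads ``for some (and hence all) $p\in[1,+\infty)$,'' so the theorem requires showing that the continuous inclusion $A^p\bigl(D,(n+1)(\theta-1)\bigr)\hookrightarrow L^p(\mu)$ for a \emph{single, arbitrary} $p$ already forces $\mu$ to be $\theta$-Carleson. Your cycle proves (ii)$\Rightarrow$(i) for every $p$, but then in (i)$\Rightarrow$(iii) you ``specialize to $p=2$''; this only establishes that the $p=2$ inclusion implies the Berezin bound, and leaves open, say, whether the $p=1$ inclusion alone implies (ii) or (iii). The paper closes this by testing the $A^p$ Carleson inequality on the holomorphic function $k_{z_0}^2\in A^p\bigl(D,(n+1)(\theta-1)\bigr)$, so that $|k_{z_0}|^{2p}$ appears: Lemma~\ref{piu} gives
\[
\frac{c_r^p}{\delta(z_0)^{(n+1)p}}\,\mu\bigl(B_D(z_0,r)\bigr)\le\int_D|k_{z_0}|^{2p}\,d\mu
\preceq\int_D|k_{z_0}|^{2p}\delta^{(n+1)(\theta-1)}\,d\nu\preceq\delta(z_0)^{(n+1)(\theta-p)}\;,
\]
where the last step is Theorem~\ref{BKp} with exponent $2p$, legitimate because $-1<(n+1)(\theta-1)<(n+1)(2p-1)$ for all $p\ge1$ precisely when $1-\frac{1}{n+1}<\theta<2$. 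This yields $\mu\bigl(B_D(z_0,r)\bigr)\preceq\delta(z_0)^{(n+1)\theta}$ directly from the single-$p$ hypothesis. You should either add this implication or restate (i) as ``for all $p$''; as written, the ``for some'' half of the equivalence is not proved.
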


Actually, it turns out that the implications (iii)$\Longleftrightarrow$(ii)$\Longrightarrow$(i) hold
for any $\theta>0$; see Remark~\ref{rem:uno}.

We also have a similar characterization (see Theorems~\ref{carvanthetaCarldue} and \ref{Carleson}) for vanishing $\theta$-Carleson measures, that, as far as we know, in the setting of strongly pseudoconvex domains is new for $\theta=1$ too:

\begin{theorem}
\label{th:zqua}
Let $D\subset\subset\C ^n$ be a bounded strongly
pseudoconvex domain. Then for each $1-\frac{1}{n+1}<\theta<2$ the following assertions are
equivalent:
\begin{itemize}
\item[(i)] $\mu$ is a vanishing Carleson measure of $A^p\bigl(D,(n+1)(\theta-1)\bigr)$, that is the inclusion
$A^p\bigl(D,(n+1)(\theta-1)\bigr)\hookrightarrow L^p(\mu)$ is compact, for some (and hence all)
$p\in[1,+\infty)$;
\item[(ii)] $\mu$ is vanishing $\theta$-Carleson;
\item[(iii)] we have $\delta(z)^{(n+1)(1-\theta)}B\mu(z)\to 0$ as $z\to\de D$.
\end{itemize}
\end{theorem}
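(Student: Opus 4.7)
The plan is to prove the three equivalences in parallel with the proof of Theorem~\ref{th:ztre}, upgrading the estimates from uniform boundedness to boundary vanishing. The pair (ii)~$\Longleftrightarrow$~(iii) rests on pointwise estimates of the normalized Bergman kernel on Kobayashi balls, while (i)~$\Longleftrightarrow$~(ii) is obtained via a compactness argument built on a suitable family of test functions.

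\medskip

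For (iii)~$\Longrightarrow$~(ii), fix $0<r<1$; on $B_D(z_0,r)$ one has $|k_{z_0}(w)|^2 \geq c\, K(z_0,z_0)$ with $K(z_0,z_0) \approx 1/\nu(B_D(z_0,r))$, so
\[
\mu(B_D(z_0,r)) \leq C\, \nu(B_D(z_0,r))\, B\mu(z_0),
\]
and dividing by $\nu(B_D(z_0,r))^{\theta} \approx \delta(z_0)^{(n+1)\theta}$ gives a bound proportional to $\delta(z_0)^{(n+1)(1-\theta)} B\mu(z_0)$, which tends to $0$ as $z_0 \to \partial D$. For (ii)~$\Longrightarrow$~(iii), fix $\varepsilon>0$ and a compact set $K \subset D$ off which $\mu(B_D(z,r)) < \varepsilon\, \nu(B_D(z,r))^\theta$. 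Using a Kobayashi $r$-lattice $\{a_j\}$ from~\cite{AS}, decompose the integral defining $B\mu(z_0)$ into pieces over $B_D(a_j,r)$ with $a_j \in K$ and with $a_j \notin K$. The second piece contributes at most $\varepsilon$ times the sum already bounded in Theorem~\ref{th:ztre}, giving $O\bigl(\varepsilon\, \delta(z_0)^{(n+1)(\theta-1)}\bigr)$. The first piece is controlled by the decay of $|k_{z_0}(w)|^2$ when $w$ ranges over the fixed compact set $K$: since $k_D(z_0,K) \to \infty$ as $z_0 \to \partial D$, Theorem~\ref{BKp} shows this contribution is $o\bigl(\delta(z_0)^{(n+1)(\theta-1)}\bigr)$.

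\medskip

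For (ii)~$\Longrightarrow$~(i), let $\{f_k\}$ be a bounded sequence in $A^p\bigl(D,(n+1)(\theta-1)\bigr)$; after extracting I may assume $f_k \to f$ weakly, hence uniformly on compact subsets. Given $\varepsilon>0$, take $K \subset D$ compact with $\mu(B_D(z,r)) < \varepsilon\, \nu(B_D(z,r))^\theta$ for every $z$ outside a slightly larger compact, and split
\[
\int_D |f_k-f|^p\, d\mu = \int_K |f_k-f|^p\, d\mu + \int_{D\setminus K} |f_k-f|^p\, d\mu;
\]
the first term tends to $0$ by uniform convergence on $K$, while the restriction of $\mu$ to $D\setminus K$ is $\theta$-Carleson with constant $O(\varepsilon)$, so by the continuous-inclusion half of Theorem~\ref{th:ztre} the second term is bounded by $C\varepsilon\, \|f_k-f\|_{p,(n+1)(\theta-1)}^p$, uniformly in $k$. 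For (i)~$\Longrightarrow$~(ii), introduce test functions of the form $g_{z_0}(w) = K(w,z_0)^{s}\big/\|K(\cdot,z_0)^{s}\|_{p,(n+1)(\theta-1)}$, with the exponent $s$ fixed via Theorem~\ref{BKp} so that the denominator has the expected asymptotic order; these have unit weighted $A^p$-norm and tend weakly to $0$ as $z_0 \to \partial D$, while on $B_D(z_0,r)$ they satisfy $|g_{z_0}(w)|^p \geq c\, \nu(B_D(z_0,r))^{-\theta}$. Compactness of the inclusion $A^p\bigl(D,(n+1)(\theta-1)\bigr) \hookrightarrow L^p(\mu)$ then forces $\mu(B_D(z_0,r))/\nu(B_D(z_0,r))^\theta \to 0$.

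\medskip

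The main obstacle I anticipate is the off-diagonal estimate in (ii)~$\Longrightarrow$~(iii): one must show that the ``compact-set part'' of $B\mu(z_0)$ decays \emph{faster} than $\delta(z_0)^{(n+1)(\theta-1)}$, which demands a quantitative form of the Bergman-kernel decay in the Kobayashi distance rather than merely its boundedness. The restriction $1 - 1/(n+1) < \theta < 2$ is precisely the range in which the integrability estimates of Theorem~\ref{BKp} provide the uniform-in-$z_0$ control needed to handle the corresponding weighted integrals of $|k_{z_0}|^2$ over $D\setminus K$.
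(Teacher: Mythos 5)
Your overall architecture is sound, and your treatment of (iii)$\Longrightarrow$(ii) coincides with the paper's (Lemma~\ref{piu} plus Lemma~\ref{sei}). Your (ii)$\Longrightarrow$(iii) takes a genuinely different route: the paper first proves (ii)$\Longrightarrow$(i) and then feeds the norm-bounded, locally uniformly null family $\delta(z_0)^{(n+1)(1-\theta)/2}k_{z_0}$ into the compact inclusion via Lemma~\ref{weakly}, whereas you estimate $B\mu(z_0)$ directly by a lattice decomposition. Your version can be made to work, but the ``main obstacle'' you flag is resolved not by Theorem~\ref{BKp} but by Lemma~\ref{BK} (Kerzman's boundedness of $K$ off the boundary diagonal), which gives $|k_{z_0}(w)|\preceq\delta(z_0)^{(n+1)/2}$ uniformly for $w$ in a fixed compact set; the compact-set part of $B\mu(z_0)$ is then $\preceq\delta(z_0)^{n+1}\mu(D)$, and it is precisely the hypothesis $\theta<2$ that makes this $o\bigl(\delta(z_0)^{(n+1)(\theta-1)}\bigr)$. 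You should make this explicit, since it is the only place in that implication where $\theta<2$ enters.

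Two steps would fail as written. First, in (i)$\Longrightarrow$(ii) your test functions $K(\cdot,z_0)^{s}$ need not be holomorphic for non-integer $s$, because $K(\cdot,z_0)$ is not guaranteed to be zero-free on a general strongly pseudoconvex domain; the paper instead uses the integer power $k_{z_0}^2$, normalized as $\delta(z_0)^{(n+1)(1-\theta/p)}k_{z_0}^2$, whose $A^p\bigl(D,(n+1)(\theta-1)\bigr)$-norm is controlled via $\|k_{z_0}^2\|_{p,\beta}=\|k_{z_0}\|^2_{2p,\beta}$ and Theorem~\ref{BKp}, and whose modulus on $B_D(z_0,r)$ is bounded below by Lemma~\ref{piu}; you should replace your $g_{z_0}$ by these. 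Second, in (ii)$\Longrightarrow$(i) you extract a weakly convergent subsequence, but the unit ball of $A^1\bigl(D,(n+1)(\theta-1)\bigr)$ is not weakly compact, so this fails for $p=1$; the extraction must instead be done with Montel's theorem (Lemma~\ref{relcomp}) together with the criterion of Lemma~\ref{weakly} (norm-bounded plus uniform convergence on compact subsets), which is in fact the notion of convergence the rest of your argument uses. With these repairs your splitting of $\int|f_k-f|^p\,d\mu$ into a compact piece and an $\eps$-Carleson piece becomes exactly the paper's lattice argument in Theorem~\ref{Carleson}, (vi)$\Longrightarrow$(i); note only that the statement ``$\mu$ restricted to $D\setminus K$ is $\theta$-Carleson with constant $O(\eps)$'' should be phrased on lattice balls, since Kobayashi balls centred well inside $D$ may still meet $D\setminus K$, and the lattice formulation avoids this issue.
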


Again, the implications (iii)$\Longrightarrow$(ii)$\Longrightarrow$(i) hold
for any $\theta>0$; see Remarks~\ref{rem:C} and~\ref{rem:Cb}.

The connection between Toeplitz operators and Berezin transform is given by the following useful formula (see Proposition~\ref{converse}):
\[
B\mu(z)=\int_D T_\mu k_z(w)\overline{k_z(w)}\,d\nu(w)\;.
\]
Using this, estimates on the normalized Bergman kernel and a few basic functional analysis arguments, we obtain information on the growth of~$B\mu$ from mapping properties of~$T_\mu$, and thus, via Theorems~\ref{th:ztre} and~\ref{th:zqua}, information on Carleson properties of the measure~$\mu$.
Conversely, the integral Minkowski inequality and the estimates on the Bergman kernel allow us to relate Carleson properties of~$\mu$---that is, by Theorems~\ref{th:ztre} and~\ref{th:zqua}, inclusions of suitable weighted Bergman spaces in $L^p(\mu)$--- with mapping properties of the associated Toeplitz operator. In this way, we obtain a large number of results, valid for
$p\in[1,+\infty]$, the case $p=1$ typically requiring a bit more care. The more general statements
(see Theorems~\ref{Toeptre}, \ref{Toepinftydue}, \ref{Toepqua}, \ref{Toepinfty}, \ref{Toepinvgen}, \ref{ToepinvLuno} and~\ref{Toepinvpinfty}) are a bit technical; but a few particular cases can
be stated more easily. One instance is Theorem~\ref{th:corunoint}; other examples are the following (see, respectively, Corollaries~\ref{th:cordue}, \ref{th:cortre}, \ref{th:cortreb} 
and~\ref{th:corqua}):

\begin{corollary}
\label{th:corduez}
Let $D\subset\subset\C^n$ be a bounded strongly pseudoconvex domain. 
Let $\mu$ be a finite positive Borel measure on $D$, and take $1\le p< r<p\left(1+\frac{1}{n}\right)$. Then the following statements are equivalent:
\begin{itemize}
\item[(i)] $T_\mu\colon A^p\bigl(D,(n+1)p(\frac{1}{r}-\frac{1}{p})\bigr)\to A^r(D)$ continuously (respectively, compactly);
\item[(ii)] $\mu$ is (respectively, vanishing) Carleson.
\end{itemize}
\end{corollary}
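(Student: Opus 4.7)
The plan is to identify this corollary as the specific case $\theta = 1$ of the general Toeplitz mapping characterization alluded to in the introduction, and to derive both directions by pairing the Berezin transform identity of Proposition~\ref{converse} with the Bergman kernel integrability estimates of Theorem~\ref{BKp}, using the characterizations of (vanishing) Carleson measures in Theorems~\ref{th:ztre} and~\ref{th:zqua}. Set $\beta = (n+1)p\bigl(\tfrac{1}{r}-\tfrac{1}{p}\bigr) < 0$. Note that the hypothesis $r < p\bigl(1+\tfrac{1}{n}\bigr)$ is equivalent to $\beta>-1$, which ensures both that $A^p(D,\beta)$ is a nontrivial weighted Bergman space and that the Forelli--Rudin-type kernel integrals needed below converge.

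For the easier direction (i)$\Rightarrow$(ii), I would apply Proposition~\ref{converse} to write
\[
B\mu(z) = \int_D T_\mu k_z(w)\,\overline{k_z(w)}\,d\nu(w),
\]
and then use H\"older's inequality with conjugate exponents $r$ and $r'=r/(r-1)$ together with the hypothesis to obtain
\[
B\mu(z) \le \|T_\mu k_z\|_r\,\|k_z\|_{r'} \le \|T_\mu\|_{\mathrm{op}}\,\|k_z\|_{p,\beta}\,\|k_z\|_{r'}.
\]
The Bergman kernel asymptotics from Theorem~\ref{BKp}, combined with $K(z,z)\approx \delta(z)^{-(n+1)}$, give in the admissible range $\|k_z\|_{q,s}\approx \delta(z)^{(s+n+1)/q - (n+1)/2}$; plugging in $(q,s)=(p,\beta)$ yields $\|k_z\|_{p,\beta}\approx \delta(z)^{(n+1)/r-(n+1)/2}$, and $(q,s)=(r',0)$ yields $\|k_z\|_{r'}\approx \delta(z)^{(n+1)/r'-(n+1)/2}$. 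The two exponents sum to $(n+1)\bigl(\tfrac{1}{r}+\tfrac{1}{r'}-1\bigr)=0$, so $B\mu$ is bounded, which by Theorem~\ref{th:ztre} with $\theta=1$ is exactly the statement that $\mu$ is Carleson. For the compact case, the normalized kernels $k_z$ (suitably rescaled by $\delta(z)^{(n+1)/r-(n+1)/2}$) tend weakly to zero as $z\to\partial D$, so compactness of $T_\mu$ upgrades the bound to $\|T_\mu k_z\|_r\to 0$ with the right rate, giving $B\mu(z)\to 0$ and hence the vanishing Carleson property via Theorem~\ref{th:zqua}.

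For the harder direction (ii)$\Rightarrow$(i), assuming $\mu$ is Carleson, I would estimate
\[
|T_\mu f(z)| \le \int_D |K(z,w)|\,|f(w)|\,d\mu(w),
\]
and use a Schur/integral Minkowski argument: split the kernel as $|K(z,w)| = |K(z,w)|^{a}\cdot |K(z,w)|^{1-a}$ for an exponent $a$ chosen to balance integrations, apply H\"older against $d\mu$, and use Theorem~\ref{th:ztre} to convert Carleson integrals $\int |g|^q\,d\mu$ into controlled weighted Bergman integrals. The precise value $\beta=(n+1)p\bigl(\tfrac{1}{r}-\tfrac{1}{p}\bigr)$ is forced by the requirement that the $z$-integrals produced by Theorem~\ref{BKp} match the target exponent $r$ and the weight of the source space; this is the same scaling identity manifest in the (i)$\Rightarrow$(ii) cancellation above. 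The vanishing case follows by replacing the Carleson bound on $\int|f|^p\,d\mu$ with a vanishing version and applying standard compactness arguments for kernel operators on strongly pseudoconvex domains.

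The main obstacle is the forward direction: the Schur-type computation requires all the exponents coming from the Bergman kernel off-diagonal behavior, the Carleson embedding, and the weight $\delta^\beta$ to balance simultaneously, and it is the constraint $\beta>-1$ (equivalently $r < p(1+\tfrac{1}{n})$) that keeps the relevant kernel integrals convergent. Once the balancing is set up, however, the argument is parallel to the proof of Theorem~\ref{th:corunoint}, and in fact Corollary~\ref{th:corduez} can be obtained by applying the general Toeplitz mapping theorem (Theorem~\ref{Toepqua}) with $\theta=1$ rather than $\theta=1+\tfrac{1}{p}-\tfrac{1}{r}$, shifting the "gain in integrability" from a $\theta$-Carleson hypothesis on $\mu$ to a negative weight on the source Bergman space.
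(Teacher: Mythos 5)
Your proposal follows essentially the same route as the paper: the paper proves this corollary by specializing its general mapping theorems, namely Theorem~\ref{Toeptre}.(ii) with $\theta=1$ (noting $\frac{p'}{p'-1}=p<r$) for (ii)$\Rightarrow$(i) when $p>1$, Theorem~\ref{Toepqua} when $p=1$, and Theorems~\ref{Toepinvgen}/\ref{ToepinvLuno} with $(n+1)\beta=(n+1)p(\frac1r-\frac1p)$ for (i)$\Rightarrow$(ii); your Berezin-transform computation and the exponent bookkeeping (including the equivalence of $r<p(1+\frac1n)$ with $\beta>-\frac{1}{n+1}$ in the paper's normalization) match the paper's. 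Two small corrections: the theorem you cite for the forward direction, Theorem~\ref{Toepqua}, covers only $p=1$; for $p>1$ you need Theorem~\ref{Toeptre}.

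The one step that would fail as written is the compactness upgrade in (i)$\Rightarrow$(ii) when $p=1$. You argue that the rescaled kernels ``tend weakly to zero'' so that compactness of $T_\mu$ forces $\|T_\mu k_z\|_r\to0$; this uses Lemma~\ref{ucsweak} and Proposition~\ref{th:gencompact}, which require $1<p<\infty$ (reflexivity, weak compactness of the unit ball), and $A^1(D,\beta)$ has neither. The paper handles $p=1$ by a different mechanism: it first uses the continuity part to know $\mu$ is Carleson, then shows via Lemmas~\ref{th:prelemuno} and~\ref{th:prelemdue} (Corollary~\ref{th:prelemtre}) that $\delta(z_0)^{\eta}T_\mu k_{z_0}\to0$ uniformly on compact subsets, and finally combines this with compactness of $T_\mu$ through a subsequence argument in $A^r(D)$ --- which is why the paper's Theorem~\ref{ToepinvLuno} needs $r>1$ for the vanishing conclusion (automatic here since $r>p\ge1$). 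Without this substitute argument your proof of the compact case is incomplete for $p=1$.
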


\begin{corollary}
\label{th:cortrez}
Let $D\subset\subset\C^n$ be a bounded strongly pseudoconvex domain. 
Let $\mu$ be a finite positive Borel measure on $D$, and take $1\le p<+\infty$. Then the following statements are equivalent:
\begin{itemize}
\item[(i)] $T_\mu\colon A^p\bigl(D,-(n+1)\eps\bigr)\to A^p(D)$ continuously (respectively, compactly) 
for all $\eps>0$;
\item[(ii)] $\mu$ is (respectively, vanishing) $\theta$-Carleson for all $\theta<1$.
\end{itemize}
\end{corollary}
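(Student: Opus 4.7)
The plan is to deduce Corollary~\ref{th:cortrez} from the general Toeplitz theorems of the paper (Theorems~\ref{Toeptre} and its compact counterpart) combined with the geometric characterizations of (vanishing) $\theta$-Carleson measures in Theorems~\ref{th:ztre} and~\ref{th:zqua}. Writing $\eps=1-\theta$, the weight $(n+1)(\theta-1)$ appearing in Theorem~\ref{th:ztree} becomes exactly $-(n+1)\eps$, so condition (ii) — $\mu$ is $\theta$-Carleson (respectively vanishing) for all $\theta<1$ — is equivalent, by Theorems~\ref{th:ztre}/\ref{th:zqua}, to the requirement that $A^p\bigl(D,-(n+1)\eps\bigr)\hookrightarrow L^p(\mu)$ continuously (respectively, compactly) for every sufficiently small $\eps>0$, and hence, by monotonicity of the weights, for every $\eps>0$.

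For the direction (ii)$\Rightarrow$(i) I would argue directly: once $A^p\bigl(D,-(n+1)\eps\bigr)\hookrightarrow L^p(\mu)$ is known, apply the integral Minkowski inequality to
\[
T_\mu f(z)=\int_D K(z,w)f(w)\,d\mu(w)
\]
and control the resulting double integral via the kernel estimates of Theorem~\ref{BKp}, exactly as in the direct half of Theorem~\ref{Toeptre}. This yields $\|T_\mu f\|_p\le C_\eps\|f\|_{p,-(n+1)\eps}$; the vanishing/compact version is obtained by approximating $T_\mu$ by operators supported on $\{\delta\ge 1/k\}$ and using the vanishing Carleson condition to control the tail.

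For the direction (i)$\Rightarrow$(ii) I would use Proposition~\ref{converse}, which expresses the Berezin transform as
\[
B\mu(z)=\int_D T_\mu k_z(w)\,\overline{k_z(w)}\,d\nu(w).
\]
By H\"older's inequality and the continuity of $T_\mu$,
\[
B\mu(z)\le \|T_\mu k_z\|_p\,\|k_z\|_{p'}\le C_\eps\,\|k_z\|_{p,-(n+1)\eps}\,\|k_z\|_{p'}.
\]
Writing $k_z=K(\cdot,z)/\sqrt{K(z,z)}$, using the standard asymptotics $K(z,z)\asymp\delta(z)^{-(n+1)}$ and the integrability estimates for $|K(\cdot,z)|^q\delta^\beta$ from Theorem~\ref{BKp}, each of the two factors contributes a power of $\delta(z)$ that can be computed explicitly. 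A bookkeeping of these exponents shows that for every fixed $\eps>0$ the bound takes the form $B\mu(z)\le C\delta(z)^{-(n+1)\eps'(\eps)}$ with $\eps'(\eps)\to 0$ as $\eps\to 0^+$. By Theorem~\ref{th:ztre}(iii), this is exactly the assertion that $\mu$ is $\bigl(1-\eps'(\eps)\bigr)$-Carleson, and letting $\eps\to 0^+$ gives $\theta$-Carleson for every $\theta<1$. The compact case is handled analogously, replacing Theorem~\ref{th:ztre} by Theorem~\ref{th:zqua} and noting that compactness of $T_\mu$ forces $\delta(z)^{(n+1)\eps}B\mu(z)\to 0$ along any sequence converging to $\partial D$, via a standard test with the normalized reproducing kernels (which tend weakly to $0$ in the appropriate weighted Bergman spaces).

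The main obstacle I expect is the precise calibration between $\eps$ and $\eps'(\eps)$ in step three: one has to check that the exponents produced by combining the two norms $\|k_z\|_{p,-(n+1)\eps}$ and $\|k_z\|_{p'}$ through Theorem~\ref{BKp} really shrink to zero with $\eps$, so that the resulting family of $\theta$-Carleson estimates truly exhausts every $\theta<1$ rather than only a proper subinterval. This is the same bookkeeping difficulty that forces Theorems~\ref{th:ztre} and~\ref{th:zqua} to require the restricted range $1-\tfrac{1}{n+1}<\theta<2$, and here it is needed only in a one-sided limit $\theta\nearrow 1$, which is the benign case and should fit squarely within the regime where those theorems apply.
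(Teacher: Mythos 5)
Your plan follows essentially the same route as the paper: the forward implication is the Minkowski-integral-inequality-plus-kernel-estimate argument of Theorem~\ref{Toeptre} (for $p>1$) and Theorem~\ref{Toepqua} (for $p=1$) specialized to $r=p$, and the converse is the Berezin-transform test underlying Theorems~\ref{Toepinvgen} and~\ref{ToepinvLuno}. The bookkeeping you flag as the main obstacle is benign: by Theorem~\ref{BKp} one gets $\|k_z\|_{p,-(n+1)\eps}\|k_z\|_{p'}\preceq\delta(z)^{-(n+1)\eps/p}$, so $\eps'(\eps)=\eps/p\to0$ and the resulting estimates do exhaust every $\theta<1$.

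Two points deserve attention. First, your test with normalized kernels ``tending weakly to $0$'' to extract the vanishing conclusion from compactness of $T_\mu$ is only available for $1<p<\infty$ (Lemma~\ref{ucsweak} and Proposition~\ref{th:gencompact}); $A^1(D,\beta)$ is not reflexive, and the paper's substitute for $p=1$ (Lemmas~\ref{th:prelemuno}--\ref{th:prelemdue} and Corollary~\ref{th:prelemtre}, as used in Theorem~\ref{ToepinvLuno}) delivers the vanishing conclusion only when the target exponent is $>1$, whereas here $r=p=1$. This gap is harmless for the present corollary, but for a reason you should make explicit: since (ii) quantifies over all $\theta<1$, Remark~\ref{rem:carvancar} shows that being $\theta$-Carleson for all $\theta<1$ is already equivalent to being vanishing $\theta$-Carleson for all $\theta<1$, so the compact half of each implication reduces to the continuous half. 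Second, this same observation makes your truncation scheme (approximating $\mu$ by its restriction to $\{\delta\ge 1/k\}$ and controlling the tail in operator norm) unnecessary; it is also the one genuinely different and more laborious ingredient of your plan, since you would need to prove both compactness of each truncated operator into $A^p(D)$ and operator-norm convergence, whereas the paper simply factors $T_\mu$ through the compact inclusion $A^p\bigl(D,\beta\bigr)\hookrightarrow L^p(\delta^\alpha\mu)$ provided by Theorem~\ref{Carleson}.
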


\begin{corollary}
\label{th:cortrebz}
Let $D\subset\subset\C^n$ be a bounded strongly pseudoconvex domain. 
Let $\mu$ be a finite positive Borel measure on $D$, and take $1< r<+\infty$. Then the following statements are equivalent:
\begin{itemize}
\item[(i)] $T_\mu\colon A^1\bigl(D,-(n+1)\eps\bigr)\to A^r(D)$ continuously (respectively, compactly) 
for all $\eps>0$;
\item[(ii)] $\mu$ is (respectively, vanishing) $\theta$-Carleson for all $\theta<2-\frac{1}{r}$.
\end{itemize}
\end{corollary}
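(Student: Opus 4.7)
The plan is to treat this corollary as the $p=1$ endpoint of the general scheme behind Theorem~\ref{th:corunoint}: the implication (ii)$\Rightarrow$(i) comes from the integral Minkowski inequality combined with the Bergman kernel bounds of Theorem~\ref{BKp}, while (i)$\Rightarrow$(ii) exploits the Berezin formula of Proposition~\ref{converse} together with H\"older's inequality. In both directions Theorem~\ref{th:ztre} (resp.\ Theorem~\ref{th:zqua}) is the bridge converting geometric Carleson estimates into analytic ones.

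For (ii)$\Rightarrow$(i) I would fix $\eps>0$ and choose $\theta=2-\tfrac{1}{r}-\eps$. By hypothesis $\mu$ is $\theta$-Carleson, hence by Lemma~\ref{changeCarl} the weighted measure $\delta^{(n+1)(1/r-1)}\mu$ is $\bigl(\theta+\tfrac{1}{r}-1\bigr)$-Carleson, i.e.\ $(1-\eps)$-Carleson, and by Theorem~\ref{th:ztre} this is the same as being a Carleson measure of $A^1\bigl(D,-(n+1)\eps\bigr)$. Starting from $|T_\mu f(z)|\le\int_D|K(z,w)|\,|f(w)|\,d\mu(w)$ the integral Minkowski inequality (legal since $r>1$) yields
\[
\|T_\mu f\|_r\le\int_D\|K(\cdot,w)\|_r\,|f(w)|\,d\mu(w),
\]
and the Bergman kernel estimate $\|K(\cdot,w)\|_r\le C\,\delta(w)^{(n+1)(1/r-1)}$ from Theorem~\ref{BKp} turns the right-hand side into a Carleson-type integral of $|f|$ against $\delta^{(n+1)(1/r-1)}\mu$, which is controlled by a constant times $\|f\|_{A^1(D,-(n+1)\eps)}$ as required.

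For (i)$\Rightarrow$(ii) I would plug $k_z$ into Proposition~\ref{converse} and apply H\"older with conjugate exponents $r,r'$:
\[
B\mu(z)\le\|T_\mu k_z\|_r\,\|k_z\|_{r'}.
\]
Forelli--Rudin type kernel estimates (again Theorem~\ref{BKp}) give $\|k_z\|_{r'}\le C\,\delta(z)^{(n+1)(1/2-1/r)}$ and, for each $\eps>0$, $\|k_z\|_{A^1(D,-(n+1)\eps)}\le C_\eps\,\delta(z)^{(n+1)(1/2-\eps)}$; combined with the continuity hypothesis $\|T_\mu k_z\|_r\le C'_\eps\|k_z\|_{A^1(D,-(n+1)\eps)}$ these give
\[
B\mu(z)\le C''_\eps\,\delta(z)^{(n+1)(1-\eps-1/r)}=C''_\eps\,\delta(z)^{(n+1)(\theta-1)}
\]
with $\theta=2-\tfrac{1}{r}-\eps$. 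Theorem~\ref{th:ztre} then forces $\mu$ to be $(2-\tfrac{1}{r}-\eps)$-Carleson, and letting $\eps\to 0^+$ yields (ii).

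The compactness equivalence runs along the same chain with Theorem~\ref{th:zqua} in place of Theorem~\ref{th:ztre}; for (i)$\Rightarrow$(ii) one additionally uses that $k_z/\|k_z\|_{A^1(D,-(n+1)\eps)}$ tends weakly to $0$ in $A^1\bigl(D,-(n+1)\eps\bigr)$ as $z\to\de D$, so that a compact $T_\mu$ sends it to a norm-null sequence in $A^r(D)$, forcing $\delta(z)^{(n+1)(1-\theta)}B\mu(z)\to 0$. The step I expect to be most delicate is the $L^1$-weighted kernel estimate $\|k_z\|_{A^1(D,-(n+1)\eps)}\le C_\eps\,\delta(z)^{(n+1)(1/2-\eps)}$: it is exactly what forces the $\eps$-loss in both the source weight and the Carleson exponent, thereby explaining the ``for all $\eps>0$ / for all $\theta<2-\tfrac{1}{r}$'' quantifier pattern in the statement.
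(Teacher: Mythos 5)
Your continuity arguments are correct and coincide with the paper's route: the paper deduces this corollary from Theorem~\ref{Toepqua} (whose proof is exactly your Minkowski-plus-kernel-estimate computation, followed by Lemma~\ref{changeCarl} and the geometric-to-analytic direction of Theorem~\ref{carthetaCarluno}) and from Theorem~\ref{ToepinvLuno} with $\beta=-\eps$ (whose proof is exactly your Berezin/H\"older computation). The compactness of $T_\mu$ under hypothesis (ii) is also handled as you suggest, by factoring $T_\mu$ through the compact inclusion $A^1\bigl(D,-(n+1)\eps\bigr)\hookrightarrow L^1\bigl(\delta^{(n+1)(\frac{1}{r}-1)}\mu\bigr)$ provided by Theorem~\ref{Carleson}.

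There is, however, a genuine gap in your compactness argument for (i)$\Longrightarrow$(ii). You assert that the normalized kernels $k_z/\|k_z\|_{A^1(D,-(n+1)\eps)}$ tend weakly to $0$ in $A^1\bigl(D,-(n+1)\eps\bigr)$. The identification of ``norm-bounded and locally uniformly null'' with ``weakly null'' is Lemma~\ref{ucsweak}, which is proved only for $1<p<\infty$ and genuinely fails for $p=1$: $A^1$ is not reflexive and its dual is a Bloch-type space, not representable by an $L^{q}$ density. In fact the claim is false in general: already in the disk with weight $\delta^{-s}$, the functional $f\mapsto\int_\Delta f(w)\overline{(1-w)^{-s}}\,d\nu(w)$ is bounded on $A^1(\Delta,-s)$ (since $|1-w|^{-s}\le\delta(w)^{-s}$), and by the reproducing property it pairs with $\delta(z)^{s-1}k_z$ along $z=r\to 1^-$ to a quantity comparable to $1$, not to $0$. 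This is precisely why the paper cannot invoke Proposition~\ref{th:gencompact} here and instead inserts Lemmas~\ref{th:prelemuno} and~\ref{th:prelemdue} and Corollary~\ref{th:prelemtre}: using that $\mu$ is already known to be $\theta$-Carleson (from the continuity part) and that $r>1$, one shows that $\delta(z_0)^{\eta}T_\mu k_{z_0}\to 0$ \emph{uniformly on compact subsets}, and then compactness of $T_\mu$ (relative compactness of the image family, plus uniqueness of the locally uniform limit) upgrades this to $\delta(z_0)^{\eta}\|T_\mu k_{z_0}\|_r\to 0$, after which Theorem~\ref{carvanthetaCarldue} applies. You should replace your weak-convergence step by this argument (or supply an independent proof that the relevant normalized kernels are weakly null in the weighted $A^1$ space, which, as the example shows, would require a different normalization or is simply unavailable).
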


\begin{corollary}
\label{th:corquaz}
Let $D\subset\subset\C^n$ be a bounded strongly pseudoconvex domain. 
Let $\mu$ be a finite positive Borel measure on $D$, and take $1\le p<+\infty$. Then the following statements are equivalent:
\begin{itemize}
\item[(i)] $T_\mu\colon A^p\bigl(D,-(n+1)\eps\bigr)\to A^\infty(D)$ continuously (respectively, compactly) 
for all $\eps>0$;
\item[(ii)] $\mu$ is (respectively, vanishing) $\theta$-Carleson for all $\theta<1+\frac{1}{p}$.
\end{itemize}
\end{corollary}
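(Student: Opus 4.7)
The plan is to derive the corollary from the general Toeplitz-to-$A^\infty$ mapping theorem (Theorem~\ref{Toepinfty}), specialized to weighted source spaces $A^p(D,-(n+1)\eps)$, and to match the resulting kernel-integrability conditions with the geometric Carleson characterizations in Theorems~\ref{th:ztre} and~\ref{th:zqua}. The two directions are bridged, exactly as in Theorem~\ref{th:corunoint}, by the Berezin identity
\[
B\mu(z)=\int_D T_\mu k_z(w)\overline{k_z(w)}\,d\nu(w)
\]
of Proposition~\ref{converse}.

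For (ii)~$\Rightarrow$~(i), I would start from $|T_\mu f(z)|\le\int_D|K(z,w)|\,|f(w)|\,d\mu(w)$ and, for $p>1$, apply H\"older's inequality with an auxiliary weight $\delta(w)^{a}$, tuned so that one factor yields $\|f\|_{p,-(n+1)\eps}$ via the embedding $A^p(D,-(n+1)\eps)\hookrightarrow L^p\bigl(\delta^{-(n+1)\eps}\mu\bigr)$. This embedding is supplied by Theorem~\ref{th:ztre} once $\mu$ is $(1-\eps')$-Carleson for some $0<\eps'<\eps$, a condition included in (ii) since $1-\eps'<1+\frac{1}{p}$. The remaining factor is a kernel integral $\int_D|K(z,w)|^{p'}\delta(w)^{a'}\,d\mu(w)$, which the Bergman kernel integrability estimates of Theorem~\ref{BKp}, combined with the $\theta$-Carleson hypothesis, control uniformly in $z$ precisely when $\theta<1+\frac{1}{p}$. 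The endpoint $p=1$ escapes H\"older duality and must be handled separately via a direct sup-type bound $|T_\mu f(z)|\le\bigl(\sup_{w\in D}|K(z,w)|\delta(w)^{(n+1)\eps}\bigr)\int_D|f|\delta^{-(n+1)\eps}\,d\mu$, whose first factor is tamed by a $\theta$-Carleson condition with $\theta$ close to $2$ (the same mechanism used for Corollary~\ref{th:cortrebz}).

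For (i)~$\Rightarrow$~(ii), the plan is to test $T_\mu$ against a renormalized family of Bergman kernels. Given $\theta<1+\frac{1}{p}$, pick $\eps>0$ small and set $f_z=c(z)\,k_z$ with a suitable $\delta(z)$-power $c(z)$ chosen so that $\{f_z\}$ is uniformly bounded in $A^p(D,-(n+1)\eps)$; this verification is a direct application of Theorem~\ref{BKp} together with $K(z,z)\approx\delta(z)^{-(n+1)}$. The continuity assumption then yields $|T_\mu f_z(z)|\le M$ uniformly, and unwinding through the Berezin identity produces the pointwise estimate $B\mu(z)\le C\delta(z)^{(n+1)(\theta-1)}$, which by Theorem~\ref{th:ztre} is equivalent to $\theta$-Carlesonness of $\mu$. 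Since $\theta<1+\frac{1}{p}$ was arbitrary, (ii) follows. For the compactness half, $\{f_z\}$ tends weakly to $0$ as $z\to\de D$, so a compact $T_\mu$ sends it to a strongly null family in $A^\infty(D)$; via the same identity this forces $\delta(z)^{(n+1)(1-\theta)}B\mu(z)\to 0$ and hence vanishing $\theta$-Carlesonness, by Theorem~\ref{th:zqua}.

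The main obstacle will be the sharp exponent bookkeeping: the free parameter $\eps>0$ on the source side and the strict inequality $\theta<1+\frac{1}{p}$ on the measure side must be matched perfectly to the integrability thresholds of Theorem~\ref{BKp}, and one must verify that the endpoint $1+\frac{1}{p}$ is \emph{produced} by the calculation rather than artificially imposed. A secondary subtlety is the $p=1$ case, where the lack of H\"older duality forces a separate sup-type argument in both directions of the equivalence.
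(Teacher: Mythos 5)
Your continuity arguments reproduce, in the special case $\beta=-(n+1)\eps$, exactly what the paper packages into Theorems~\ref{Toepinftydue} (for $p>1$), \ref{Toepinfty} (for $p=1$) and \ref{Toepinvpinfty}: H\"older against an auxiliary power of $\delta$ plus the kernel estimates of Theorem~\ref{BKp} for (ii)$\Rightarrow$(i), and the Berezin pairing $B\mu(z_0)\le\|T_\mu k_{z_0}\|_\infty\|k_{z_0}\|_1\preceq\|k_{z_0}\|_{p,-(n+1)\eps}\|k_{z_0}\|_1$ for the converse. The bookkeeping does produce the endpoint: Theorem~\ref{BKp} gives $B\mu(z_0)\preceq\delta(z_0)^{(n+1)(\frac1p-\frac{\eps}{p})}$, so letting $\eps\downarrow0$ yields $\theta$-Carleson for every $\theta<1+\frac1p$, as you predicted. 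For the continuity/plain-Carleson halves your route is therefore the paper's route, merely unpacked.

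The compactness halves are where your plan has genuine gaps. First, your (i)$\Rightarrow$(ii) argument via weakly null families of normalized kernels breaks down at $p=1$: the unit ball of $A^1(D,\beta)$ is not weakly compact, so Lemma~\ref{ucsweak} and Proposition~\ref{th:gencompact} do not apply (the paper explicitly works around this in the proof of Theorem~\ref{ToepinvLuno}). But the argument is also unnecessary here: since condition (ii) is open in $\theta$, Remark~\ref{rem:carvancar} upgrades ``$\theta$-Carleson for all $\theta<1+\frac1p$'' to ``vanishing $\theta$-Carleson for all $\theta<1+\frac1p$'' for free, with no use of compactness of $T_\mu$ --- this is precisely how Theorem~\ref{Toepinvpinfty} obtains its final assertion. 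Second, and more seriously, you never address the implication from (ii) with ``vanishing'' to (i) with ``compactly'': one must show that a vanishing $\theta$-Carleson measure makes $T_\mu$ compact, which the paper does by factoring $T_\mu$ through the compact inclusion $A^p(D,\cdot)\hookrightarrow L^p(\delta^a\mu)$ supplied by Theorem~\ref{Carleson} (the ``if moreover $\mu$ is vanishing'' clauses of Theorems~\ref{Toepinfty} and~\ref{Toepinftydue}). Without that step the equivalence in the compact case is not closed.
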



We should mention that the techniques introduced here might work in other domains too (e.g., smoothly bounded convex domains of finite type, or finite type domains in~$\C^2$); but we restricted ourselves to strongly pseudoconvex domains to describe more clearly the main ideas.
Finally, we expressed our results in terms of the Lebesgue measure, and using the euclidean
distance from the boundary as weight, because this is the customary habit in this context;
however, it is possible to reformulate everything in completely intrinsic terms. Indeed,
let $\nu_D$ be the invariant Kobayashi measure (see, e.g., \cite{K}). Then \cite{M} implies that $A^p(\nu_D)=A^p\bigl(D,-(n+1)\bigr)$; furthermore it is well known (see, e.g., \cite{A}) that
 $\delta$ is bounded from above and below by constant multiples of~$\exp\bigl(2k_D(z_0,\cdot)\bigr)$ for any $z_0\in D$. Thus
all our statements can be reformulated in terms of completely intrinsic function spaces, 
using $\nu_D$ as reference measure and weights expressed in terms of the exponential of the Kobayashi distance
from a reference point.

\smallskip
The paper is structured as follows. In Section~2 we shall collect the preliminary results we need on the geometry of strongly pseudoconvex domains; in particular we shall prove (Theorem~\ref{BKp}) the integral estimates on the Bergman kernel mentioned before. In Section~3 we shall study $\theta$-Carleson measures, proving the characterizations described above; in Section~4 we shall analogously study vanishing $\theta$-Carleson measures, introducing the functional 
analysis results we shall need to deal with compactness properties of operators between
weighted Bergman spaces. Finally, in Section~5 we shall study the mapping properties
of Toeplitz operators, proving our main theorems.

\section{Preliminaries}
Let $D\subset\subset\C^n$ be a bounded strongly pseudoconvex domain in~$\C^n$. We shall
use the following notations:
\begin{itemize}
\item $\delta\colon D\to\R^+$ will denote the Euclidean distance from the boundary,
that is $\delta(z)=d(z,\de D)$;
\item given two non-negative functions $f$, ~$g\colon D\to\R^+$ we shall write $f\preceq g$
to say that there is $C>0$ such that $f(z)\le C g(z)$ for all $z\in D$. The constant $C$ is 
independent of~$z\in D$, but it might depend on other parameters ($r$, $\theta$, etc.);
\item given two strictly positive functions $f$, ~$g\colon D\to\R^+$ we shall write $f\approx g$
if $f\preceq g$ and $g\preceq f$, that is
if there is $C>0$ such that $C^{-1} g(z)\le f(z)\le C g(z)$ for all $z\in D$;
\item $\nu$ will be the Lebesgue measure;
\item $\mathcal{O}(D)$ will denote the space of holomorphic functions on~$D$, endowed with the topology of uniform convergence on compact subsets;
\item given $1\le p\le +\infty$, the \emph{Bergman space} $A^p(D)$ is the Banach space
$L^p(D)\cap\mathcal{O}(D)$, endowed with the $L^p$-norm;
\item more generally, given $\beta\in\R$ we introduce the \emph{weighted Bergman space}
\[
A^p(D,\beta)=L^p(\delta^\beta \nu)\cap\mathcal{O}(D)
\]
endowed with the norm
\[
\|f\|_{p,\beta}=\left[\int_D |f(\zeta)|^p\delta(\zeta)^\beta\,d\nu(\zeta)\right]^{1/p}
\]
if $1\le p<\infty$, and with the norm
\[
\|f\|_{\infty,\beta}=\| f\delta^\beta\|_\infty
\]
if $p=\infty$;
\item $K\colon D\times D\to\C$ will be the Bergman kernel of~$D$;
\item for each $z_0\in D$ we shall denote by $k_{z_0}\colon D\to\C$ the \emph{normalized
Bergman kernel} defined by 
\[
k_{z_0}(z)=\frac{K(z,z_0)}{\sqrt{K(z_0,z_0)}}=\frac{K(z,z_0)}{\|K(\cdot,z_0)\|_2}\;;
\]
\item given $r\in(0,1)$ and $z_0\in D$, we shall denote by $B_D(z_0,r)$ the Kobayashi ball
of center~$z_0$ and radius $\frac{1}{2}\log\frac{1+r}{1-r}$.
\end{itemize}

\noindent See, e.g., \cite{A,A1,JP,K} for definitions, basic properties and applications to geometric function theory of the Kobayashi distance; and \cite{Ho,Ho1,Kr,R} for definitions and basic properties of the Bergman kernel. 

Let us now recall a number of results proved in \cite{AS}. The first two give information about the shape of Kobayashi balls:

\begin{lemma}[\textbf{\cite[Lemma 2.1]{AS}}]
\label{sei} 
Let $D\subset\subset\C ^n$ be a bounded  strongly pseudoconvex domain, and $r\in(0,1)$. Then 
\[
\nu\bigl(B_D(\cdot,r)\bigr)\approx \delta^{n+1}\;,
\]
(where the constant depends on~$r$).
\end{lemma}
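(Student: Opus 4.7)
The plan is to split into two cases according to the distance of $z_0$ from $\partial D$: points $z_0$ staying away from $\partial D$ are trivial by compactness, while points approaching the boundary are handled by the standard non-isotropic shape of Kobayashi balls in strongly pseudoconvex domains.

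\emph{Interior case.} If $\delta(z_0)$ is bounded below by a fixed $\delta_0>0$, the set $\{\delta\ge\delta_0/2\}$ is relatively compact in $D$; there the Kobayashi distance is continuous and bi-Lipschitz equivalent to the Euclidean one on each compact subset, so both $\nu\bigl(B_D(z_0,r)\bigr)$ and $\delta(z_0)^{n+1}$ are pinched between positive constants depending only on $r$ and $\delta_0$.

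\emph{Boundary case.} For $z_0$ with $\delta(z_0)$ small, I would pick the nearest boundary point $p_0$ and choose local holomorphic coordinates centered at $p_0$ in which $\partial D$ is the zero set of a strictly plurisubharmonic defining function of the form $\rho(z)=2\,\mathrm{Re}\,z_1 + H(z,\bar z)+O(|z|^3)$, with $H$ positive definite on the complex tangential hyperplane. The classical localization of the Kobayashi metric in strongly pseudoconvex domains (via local plurisubharmonic peak functions to reduce the estimate to an osculating ball or Siegel model, and then an explicit computation on the model) then yields the non-isotropic estimate
\[
\kappa_D(z_0;v)\approx \frac{|v_N|}{\delta(z_0)}+\frac{|v_T|}{\sqrt{\delta(z_0)}}\;,
\]
where $v_N$, $v_T$ are the complex normal and complex tangential components of $v$ at $z_0$. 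Integrating this shows that $B_D(z_0,r)$ is, with comparison constants depending only on $r$, contained in and contains a box of Euclidean size $\delta(z_0)$ in the two real directions spanning the complex normal and of size $\sqrt{\delta(z_0)}$ in each of the $2(n-1)$ real complex-tangential directions.

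Multiplying out the sidelengths yields
\[
\nu\bigl(B_D(z_0,r)\bigr)\approx \delta(z_0)^{2}\cdot\bigl(\sqrt{\delta(z_0)}\bigr)^{2(n-1)}=\delta(z_0)^{n+1}\;,
\]
and combining with the interior case gives the lemma. The real work is the non-isotropic metric estimate: strong pseudoconvexity is needed to produce, uniformly in $p_0\in\partial D$, both a local peak plurisubharmonic function (for the upper bound on $\kappa_D$) and an osculating interior ball or Siegel domain (for the lower bound via the distance-decreasing property of the Kobayashi pseudometric). This uniformity as $p_0$ varies over $\partial D$ is the main obstacle; once it is in place, the volume computation is purely mensurational.
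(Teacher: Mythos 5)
The paper does not actually prove this lemma: it imports it verbatim from \cite{AS} (Lemma 2.1 there), so the only comparison available is with that cited proof. Your route --- dispose of the interior case by compactness, then near the boundary establish the uniform non-isotropic estimate $\kappa_D(z;v)\approx |v_N|/\delta(z)+|v_T|/\sqrt{\delta(z)}$ (Graham/Ma), deduce that $B_D(z_0,r)$ is comparable to a polydisc of Euclidean size $\delta(z_0)$ in the two real directions of the complex normal and $\sqrt{\delta(z_0)}$ in the $2(n-1)$ complex-tangential directions, and multiply side lengths to get $\delta(z_0)^2\cdot\delta(z_0)^{n-1}=\delta(z_0)^{n+1}$ --- is exactly the standard argument underlying the cited lemma, and the arithmetic is right; note in particular that the full non-isotropic comparison is genuinely needed, since neither the slab condition $\delta(z)\approx\delta(z_0)$ nor an isotropic Euclidean ball of radius $\approx\delta(z_0)$ or $\approx\sqrt{\delta(z_0)}$ gives the exponent $n+1$ in both directions. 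The one place where the sketch is thin is the outer inclusion $B_D(z_0,r)\subseteq(\hbox{box})$: the infinitesimal lower bound on the metric must be integrated along arbitrary competing curves, which may leave the coordinate chart around $p_0$, so one needs either a localization lemma for $k_D$ near strongly pseudoconvex boundary points or global two-point lower bounds for $k_D$ (both available, e.g., in \cite{A}), with constants uniform in $p_0\in\de D$ and depending on $r$ only through the Kobayashi radius. You correctly identify this uniformity as the crux, so I would count it as an unexecuted but correctly located step rather than a gap.
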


\begin{lemma}[\textbf{\cite[Lemma 2.2]{AS}}]
\label{sette} 
Let $D\subset\subset\C ^n$ be a bounded  strongly pseudoconvex domain. Then there is $C>0$ such that
\[
\frac{C}{1-r}\delta(z_0)\ge \delta(z) \ge \frac{1-r}{C}\delta(z_0)
\]
for all $r\in(0,1)$, $z_0\in D$ and $z\in B_D(z_0,r)$.
\end{lemma}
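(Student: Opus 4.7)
The plan is to derive the lemma directly from a standard lower estimate for the Kobayashi distance on a bounded strongly pseudoconvex domain. Recall, as follows from the Forstneri\v c--Rosay localization of the Kobayashi metric combined with the fact that $k_D$ is bounded on compact subsets of $D\times D$, that there exists a constant $c>0$, depending only on $D$, such that
\[
k_D(w_1,w_2)\ge \frac{1}{2}\log\frac{\delta(w_1)}{\delta(w_2)}-c
\]
for all $w_1,w_2\in D$. Since $k_D$ is symmetric, swapping the two arguments and combining the two resulting inequalities yields
\[
k_D(w_1,w_2)\ge \frac{1}{2}\bigl|\log\bigl(\delta(w_1)/\delta(w_2)\bigr)\bigr|-c.
\]

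Given this, fix $r\in(0,1)$, $z_0\in D$ and $z\in B_D(z_0,r)$. By the definition of $B_D(z_0,r)$ we have $k_D(z_0,z)<\frac{1}{2}\log\frac{1+r}{1-r}$, so the estimate above gives
\[
\left|\log\frac{\delta(z_0)}{\delta(z)}\right| < \log\frac{1+r}{1-r}+2c \le \log\frac{2}{1-r}+2c,
\]
where we have used $1+r<2$. Exponentiating and setting $C=2e^{2c}$ then yields
\[
\frac{1-r}{C}\delta(z_0)\le \delta(z)\le \frac{C}{1-r}\delta(z_0),
\]
which is exactly the conclusion of the lemma, with the constant $C$ independent of $r$, $z_0$ and $z$.

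The only genuinely delicate point of this plan is establishing the uniform lower estimate for $k_D$ in terms of $\log\delta$. For points close to $\de D$ this is the classical Forstneri\v c--Rosay bound, obtained by localizing the Kobayashi metric near a boundary point and comparing it with that of a strongly convex model domain; for points lying in a fixed compact subset of $D$ both $k_D$ and $\log\delta$ are bounded, so the inequality holds trivially there. The two regimes fit together with a single constant $c$, after which the argument reduces to the one-line computation displayed above.
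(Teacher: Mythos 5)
Your proof is correct and is essentially the proof the paper relies on: the lemma is quoted from \cite[Lemma 2.2]{AS}, where it is derived in exactly this way from the two-point lower bound $k_D(w_1,w_2)\ge\frac{1}{2}\bigl|\log\bigl(\delta(w_1)/\delta(w_2)\bigr)\bigr|-c$ for strongly pseudoconvex domains (cited there from \cite{A}), combined with the definition of $B_D(z_0,r)$ as the Kobayashi ball of radius $\frac{1}{2}\log\frac{1+r}{1-r}$ and the bound $1+r<2$. Nothing further is needed.
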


We shall also need the existence of suitable coverings by Kobayashi balls:

\begin{definition}
\label{def:lattice}
Let $D\subset\subset\C^n$ be a bounded domain, and $r>0$. An \emph{$r$-lattice} in~$D$
is a sequence $\{a_k\}\subset D$ such that $D=\bigcup_{k} B_D(a_k,r)$ and 
there exists $m>0$ such that any point in~$D$ belongs to at most $m$ balls of the
form~$B_D(a_k,R)$, where $R=\frac{1}{2}(1+r)$.
\end{definition}

The existence of $r$-lattices in bounded strongly pseudoconvex domains is ensured by
the following 

\begin{lemma}[\textbf{\cite[Lemma 2.5]{AS}}]
\label{uno}
Let $D\subset\subset\C ^n$ be a bounded strongly pseudoconvex domain. Then for every $r\in(0,1)$ there exists an $r$-lattice in~$D$, that is there exist
 $m\in{\bf N}$ and a sequence 
$\{a_k\}\subset D$ of points such that
$D=\bigcup_{k=0}^\infty B_D(a_k,r)$
and no point of $D$ belongs to more than $m$ of the balls $B_D(a_k,R)$,
where $R={\frac12}(1+r)$.
\end{lemma}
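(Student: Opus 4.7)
The plan is to use a standard maximal packing argument with respect to the Kobayashi distance. Set $\rho(r)=\tfrac{1}{2}\log\tfrac{1+r}{1-r}$, so that $B_D(z_0,r)$ is the Kobayashi ball of center $z_0$ and Kobayashi radius $\rho(r)$. I would invoke Zorn's lemma to produce a family $\{a_k\}\subset D$ that is maximal with respect to the property $k_D(a_i,a_j)\ge\rho(r)$ for all $i\ne j$. Maximality immediately yields the covering property: for any $z\in D$ there must exist some $a_k$ with $k_D(z,a_k)<\rho(r)$, otherwise $\{a_k\}\cup\{z\}$ would still be $\rho(r)$-separated, contradicting maximality; hence $z\in B_D(a_k,r)$. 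Countability of the family follows by choosing $r''\in(0,1)$ small enough that $\rho(r'')\le\rho(r)/2$; by the triangle inequality the open balls $B_D(a_k,r'')$ are then pairwise disjoint, and a disjoint family of non-empty open sets in the separable space $D$ is at most countable.

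The harder part is the uniform multiplicity bound at the larger radius $R=\tfrac{1}{2}(1+r)$. Suppose that a point $z\in D$ lies in $B_D(a_{k_1},R),\ldots,B_D(a_{k_m},R)$. Since Kobayashi balls are symmetric, each $a_{k_j}$ lies in $B_D(z,R)$, and therefore the pairwise disjoint balls $B_D(a_{k_j},r'')$ are all contained in a single slightly larger Kobayashi ball $B_D(z,R')$, with $R'\in(0,1)$ depending only on $r$. By Lemma~\ref{sei}, $\nu\bigl(B_D(z,R')\bigr)\approx\delta(z)^{n+1}$; by Lemma~\ref{sette}, $\delta(a_{k_j})\approx\delta(z)$ for every $j$ with constants depending only on $r$, so Lemma~\ref{sei} again gives $\nu\bigl(B_D(a_{k_j},r'')\bigr)\approx\delta(z)^{n+1}$. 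Summing over $j$ and using disjointness yields
\[
m\,c_1\,\delta(z)^{n+1}\le\sum_{j=1}^m\nu\bigl(B_D(a_{k_j},r'')\bigr)\le\nu\bigl(B_D(z,R')\bigr)\le c_2\,\delta(z)^{n+1},
\]
which produces an absolute upper bound $m\le c_2/c_1$ depending only on $r$ (and $D$), as required.

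The main obstacle, and the only nontrivial analytic input, is ensuring that the implicit constants coming out of Lemmas~\ref{sei} and~\ref{sette} are uniform in $z\in D$, so that the final bound on $m$ does not degenerate as $z$ approaches $\de D$. This is precisely the content of those two lemmas: the volume and boundary-distance comparisons hold with constants depending only on the chosen radius and on~$D$. Once this metric-measure comparability is in hand, the proof reduces to a purely combinatorial volume-packing argument and requires no further function-theoretic input beyond what has already been summarised in Section~2.
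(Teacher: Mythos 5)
Your argument is correct. The paper itself gives no proof of this lemma --- it is quoted verbatim from \cite[Lemma~2.5]{AS} --- and your maximal-packing construction is, in substance, the proof given there: maximality of a $k_D$-separated family yields the covering, and the multiplicity bound at the enlarged radius $R$ reduces, via Lemmas~\ref{sei} and~\ref{sette}, to comparing the volumes of the disjoint balls $B_D(a_{k_j},r'')$ with that of a single ball $B_D(z,R')$, all comparable to $\delta(z)^{n+1}$ with constants depending only on $r$. You also handle the two delicate points correctly: working with $k_D$ itself (so the triangle inequality is available, unlike for $\tanh k_D$), and noting that $R'=\tanh\bigl(\rho(R)+\rho(r'')\bigr)<1$ so that Lemma~\ref{sei} applies at a fixed radius.
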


We shall use a submean estimate for nonnegative plurisubharmonic functions on Kobayashi balls:

\begin{lemma}[\textbf{(\cite[Corollary 2.8]{AS})}] 
 \label{due}
 Let $D\subset\subset\C ^n$ be a bounded strongly pseudoconvex domain. Given $r\in(0,1)$, set $R={\frac12}(1+r)\in(0,1)$. Then there exists a $K_r>0$ depending on~$r$ such that
\[
\forall{z_0\in D\;\forall z\in B_D(z_0,r)}\ \ \ \ \chi(z)\le 
{\frac{K_r}{\nu\left(B_D(z_0,r)\right)}}\int_{B_D(z_0,R)}\chi\,d\nu
\]
for every nonnegative plurisubharmonic function $\chi\colon D\to{\bf R}^+$.
\end{lemma}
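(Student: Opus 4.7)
The plan is to reduce the submean estimate on Kobayashi balls to the classical iterated submean-value property of plurisubharmonic functions on polydisks, exploiting the known shape of Kobayashi balls in strongly pseudoconvex domains.

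First, using the triangle inequality for $k_D$, I pick the pseudohyperbolic radius $s=\tanh(\rho_R-\rho_r)\in(0,1)$, where $\rho_t=\frac{1}{2}\log\frac{1+t}{1-t}$. Since $R=(1+r)/2>r$ one has $s>0$, and for every $z\in B_D(z_0,r)$ the triangle inequality yields
\[
B_D(z,s)\subset B_D(z_0,R).
\]

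Next, for each $z\in D$ sufficiently close to $\de D$, I inscribe in $B_D(z,s)$ a Koranyi-type polydisk $P_z$ centred at $z$, of radii of order $\delta(z)^{1/2}$ in the $n-1$ complex-tangential directions at the closest boundary point $\pi(z)\in\de D$ and of order $\delta(z)$ in the complex-normal direction. The existence of such a $P_z$, with a size constant depending only on $s$ (and on a fixed defining function of $D$), is one of the basic localization facts for strongly pseudoconvex domains, and it is precisely what lies behind the volume estimate of Lemma~\ref{sei}; by construction one has $\nu(P_z)\approx\delta(z)^{n+1}$. For $z$ in a compact subset of $D$, $P_z$ can simply be chosen as a Euclidean ball of a fixed positive radius. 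Applying the classical one-variable submean inequality iteratively to the nonnegative plurisubharmonic function $\chi$ on $P_z$ gives
\[
\chi(z)\le\frac{1}{\nu(P_z)}\int_{P_z}\chi\,d\nu\preceq\frac{1}{\delta(z)^{n+1}}\int_{B_D(z_0,R)}\chi\,d\nu,
\]
where the last step uses $P_z\subset B_D(z,s)\subset B_D(z_0,R)$ together with $\chi\ge 0$.

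To finish, Lemma~\ref{sette} gives $\delta(z)\approx\delta(z_0)$ uniformly for $z\in B_D(z_0,r)$, and Lemma~\ref{sei} gives $\delta(z_0)^{n+1}\approx\nu\bigl(B_D(z_0,r)\bigr)$; combined, these absorb $\delta(z)^{n+1}$ into a constant times $\nu\bigl(B_D(z_0,r)\bigr)$, producing the required $K_r$. The main obstacle is Step 2: producing the polydisk $P_z$ with the two distinct length scales $\delta(z)$ and $\delta(z)^{1/2}$ inside a Kobayashi ball of fixed pseudohyperbolic radius, so that its Euclidean volume actually matches $\nu\bigl(B_D(z,s)\bigr)$ rather than the much smaller $\delta(z)^{2n}$ one would get from a genuine Euclidean ball. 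This rests on the standard boundary asymptotics of $k_D$ on strongly pseudoconvex domains, which are the geometric input already developed in \cite{AS}.
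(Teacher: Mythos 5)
This lemma is imported verbatim from \cite[Corollary 2.8]{AS} and is not reproved in the present paper, and your argument reconstructs the standard proof behind that citation: use the triangle inequality to place $B_D(z,s)$ inside $B_D(z_0,R)$, inscribe in it a McNeal--Koranyi polydisk of Euclidean volume $\approx\delta(z)^{n+1}$, apply the iterated one-variable sub-mean inequality for plurisubharmonic functions on that polydisk, and convert $\delta(z)^{n+1}$ into $\nu\bigl(B_D(z_0,r)\bigr)$ via Lemmas~\ref{sette} and~\ref{sei}. The one ingredient you assert rather than prove---the inscribed anisotropic polydisk with tangential radii $\sim\delta(z)^{1/2}$ and normal radius $\sim\delta(z)$---is exactly the localization fact needed for the lower bound in Lemma~\ref{sei}, so the proposal is correct and follows essentially the same route as the source.
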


We now collect a few facts on the (possibly weighted) $L^p$-norms of the Bergman kernel
and the normalized Bergman kernel. The first result is classical (see, e.g., \cite{Ho1}):

\begin{lemma}
\label{BKbasic}
Let $D\subset\subset\C ^n$ be a bounded strongly
pseudoconvex domain. Then
\[
\|K(\cdot,z_0)\|_2=\sqrt{K(z_0,z_0)}\approx \delta(z_0)^{-(n+1)/2}\qquad
\hbox{and}\qquad \|k_{z_0}\|_2\equiv 1
\]
for all $z_0\in D$.
\end{lemma}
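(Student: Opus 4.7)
The plan splits the claim into two essentially formal identities and one substantial asymptotic. The first identity $\|K(\cdot,z_0)\|_2=\sqrt{K(z_0,z_0)}$ comes straight from the reproducing property of the Bergman kernel: since $K(\cdot,z_0)\in A^2(D)$ and $K(z,w)=\overline{K(w,z)}$, evaluating the reproducing formula on $f=K(\cdot,z_0)$ at $z_0$ gives
\[
K(z_0,z_0)=\int_D K(z_0,w)\,K(w,z_0)\,d\nu(w)=\int_D |K(z_0,w)|^2\,d\nu(w)=\|K(\cdot,z_0)\|_2^2.
\]
The normalization $\|k_{z_0}\|_2\equiv 1$ is then immediate from the definition of $k_{z_0}$.

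The substantive content is thus the equivalence $K(z_0,z_0)\approx \delta(z_0)^{-(n+1)}$, which is the classical Bergman kernel boundary asymptotic for strongly pseudoconvex domains and which I would simply quote from H\"ormander (see~\cite{Ho1}). To prove it from scratch one works with the extremal characterization
\[
K(z_0,z_0)=\sup\{|f(z_0)|^2 : f\in A^2(D),\ \|f\|_2\le 1\}.
\]
For the upper bound, one straightens $\partial D$ locally near a boundary point via a biholomorphic chart and uses a cutoff argument to reduce to a local strongly pseudoconvex model comparable, after rescaling, to a ball; on such a model the evaluation functional at a point of Euclidean distance $\delta$ from the boundary is bounded on $A^2$ by a constant multiple of $\delta^{-(n+1)/2}$, and the cutoff argument transfers this bound to~$D$. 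For the lower bound one constructs a test function concentrated near $z_0$: take a localized holomorphic function (for example, the exponential of a local H\"ormander--Kohn--Nirenberg-type peak function) multiplied by a cutoff, and use H\"ormander's $L^2$ $\overline{\partial}$-estimate with a plurisubharmonic weight adapted to $-\log|\rho|$, where $\rho$ is a smooth defining function, to correct it into a globally holomorphic element of $A^2(D)$ whose value at $z_0$ is still bounded below by a constant multiple of $\delta(z_0)^{-(n+1)/2}$ times its $L^2$-norm. Together these two bounds give the asymptotic uniformly for $z_0$ in a collar neighborhood of~$\partial D$.

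To pass from this boundary estimate to the global equivalence on $D$ one observes that on any compact subset of~$D$ the positive continuous function $z_0\mapsto K(z_0,z_0)$ is bounded above and below, and so is $\delta(z_0)$; hence the two quantities are comparable on compact sets, and combining with the boundary estimate yields the stated $\approx$ throughout~$D$. The main obstacle is, as always for the Bergman kernel on strongly pseudoconvex domains, the sharp construction of localized holomorphic test functions with $L^2$-mass of the right order: the $\overline{\partial}$-cohomology step required for the lower bound is delicate but classical, which is precisely why this lemma is invoked as a known black box rather than proved from first principles in the present paper.
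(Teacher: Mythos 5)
Your proposal is correct and matches the paper's treatment: the paper states this lemma without proof as a classical fact (citing H\"ormander), and your derivation of $\|K(\cdot,z_0)\|_2=\sqrt{K(z_0,z_0)}$ from the reproducing property, the normalization $\|k_{z_0}\|_2\equiv 1$, and the appeal to the classical boundary asymptotic $K(z_0,z_0)\approx\delta(z_0)^{-(n+1)}$ are exactly what is intended. The additional sketch of how the asymptotic is proved is sound but not required here.
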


The next result is the main result of this section, and contains the weighted $L^p$-estimates we shall need:

%

\begin{theorem}
\label{BKp}
Let $D\subset\subset\C ^n$ be a bounded strongly
pseudoconvex domain, and let $z_0\in D$ and $1\le p<\infty$. Then
\begin{equation}
\int_D |K(\zeta,z_0)|^p \delta(\zeta)^{\beta}\,d\nu(\zeta)\preceq 
\begin{cases}
\delta(z_0)^{\beta-(n+1)(p-1)}&\hbox{ for $-1<\beta<(n+1)(p-1)$;}\\
|\log\delta(z_0)|&\hbox{ for $\beta=(n+1)(p-1)$;}\\
1&\hbox{ for $\beta>(n+1)(p-1)$.}
\end{cases}
\label{eqBKp}
\end{equation}
In particular:
\begin{itemize}
\item[(i)] $\|K(\cdot,z_0)\|_{p,\beta}\preceq \delta(z_0)^{\frac{\beta}{p}-\frac{n+1}{q}}$
and $\|k_{z_0}\|_{p,\beta}\preceq \delta(z_0)^{\frac{n+1}{2}+\frac{\beta}{p}-\frac{n+1}{q}}$
when $-1<\beta<(n+1)(p-1)$, where $q>1$ is the conjugate exponent of~$p$ (and $\frac{n+1}{q}=0$ when $p=1$);
\item[(ii)] $\|K(\cdot,z_0)\|_{p,\beta}\preceq 1$
and $\|k_{z_0}\|_{p,\beta}\preceq \delta(z_0)^{\frac{n+1}{2}}$ when 
$\beta>(n+1)(p-1)$;
\item[(iii)] $\|K(\cdot,z_0)\|_{p,(n+1)(p-1)}\preceq \delta(z_0)^{-\eps}$
and $\|k_{z_0}\|_{p,(n+1)(p-1)}\preceq \delta(z_0)^{\frac{n+1}{2}-\eps}$ for all $\eps>0$.
\end{itemize}
Furthermore,
\begin{itemize}
\item[(iv)] $\|K(\cdot,z_0)\|_{\infty,\beta} \approx \delta(z_0)^{\beta-(n+1)}$ and
$\|k_{z_0}\|_{\infty,\beta} \approx \delta(z_0)^{\beta-(n+1)/2}$ for all $0\le\beta<n+1$;
and $\|K(\cdot,z_0)\|_{\infty,\beta} \approx 1$ and
$\|k_{z_0}\|_{\infty,\beta} \approx \delta(z_0)^{(n+1)/2}$ for all $\beta\ge n+1$.
\end{itemize}
\end{theorem}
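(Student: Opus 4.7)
My plan is to establish the integral estimate (\ref{eqBKp}) first, after which the corollaries (i)--(iv) follow with little extra work. The proof rests on the sharp pointwise boundary estimate for the Bergman kernel of a bounded strongly pseudoconvex domain, going back to Fefferman and refined by McNeal: near each boundary point there are holomorphic coordinates $(w',w_n)$ (with $w'$ complex-tangential and $w_n$ the complex-normal coordinate, whose real part points into $D$) in which
\[
|K(\zeta,z_0)|\preceq \bigl(\delta(\zeta)+\delta(z_0)+|\mathrm{Im}(w_n(\zeta)-w_n(z_0))|+|w'(\zeta)-w'(z_0)|^2\bigr)^{-(n+1)}.
\]
I would either cite this from McNeal--Stein \cite{McS} and \v Cu\v ckovi\'c--McNeal \cite{CMc} or briefly recall its derivation from Fefferman's asymptotic expansion.

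Given the pointwise bound, $D$ splits as a compact core (where $|K(\cdot,z_0)|$ is uniformly bounded, giving only a constant contribution) plus a finite collection of boundary patches covered by adapted coordinate charts. On each patch I would integrate in the adapted coordinates using the anisotropic substitution $w'-w'(z_0)=\sqrt{\rho}\,\tilde u$ and $\mathrm{Im}\,(w_n-w_n(z_0))=\rho\,\tilde v$ with $\rho=\delta(\zeta)+\delta(z_0)$: the tangential directions integrate out to a factor $(\delta(\zeta)+\delta(z_0))^{n-p(n+1)}$ times a convergent universal integral (the Jacobian contributing $\rho^{n-1}\cdot\rho$ from the $n-1$ complex-tangential and one real-tangential directions), reducing (\ref{eqBKp}) to the one-dimensional Forelli--Rudin integral
\[
\int_0^{c}\frac{t^{\beta}\,dt}{(t+\delta(z_0))^{p(n+1)-n}}.
\]
The three standard cases of this integral---convergent at infinity, log-divergent, and power-divergent as $c/\delta(z_0)\to\infty$---correspond respectively to $\beta<(n+1)(p-1)$, $\beta=(n+1)(p-1)$, and $\beta>(n+1)(p-1)$, yielding the three bounds in (\ref{eqBKp}).

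The corollaries then follow easily. Parts (i) and (ii) follow by taking $p$-th roots of (\ref{eqBKp}) and converting between $\|K(\cdot,z_0)\|_{p,\beta}$ and $\|k_{z_0}\|_{p,\beta}$ via Lemma~\ref{BKbasic}, which supplies the factor $\delta(z_0)^{(n+1)/2}$. Part (iii) (the critical case $\beta=(n+1)(p-1)$) follows from (\ref{eqBKp}) because $|\log\delta(z_0)|^{1/p}=o(\delta(z_0)^{-\eps})$ as $z_0\to\partial D$ for every $\eps>0$, while everything is trivially bounded on compact subsets. For (iv), the upper bound comes directly from the pointwise kernel estimate: $|K(\zeta,z_0)|\delta(\zeta)^\beta\preceq\delta(\zeta)^{\beta-(n+1)}$, whose supremum over $\zeta\in D$ is $\approx\delta(z_0)^{\beta-(n+1)}$ when $0\le\beta<n+1$ (maximum attained for $\zeta$ comparable to $z_0$) and $\preceq 1$ when $\beta\ge n+1$ (since $\delta(\zeta)^{\beta-(n+1)}$ itself is then bounded on $D$); the matching lower bounds come from evaluating at suitable test points in $D$ together with $K(z_0,z_0)\approx\delta(z_0)^{-(n+1)}$ from Lemma~\ref{BKbasic}.

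The main obstacle is the pointwise kernel estimate of the first step: its derivation requires Fefferman's asymptotic expansion and McNeal's careful analysis of adapted boundary coordinates---classical but technical. Once the pointwise estimate is in hand, the rest is a direct adaptation of the Forelli--Rudin computation on the unit ball.
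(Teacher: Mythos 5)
Your proposal is correct and follows essentially the same route as the paper: the Fefferman--McNeal pointwise bound \eqref{eqKF}, a compact core plus finitely many boundary patches, integration of the tangential variables (the paper does this one variable at a time via an $\Omega_1/\Omega_2$ splitting rather than by a single anisotropic rescaling, but both reduce to the same one-dimensional integral $\int_0^c t^\beta(t+\delta(z_0))^{n-p(n+1)}\,dt$ with the same three-case analysis), and Lemma~\ref{BKbasic} to pass to the normalized kernel. The only imprecision is in (iv), where the quantity whose supremum you take should be $\delta(\zeta)^\beta\bigl(\delta(\zeta)+\delta(z_0)\bigr)^{-(n+1)}$ rather than $\delta(\zeta)^{\beta-(n+1)}$ (the latter has infinite supremum when $\beta<n+1$); your parenthetical about the maximum being attained for $\zeta$ comparable to $z_0$ shows you meant the former, which is exactly the quantity the paper estimates.
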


\begin{proof}
We shall closely follow the argument of \cite[Proposition~3.4]{CMc}.

First of all, Kerzman~\cite{Ke} proved that the Bergman kernel of a bounded 
strongly pseudoconvex domain is smooth outside the boundary diagonal, that is
$K\in C^\infty(\overline{D}\times \overline{D}\setminus\Delta_\de)$, where $\Delta_\de=\{(x,x)\mid x\in\de D\}$. We also recall an (essentially sharp) estimate on the Bergman kernel which follows from Fefferman's expansion \cite{F}. Let $r\colon\C^n\to\R$ be a smooth defining function for~$D$,
that is $D=\{r<0\}$ and $dr\ne 0$ on $\de D$; since $D$ is strongly pseudoconvex, we can 
also assume that the Levi form of~$r$ is positive definite on~$\de D$. Notice that, being
$D$ bounded, we have $|r|\approx \delta$ on~$\overline{D}$. Then (see, e.g., \cite{Mc}) there is $C>0$ such that for each $x\in \de D$ we can find a neighborhood $U$ of~$x$ in~$\C^n$ and
local coordinates $\phe=(\phe_1,\ldots,\phe_n)\colon U\to\C^n$ so that 
\begin{equation}
|K(w,z)|\le C\left(|r(z)|+|r(w)|+|\phe_1(z)-\phe_1(w)|+\sum_{k=2}^n |\phe_k(z)-\phe_k(w)|^2\right)^{-(n+1)}
\label{eqKF}
\end{equation}
for all $z$, $w\in U\cap D$. 

Cover $\de D$ with a finite number $U_1,\ldots,U_m$ of such neighborhoods; we can also
assume that they are so small that the quantity in brackets in the right-hand side of (\ref{eqKF})
is always less than~1. Setting $U_0=D\setminus\bigcup_{j=1}^m U_j$, the smoothness of
the Bergman kernel outside the boundary diagonal implies that for any $p\ge 0$ and $\beta\in\R$ we have
\[
\int_{U_0} |K(\zeta,z_0)|^p\delta(\zeta)^\beta\,d\nu(\zeta)\preceq 1
\]
for all $z_0\in\overline{D}$; we must control the integral on $\bigcup_{j=1}^m U_j\cap D$.

We fix $1\le j\le m$, and we work in the local coordinated defined in~$U_j$ mentioned
above. Since $p\ge 1$ and the quantity in brackets is less than 1, we have
\[
\begin{aligned}
I_j&=\int_{U_j\cap D}|K(w,z)|^p\delta(w)^{\beta}\,d\nu(w)\cr
&\preceq \int_{\phe(U_j\cap D)}\left(|r(z)|+|r(w)|+|z_1-w_1|+\sum_{k=2}^n|z_k-w_k|^2\right)^{-p(n+1)}|r(w)|^{\beta}\,d\nu(w)\;.
\end{aligned}
\]
We change again coordinates, putting $\tilde w_k=w_k-z_k$ for $k=2,\ldots,n$, and $\tilde w_1=r(w)+i\textrm{Im}(w_1-z_1)$; we also put
$x=\textrm{Re}\,\tilde w_1$ and $y=\textrm{Im}\,\tilde w_1=\textrm{Im}\,(w_1-z_1)$. Then, since
$|z_1-w_1|\ge|y|$, we get
\[
I_j\preceq\int_W\left(|r(z)|+|x|+|y|+\sum_{k=2}^n |\tilde w_k|^2\right)^{-p(n+1)}|x|^\beta\,
d\tilde w_2\cdots d\tilde w_n\,dx\,dy\;,
\]
where $W=[0,d]\times\R\times\C^{n-1}$, and $d=\max_{w\in D}|r(w)|$.

Let us first perform the integration on $\tilde w_2$. Put
\[
\begin{aligned}
\Omega_1=\left\{\tilde w_2\in\C\,\Big|\, |\tilde w_2|^2>|r(z)|+|x|+|y|+\sum_{k=3}^n|\tilde w_k|^2\right\}\;,\\
\Omega_2=\left\{\tilde w_2\in\C\,\Big|\, |\tilde w_2|^2<|r(z)|+|x|+|y|+\sum_{k=3}^n|\tilde w_k|^2\right\}\;.
\end{aligned}
\]
Using polar coordinates in~$\Omega_1$ we get
\[
\begin{aligned}
\int_{\Omega_1}\left(|r(z)|+|x|+|y|+\sum_{k=2}^n |\tilde w_k|^2\right)^{-p(n+1)}\!\!\!\!\!|x|^\beta\,
d\tilde w_2&\le \int_{\Omega_1}(|\tilde w_2|^2)^{-p(n+1)}|x|^\beta\,d\tilde w_2\\
&\preceq\int_L^{+\infty} R^{-2p(n+1)}R|x|^\beta\,dR\\
&\preceq \left(|r(z)|+|x|+|y|+\sum_{k=3}^n|\tilde w_k|^2\right)^{-p(n+1)+1}\!\!\!\!|x|^\beta\;,
\end{aligned}
\]
where $L=\left(|r(z)|+|x|+|y|+\sum_{k=3}^n|\tilde w_k|^2\right)^{1/2}$. On~$\Omega_2$ we obtain the same upper bound just by a direct estimation:
\[
\begin{aligned}
\int_{\Omega_2}\left(|r(z)|+|x|+|y|+\sum_{k=2}^n |\tilde w_k|^2\right)^{-p(n+1)}&\!\!\!\!|x|^\beta\,
d\tilde w_2\cr
&\le \left(|r(z)|+|x|+|y|+\sum_{k=3}^n|\tilde w_k|^2\right)^{-p(n+1)}\!\!\!\!\!\!\!\!|x|^\beta
\textrm{Area}(\Omega_2)\\
&\preceq \left(|r(z)|+|x|+|y|+\sum_{k=3}^n|\tilde w_k|^2\right)^{-p(n+1)+1}|x|^\beta\;.
\end{aligned}
\]
We can do the same kind of computations on $\tilde w_3,\ldots,\tilde w_n$, reducing the negative
power by one at each step, until we obtain
\[
I_j\preceq \int_{[0,d]\times\R}\left(|r(z)|+|x|+|y|\right)^{-p(n+1)+n-1}|x|^\beta\,dx\,dy\;.
\]
Since $p\ge 1$, we have $-p(n+1)+n-1\le-2$; so we can perform once again the same
kind of integration on~$y$, obtaining
\[
I_j\preceq \int_0^d\bigl(|r(z)|+|x|\bigr)^{-p(n+1)+n}|x|^\beta\,dx
=|r(z)|^{\beta-(n+1)(p-1)}\int_0^{d/|r(z)|}\frac{t^\beta}{(1+t)^{p(n+1)-n}}\,dt\;.
\]
If $\beta>(n+1)(p-1)$ we have
\[
I_j\preceq |r(z)|^{\beta-(n+1)(p-1)}\int_0^{d/|r(z)|}t^{\beta-p(n+1)+n}\,dt=
\frac{d^{\beta-(n+1)(p-1)}}{\beta-(n+1)(p-1)}\preceq 1\;. 
\]
If instead $-1<\beta\le(n+1)(p-1)$ we can estimate as follows
\[
\begin{aligned}
|r(z)|^{\beta-(n+1)(p-1)}&\int_0^{d/|r(z)|}\frac{t^\beta}{(1+t)^{p(n+1)-n}}\,dt\cr
&\le|r(z)|^{\beta-(n+1)(p-1)}\left[\int_0^1\frac{t^\beta}{(1+t)^{p(n+1)-n}}\,dt
+\int_1^{d/|r(z)|}t^{\beta-p(n+1)+n}\,dt\right]\;.
\end{aligned}
\]
The first integral in square brackets is just a (finite because $\beta>-1$) constant. If $\beta=(n+1)(p-1)$ the second 
integral is $|\log|r(z)||+\log d$, and thus
\[
I_j\preceq \bigl|\log|r(z)|\bigr|\;.
\]
If $-1<\beta<(n+1)(p-1)$ the second integral is of the form $c_1-c_2|r(z)|^{-\beta+(n+1)(p-1)}$ for
suitable constants $c_1$, $c_2>0$, and thus we get
\[
I_j\preceq |r(z)|^{\beta-(n+1)(p-1)}\;.
\]
%
%
So we obtained the desired bound on $I_j$ as soon as $z\in U_j\cap D$. But if $z\notin U_j\cap D$
we have $|K(z,w)|\preceq 1$ for $w\in U_j\cap D$, and thus $I_j\preceq 1$ in this case.
Putting all together, we have proved (\ref{eqBKp}), and the rest of the statements (i)--(iii)
follows immediately recalling that $|k_{z_0}|\preceq \delta^{\frac{n+1}{2}}|K(\cdot,z_0)|$
and using Lemma~\ref{BKbasic}.

Finally, Kerzman's result~\cite{Ke} and \eqref{eqKF} yield
\[
|K(w,z_0)|\delta(w)^\beta\preceq\frac{|r(w)|^\beta}{\bigl(|r(z_0)|+|r(w)|\bigr)^{n+1}}\;.
\]
The supremum (in $w$) of the latter quantity is bounded by a constant
times $|r(z_0)|^{\beta-(n+1)}$ when $0\le\beta<n+1$, and is bounded by a constant
independent of~$z_0$ when $\beta\ge n+1$; recalling that (Lemma~\ref{BKbasic})  
$K(z_0,z_0)\approx\delta(z_0)^{-(n+1)}$ we obtain (iv).
\end{proof}

Another fact that shall be useful later on is:

\begin{lemma}
\label{BK}
Let $D\subset\subset\C^n$ be a bounded strongly pseudoconvex domain. Then
$\delta(z_0)^\beta k_{z_0}\to 0$ uniformly on compact subsets as $z_0\to\de D$
for all $\beta>-(n+1)/2$.
\end{lemma}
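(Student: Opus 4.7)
The plan is to reduce the claim to a pointwise estimate that gains a positive power of $\delta(z_0)$, leveraging Kerzman's regularity of the Bergman kernel off the boundary diagonal together with the size estimate for $K(z_0,z_0)$ from Lemma~\ref{BKbasic}.

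First, I would rewrite
\[
\delta(z_0)^\beta k_{z_0}(z) = \frac{\delta(z_0)^\beta}{\sqrt{K(z_0,z_0)}} K(z,z_0),
\]
and use Lemma~\ref{BKbasic}, which gives $\sqrt{K(z_0,z_0)} \approx \delta(z_0)^{-(n+1)/2}$, to conclude that
\[
\bigl|\delta(z_0)^\beta k_{z_0}(z)\bigr| \preceq \delta(z_0)^{\beta+\frac{n+1}{2}} |K(z,z_0)|.
\]
Since $\beta>-(n+1)/2$, the exponent $\beta+(n+1)/2$ is strictly positive, so $\delta(z_0)^{\beta+(n+1)/2}\to 0$ as $z_0\to\de D$. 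Thus it suffices to show that for any fixed compact $E\subset D$ the kernel $K(z,z_0)$ stays bounded as $(z,z_0)$ ranges over $E\times D$ (or $E\times\overline{D}$).

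This is where Kerzman's theorem, already invoked in the proof of Theorem~\ref{BKp}, enters: $K\in C^\infty(\overline{D}\times\overline{D}\setminus\Delta_\de)$, where $\Delta_\de=\{(x,x)\mid x\in\de D\}$. Given a compact $E\subset D$, every point of $E$ satisfies $\delta\ge\eta$ for some $\eta=\eta(E)>0$, hence $E\times\overline{D}$ is a compact subset of $\overline{D}\times\overline{D}$ that is disjoint from $\Delta_\de$ (distance at least $\eta$). On this compact set $K$ is continuous, hence bounded by some constant $M_E$, i.e.\ $|K(z,z_0)|\le M_E$ for all $z\in E$ and all $z_0\in\overline{D}$.

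Combining the two facts yields
\[
\sup_{z\in E}\bigl|\delta(z_0)^\beta k_{z_0}(z)\bigr|\preceq M_E\,\delta(z_0)^{\beta+\frac{n+1}{2}} \xrightarrow[z_0\to\de D]{} 0,
\]
which is exactly the uniform convergence on compact subsets that we wanted. The proof is essentially bookkeeping; the only slightly delicate point is the uniform boundedness of $|K(z,z_0)|$ on $E\times\overline{D}$, but this is a direct consequence of Kerzman's off-diagonal smoothness of the Bergman kernel together with the compactness of $E\times\overline{D}$ in $\overline{D}\times\overline{D}\setminus\Delta_\de$.
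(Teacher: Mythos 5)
Your proof is correct and follows essentially the same route as the paper's: both use Kerzman's off-diagonal regularity to bound $|K(z,z_0)|$ uniformly for $z$ in a compact set and $z_0\in D$, combine it with $\sqrt{K(z_0,z_0)}\approx\delta(z_0)^{-(n+1)/2}$ from Lemma~\ref{BKbasic}, and conclude from the positivity of the exponent $\beta+\frac{n+1}{2}$.
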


\begin{proof}
The already quoted result by
Kerzman \cite[Theorem 2]{Ke} of continuous extendibility of the Bergman kernel outside the boundary diagonal implies
that for every compact subset $D_0\subset\subset D$ we have
\[
\sup_{w\in D, w_0\in D_0}|K(w_0,w)| <+\infty\;.
\]
On the other hand, Lemma~\ref{BKbasic} yields
\[
|k_{z_0}(z)|\preceq\delta(z_0)^{(n+1)/2}|K(z,z_0)|\;.
\]
Therefore for every compact subset $D_0\subset\subset D$ we can find $C_{D_0}>0$ such that
\[
|k_{z_0}(z)|\le C_{D_0}\delta(z_0)^{(n+1)/2}
\]
for all $z\in D_0$ and $z_0\in D$, and we are done. 
\end{proof}

We also recall another result from~\cite{AS}, providing an estimate from below of the Bergman kernel 
on Kobayashi balls:

\begin{lemma}[\textbf{\cite[Lemma~3.2 and Corollary 3.3]{AS}}]  
\label{piu}
Let $D\subset\subset\C ^n$ be a bounded strongly
pseudoconvex domain. Then for every $r\in(0,1)$ there exist $c_r>0$ and 
$\delta_r>0$ such that 
if $z_0\in D$ satisfies $\delta(z_0)<\delta_r$ then
\[
\forall{z\in B_D(z_0,r)}\qquad \min\{|K(z,z_0)|, |k_{z_0}(z)|^2\}\ge\frac{c_r}{\delta(z_0)^{n+1}}\;.
\]
\end{lemma}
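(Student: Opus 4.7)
The plan is to reduce the lemma to a pointwise lower bound for $|K(z,z_0)|$ alone and then obtain that bound from Fefferman's boundary expansion of the Bergman kernel, invoking the known Euclidean shape of Kobayashi balls near the boundary. The reduction is immediate: Lemma~\ref{BKbasic} gives $K(z_0,z_0)\approx\delta(z_0)^{-(n+1)}$, so from $|k_{z_0}(z)|^2=|K(z,z_0)|^2/K(z_0,z_0)$ the two inequalities $|k_{z_0}(z)|^2\ge c_r/\delta(z_0)^{n+1}$ and $|K(z,z_0)|\ge c_r'/\delta(z_0)^{n+1}$ are equivalent up to an adjustment of the constant. Thus it suffices to prove $|K(z,z_0)|\ge c_r'/\delta(z_0)^{n+1}$ uniformly for $z\in B_D(z_0,r)$ when $\delta(z_0)$ is small enough.

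For the lower bound on $|K(z,z_0)|$ I would invoke Fefferman's expansion: locally near any boundary point
\[
K(z,w)=\frac{\phe(z,\bar w)}{\psi(z,\bar w)^{n+1}}+\chi(z,\bar w)\log\psi(z,\bar w),
\]
with $\phe,\chi,\psi$ smooth, $\psi(z,\bar z)\approx\delta(z)$, and $\phe(p,\bar p)>0$ for all $p\in\de D$. In the local coordinates used in the proof of Theorem~\ref{BKp}, $\psi(z,\bar w)$ may be taken comparable to the quantity $|r(z)|+|r(w)|+|\phe_1(z)-\phe_1(w)|+\sum_{k\ge 2}|\phe_k(z)-\phe_k(w)|^2$ appearing in~\eqref{eqKF}. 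The bound $\delta(z_0)\preceq\psi(z,\bar z_0)$ is trivial from $|r(z_0)|\approx\delta(z_0)$; the crucial step is the matching upper bound $\psi(z,\bar z_0)\preceq\delta(z_0)$ uniformly on $B_D(z_0,r)$. Once $\psi(z,\bar z_0)\approx\delta(z_0)$ is established, the leading term in the expansion has magnitude $\approx\delta(z_0)^{-(n+1)}$ while the logarithmic correction is of strictly lower order, so for $\delta_r$ sufficiently small the leading term dominates and yields the required lower bound on $|K(z,z_0)|$.

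The main obstacle is thus the upper bound $\psi(z,\bar z_0)\preceq\delta(z_0)$ on $B_D(z_0,r)$, which is a geometric statement about the Euclidean size of Kobayashi balls. The fact to be used is that in a strongly pseudoconvex domain, $B_D(z_0,r)$ is comparable, in the adapted coordinates at the nearest boundary point, to an anisotropic ``box'' of sides $\delta(z_0)$ in the complex and imaginary normal directions and $\delta(z_0)^{1/2}$ in the remaining $n-1$ complex tangential directions. Substituting such a box into the expression for $\psi$ yields $|\phe_1(z)-\phe_1(z_0)|\preceq\delta(z_0)$ and $|\phe_k(z)-\phe_k(z_0)|^2\preceq\delta(z_0)$ for $k\ge 2$, together with $|r(z)|\preceq\delta(z_0)$ from Lemma~\ref{sette}, whence $\psi(z,\bar z_0)\preceq\delta(z_0)$ as needed. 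The anisotropic description of Kobayashi balls in turn rests on the sharp boundary asymptotics of the Kobayashi metric in strongly pseudoconvex domains (Graham), the preliminary material developed in the authors' earlier paper~\cite{AS}.
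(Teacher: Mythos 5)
This lemma is not proved in the present paper at all---it is quoted verbatim from \cite{AS} (Lemma~3.2 and Corollary~3.3 there)---and your reconstruction follows essentially the same route as the argument in that reference: reduce to a lower bound on $|K(z,z_0)|$ via $K(z_0,z_0)\approx\delta(z_0)^{-(n+1)}$, then extract that bound from the Fefferman--Boutet de Monvel--Sj\"ostrand expansion once the anisotropic localization of Kobayashi balls yields $|\psi(z,\bar z_0)|\approx\delta(z_0)$ uniformly on $B_D(z_0,r)$. Your outline is correct as stated, with the understood caveat that the box containment you invoke (in particular the $\delta(z_0)$-size control in the imaginary-normal direction, which is what makes $|\phe_1(z)-\phe_1(z_0)|\preceq\delta(z_0)$ rather than merely $\preceq\delta(z_0)^{1/2}$) is precisely the nontrivial geometric input supplied by the Graham-type metric estimates developed in \cite{AS}.
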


We end this section with an easy (but sometimes useful) lemma:

\begin{lemma}
\label{compwn}
Let $D\subset\subset\C ^n$ be a bounded domain, $p\in[1,+\infty]$ and $\beta_1\le\beta_2$. Then
$A^p(D,\beta_1)\hookrightarrow A^p(D,\beta_2)$ continuously, that is
\[
\|\cdot\|_{p,\beta_2}\preceq \|\cdot\|_{p,\beta_1}\;.
\]
\end{lemma}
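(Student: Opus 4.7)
The plan is to exploit boundedness of $D$: since $D$ is bounded, $\delta$ is bounded above by some constant $M>0$ (for instance the diameter of $D$), so $\delta$ takes values in a bounded interval on which the map $t\mapsto t^{\beta_2-\beta_1}$ is controlled.

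First I would handle $1\le p<\infty$. Writing $\delta^{\beta_2}=\delta^{\beta_1}\cdot\delta^{\beta_2-\beta_1}$ and using $\beta_2-\beta_1\ge 0$ together with $\delta\le M$, I get the pointwise bound
\[
\delta(\zeta)^{\beta_2}\le M^{\beta_2-\beta_1}\,\delta(\zeta)^{\beta_1}\qquad\text{for all }\zeta\in D.
\]
Multiplying by $|f(\zeta)|^p$ for $f\in A^p(D,\beta_1)$, integrating over $D$, and taking $p$-th roots yields
\[
\|f\|_{p,\beta_2}\le M^{(\beta_2-\beta_1)/p}\,\|f\|_{p,\beta_1},
\]
which gives holomorphy together with finite $\|\cdot\|_{p,\beta_2}$-norm, hence the continuous inclusion.

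For $p=\infty$ the same pointwise estimate applied to $|f(\zeta)|\delta(\zeta)^{\beta_2}=|f(\zeta)|\delta(\zeta)^{\beta_1}\delta(\zeta)^{\beta_2-\beta_1}$ and taking the supremum gives
\[
\|f\|_{\infty,\beta_2}\le M^{\beta_2-\beta_1}\,\|f\|_{\infty,\beta_1}.
\]
There is no real obstacle: the only point worth noticing is that the argument uses only $\beta_2-\beta_1\ge 0$ and the upper bound on $\delta$, with no hypothesis on the sign of $\beta_1$ or $\beta_2$ individually (the constant $M^{\beta_2-\beta_1}$ depends on $D$ and on $\beta_2-\beta_1$ but not on~$f$).
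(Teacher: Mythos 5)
Your proof is correct and follows essentially the same route as the paper: a pointwise comparison of the weights using $\beta_2-\beta_1\ge 0$ and the boundedness of $\delta$. The only (immaterial) difference is that the paper splits $D$ into $\{\delta\ge 1\}$ and $\{\delta<1\}$ and ends up with the constant $\max\bigl(1,M^{\beta_2-\beta_1}\bigr)$, whereas your direct use of the monotonicity of $t\mapsto t^{\beta_2-\beta_1}$ on $(0,M]$ gives the bound in one step.
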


\begin{proof}
Put $D_1=\{z\in D\mid \delta(z)\ge 1\}$ and $D_0=D\setminus D_1$. Assume $p<+\infty$, and take $f\in A^p(D,\beta_1)$. Then
\[
\begin{aligned}
\int_D |f|^p\delta^{\beta_2}\,d\nu&=\int_{D_0}|f|^p\delta^{\beta_2-\beta_1}\delta^{\beta_1}\,d\nu
+\int_{D_1}|f|^p\delta^{\beta_2-\beta_1}\delta^{\beta_1}\,d\nu\\
&\le \int_{D_0} |f|^p\delta^{\beta_1}\,d\nu+M^{\beta_2-\beta_1}\int_{D_1}|f|^p\delta^{\beta_1}\,d\nu\\
&\le\max(1,M^{\beta_2-\beta_1})\int_D |f|^p\delta^{\beta_1}\,d\nu
\end{aligned}
\]
where $M=\max_{z\in D}\delta(z)<+\infty$, and we are done in this case.

If $p=+\infty$ we instead have
\[
\|f\|_{\infty,\beta_2}=\|f\delta^{\beta_2}\|_\infty\le M^{\beta_2-\beta_1}\|f\delta^{\beta_1}\|_\infty=
M^{\beta_2-\beta_1}\|f\|_{\infty,\beta_1}\;,
\]
as claimed.
\end{proof}

\section{$\theta$-Carleson measures}

In this section we shall characterize Carleson measures for weighted Bergman spaces following ideas introduced in~\cite{AS} and~\cite{Ka}. Let us begin with:

\begin{definition}
\label{thetaCarl}
Let $D\subset\subset\C ^n$ be a bounded domain, $\beta$,~$\theta\in\R$ and $p\ge 1$. A (analytic)
\emph{Carleson measure} of~$A^p(D,\beta)$ is a finite positive Borel measure on~$D$ such that there is a continuous inclusion $A^p(D,\beta)\hookrightarrow L^p(\mu)$, that is there exists a constant $C>0$ such that
\[
\forall f\in A^p(D,\beta)\qquad\quad \int_D |f|^p\,d\mu\le C\|f\|^p_{p,\beta}\;.
\]
On the other hand, a (geometric) \emph{$\theta$-Carleson measure} is a finite positive Borel measure on~$D$ such that 
\[
\mu\bigl(B_D(\cdot,r)\bigr)\preceq \nu\bigl(B_D(\cdot,r)\bigr)^\theta
\]
for all $r\in(0,1)$, where the constant might depend on~$r$.
\end{definition}

\begin{remark}
A $1$-Carleson measure is just an usual (geometric) Carleson measure, and thus (by~\cite{AS})
a Carleson measure of all~$A^p(D)$ if $D$ is strongly pseudoconvex. 
Furthermore, every finite measure clearly is $\theta$-Carleson for any $\theta\le 0$. 
\end{remark}

At the end of this section we shall give examples of $\theta$-Carleson measures (see Example~\ref{es:deltaeta} and Theorem~\ref{dtre}); but first we
shall prove that $\theta$-Carleson measures and Carleson measures of weighted Bergman spaces are one and the same thing:

\begin{theorem}
\label{carthetaCarluno}
Let $D\subset\subset\C ^n$ be a bounded strongly
pseudoconvex domain, and choose $1-\frac{1}{n+1}<\theta<2$. Then the following assertions are
equivalent:
\begin{itemize}
\item[(i)] $\mu$ is a Carleson measure of all $A^p\bigl(D,(n+1)(\theta-1)\bigr)$, that is $A^p\bigl(D,(n+1)(\theta-1)\bigr)\hookrightarrow L^p(\mu)$ continuously for all
$p\in[1,+\infty)$;
\item[(ii)] there exists $p\in[1,+\infty)$ such that $\mu$ is a Carleson measure of $A^p\bigl(D,(n+1)(\theta-1)\bigr)$;
\item[(iii)] $\mu$ is $\theta$-Carleson;
\item[(iv)] there exists $r_0\in(0,1)$ such that $\mu\bigl(B_D(\cdot,r_0)\bigr)\preceq \nu\bigl(B_D(\cdot,r_0)\bigr)^\theta$;
\item[(v)] for every $r\in(0,1)$ and for every $r$-lattice $\{a_k\}$ in $D$ one has
\[
\mu\bigl(B_D(a_k,r)\bigr)\preceq \nu\bigl(B_D(a_k,r)\bigr)^\theta\;;
\]
\item[(vi)] there exists $r_0\in(0,1)$ and a $r_0$-lattice $\{a_k\}$ in $D$ such that
\[
\mu\bigl(B_D(a_k,r_0)\bigr)\preceq\nu\bigl(B_D(a_k,r_0)\bigr)^\theta\;.
\]
\end{itemize}
\end{theorem}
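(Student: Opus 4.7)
My plan is to close the cycle (i)$\Rightarrow$(ii)$\Rightarrow$(iv)$\Rightarrow$(iii)$\Rightarrow$(i), with the lattice versions (v), (vi) folded in via the same geometric comparisons. The implication (i)$\Rightarrow$(ii) is trivial, and the geometric equivalences (iii)$\Leftrightarrow$(iv)$\Leftrightarrow$(v)$\Leftrightarrow$(vi) rest only on Lemmas~\ref{sei}, \ref{sette} and~\ref{uno}: the first two give $\nu\bigl(B_D(z,r)\bigr) \approx \delta(z)^{n+1}$ and $\delta(z) \approx \delta(z_0)$ on $B_D(z_0,r)$, so Kobayashi balls of different fixed radii centered at comparable points are mutually comparable in both volume and $\delta$-weight. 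Combined with the finite-overlap covering of Lemma~\ref{uno}, one checks that each $B_D(z_0,r)$ is covered by a uniformly bounded number of lattice balls $B_D(a_k,r_0)$ with $\nu\bigl(B_D(a_k,r_0)\bigr) \approx \nu\bigl(B_D(z_0,r)\bigr)$, and conversely; substituting into the defining inequality then transfers the $\theta$-Carleson condition between the different formulations.

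For (iv)$\Rightarrow$(i), I fix an $r$-lattice $\{a_k\}$ with $R = \frac{1}{2}(1+r)$ and estimate
\[
\int_D |f|^p\,d\mu \le \sum_k \mu\bigl(B_D(a_k,r)\bigr)\,\sup_{B_D(a_k,r)}|f|^p.
\]
Lemma~\ref{due} bounds the supremum by $\nu\bigl(B_D(a_k,r)\bigr)^{-1}\int_{B_D(a_k,R)}|f|^p\,d\nu$ up to a constant; the hypothesis combined with Lemma~\ref{sei} turns $\mu\bigl(B_D(a_k,r)\bigr)/\nu\bigl(B_D(a_k,r)\bigr)$ into $\nu\bigl(B_D(a_k,r)\bigr)^{\theta-1} \approx \delta(a_k)^{(n+1)(\theta-1)}$; and Lemma~\ref{sette} lets me replace $\delta(a_k)$ by $\delta(z)$ on $B_D(a_k,R)$, the sign of the exponent being absorbed by a constant. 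Summing and using the finite-overlap property of $\{B_D(a_k,R)\}$ yields $\int_D |f|^p\,d\mu \preceq \|f\|_{p,(n+1)(\theta-1)}^p$, as required.

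The crux is (ii)$\Rightarrow$(iv). I plan to test the Carleson inequality on $f_m(z) = K(z,z_0)^m$ for a small integer $m \ge 1$ to be chosen. On $B_D(z_0,r)$, Lemma~\ref{piu} gives $|f_m(z)|^p \ge c_r\,\delta(z_0)^{-(n+1)pm}$ as soon as $\delta(z_0) < \delta_r$, so
\[
\delta(z_0)^{-(n+1)pm}\,\mu\bigl(B_D(z_0,r)\bigr) \preceq \int_D |f_m|^p\,d\mu \preceq \|f_m\|_{p,(n+1)(\theta-1)}^p.
\]
Theorem~\ref{BKp}(i), applied with exponent $pm$ and weight $\beta = (n+1)(\theta-1)$, yields $\|f_m\|_{p,(n+1)(\theta-1)}^p \preceq \delta(z_0)^{\beta - (n+1)(pm - 1)}$ provided $-1 < \beta < (n+1)(pm - 1)$; rearranging then gives $\mu\bigl(B_D(z_0,r)\bigr) \preceq \delta(z_0)^{(n+1)\theta} \approx \nu\bigl(B_D(z_0,r)\bigr)^\theta$. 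For $\delta(z_0)$ bounded below, the bound is automatic from finiteness of $\mu$ and boundedness of $\delta$.

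The main obstacle is precisely the choice of $m$ and the verification that the admissible range of $\theta$ matches the hypothesis. The condition $\beta > -1$ becomes $\theta > 1 - \frac{1}{n+1}$, which is the lower bound in the statement; the condition $\beta < (n+1)(pm-1)$ becomes $\theta < pm$. For $p > 1$ the choice $m = 1$ works whenever $\theta < p$, while in the delicate case $p = 1$ I am forced to take $m = 2$ and the constraint reads $\theta < 2$, matching exactly the stated upper bound and explaining the shape of the hypothesis.
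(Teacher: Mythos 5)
Your proposal follows essentially the same route as the paper: the hard implication is handled by testing the Carleson inequality on a power of the Bergman kernel, using Lemma~\ref{piu} for the pointwise lower bound on the Kobayashi ball and Theorem~\ref{BKp} for the upper bound on the weighted norm, while the converse direction is the standard lattice/submean-value argument via Lemmas~\ref{uno}, \ref{due}, \ref{sei} and~\ref{sette}. The one point to fix is your choice of $m$: the paper takes your $m=2$ uniformly (its test function is $k_{z_0}^2$, i.e.\ $K(\cdot,z_0)^2$ normalized by $K(z_0,z_0)$), and you should too, because your remark that ``$m=1$ works for $p>1$'' only covers $\theta<p$ and therefore leaves out the range $p\le\theta<2$ when $1<p<2$; with $m=2$ the constraint $\theta<pm=2p$ is satisfied for every $p\ge 1$ since $\theta<2\le 2p$, so the omission is purely in the bookkeeping and your own framework already closes it. The only genuinely different organizational choice is that you propose proving (iv)$\Rightarrow$(iii) and (vi)$\Rightarrow$(v) directly by a finite-covering argument (a Kobayashi ball of radius $r$ meets only boundedly many lattice balls of comparable volume and comparable $\delta$), whereas the paper never proves these geometric implications separately: it closes the loop through the analytic condition via (vi)$\Rightarrow$(i)$\Rightarrow$(ii)$\Rightarrow$(iii). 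Your covering argument is correct but needs the volume-counting step (finite overlap of the enlarged balls $B_D(a_k,R_0)$ plus Lemmas~\ref{sei} and~\ref{sette}) spelled out, and it buys nothing extra here since the analytic detour already yields all the equivalences.
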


\begin{proof}
(i)$\Longrightarrow$(ii). Obvious.
\smallbreak

(ii)$\Longrightarrow$(iii). 
Fix $r\in(0,1)$, and let $\delta_r>0$ and $c_r>0$ be given by 
Lemma~\ref{piu}. We must prove that $\mu\bigl(B_D(z_0,r)\bigr)\le C\nu\bigl(B_D(z_0,r)\bigr)^\theta$ for all $z_0\in D$, where $C>0$ is a suitable constant independent of~$z_0$. Thanks to Lemma~\ref{sei}, it suffices to prove this statement when $\delta(z_0)<\delta_r$. 

Lemmas~\ref{piu} and~\ref{BKbasic} yield
\[
\begin{aligned}
\frac{c^p_r}{\delta(z_0)^{(n+1)p}}\mu\bigl(B_D(z_0,r)\bigr)&\le
\int_{B_D(z_0,r)}|k_{z_0}(\zeta)|^{2p}\,d\mu(\zeta) \le \int_D|k_{z_0}(\zeta)|^{2p}\,d\mu(\zeta)\\
&\preceq \int_D |k_{z_0}(\zeta)|^{2p} \delta(\zeta)^{(n+1)(\theta-1)}\,d\nu(\zeta)\\
&\preceq \delta(z_0)^{(n+1)p}\int_D|K(\zeta,z_0)|^{2p}\delta(\zeta)^{(n+1)(\theta-1)}\,d\nu(\zeta)\;.
\end{aligned}
\]
Since $-1< (n+1)(\theta-1)< n+1\le(n+1)(2p-1)$, we can apply
Theorem~\ref{BKp} obtaining
\[
\frac{c^p_r}{\delta(z_0)^{(n+1)p}}\mu\bigl(B_D(z_0,r)\bigr)\preceq \delta(z_0)^{(n+1)(\theta-p)}\;.
\]
Therefore
\[
\mu\bigl(B_D(z_0,r)\bigr)\preceq \delta(z_0)^{(n+1)\theta}\preceq \nu\bigl(B_D(z_0,r)\bigr)^\theta\;,
\]
where we used Lemma~\ref{sei}.
\smallbreak

(iii)$\Longrightarrow$(iv)$\Longrightarrow$(vi) and (iii)$\Longrightarrow$(v)$\Longrightarrow$(vi). Obvious.

\smallbreak
(vi)$\Longrightarrow$(i). 
Fix $p\in[1,+\infty)$, and take $f\in A^p\bigl(D,(n+1)(\theta-1)\bigr)$. Clearly we have
\[
\int_D |f(z)|^p\,d\mu(z)\le\sum_{k=0}^\infty \int_{B_D(a_k,r_0)}|f(z)|^p\,d\mu(z)\;.
\]
Now, Lemma~\ref{due} gives a $K>0$ depending only on~$r_0$ (and $D$) such that
\[
\begin{aligned}
\int_{B_D(a_k,r_0)}|f(z)|^p\,d\mu(z)&\le \frac{K}{\nu\bigl(B_D(a_k,r_0)\bigr)}\int_{B_D(a_k,r_0)}
\left[\int_{B_D(a_k,R_0)}|f(\zeta)|^p\,d\nu(\zeta)\right]d\mu(z)\\
&=K\,\frac{\mu\bigl(B_D(a_k,r_0)\bigr)}{\nu\bigl(B_D(a_k,r_0)\bigr)}\int_{B_D(a_k,R_0)}
|f(\zeta)|^p\,d\nu(\zeta)\\
&\preceq \nu\bigl(B_D(a_k,r_0)\bigr)^{\theta-1}\int_{B_D(a_k,R_0)}|f(\zeta)|^p\,d\nu(\zeta)\;,
\end{aligned}
\]
where $R_0=\frac{1}{2}(1+r_0)$. 
Now, Lemma~\ref{sei} yields $\nu\bigl(B_D(\cdot,r_0)\bigr)^{\theta-1}\preceq \delta^{(n+1)(\theta-1)}$
both when $\theta-1\ge 0$ and when $\theta-1\le 0$; therefore recalling Lemma~\ref{sette} we get
\[
\begin{aligned}
\int_{B_D(a_k,r_0)}|f(z)|^p\,d\mu(z)&\preceq \delta(a_k)^{(n+1)(\theta-1)}
\int_{B_D(a_k,R_0)}|f(\zeta)|^p\,d\nu(\zeta)\\
&\preceq \int_{B_D(a_k,R_0)}|f(\zeta)|^p\delta(\zeta)^{(n+1)(\theta-1)}\,d\nu(\zeta)\;.
\end{aligned}
\]
Summing on $k$ and recalling that, by definition of $r_0$-lattice, there is $m\in\N$ such that every point of~$D$ is contained in at most $m$ balls of the
form $B_D(a_k,R_0)$ we obtain
\[
\|f\|_{L^p(\mu)}^p=\int_D |f(z)|^p\,d\mu(z)\preceq \int_D |f(\zeta)|^p \delta(\zeta)^{(n+1)(\theta-1)}\,d\nu(\zeta)
=\|f\|^p_{p,(n+1)(\theta-1)}\;,
\]
as claimed. 
\end{proof}

%
%

\begin{remark}
\label{rem:uno}
The proof shows that the chains of implications
(iii)$\Longrightarrow$(iv)$\Longrightarrow$(vi)$\Longrightarrow$(i)$\Longrightarrow$(ii) 
and (iii)$\Longrightarrow$(v)$\Longrightarrow$(vi)$\Longrightarrow$(i)$\Longrightarrow$(ii) hold for all $\theta\in\R$,
and that the implication (ii)$\Longrightarrow$(iii) holds for $1-\frac{1}{n+1}<\theta<2p$. 
When $\theta>2p$ condition (ii) just implies that $\mu$ is $2p$-Carleson, and when $\theta=2p$ condition (ii) implies that $\mu$ is $(2p-\eps)$-Carleson for all $\eps>0$.
Furthermore, the proof shows that the norm of the 
inclusion in (i) is bounded by a constant independent of $p$, and also of $\theta$ if the latter 
is restricted to vary in a compact interval. Finally, the proof is somewhat new even
for $\theta=1$, because it does not depend on \cite{CM}.
\end{remark}

\begin{remark}
\label{rem:psh}
The proof of the implication (vi)$\Longrightarrow$(i), recalling Lemma~\ref{uno}, shows that \emph{if $\mu$ is $\theta$-Carleson then we have
\[
\int_D \chi(z)\,d\mu(z)\preceq \int_D \chi(\zeta)\delta(\zeta)^{(n+1)(\theta-1)}\,d\nu(\zeta)
\]
for all nonnegative plurisubharmonic functions $\chi\colon D\to\R^+$.}
\end{remark}

As anticipated in the introduction, another useful characterization of $\theta$-Carleson measures relies on the Berezin transform.

\begin{definition}
Let $\mu$ be a finite positive Borel measure on a bounded strongly pseudoconvex domain
$D\subset\subset\C^n$. The \emph{Berezin transform} of $\mu$ is the function $B\mu\colon D\to\R^+$ given by
\[
B\mu(z_0)=\int_D |k_{z_0}(z)|^2\,d\mu(z)\;.
\]
\end{definition}

Then:

\begin{theorem}
\label{carthetaCarldue}
Let $D\subset\subset\C ^n$ be a bounded strongly
pseudoconvex domain, and 
choose $\theta>0$. 
Then the following assertions are equivalent:
\begin{itemize}
\item[(i)] $\mu$ is $\theta$-Carleson;
\item[(ii)] $B\mu\preceq \delta^{(n+1)(\theta-1)}$.
\end{itemize}
\end{theorem}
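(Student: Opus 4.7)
The plan is to test the $\theta$-Carleson condition against the normalized Bergman kernel $k_{z_0}$, which enjoys sharp two-sided control on Kobayashi balls.

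For (ii)$\Longrightarrow$(i), the idea is to exploit the pointwise lower estimate of Lemma~\ref{piu}: fixing $r\in(0,1)$, if $\delta(z_0)<\delta_r$ then $|k_{z_0}(z)|^2\ge c_r\,\delta(z_0)^{-(n+1)}$ for every $z\in B_D(z_0,r)$. Integrating against $\mu$ on $B_D(z_0,r)$ and invoking the Berezin hypothesis in (ii) yields
\[
\frac{c_r}{\delta(z_0)^{n+1}}\,\mu\bigl(B_D(z_0,r)\bigr)\le B\mu(z_0)\preceq\delta(z_0)^{(n+1)(\theta-1)},
\]
so $\mu(B_D(z_0,r))\preceq\delta(z_0)^{(n+1)\theta}\approx\nu(B_D(z_0,r))^\theta$ by Lemma~\ref{sei}. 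On the compact set $\{\delta\ge\delta_r\}$ both sides are uniformly comparable to positive constants, so the inequality is automatic there; this direction thus goes through for every $\theta>0$.

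For (i)$\Longrightarrow$(ii), the key observation is that $|k_{z_0}|^2$ is a nonnegative plurisubharmonic function on~$D$, being the squared modulus of a holomorphic function. Remark~\ref{rem:psh}---already established inside the proof of Theorem~\ref{carthetaCarluno} via the lattice decomposition together with the submean inequality of Lemma~\ref{due}---converts the $\theta$-Carleson hypothesis into the weighted comparison
\[
\int_D\chi\,d\mu\preceq\int_D\chi\,\delta^{(n+1)(\theta-1)}\,d\nu
\]
valid for every nonnegative plurisubharmonic $\chi$. Applied with $\chi=|k_{z_0}|^2$ it gives $B\mu(z_0)\preceq\|k_{z_0}\|_{2,(n+1)(\theta-1)}^{2}$, reducing the claim to a sharp weighted $L^2$-estimate on the normalized kernel. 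Taking $p=2$ and $\beta=(n+1)(\theta-1)$ in Theorem~\ref{BKp}(i) then delivers $\|k_{z_0}\|_{2,\beta}^{2}\preceq\delta(z_0)^\beta$, which is exactly the bound on $B\mu(z_0)$ we want.

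The step I expect to need the most care is the admissible range of~$\theta$: the clean chain above produces the sharp exponent precisely when $-1<(n+1)(\theta-1)<n+1$, that is $\frac{n}{n+1}<\theta<2$. At and beyond the endpoints one lands in parts~(ii) and~(iii) of Theorem~\ref{BKp}, where the weighted $L^2$ estimates on $k_{z_0}$ are slightly weaker; to recover the sharp exponent uniformly in $\theta>0$ one has to interpolate, or replace the test function $|k_{z_0}|^2$ by a higher-order plurisubharmonic probe such as $|K(\cdot,z_0)|^{2m}$ (still plurisubharmonic for every integer $m\ge 1$) so as to reposition the weight inside the favourable window of Theorem~\ref{BKp}. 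This bookkeeping is the only genuinely nonroutine point; the whole conceptual content of the theorem is the pairing of Remark~\ref{rem:psh} (turning geometric $\theta$-Carleson into a weighted integral) with Theorem~\ref{BKp} (turning weighted integrals of the kernel into pointwise powers of~$\delta$), together with Lemma~\ref{piu} for the reverse implication.
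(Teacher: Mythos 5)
Your argument coincides with the paper's own proof in both directions: for (ii)$\Longrightarrow$(i) the paper uses exactly the lower bound of Lemma~\ref{piu} on $|k_{z_0}|^2$ over $B_D(z_0,r)$ followed by Lemma~\ref{sei}, and for (i)$\Longrightarrow$(ii) it writes $B\mu(z_0)=\|k_{z_0}\|^2_{L^2(\mu)}\preceq\|k_{z_0}\|^2_{2,(n+1)(\theta-1)}$ using the inclusion $A^2\bigl(D,(n+1)(\theta-1)\bigr)\hookrightarrow L^2(\mu)$ from Theorem~\ref{carthetaCarluno} (which is the same estimate you obtain from Remark~\ref{rem:psh}, that remark being extracted from the proof of that very inclusion) and then applies Theorem~\ref{BKp} with $p=2$, $\beta=(n+1)(\theta-1)$.

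The one place you go beyond the paper is the discussion of the admissible range of $\theta$, and there your instinct is right but your proposed repair is not. The paper's proof also silently uses Theorem~\ref{BKp}(i), hence also only yields the sharp exponent for $1-\frac{1}{n+1}<\theta<2$. However, no interpolation or higher-order probe $|K(\cdot,z_0)|^{2m}$ can extend the sharp bound to $\theta>2$, because the implication (i)$\Longrightarrow$(ii) with exponent $(n+1)(\theta-1)$ is then false: a Dirac mass $\delta_{z_1}$ at an interior point $z_1$ is $\theta$-Carleson for \emph{every} $\theta$ (its Kobayashi balls near $\de D$ miss $z_1$ by Lemma~\ref{sette}), yet $B\delta_{z_1}(z_0)=|k_{z_0}(z_1)|^2\approx\delta(z_0)^{n+1}|K(z_1,z_0)|^2\approx\delta(z_0)^{n+1}$ near a generic boundary point, which is not $O\bigl(\delta(z_0)^{(n+1)(\theta-1)}\bigr)$ once $\theta>2$. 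So the correct reading is that the equivalence with the stated exponent holds in the window $1-\frac{1}{n+1}<\theta<2$ where Theorem~\ref{BKp}(i) applies (for $\theta\ge2$ one only gets $B\mu\preceq\delta^{n+1-\eps}$, resp.\ $\delta^{n+1}$, from parts (iii) and (ii) of Theorem~\ref{BKp}); this is all that is used elsewhere in the paper, whose main theorems restrict to $\theta<2$.
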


\begin{proof}
(i)$\Longrightarrow$(ii). Using Theorems~\ref{carthetaCarluno} and~\ref{BKp} (and Remark~\ref{rem:uno}) we obtain
\[
B\mu(z_0)=\|k_{z_0}\|^2_{L^2(\mu)}\preceq \|k_{z_0}\|^2_{2,(n+1)(\theta-1)}\preceq
\delta(z_0)^{(n+1)(\theta-1)}\;,
\]
as claimed.
\smallskip

(ii)$\Longrightarrow$(i). Fix $r\in(0,1)$; we must show that $\mu\bigl(B_D(z_0,r)\bigr)\le C
\nu\bigl(B_D(z_0,r)\bigr)^\theta$ for all $z_0\in D$, where $C>0$ is a suitable constant
independent of~$z_0$.  Let $\delta_r>0$ and $c_r>0$ be given by 
Lemma~\ref{piu}; clealy it suffices to prove the claim for $\delta(z_0)<\delta_r$. We have
\[
B\mu(z_0)=\int_D |k_{z_0}(z)|^2\,d\mu(z)\ge \int_{B_D(z_0,r)}|k_{z_0}(z)|^2\,d\mu(z)
\ge \frac{c_r}{\delta(z_0)^{n+1}}\mu\bigl(B_D(z_0,r)\bigr)\;;
\]
therefore
\[
\mu\bigl(B_D(z_0,r)\bigr)\preceq \delta(z_0)^{n+1}B\mu(z_0)\preceq \delta(z_0)^{(n+1)\theta}
\]
by the hypothesis, and the assertion follows from Lemma~\ref{sei}.
\end{proof}


We end this section by giving examples of $\theta$-Carleson measures. The next lemma provides a way of shifting the value of~$\theta$:

\begin{lemma}
\label{changeCarl}
Let $D\subset\subset\C ^n$ be a bounded domain, and $\theta$,~$\eta\in\R$.
Then a finite positive Borel measure $\mu$ is $\theta$-Carleson if and only if
$\delta^\eta\mu$ is $(\theta+\frac{\eta}{n+1})$-Carleson.
\end{lemma}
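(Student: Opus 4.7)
The plan is to reduce both Carleson conditions to pointwise estimates in terms of $\delta(z_0)$ using the two basic facts already established: Lemma~\ref{sei}, which gives $\nu\bigl(B_D(z_0,r)\bigr)\approx \delta(z_0)^{n+1}$, and Lemma~\ref{sette}, which gives $\delta(z)\approx \delta(z_0)$ on the Kobayashi ball $B_D(z_0,r)$ (with constants depending on $r$). Together these allow us to translate the Carleson condition into the comparable pointwise statement $\mu\bigl(B_D(z_0,r)\bigr)\preceq \delta(z_0)^{(n+1)\theta}$.

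Starting from the forward direction, assume $\mu$ is $\theta$-Carleson. By Lemma~\ref{sette}, for every $z\in B_D(z_0,r)$ one has $\delta(z)^\eta \approx \delta(z_0)^\eta$ (the implicit constants depend only on $r$ and $\eta$, regardless of whether $\eta$ is positive or negative). Hence
\[
(\delta^\eta\mu)\bigl(B_D(z_0,r)\bigr)=\int_{B_D(z_0,r)}\delta(z)^\eta\,d\mu(z)\approx \delta(z_0)^\eta\,\mu\bigl(B_D(z_0,r)\bigr)\preceq \delta(z_0)^{(n+1)\theta+\eta}\;,
\]
and by Lemma~\ref{sei} the right-hand side is comparable to $\nu\bigl(B_D(z_0,r)\bigr)^{\theta+\eta/(n+1)}$. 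Thus $\delta^\eta\mu$ is $(\theta+\frac{\eta}{n+1})$-Carleson.

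For the converse, one can either repeat the same computation in reverse, using that $\delta(z_0)^{-\eta}$ again approximates $\delta(z)^{-\eta}$ uniformly on $B_D(z_0,r)$, or simply apply the forward implication already proved to the measure $\delta^\eta\mu$ with the shift $-\eta$ in place of $\eta$: this returns $\delta^{-\eta}(\delta^\eta\mu)=\mu$ and shifts the Carleson exponent from $\theta+\frac{\eta}{n+1}$ back to $\theta$.

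There is no real obstacle here; the only point requiring a small amount of care is that Lemma~\ref{sette} gives a two-sided comparison of $\delta(z)$ and $\delta(z_0)$ on $B_D(z_0,r)$, so the estimate on $\delta(z)^\eta$ works for every real $\eta$ (with constants that do depend on $\eta$ through the exponent, but not on $z_0$).
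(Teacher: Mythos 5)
Your proposal is correct and follows essentially the same route as the paper's own proof: use Lemma~\ref{sette} to replace $\delta(\zeta)^\eta$ by $\delta(z_0)^\eta$ on the Kobayashi ball, apply the $\theta$-Carleson hypothesis, convert back via Lemma~\ref{sei}, and obtain the converse by applying the forward implication to $\delta^\eta\mu$ with shift $-\eta$. Nothing is missing.
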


\begin{proof}
Assume $\mu$ is $\theta$-Carleson, set $\mu_\eta=\delta^\eta\mu$, and choose $r\in(0,1)$.
Then Lemmas~\ref{sette} and~\ref{sei} yield
\[
\begin{aligned}
\mu_\alpha\bigl(B_D(z_0,r)\bigr)&=\int_{B_D(z_0,r)}\delta(\zeta)^\eta\,d\mu(\zeta)
\preceq \delta(z_0)^\eta \mu\bigl(B_D(z_0,r)\bigr)\\
&\preceq  \delta(z_0)^\eta \nu\bigl(B_D(z_0,r)\bigr)^\theta
\preceq \nu\bigl(B_D(z_0,r)\bigr)^{\theta+\frac{\eta}{n+1}}\;,
\end{aligned}
\]
and so $\mu_\eta$ is $\left(\theta+\frac{\eta}{n+1}\right)$-Carleson. Since $\mu=(\mu_\eta)_{-\eta}$, the converse follows too.
\end{proof}

\begin{example}
\label{es:deltaeta}
For instance, as anticipated in the introduction, $\delta^\eta\nu$ is $\left(1+\frac{\eta}{n+1}\right)$-Carleson.
\end{example}

To give another class of examples of $\theta$-Carleson measures, we recall the following definition:

\begin{definition}
Let $(X,d)$ be a metric space.  A sequence $\Gamma=\{x_j\}\subset
X$ is \emph{uniformly discrete} if there exists $\delta>0$
such that $d(x_j,x_k)\ge\delta$ for all $j\ne k$. In this case 
$\inf\limits_{j\ne k} d(x_j,x_k)$ is the {\sl separation constant} of~$\Gamma$. Furthermore,
given $x\in X$ and $r>0$ we shall denote by $N(x,r,\Gamma)$ the number of points $x_j\in\Gamma$ with $d(x_j,x)<r$.
\end{definition}

\begin{theorem} 
\label{dtre} 
Let $D\subset\subset\C^n$ be a bounded strongly pseudoconvex domain, considered as a metric space with the distance~$\rho_D=\tanh k_D$, and choose $1-\frac{1}{n+1}<\theta<2$. Let $\Gamma=\{z_j\}_{j\in\N}$ be a sequence
in~$D$. Then $\Gamma$ is a finite union of uniformly discrete sequences if and only if $\sum_j \delta(z_j)^{(n+1)\theta}\delta_{z_j}$ is a $\theta$-Carleson measure, where $\delta_{z_j}$ is the Dirac measure in~$z_j$.
\end{theorem}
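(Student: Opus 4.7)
The plan is to translate the $\theta$-Carleson condition for $\mu_\Gamma=\sum_j\delta(z_j)^{(n+1)\theta}\delta_{z_j}$ into a purely combinatorial statement about $\Gamma$, namely that some (equivalently, every) Kobayashi ball of fixed radius $r$ contains a uniformly bounded number of points of $\Gamma$; both implications then follow from short geometric arguments.

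For the reduction, fix $r\in(0,1)$ and denote by $N(z_0,r,\Gamma)$ the number of points of $\Gamma$ in $B_D(z_0,r)$. Lemma~\ref{sette} gives $\delta(z_j)\approx\delta(z_0)$ for every $z_j\in B_D(z_0,r)$, so
\[
\mu_\Gamma\bigl(B_D(z_0,r)\bigr)=\sum_{z_j\in B_D(z_0,r)}\delta(z_j)^{(n+1)\theta}\approx N(z_0,r,\Gamma)\,\delta(z_0)^{(n+1)\theta},
\]
while Lemma~\ref{sei} gives $\nu\bigl(B_D(z_0,r)\bigr)^\theta\approx \delta(z_0)^{(n+1)\theta}$. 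Using the equivalence (iii)$\Leftrightarrow$(iv) in Theorem~\ref{carthetaCarluno}, we conclude that $\mu_\Gamma$ is $\theta$-Carleson if and only if for some (hence every) $r\in(0,1)$ there is $M=M_r>0$ with $N(z_0,r,\Gamma)\le M$ for all $z_0\in D$.

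For the forward direction, write $\Gamma=\bigcup_{i=1}^N\Gamma_i$ with each $\Gamma_i$ uniformly discrete of minimum separation $\eps>0$ in~$k_D$, and set $s=\tanh(\eps/3)$. The balls $B_D(z_j,s)$ with $z_j\in\Gamma_i$ are pairwise disjoint, and those with $z_j\in\Gamma_i\cap B_D(z_0,r)$ all sit inside a single enlarged ball $B_D(z_0,r')$. By Lemmas~\ref{sei} and~\ref{sette} each $B_D(z_j,s)$ and the enclosing ball $B_D(z_0,r')$ have Lebesgue volume $\approx \delta(z_0)^{n+1}$, so a volume count bounds $N(z_0,r,\Gamma_i)$ by a constant depending only on $r$ and $\eps$; summing over $i$ gives the required uniform bound. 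For the reverse direction, suppose $N(z_0,r,\Gamma)\le M$ for all $z_0$. Taking $z_0=z_j\in\Gamma$ shows every $z_j$ has at most $M-1$ other points of $\Gamma$ at Kobayashi distance less than $\tfrac12\log\tfrac{1+r}{1-r}$. Form the proximity graph on $\Gamma$ with edges $z_j\sim z_k$ whenever this holds: it has maximum degree at most $M-1$, so a standard greedy $(\Delta+1)$-coloring (applied to any enumeration of~$\Gamma$) partitions $\Gamma$ into at most $M$ color classes, each of which is uniformly discrete with separation $\ge\tfrac12\log\tfrac{1+r}{1-r}$.

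No step is especially delicate: the only potential sticking point is transporting the packing estimate through the curved Kobayashi geometry, and this is handled cleanly by the conjunction of Lemma~\ref{sei} (volumes of Kobayashi balls) and Lemma~\ref{sette} (near-constancy of $\delta$ on such balls). Everything else is a routine combinatorial bookkeeping, and the restriction $1-\tfrac{1}{n+1}<\theta<2$ is used only implicitly via Theorem~\ref{carthetaCarluno}, since the counting reformulation itself is insensitive to the precise value of $\theta>0$.
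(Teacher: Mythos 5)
Your argument is essentially correct in its main thrust but follows a genuinely different, and in places more elementary, route than the paper's. The paper proves the forward implication by showing that $\mu_\Gamma=\sum_j\delta(z_j)^{(n+1)\theta}\delta_{z_j}$ satisfies the \emph{analytic} Carleson inequality for $A^p\bigl(D,(n+1)(\theta-1)\bigr)$ (disjointness of the Kobayashi balls plus the submean estimate of Lemma~\ref{due}) and then invokes Theorem~\ref{carthetaCarluno}; for the converse it bounds $N(z_0,r,\Gamma)$ using the lower bound on $|k_{z_0}|^2$ from Lemma~\ref{piu} together with the $L^2$ estimates of Theorem~\ref{BKp}, and outsources the combinatorial step ``$\sup_{z_0}N(z_0,r,\Gamma)<+\infty$ implies finite union of uniformly discrete sequences'' to \cite[Lemma~4.1]{AS}. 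You instead observe that Lemma~\ref{sette} alone gives $\mu_\Gamma\bigl(B_D(z_0,r)\bigr)\approx N(z_0,r,\Gamma)\,\nu\bigl(B_D(z_0,r)\bigr)^\theta$, which reduces the whole theorem to the purely geometric equivalence between ``$N$ uniformly bounded'' and ``finite union of uniformly discrete sequences''; your volume-packing and greedy-colouring arguments for that equivalence are both correct (the latter in effect reproves the cited lemma from \cite{AS}). This bypasses the Bergman kernel entirely, which is a real simplification.

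There is, however, one gap you must close. A $\theta$-Carleson measure is by definition a \emph{finite} measure, and Theorem~\ref{carthetaCarluno} is stated for finite positive Borel measures, so you cannot use its equivalence (iii)$\Longleftrightarrow$(iv) to conclude that a uniform bound on $N$ makes $\mu_\Gamma$ $\theta$-Carleson without first verifying $\mu_\Gamma(D)<+\infty$. This is exactly where the hypothesis $\theta>1-\frac{1}{n+1}$ enters, and your closing remark that the counting reformulation is ``insensitive to the precise value of $\theta>0$'' is wrong on this point: for a maximal separated set the number of points with $\delta(z_j)\in[2^{-k-1},2^{-k})$ is comparable to $2^{kn}$ (disjoint Kobayashi balls of volume $\approx 2^{-k(n+1)}$ packed into a collar of volume $\approx 2^{-k}$), so $\sum_j\delta(z_j)^{(n+1)\theta}$ converges precisely when $(n+1)\theta>n$, i.e.\ when $\theta>1-\frac{1}{n+1}$. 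The gap is easily filled --- sum over an $r$-lattice and perform this dyadic count, or note, as the paper's proof implicitly does, that taking $f\equiv1$ in the Carleson inequality for $A^p\bigl(D,(n+1)(\theta-1)\bigr)$ yields finiteness because $(n+1)(\theta-1)>-1$ --- but it has to be said, since it is the only place the lower bound on $\theta$ is actually used.
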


\begin{proof}
Clearly it suffices to prove the only if part when $\Gamma$ is a single
uniformly discrete sequence. Choose $p>\max\{1,\theta/2\}$, and
let $2r>0$ be the separation constant
of~$\Gamma$.
By the triangle inequality, the Kobayashi balls $B_D(z_j,r)$ are
pairwise disjoint. Hence for any $f\in A^p\bigl(D,(n+1)(\theta-1)\bigr)$ Lemma~\ref{sette} yields
\[
\begin{aligned}
\int_D|f(z)|^p\delta(z)^{(n+1)(\theta-1)}\,d\nu(z)
&\ge\sum_{z_j\in\Gamma}\int_{B_D(z_j,r)}|f(z)|^p \delta(z)^{(n+1)(\theta-1)}\,d\nu(z)\\
&\succeq \sum_{z_j\in\Gamma}\delta(z_j)^{(n+1)(\theta-1)}\int_{B_D(z_j,r)}|f(z)|^p \,d\nu(z)
\;.
\end{aligned}
\]
Now, $|f|^p$ is plurisubharmonic and nonnegative; hence Lemma~\ref{due} 
and Lemma~\ref{sei} yield
\[
\int_D|f(z)|^p\delta(z)^{(n+1)(\theta-1)}\,d\nu(z)
\succeq \sum_{z_j\in\Gamma}\delta(z_j)^{(n+1)(\theta-1)}\delta(z_j)^{n+1}|f(z_j)|^p
=\sum_{z_j\in\Gamma}\delta(z_j)^{(n+1)\theta}|f(z_j)|^p
\]
and the assertion follows from Theorem~\ref{carthetaCarluno} and Remark~\ref{rem:uno}.

Assume conversely that $\mu=\sum_j \delta(z_j)^{(n+1)\theta}\delta_{z_j}$ is a $\theta$-Carleson measure. \cite[Lemma~4.1]{AS} shows that it suffices to prove that $\sup_{z_0\in D} N(z_0,r, \Gamma)<+\infty$, for any $r\in(0,1)$. Fix $r\in(0,1)$, and let $\delta_r>0$ be given
by Lemma~\ref{piu}. By Lemma~\ref{sette}, if $\delta(z_0)\ge\delta_r$ then $w\in B_D(z_0,r)$ implies
$\delta(w)\succeq \delta_r$. It is easy to see that, since $\mu$ should be a finite measure, only a finite number of~$z_j\in\Gamma$ can have $\delta(z_j)\succeq\delta_r$; therefore to get the assertion it suffice to prove
that the supremum is finite when $\delta(z_0)<\delta_r$.

Given $z_0\in D$ with $\delta(z_0)<\delta_r$, Lemma~\ref{piu} yields
\[
\forall{z\in B_D(z_0,r)}\ \ \ \ 
\delta(z_0)^{n+1}|k_{z_0}(z)|^2\ge  c_r\;.
\]
Hence using again Lemma~\ref{sette} we obtain
\begin{eqnarray*}
N(z_0,r,\Gamma) &\le&\frac{1}{c_r}
\sum_{z\in B_D(z_0,r)\cap\Gamma}\delta(z_0)^{n+1}|k_{z_0}(z)|^2\\
&\preceq&\delta(z_0)^{(n+1)(1-\theta)}\sum_{z\in B_D(z_0,r)\cap\Gamma}\delta(z)^{(n+1)\theta}|k_{z_0}(z)|^2\le\delta(z_0)^{(n+1)(1-\theta)}\|k_{z_0}\|^2_{L^2(\mu)}\\
&\preceq&\delta(z_0)^{(n+1)(1-\theta)} \|k_{z_0}\|_{2,(n+1)(\theta-1)}^2\\
&\preceq& 1
\end{eqnarray*}
by Theorems~\ref{BKp} and~\ref{carthetaCarluno} (and Remark~\ref{rem:uno}), as desired.
\end{proof}

\begin{remark}
\label{rem:impl}
Notice that the proof that if $\Gamma$ is a finite union of uniformly discrete sequences then $\sum_j \delta(z_j)^{(n+1)\theta}\delta_{z_j}$ is a $\theta$-Carleson measure works for
any $\theta>0$. 
\end{remark}

\section{Vanishing $\theta$-Carleson measures}

In this section we shall characterize vanishing Carleson measures for weighted Bergman spaces; along the way we shall prove a few results on the functional analysis of weighted Bergman spaces that shall be useful in the next section too.

\begin{definition}
\label{vanthetaCarl}
Let $D\subset\subset\C ^n$ be a bounded domain, $\beta$,~$\theta\in\R$ and $p\ge 1$. A (analytic)
\emph{vanishing Carleson measure} of~$A^p(D,\beta)$ is a finite positive Borel measure on~$D$ such that there is a compact inclusion $A^p(D,\beta)\hookrightarrow L^p(\mu)$.

On the other hand, a (geometric) \emph{vanishing $\theta$-Carleson measure} is a finite positive Borel measure on~$D$ such that 
\[
\lim_{z_0\to\de D} \frac{\mu\bigl(B_D(z_0,r)\bigr)}{\nu\bigl(B_D(z_0,r)\bigr)^\theta}=0
\]
for all $r\in(0,1)$.
\end{definition}

\begin{remark}
\label{rem:carvancar}
In particular, every $\theta$-Carleson measure is a vanishing $\theta'$-Carleson measure
for all $\theta'<\theta$. For instance, a Carleson measure is a vanishing $\theta$-Carleson measure for all $\theta<1$. 
\end{remark}

%

We start with an easy generalization of a standard lemma (see, e.g., \cite[Lemma~1.4.1]{Kr}):

\begin{lemma}
\label{th:basiclemma}
Let $D\subset\subset\C^n$ be a bounded domain, $p\in[1,+\infty]$ and $\beta\in\R$. Then
for every relatively compact subdomain $D_0\subset\subset D$ we can find a constant
$C=C(D_0,p,\beta)>0$ such that
\[
\sup_{z\in D_0}|f(z)|\le C\|f\|_{p,\beta}
\] 
for all $f\in A^p(D,\beta)$.
\end{lemma}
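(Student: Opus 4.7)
The plan is to invoke the sub-mean value inequality for the plurisubharmonic function $|f|^p$ on suitable Euclidean balls, and then control the weight $\delta^\beta$ on a slightly larger relatively compact set where $\delta$ is bounded above and below.

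First, I would choose an auxiliary set. Since $D_0\subset\subset D$, there exists $r>0$ (depending only on $D_0$) such that the enlarged set
\[
D_1=\bigcup_{z\in D_0}\overline{B(z,r)}
\]
is still relatively compact in~$D$, where $B(z,r)$ denotes the Euclidean ball. Hence there exist constants $0<\delta_1\le\delta_2$ such that $\delta_1\le\delta(w)\le\delta_2$ for all $w\in D_1$, and therefore $\delta^{-\beta}$ is bounded on $D_1$ by a constant $M=M(D_0,\beta)>0$ (taking $M=\max\{\delta_1^{-\beta},\delta_2^{-\beta}\}$).

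Next, treat the case $1\le p<\infty$. Fix $z\in D_0$. Since $f$ is holomorphic, $|f|^p$ is plurisubharmonic and in particular subharmonic; the sub-mean value inequality on the Euclidean ball $B(z,r)\subset D_1$ gives
\[
|f(z)|^p\le\frac{1}{\nu\bigl(B(z,r)\bigr)}\int_{B(z,r)}|f(w)|^p\,d\nu(w).
\]
Writing $|f|^p=|f|^p\delta^\beta\cdot\delta^{-\beta}$ inside the integral and using the bound $\delta^{-\beta}\le M$ on $D_1$, I get
\[
|f(z)|^p\le\frac{M}{\nu\bigl(B(z,r)\bigr)}\int_{B(z,r)}|f(w)|^p\delta(w)^{\beta}\,d\nu(w)\le\frac{M}{c_n r^{2n}}\|f\|_{p,\beta}^p,
\]
where $c_n r^{2n}=\nu\bigl(B(z,r)\bigr)$ is independent of $z$. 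Taking $p$-th roots and passing to the supremum over $z\in D_0$ yields the claim with $C=(M/(c_n r^{2n}))^{1/p}$.

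For the remaining case $p=\infty$, no mean-value estimate is needed: one simply writes $|f(z)|=|f(z)\delta(z)^\beta|\,\delta(z)^{-\beta}\le\|f\|_{\infty,\beta}\sup_{z\in D_0}\delta(z)^{-\beta}$, and the supremum is finite because $\delta$ is bounded above and below on $D_0\subset\subset D$. No real obstacle is expected here; the only thing to be careful about is the sign of $\beta$ when bounding $\delta^{-\beta}$ on $D_1$ (or $D_0$), but this is handled uniformly by the two-sided bound on $\delta$ on the relatively compact set.
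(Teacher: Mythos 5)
Your proof is correct and follows essentially the same route as the paper: Euclidean balls around points of $D_0$ staying in a relatively compact set where $\delta$ is bounded above and below, a sub-mean value inequality, and absorption of the weight $\delta^{\beta}$ via that two-sided bound, with the trivial $p=\infty$ case handled separately. The only cosmetic difference is that you apply the sub-mean inequality directly to $|f|^p$, whereas the paper applies it to $|f|$ and then uses H\"older's inequality; both are valid and yield the same constant up to normalization.
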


\begin{proof}
Given $r>0$ and $z\in\C^n$, we shall denote by $B_r(z)$ the Euclidean ball of radius~$r$ and
center~$z$. If $r_0=\inf_{z\in D_0}\delta(z)>0$ and $M=\sup_{z\in D}\delta(z)<+\infty$, we have
\[
\frac{r_0}{2}\le\delta(\zeta)\le M
\]
for all $\zeta\in B_{r_0/2}(z)$ and $z\in D_0$. 

Assume that $p\in[1,+\infty)$. Using the usual submean property for nonnegative plurisubharmonic functions, for all $z\in D_0$ by H\"older's inequality we then have  
\[
\begin{aligned}
|f(z)|&\le\frac{1}{\nu\bigl(B_{r_0/2}(z)\bigr)}\int_{B_{r_0/2}(z)}|f(\zeta)|\,d\nu(\zeta)\cr
 &\le \nu\bigl(B_{r_0/2}(z)\bigr)^{\frac{1}{q}-1}\left[
\int_{B_{r_0/2}(z)}|f(\zeta)|^p\,d\nu(\zeta)\right]^{1/p}\cr
&\le \nu\bigl(B_{r_0/2}(z)\bigr)^{\frac{1}{q}-1}\max\left\{M,\frac{2}{r_0}\right\}^{|\beta|/p}
\left[\int_{B_{r_0/2}(z)}|f(\zeta)|^p\delta(\zeta)^\beta\,d\nu(\zeta)\right]^{1/p}\cr
&\le C(D_0,p,\beta) \|f\|_{p,\beta}\;,
\end{aligned}
\]
where $q$ is the conjugate exponent of~$p$, and we are done. 

Finally, if $p=+\infty$ we have
\[
\sup_{z\in D_0}|f(z)|\le \max\left\{M,\frac{1}{r_0}\right\}^{|\beta|}\|f\|_{\infty,\beta}\;,
\]
and we are done in this case too.
\end{proof}

Using this we obtain a basic compactness property for weighted Bergman spaces on bounded domains:

\begin{lemma}
\label{relcomp}
Let $D\subset\subset\C^n$ be a bounded domain, $1\le p\le\infty$ and $\beta\in\R$. Then:
\begin{itemize}
\item[(i)] if $\{f_k\}\subset A^p(D,\beta)$ is a norm-bounded sequence converging uniformly
on compact subsets to $h\in\mathcal{O}(D)$, then $h\in A^p(D,\beta)$;
\item[(ii)] the inclusion $A^p(D,\beta)\hookrightarrow
\mathcal{O}(D)$ is compact, that is, any norm-bounded subset of $A^p(D,\beta)$ is relatively compact
in~$\mathcal{O}(D)$.
\end{itemize}
\end{lemma}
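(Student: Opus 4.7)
The plan is the standard Fatou-plus-Montel argument, adapted to the weighted setting. For part (i), I would fix a norm-bounded sequence $\{f_k\}\subset A^p(D,\beta)$ with $\sup_k\|f_k\|_{p,\beta}\le M<+\infty$ converging to $h\in\mathcal{O}(D)$ uniformly on compact subsets. When $p<\infty$, the functions $|f_k|^p\delta^\beta$ are nonnegative measurable and converge pointwise to $|h|^p\delta^\beta$; Fatou's lemma then gives
\[
\int_D |h|^p\delta^\beta\,d\nu\le\liminf_{k\to\infty}\int_D|f_k|^p\delta^\beta\,d\nu\le M^p,
\]
so $h\in A^p(D,\beta)$. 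When $p=\infty$, for every $z\in D$ we have $|h(z)|\delta(z)^\beta=\lim_k|f_k(z)|\delta(z)^\beta\le M$, so $\|h\|_{\infty,\beta}\le M$.

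For part (ii), I would take any norm-bounded sequence $\{f_k\}\subset A^p(D,\beta)$ and show it admits a subsequence converging in the topology of $\mathcal{O}(D)$, i.e.\ uniformly on compact subsets. Lemma~\ref{th:basiclemma} applied to an exhaustion $D_0\subset\subset D_1\subset\subset\cdots$ of $D$ by relatively compact subdomains yields
\[
\sup_{z\in D_j}|f_k(z)|\le C(D_j,p,\beta)\sup_k\|f_k\|_{p,\beta}<+\infty,
\]
so $\{f_k\}$ is locally uniformly bounded on $D$. Montel's theorem then provides a subsequence $\{f_{k_j}\}$ converging uniformly on compact subsets to some $h\in\mathcal{O}(D)$, and by part~(i) the limit lies in $A^p(D,\beta)$. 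This is precisely the statement that any norm-bounded subset is relatively compact in $\mathcal{O}(D)$.

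There is no real obstacle: part (i) is Fatou (or pointwise estimate when $p=\infty$), and part (ii) is just combining Lemma~\ref{th:basiclemma} with Montel and part~(i). The only minor care is to treat the $p=\infty$ case separately in (i), since Fatou does not apply directly, and to be sure that Lemma~\ref{th:basiclemma} holds in the range of $\beta$ and $p$ we are considering, which it does by hypothesis.
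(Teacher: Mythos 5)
Your proposal is correct and follows essentially the same route as the paper: Fatou's lemma for part (i) (with the $p=\infty$ case handled by the trivial pointwise bound, which the paper merely calls trivial), and Lemma~\ref{th:basiclemma} combined with an exhaustion, Montel's theorem, and part (i) for part (ii). No gaps.
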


\begin{proof}
(i) If $p=\infty$ the assertion is trivial; let then $1\le p<\infty$ and assume that $\{f_k\}\subset A^p(D,\beta)$ is a norm-bounded sequence converging
uniformly on compact subsets to $h\in\mathcal{O}(D)$. Then
\[
\int_D |h|^p\delta^\beta\,d\nu=\int_D\lim_{k\to\infty}|f_k|^p\delta^\beta\,d\nu\le
\liminf_{k\to\infty}\int_D |f_k|^p\delta^\beta\,d\nu\le\sup_{k}\|f_k\|^p_{p,\beta}\;,
\]
by Fatou's lemma, and thus $h\in A^p(D,\beta)$ as claimed.

(ii) We have to prove that any norm-bounded sequence in $A^p(D,\beta)$ admits a subsequence
converging uniformly on compact subsets. But indeed, Lemma~\ref{th:basiclemma}
says that the sup-norm on a relatively compact subset $D_0\subset\subset D$ of any
$f\in A^p(D,\beta)$ is bounded by a constant times its $A^p(D,\beta)$-norm. 
So if $\{f_k\}\subset A^p(D,\beta)$ is norm-bounded, by taking a countable increasing
exhaustion of $D$ by relatively compact subdomains and applying Montel's theorem to each subdomain, we obtain a subsequence $\{f_{k_j}\}$ converging uniformly on compact subsets 
to a holomorphic function~$h\in\mathcal{O}(D)$ --- and actually $h\in A^p(D,\beta)$, by (i). 
\end{proof}

As a consequence we obtain the following characterization of vanishing Carleson measures
of~$A^p(D,\beta)$:

\begin{lemma}
\label{weakly}
Let $D\subset\subset\C^n$ be a bounded domain, and $\mu$ a finite positive Borel measure on~$D$. Take $1\le p\le\infty$ and $\beta\in\R$. Then $\mu$ is a vanishing Carleon measure of~$A^p(D,\beta)$ if and only if $\|f_k\|_{L^p(\mu)}\to 0$ for all norm-bounded sequences $\{f_k\}\subset A^p(D,\beta)$ converging to~$0$ uniformly on compact subsets. 
\end{lemma}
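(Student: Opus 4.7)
The plan is to prove both implications by exploiting the functional-analytic compactness lemma (Lemma~\ref{relcomp}) for weighted Bergman spaces together with a standard subsequence argument, and extracting continuity of the inclusion en route.

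For the necessary direction, assume $\mu$ is a vanishing Carleson measure of $A^p(D,\beta)$, so the inclusion $j\colon A^p(D,\beta) \hookrightarrow L^p(\mu)$ is compact. Given a bounded sequence $\{f_k\}$ in $A^p(D,\beta)$ with $f_k \to 0$ uniformly on compact subsets, I argue by contradiction: if $\|f_k\|_{L^p(\mu)} \not\to 0$, a subsequence satisfies $\|f_{k_j}\|_{L^p(\mu)} \ge c > 0$, and by compactness of $j$ one can pass to a further subsequence converging in $L^p(\mu)$ to some $g$ with $\|g\|_{L^p(\mu)} \ge c$. For $1 \le p < \infty$, extract yet another subsequence converging to $g$ pointwise $\mu$-a.e.; combined with the pointwise convergence to $0$ inherited from the u.o.c.\ convergence, this forces $g = 0$ $\mu$-a.e., a contradiction. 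For $p = \infty$, convergence in $L^\infty(\mu)$ implies pointwise convergence to $g$ off a $\mu$-null set (via a countable-union argument on the level sets $\{|f_{k_j}-g| > 1/n\}$), yielding the same contradiction.

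For the sufficient direction, assume the u.o.c.\ condition and first check that the inclusion $A^p(D,\beta) \hookrightarrow L^p(\mu)$ is well defined and continuous. Applied to the sequence $f_k = h/k$ for any $h \in A^p(D,\beta)$, the hypothesis (implicitly requiring finiteness of the $L^p(\mu)$-norms) shows $h \in L^p(\mu)$, so the inclusion is well defined. If the inclusion were unbounded, there would exist $f_k$ with $\|f_k\|_{p,\beta} = 1$ and $\|f_k\|_{L^p(\mu)} \to \infty$; then $g_k = f_k/\|f_k\|_{L^p(\mu)}$ satisfies $\|g_k\|_{p,\beta} \to 0$, hence $g_k \to 0$ u.o.c.\ by Lemma~\ref{th:basiclemma}, while $\|g_k\|_{L^p(\mu)} = 1$, contradicting the hypothesis. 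For compactness of the inclusion, take a bounded sequence $\{f_k\}$ in $A^p(D,\beta)$; Lemma~\ref{relcomp}(ii) yields a subsequence $\{f_{k_j}\}$ converging u.o.c.\ to some $h$, which lies in $A^p(D,\beta)$ by Lemma~\ref{relcomp}(i). The sequence $\{f_{k_j}-h\}$ is then norm-bounded in $A^p(D,\beta)$ and converges u.o.c.\ to $0$, so the hypothesis gives $\|f_{k_j}-h\|_{L^p(\mu)} \to 0$, proving $\{f_{k_j}\}$ converges in $L^p(\mu)$.

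The main obstacle is the $p = \infty$ case of the forward direction: the standard trick of passing to a $\mu$-almost-everywhere convergent subsequence (valid for $p<\infty$) is unavailable, so one must argue more carefully that $L^\infty(\mu)$-convergence produces a single $\mu$-null set off which convergence is genuinely pointwise. Everything else reduces to a routine combination of Lemmas~\ref{th:basiclemma} and~\ref{relcomp} with scaling and subsequence arguments.
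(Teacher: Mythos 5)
Your proof is correct and follows essentially the same route as the paper: a subsequence/unique-limit-point argument via $\mu$-a.e.\ convergence for the forward direction, and Lemma~\ref{relcomp} for the converse. The extra steps you supply (the separate treatment of $p=\infty$, and the verification that the inclusion is well defined and bounded before proving compactness) are details the paper leaves implicit, and they are handled correctly.
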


\begin{proof}
Assume that the inclusion $A^p(D,\beta)\hookrightarrow L^p(\mu)$ is compact, and take $\{f_k\}\subset A^p(D,\beta)$ norm-bounded and converging to~$0$ uniformly on compact subsets. In particular, $\{f_k\}$ is relatively compact in~$L^p(\mu)$;
we must prove that $f_k\to 0$ in~$L^p(\mu)$. To do so, by compactness, it suffices to show that
$0$ is the unique limit point of $\{f_k\}$ in~$L^p(\mu)$. Let $\{f_{k_j}\}$ be a subsequence
converging to $h\in L^p(\mu)$. Passing if necessary to
a subsequence 
we can assume that 
$f_{k_j}(z)\to h(z)$ $\mu$-almost everywhere. But $f_k\to 0$ uniformly on compact
subsets; therefore $h\equiv 0$ and we are done.

Conversely, assume that all norm-bounded sequences in $A^p(D,\beta)$ converging to~$0$
uniformly on compact subsets
converge to~0 in~$L^p(\mu)$. To prove that the inclusion $A^p(D,\beta)\hookrightarrow A^p(\mu)$ is compact it
suffices to show that if $\{f_k\}$ is norm-bounded in~$A^p(D,\beta)$ then it admits a subsequence
converging in~$L^p(\mu)$. Lemma~\ref{relcomp} yields a subsequence $\{f_{k_j}\}$
converging uniformly on compact subsets to $h\in A^p(D,\beta)$. Then $\{f_{k_j}-h\}$
converges to~$0$ uniformly on compact subsets; by assumption, this yields $\|f_{k_j}-h\|_{L^p(\mu)}\to 0$, and thus $f_{k_j}\to h$ in~$L^p(\mu)$, as desired.
\end{proof}

We shall also need the following characterization of weakly convergent sequences in $A^p(D,\beta)$ for $1<p<\infty$:

\begin{lemma}
\label{ucsweak}
Let $D\subset\subset\C^n$ be a bounded domain, $1< p<\infty$ and $\beta\in\R$. Then:
\begin{itemize}
\item[(i)] a sequence
$\{f_k\}\subset A^p(D,\beta)$ is norm-bounded and converges uniformly on compact subsets to $h\in A^p(D,\beta)$ if and only if
it converges weakly to~$h$;
\item[(ii)] the unit ball of $A^p(D,\beta)$ is weakly compact, and thus $A^p(D,\beta)$ is reflexive.
\end{itemize}
\end{lemma}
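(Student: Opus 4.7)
The plan is to prove (ii) first by realizing $A^p(D,\beta)$ as a closed subspace of the reflexive Banach space $L^p(\delta^\beta\nu)$, and then to deduce (i) using (ii) together with Lemmas~\ref{th:basiclemma} and~\ref{relcomp}.

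For (ii), the key point is that $A^p(D,\beta)$ is norm-closed in $L^p(\delta^\beta\nu)$. If $\{f_k\}\subset A^p(D,\beta)$ is Cauchy for $\|\cdot\|_{p,\beta}$, applying Lemma~\ref{th:basiclemma} to the differences $f_k-f_j$ shows that $\{f_k\}$ is uniformly Cauchy on every relatively compact subdomain $D_0\subset\subset D$. Its locally uniform limit is holomorphic and must agree almost everywhere with the $L^p(\delta^\beta\nu)$-limit, so that limit lies in $A^p(D,\beta)$. Since $L^p(\delta^\beta\nu)$ is reflexive for $1<p<\infty$, so is its closed subspace $A^p(D,\beta)$, and its unit ball is weakly compact by Kakutani's theorem.

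The $(\Longleftarrow)$ direction of (i) does not use (ii). If $f_k\rightharpoonup h$ weakly, the uniform boundedness principle gives norm-boundedness. Lemma~\ref{th:basiclemma} shows that each point evaluation $\mathrm{ev}_z\colon f\mapsto f(z)$ is a continuous linear functional on $A^p(D,\beta)$, so $f_k(z)\to h(z)$ for every $z\in D$. By Lemma~\ref{relcomp}(ii), any subsequence of $\{f_k\}$ admits a further subsequence converging uniformly on compact subsets to some $g\in\mathcal{O}(D)$; pointwise convergence forces $g\equiv h$, so by a standard argument the whole sequence converges uniformly on compact subsets to~$h$.

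For the $(\Longrightarrow)$ direction, suppose $\{f_k\}$ is norm-bounded and converges uniformly on compact subsets to~$h$. Using (ii), every subsequence of $\{f_k\}$ admits a weakly convergent sub-subsequence; by the $(\Longleftarrow)$ direction just established, any such weak limit agrees with the uniform-on-compacts limit of that sub-subsequence, hence equals~$h$. The usual subsequence argument then gives $f_k\rightharpoonup h$ in $A^p(D,\beta)$. The main obstacle is the norm-closedness step, which rests precisely on the point-evaluation bound of Lemma~\ref{th:basiclemma}; everything else is a standard functional-analysis consequence of reflexivity.
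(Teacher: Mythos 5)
Your proof is correct, but it runs in the opposite direction from the paper's. The paper proves the forward implication of (i) directly: given $\Phi\in A^p(D,\beta)^*$, it extends $\Phi$ to $L^p(\delta^\beta\nu)^*$ by Hahn--Banach, represents the extension by some $g\in L^q(\delta^\beta\nu)$ via Riesz, reduces to compactly supported $g$ by density, and then uses uniform convergence on $\mathrm{supp}(g)$ to conclude; the reverse implication follows from the forward one together with Lemma~\ref{relcomp}, and (ii) is then deduced \emph{last}, from Montel (Lemma~\ref{relcomp}) plus part (i). You instead prove (ii) \emph{first}, by showing $A^p(D,\beta)$ is norm-closed in the reflexive space $L^p(\delta^\beta\nu)$ (via the sup-norm bound of Lemma~\ref{th:basiclemma} applied to differences) and invoking reflexivity of closed subspaces and Kakutani; you then get the forward implication of (i) from weak sequential compactness by a subsequence argument, and the reverse implication from the continuity of point evaluations (again Lemma~\ref{th:basiclemma}) rather than from the forward implication. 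Both routes are sound and rest on the same two domain-specific inputs (the point-evaluation bound and Montel); the paper's duality computation makes the forward direction of (i) self-contained and independent of any compactness, which is marginally cleaner since (i) is quoted on its own elsewhere, while your argument replaces that computation with standard abstract facts (reflexivity of closed subspaces, Kakutani, Eberlein--\v Smulian) at the cost of making (i) logically dependent on (ii).
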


\begin{proof}
(i) Without loss of generality we can assume that $h\equiv 0$. Assume that $\{f_k\}$ is norm-bounded
and converges  
uniformly on compact subsets to~$0$;
we have to prove that
$\Phi(f_k)\to 0$ for all $\Phi\in A^p(D,\beta)^*$. Take $\Phi\in A^p(D,\beta)^*$; by the Hahn-Banach theorem we can find
$\hat\Phi\in L^p(\delta^\beta\nu)^*$ such that $\hat\Phi|_{A^p(D,\beta)}=\Phi$. By the Riesz representation theorem
we then get $g\in L^q(\delta^\beta\nu)$ such that 
\[
\Phi(f)=\int_D fg\delta^\beta\,d\nu
\]
for all $f\in A^p(D,\beta)$, where $q$ is the conjugate exponent of~$p$. So it suffices to prove that
$\int_D f_k g\delta^\beta\,d\nu\to 0$ for all $g\in L^q(\delta^\beta\nu)$; since functions with compact support are dense 
in~$L^q(\delta^\beta\nu)$ it suffices to prove this when $g$ has compact support. But in that case,
denoting by $\textrm{Vol}_\beta\bigl(\textrm{supp}(g)\bigr)$ the volume of $\textrm{supp}(g)$ 
with respect to the measure~$\delta^\beta\nu$, we have
\begin{eqnarray*}
\left|\int_D f_kg\delta^\beta\,d\nu\right|&\le&\int_{\mathrm{supp}(g)}|f_kg|\delta^\beta\,d\nu
\le \left[\int_{\mathrm{supp}(g)}|f_k|^p\delta^\beta\,d\nu\right]^{1/p}\|g\|_{L^q(\delta^\beta\nu)}\\
&\le&\textrm{Vol}_\beta\bigl(\textrm{supp}(g)\bigr)\sup_{z\in\mathrm{supp}(g)}|f_k(z)|\,\|g\|_{L^q(D,\beta)}
\to 0
\end{eqnarray*}
because $f_k\to 0$ uniformly on compact subsets, and we are done.

Conversely, assume that $f_k\to 0$ weakly in $A^p(D,\beta)$; in particular, 
is norm-bounded in~$A^p(D,\beta)$. Therefore,
thanks to Lemma~\ref{relcomp}.(ii), to prove that $f_k\to 0$ uniformly on compact subsets
it suffices to show that any converging (uniformly on compact subsets) subsequence
must converge to~0. But if $f_{k_j}\to h\in A^p(D,\beta)$ uniformly on compact subsets the previous
argument shows that $f_{k_j}$ converges weakly to~$h$; the uniqueness of the weak limit
then yields $h\equiv 0$, and we are again done.

(ii) Let $\{g_k\}$ be a sequence in the unit ball of $A^p(D,\beta)$. By Lemma~\ref{relcomp}.(ii),
there is a subsequence $\{g_{k_j}\}$ converging uniformly on compact subsets to~$g\in\mathcal{O}(D)$; furthermore, Lemma~\ref{relcomp}.(i) yields $g\in A^p(D,\beta)$. But then part~(i) 
implies that $g_{k_j}\to g$ weakly in $A^p(D,\beta)$, and we are done. 
\end{proof}

Thus, for $1<p<\infty$, Lemma \ref{weakly} is a particular case of the following (well-known) proposition:

\begin{proposition}
\label{th:gencompact}
Let $T\colon X\to Y$ be a linear operator between Banach spaces. Then:
\begin{itemize}
\item[(i)]
if $T$ is compact, then for any 
sequence $\{x_k\}\subset  X$ weakly converging 
to $0$ the sequence $\{Tx_k\}$ strongly converges to $0$ in $Y$;
\item[(ii)] assume that the unit ball of $X$ is weakly compact; then if for any  
sequence $\{x_k\}\subset  X$ weakly converging  
to $0$ the sequence $\{Tx_k\}$ strongly converges to $0$ in $Y$ it follows that $T$ is compact.
\end{itemize}
\end{proposition}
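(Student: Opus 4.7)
The plan is to prove both parts by the standard functional-analytic arguments for compact operators, leveraging two preliminary facts: every bounded linear operator $T\colon X\to Y$ is weak-to-weak continuous (since $\phi\circ T\in X^*$ whenever $\phi\in Y^*$), and every weakly convergent sequence in a Banach space is norm-bounded (an application of the uniform boundedness principle to the canonical embeddings in $X^{**}$).

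For part (i), I would argue by contradiction. Suppose $T$ is compact and $x_k\to 0$ weakly, but $\|Tx_k\|_Y\not\to 0$. Since $\{x_k\}$ is norm-bounded, $\{Tx_k\}$ is relatively compact in $Y$, so after passing to a subsequence we may assume $Tx_{k_j}\to y$ strongly in $Y$ with $\|y\|_Y>0$. On the other hand, for every $\phi\in Y^*$ we have $\phi(Tx_{k_j})=(T^*\phi)(x_{k_j})\to 0$ because $T^*\phi\in X^*$ and $x_{k_j}\to 0$ weakly; since strong convergence implies weak convergence to the same limit, $\phi(y)=0$ for every $\phi\in Y^*$, whence $y=0$, contradicting $\|y\|_Y>0$.

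For part (ii), let $\{x_k\}\subset X$ be a bounded sequence; rescaling, assume it lies in the unit ball $B_X$. By hypothesis $B_X$ is weakly compact, and the Eberlein--\v Smulian theorem guarantees that a weakly compact subset of a Banach space is weakly sequentially compact. Thus there is a subsequence $\{x_{k_j}\}$ converging weakly to some $x\in B_X$. Then $x_{k_j}-x\to 0$ weakly, so by the assumption $T(x_{k_j}-x)\to 0$ strongly, i.e.\ $Tx_{k_j}\to Tx$ in $Y$. Hence every bounded sequence in $X$ has a subsequence whose image under $T$ converges strongly in $Y$, which is precisely the definition of $T$ being compact.

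The only nontrivial input is the Eberlein--\v Smulian theorem in part (ii); aside from that, everything reduces to bookkeeping with the definitions of weak and strong convergence together with weak-to-weak continuity of bounded operators. In the application to Lemma~\ref{weakly} the hypothesis on weak compactness of the unit ball is supplied by Lemma~\ref{ucsweak}(ii), and Lemma~\ref{ucsweak}(i) identifies weak convergence in $A^p(D,\beta)$ with norm-boundedness plus uniform convergence on compact subsets, making the bridge between Proposition~\ref{th:gencompact} and the concrete Carleson-type statement transparent.
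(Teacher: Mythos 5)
Your proof is correct and follows essentially the same route as the paper's: part (i) is the identical contradiction argument via weak-to-weak continuity of the compact operator, and part (ii) extracts a weakly convergent subsequence from the unit ball and applies the hypothesis to $x_{k_j}-x$, merely phrased directly rather than by contradiction. Your explicit appeals to the Eberlein--\v Smulian theorem (to pass from weak compactness to weak sequential compactness) and to the uniform boundedness principle (for norm-boundedness of weakly convergent sequences) make precise two steps the paper leaves implicit.
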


\begin{proof}
(i)
Suppose $T$ is compact and let $\{x_k\}\subset X$ be 
weakly converging to~$0$. If, by contradiction, $\|Tx_k\|_Y$ does not converge to $0$ then, up to passing to a subsequence, we may assume that there is $\delta>0$ such that $\|Tx_k\|_Y\ge \delta$ for all~$k$. Since $T$ is compact, there are $y\in Y$ with $y\ne 0$ and a subsequence $\{x_{k_j}\}$ such that $\|Tx_{k_j} -y\|_Y\to 0$. In particular, $Tx_{k_j} \to y$ weakly in $Y$. Since for any $\psi\in Y^*$ we have $\psi \circ T \in X^*$, we obtain
\[
\psi (Tx_{k_j}) = (\psi\circ T)(x_{k_j})\to 0\;,
\]   
and thus $Tx_{k_j} \to 0$ weakly. It follows that $y=0$, contradicting the assumption.
\smallskip

(ii)
Suppose, by contradiction, that $T$ is not compact; then there exists a sequence $\{x_k\}\subset X$ in the unit ball such that $\{Tx_k\}$ has no strongly convergent subsequence. Now, by assumption the unit ball of $X$ is weakly compact;
therefore we can find a subsequence $\{x_{k_j}\}$ weakly convergent to~$x\in X$. 
%
%
Therefore the sequence $\{x_{k_j} - x\}$ converges to $0$ weakly, and thus, again by assumption,
the sequence $\{Tx_{k_j}-Tx\}$ converges to~$0$ strongly in~$Y$, that is $Tx_{k_j}\to Tx$ strongly, contradicting the choice of $\{x_k\}$. 
\end{proof}

\begin{corollary}
\label{th:4.6}
Let $D\subset\subset\C^n$ be a bounded domain, and take $p\in(1,+\infty)$ and $\beta\in\R$. Then a linear operator $T\colon A^p(D,\beta)\to X$ taking values in a Banach space $X$ is compact if and only if for any norm-bounded 
sequence $\{f_k\}\subset  A^p(D,\beta)$ 
converging uniformly on compact subsets 
to $0$ the sequence $\{Tf_k\}$ converges to $0$ in $X$.
\end{corollary}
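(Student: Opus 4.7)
The plan is to deduce this corollary directly by combining Lemma~\ref{ucsweak} with Proposition~\ref{th:gencompact}; no further estimates are needed, the work has already been done.

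First I would handle the forward direction. Suppose $T$ is compact, and let $\{f_k\}\subset A^p(D,\beta)$ be norm-bounded and convergent to $0$ uniformly on compact subsets. By Lemma~\ref{ucsweak}(i), this is equivalent (in the range $1<p<\infty$) to saying that $f_k\to 0$ weakly in $A^p(D,\beta)$. Proposition~\ref{th:gencompact}(i) then yields $Tf_k\to 0$ in the norm of $X$.

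For the converse, assume that $Tf_k\to 0$ in $X$ whenever $\{f_k\}$ is a norm-bounded sequence in $A^p(D,\beta)$ tending to $0$ uniformly on compact subsets. Pick any sequence $\{f_k\}\subset A^p(D,\beta)$ that is weakly null. By Lemma~\ref{ucsweak}(i) again, $\{f_k\}$ is automatically norm-bounded and converges to $0$ uniformly on compact subsets, so by hypothesis $Tf_k\to 0$ in~$X$. Because $1<p<\infty$, Lemma~\ref{ucsweak}(ii) guarantees that the unit ball of $A^p(D,\beta)$ is weakly compact, so the hypothesis of Proposition~\ref{th:gencompact}(ii) is satisfied and we conclude that $T$ is compact.

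There is essentially no obstacle: the whole content sits in the earlier identification (valid precisely for $1<p<\infty$) between weak convergence and uniform-on-compacta convergence of norm-bounded sequences, together with reflexivity of $A^p(D,\beta)$. The only point requiring a tiny bit of care is to check that the hypothesis of the corollary is correctly matched with the hypothesis of Proposition~\ref{th:gencompact}(ii), namely that weakly null sequences are a subset of norm-bounded sequences tending to $0$ uniformly on compacta, which is again exactly what Lemma~\ref{ucsweak}(i) provides.
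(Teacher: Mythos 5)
Your proof is correct and follows exactly the route the paper takes: the paper's own proof is the one-line observation that the corollary ``follows immediately from Proposition~\ref{th:gencompact} and Lemma~\ref{ucsweak}'', and your write-up simply spells out that combination (the equivalence of weak nullity with norm-bounded uniform-on-compacta convergence to $0$, plus weak compactness of the unit ball for Proposition~\ref{th:gencompact}(ii)). Nothing is missing.
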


\begin{proof}
It follows immediately from Proposition~\ref{th:gencompact} and Lemma~\ref{ucsweak}.
\end{proof}

For $p=1$ or $p=\infty$ we do not have such a general statement. However, for our needs
the following particular case will be enough:

\begin{lemma}
\label{th:cpt1infty}
Let $\mu$ be a finite positive Borel measure on a topological space $X$, and $1\le r\le\infty$. 
Assume that
$R\colon E\to L^r(\mu)$ is a compact operator, where $E$ is a Banach space. Then for every norm-bounded sequence $\{f_k\}\subset E$ such that $Rf_k(x)\to 0$ for $\mu$-almost every $x\in X$ we have $\|Rf_k\|_{L^r(\mu)}\to 0$.
\end{lemma}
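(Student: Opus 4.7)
The plan is to argue by contradiction, exploiting the standard fact that norm convergence in $L^r(\mu)$ forces almost-everywhere convergence along a subsequence (trivially so when $r=\infty$, and by the usual measure-theoretic argument when $1\le r<\infty$). This combined with the hypothesis $Rf_k\to 0$ pointwise a.e.\ will force any $L^r(\mu)$-limit of a subsequence of $\{Rf_k\}$ to vanish.

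More precisely, suppose for contradiction that $\|Rf_k\|_{L^r(\mu)}\not\to 0$. Then there exist $\delta>0$ and a subsequence $\{f_{k_j}\}$ with $\|Rf_{k_j}\|_{L^r(\mu)}\ge\delta$ for all~$j$. Since $\{f_k\}$ is norm-bounded in $E$ and $R$ is compact, $\{Rf_{k_j}\}$ admits a further subsequence $\{Rf_{k_{j_i}}\}$ converging in $L^r(\mu)$ to some $g\in L^r(\mu)$. By continuity of the norm, $\|g\|_{L^r(\mu)}\ge\delta>0$, so $g$ is not $\mu$-a.e.\ zero.

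On the other hand, I would extract one more subsequence, still denoted $\{Rf_{k_{j_i}}\}$, that converges to $g$ pointwise $\mu$-a.e.: this uses the standard theorem that $L^r(\mu)$-convergence yields a.e.\ convergence of a subsequence when $1\le r<\infty$, while in the case $r=\infty$ we have $\|Rf_{k_{j_i}}-g\|_\infty\to 0$, which gives $\mu$-a.e.\ (in fact essentially uniform) convergence without further extraction. But by hypothesis $Rf_k(x)\to 0$ for $\mu$-a.e.\ $x\in X$, hence $g(x)=0$ for $\mu$-a.e.\ $x\in X$, contradicting $\|g\|_{L^r(\mu)}\ge\delta>0$. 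This contradiction concludes the argument.

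The proof is essentially routine; the only mild subtlety is that one must pass through two successive subsequences (first to invoke compactness of $R$, then to promote $L^r$-convergence to pointwise convergence when $r<\infty$) before using the pointwise hypothesis to force the limit to be zero. There is no substantial obstacle beyond handling the $r=\infty$ case separately, which is in fact easier.
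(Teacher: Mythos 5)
Your proof is correct and follows essentially the same route as the paper's: argue by contradiction, extract a subsequence with $L^r(\mu)$-norms bounded below, use compactness of $R$ to get a strongly convergent subsequence, pass to a further subsequence converging pointwise $\mu$-a.e., and conclude from the pointwise hypothesis that the limit vanishes, a contradiction. The only difference is that you spell out the (trivial) $r=\infty$ case separately, which the paper leaves implicit.
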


\begin{proof}
Since $R$ is compact and $\{f_k\}$ is norm-bounded, the sequence $\{Rf_k\}$ is relatively
compact in~$L^r(\mu)$. If, by contradiction, $\|Rf_k\|_{L^r(\mu)}$ does not converge to~0,
up to a subsequence we can assume there is $\eps>0$ such that $\|Rf_k\|_{L^r(\mu)}\ge\eps$
for all~$k$. By compactness, there is a subsequence $\{Rf_{k_j}\}$ such that
$Rf_{k_j}\to h\in L^r(\mu)$ strongly. Passing, if necessary, to a subsubsequence we have
$Rf_{k_j}(x)\to h(x)$ for $\mu$-almost every $x\in X$; but then
the assumption forces $h\equiv 0$ and thus $\|Rf_{k_j}\|_{L^r(\mu)}\to 0$, contradiction.
\end{proof}

%

We can now prove a geometrical characterization of vanishing Carleson measures
of weighted Bergman spaces
in bounded strongly pseudoconvex domains, which is new even for~$A^p(D)$:

\begin{theorem} 
\label{Carleson} 
Let $\mu$ be a finite positive Borel measure on a bounded strongly pseudoconvex domain $D\subset\subset\C ^n$, and choose $1-\frac{1}{n+1}<\theta<2$. Then the following statements are equivalent:
\begin{itemize}
\item[(i)] $\mu$ is a vanishing Carleson measure of $A^p\bigl(D,(n+1)(\theta-1)\bigr)$ for all $p\in[1,+\infty)$;
\item[(ii)] $\mu$ is a vanishing Carleson measure of $A^p\bigl(D,(n+1)(\theta-1)\bigr)$ for some $p\in[1,+\infty)$;
\item[(iii)] $\mu$ is a vanishing $\theta$-Carleson measure;
\item[(iv)] there exists $r_0\in(0,1)$ such that 
\[
\lim_{z_0\to\de D}\frac{\mu\bigl(B_D(z_0,r_0)\bigr)}{\nu\bigl(B_D(z_0,r_0)\bigr)^\theta}=0\;;
\]
\item[(v)] for every $r\in(0,1)$ and for every $r$-lattice $\{a_k\}$ in $D$ one has
\[
\lim_{k\to+\infty}\frac{\mu\bigl(B_D(a_k,r)\bigr)}{\nu\bigl(B_D(a_k,r)\bigr)^\theta}=0\;;
\]
\item[(vi)] there exists $r_0\in(0,1)$ and a $r_0$-lattice $\{a_k\}$ in $D$ such that
\[
\lim_{k\to+\infty}\frac{\mu\bigl(B_D(a_k,r_0)\bigr)}{\nu\bigl(B_D(a_k,r_0)\bigr)^\theta}=0\;.
\]
\end{itemize}
\end{theorem}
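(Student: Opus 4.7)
The trivial chain of implications is (i)$\Longrightarrow$(ii) together with (iii)$\Longrightarrow$(iv)$\Longrightarrow$(vi) and (iii)$\Longrightarrow$(v)$\Longrightarrow$(vi), so the real work is to close the loop via (ii)$\Longrightarrow$(iii) and (vi)$\Longrightarrow$(i), imitating the structure of the proof of Theorem~\ref{carthetaCarluno} but now extracting a vanishing instead of a boundedness statement.

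For (ii)$\Longrightarrow$(iii), suppose the inclusion $A^p\bigl(D,(n+1)(\theta-1)\bigr)\hookrightarrow L^p(\mu)$ is compact for some $p\in[1,+\infty)$. Since $1-\frac{1}{n+1}<\theta<2\le 2p$, Theorem~\ref{BKp}(i) applied to the holomorphic function $k_{z_0}^2\in A^p\bigl(D,(n+1)(\theta-1)\bigr)$ yields
\[
\|k_{z_0}^2\|_{p,(n+1)(\theta-1)}^p=\|k_{z_0}\|_{2p,(n+1)(\theta-1)}^{2p}\preceq \delta(z_0)^{(n+1)(\theta-p)}.
\]
Hence the family
\[
h_{z_0}:=\delta(z_0)^{-(n+1)(\theta-p)/p}\,k_{z_0}^2
\]
is norm-bounded in $A^p\bigl(D,(n+1)(\theta-1)\bigr)$. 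On the other hand, Lemmas~\ref{BKbasic} and~\ref{BK} (together with Kerzman's boundedness of $K$ off the diagonal) give $h_{z_0}\to 0$ uniformly on every compact subset of $D$ as $z_0\to\partial D$, because the overall power of $\delta(z_0)$ one gets on compact sets is $(n+1)(2p-\theta)/p>0$. Applying Lemma~\ref{weakly} (the characterization of vanishing Carleson measures in $L^p(\mu)$) then forces $\|h_{z_0}\|_{L^p(\mu)}\to 0$. Fixing any $r\in(0,1)$ and using Lemma~\ref{piu} to bound $|k_{z_0}|^{2p}\ge c_r^p/\delta(z_0)^{(n+1)p}$ on $B_D(z_0,r)$, we obtain
\[
\frac{c_r^p\,\mu\bigl(B_D(z_0,r)\bigr)}{\delta(z_0)^{(n+1)\theta}}\le \|h_{z_0}\|_{L^p(\mu)}^p\longrightarrow 0,
\]
and Lemma~\ref{sei} converts this into (iii).

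For (vi)$\Longrightarrow$(i), fix an $r_0$-lattice $\{a_k\}$ as in (vi) and a norm-bounded sequence $\{f_n\}\subset A^p\bigl(D,(n+1)(\theta-1)\bigr)$ converging to $0$ uniformly on compact subsets of $D$; by Lemma~\ref{weakly} it suffices to show that $\|f_n\|_{L^p(\mu)}\to 0$. Split the sum over the lattice at some index $N$,
\[
\int_D|f_n|^p\,d\mu\le \sum_{k<N}\int_{B_D(a_k,r_0)}|f_n|^p\,d\mu+\sum_{k\ge N}\int_{B_D(a_k,r_0)}|f_n|^p\,d\mu.
\]
The first (finite) sum tends to $0$ as $n\to\infty$ for every fixed $N$, because the union of the corresponding balls is relatively compact in $D$ and $f_n\to 0$ uniformly there; note that the uniform discreteness of the lattice implies that only finitely many $a_k$ lie in any compact set, so $a_k\to\partial D$. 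For the tail, the submean inequality (Lemma~\ref{due}) exactly as in the proof of (vi)$\Longrightarrow$(i) of Theorem~\ref{carthetaCarluno} yields
\[
\int_{B_D(a_k,r_0)}|f_n|^p\,d\mu\preceq \frac{\mu\bigl(B_D(a_k,r_0)\bigr)}{\nu\bigl(B_D(a_k,r_0)\bigr)^\theta}\int_{B_D(a_k,R_0)}|f_n|^p\delta(\zeta)^{(n+1)(\theta-1)}\,d\nu(\zeta),
\]
with $R_0=\frac{1}{2}(1+r_0)$, after absorbing $\delta(a_k)^{(n+1)(\theta-1)}$ into $\delta(\zeta)^{(n+1)(\theta-1)}$ via Lemma~\ref{sette}. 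Given $\varepsilon>0$, hypothesis (vi) lets us pick $N$ so that the ratio is $<\varepsilon$ for all $k\ge N$; summing and using the finite-overlap property of the $R_0$-balls yields
\[
\sum_{k\ge N}\int_{B_D(a_k,r_0)}|f_n|^p\,d\mu\preceq m\,\varepsilon\,\|f_n\|_{p,(n+1)(\theta-1)}^p.
\]
Combining the two pieces gives $\limsup_n\|f_n\|_{L^p(\mu)}^p\preceq \varepsilon$, and (i) follows.

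The delicate point in the whole argument is choosing the exponent in the normalization of $k_{z_0}^2$ in the proof of (ii)$\Longrightarrow$(iii): the scaling must simultaneously make the family norm-bounded (so that compactness can be invoked) and produce local vanishing (so that Lemma~\ref{weakly} applies), and this is exactly what the bound of Theorem~\ref{BKp} on $\|k_{z_0}\|_{2p,(n+1)(\theta-1)}$ together with Lemma~\ref{BK} arranges in the range $1-\frac{1}{n+1}<\theta<2$.
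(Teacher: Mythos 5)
Your proposal is correct and follows essentially the same route as the paper's own proof: the same trivial implications, the same lattice/submean-value argument for (vi)$\Longrightarrow$(i), and for (ii)$\Longrightarrow$(iii) the same test family $\delta(z_0)^{(n+1)(1-\theta/p)}k_{z_0}^2$ (your normalization $\delta(z_0)^{-(n+1)(\theta-p)/p}$ is the identical exponent), combined with Theorem~\ref{BKp}, Lemmas~\ref{BK}, \ref{weakly} and~\ref{piu} exactly as in the text. No gaps.
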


\begin{proof}
(i)$\Longrightarrow$(ii), 
(iii)$\Longrightarrow$(iv), (iii)$\Longrightarrow$(v), (iv)$\Longrightarrow$(vi) and (v)$\Longrightarrow$(vi) are obvious.\smallbreak

(vi)$\Longrightarrow$(i) Fix $p\in[1,+\infty)$, and 
assume that $\{f_l\}\subset A^p\bigl(D,(n+1)(\theta-1)\bigr)$ is a norm-bounded sequence converging to $0$ uniformly on compact subsets; by Lemma~\ref{weakly} we must prove that $\|f_l\|_{L^p(\mu)}\to 0$.

Let $M>0$ be such that 
\begin{equation}
\|f_l\|_{p,(n+1)(\theta-1)} \le M
\label{unif}
\end{equation}
for all $l\in\N$.
By assumption, 
for any given $\eps>0$ there is $N_\eps\in\mathbb{N}$ such that
\begin{equation}
\forall k\geq N_\eps\qquad \frac{\mu\bigl(B_D(a_k,r_0)\bigr)}{\nu\bigl(B_D(a_k,r_0)\bigr)^\theta}< \eps\;.
\label{<e}
\end{equation}
Since the balls $B_D(a_k,r_0)$ cover $D$, it holds
\begin{equation}
\int_D |f_l|^p\,d\mu\ \leq\ \sum_{k=0}^\infty \int_{B_D(a_k,r_0)}|f_l(z)|^p\,d\mu(z)\;.
\label{stima1}
\end{equation}
Since $|f_l|^p$ is plurisubharmonic and nonnegative, Lemmas~\ref{due}, \ref{sei} and~\ref{sette} yield
\begin{eqnarray}
\nonumber \int_{B_D(a_k,r_0)}|f_l(z)|^p\,d\mu(z)&\le& {\frac{K_{r_0}}{\nu\bigl(B_D(a_k,r_0)\bigr)}}\int_{B_D(a_k,r_0)}d\mu(z)
\int_{B_D(a_k,R_0)}|f_l(\zeta)|^p\,d\nu(\zeta)\\
&=&K_{r_0}\frac{\mu\bigl(B_D(a_k,r_0)\bigr)}{\nu\bigl(B_D(a_k,r_0)\bigr)^\theta}
\nu\bigl(B_D(a_k,r_0)\bigr)^{\theta-1}
\int_{B_D(a_k,R_0)}|f_l(\zeta)|^p\,d\nu(\zeta)\nonumber\\
&\le&C_{r_0}\frac{\mu\bigl(B_D(a_k,r_0)\bigr)}{\nu\bigl(B_D(a_k,r_0)\bigr)^\theta}
\delta(a_k)^{(n+1)(\theta-1)}\int_{B_D(a_k,R_0)}|f_l(\zeta)|^p\,d\nu(\zeta)
\nonumber\\
&\le&\tilde C_{r_0}\frac{\mu\bigl(B_D(a_k,r_0)\bigr)}{\nu\bigl(B_D(a_k,r_0)\bigr)^\theta}
\int_{B_D(a_k,R_0)}|f_l(\zeta)|^p\delta(\zeta)^{(n+1)(\theta-1)}\,d\nu(\zeta)\;,
\label{stima2}
\end{eqnarray}
where $R_0={\frac12}(1+r_0)$ and $C_{r_0}$, $\tilde C_{r_0}>0$ are constants depending
only on~$r_0$.

Uniform convergence of $f_l$ to~$0$ on compact subsets implies that there exists $L_\varepsilon\in\N$ such that
\begin{equation}
\forall l\geq L_\eps \qquad \sum_{k=0}^{N_\eps-1} \int_{B_D(a_k,r_0)}|f_l(z)|^p\,d\mu(z)<\eps\,.
\label{stima3}
\end{equation}
Hence (\ref{stima1}), (\ref{stima2}), (\ref{stima3}), (\ref{<e}) and (\ref{unif}) yield 
\begin{eqnarray*}
\int_D |f_l|^p\,d\mu &\leq& \sum_{k=0}^{N_\eps-1} \int_{B_D(a_k,r_0)}|f_l(z)|^p\,d\mu(z) + \sum_{k=N_\eps}^{\infty} \int_{B_D(a_k,r_0)}|f_l(z)|^p\,d\mu(z)\\
&\le& \eps + \sum_{k=N_\eps}^\infty \tilde C_{r_0}\frac{\mu\bigl(B_D(a_k,r_0)\bigr)}{\nu\bigl(B_D(a_k,r_0)\bigr)^\theta}
\int_{B_D(a_k,R_0)}|f_l(\zeta)|^p\delta(\zeta)^{(n+1)(\theta-1)}\,d\nu(\zeta)\\
&\le& \eps + \tilde C_{r_0}\eps m\int_D|f_l(\zeta)|^p\delta(\zeta)^{(n+1)(\theta-1)}\,d\nu(\zeta) \le \eps(1+\tilde C_{r_0} m M^p)
\end{eqnarray*}
for a suitable $m\in\N$ as soon as $l\ge L_\eps$. 
Thus
\[
\lim_{l\to\infty}\int_D |f_l(z)|^p\,d\mu(z)\ =\ 0\,,
\]
and $\mu$ is a vanishing Carleson measure of $A^p\bigl(D,(n+1)(\theta-1)\bigr)$ by Lemma~\ref{weakly}.
\smallbreak

(ii)$\Longrightarrow$(iii) First of all notice that $\|k^2_{z_0}\|_{p,(n+1)(\theta-1)}=\|k_{z_0}\|^2_{2p,(n+1)(\theta-1)}$. Therefore the assumption on~$\theta$ and Theorem~\ref{BKp} imply
\[
\delta(z_0)^{(n+1)\left(1-\frac{\theta}{p}\right)}\|k^2_{z_0}\|_{p,(n+1)(\theta-1)}
\preceq \delta(z_0)^{(n+1)\left(1-\frac{\theta}{p}\right)}\delta(z_0)^{n+1-2\frac{n+1}{(2p)'}+2\frac{(n+1)(\theta-1)}{2p}}= 1\;,
\]
where $(2p)'$ is the conjugate exponent of~$2p>1$. Thus the family
$\{\delta(z_0)^{(n+1)\left(1-\frac{\theta}{p}\right)}k^2_{z_0}\}_{z_0\in D}$ is norm-bounded
in $A^p\bigl(D,(n+1)(\theta-1)\bigr)$, and then
Lemmas~\ref{BK} and \ref{weakly} imply that
\[
\lim_{z_0\to\de D}\|\delta(z_0)^{(n+1)\left(1-\frac{\theta}{p}\right)}k_{z_0}^2\|_{L^p(\mu)}=0\;.
\] 
Now choose $r\in(0,1)$, and let
$\delta_r>0$ be given by Lemma~\ref{piu}. Then if $\delta(z_0)<\delta_r$ we have
\begin{eqnarray*}
\|\delta(z_0)^{(n+1)\left(1-\frac{\theta}{p}\right)}k_{z_0}^2\|_{L^p(\mu)}^p&=&\delta(z_0)^{(n+1)(p-\theta)}\int_D |k_{z_0}(z)|^{2p}\, d\mu(z)\\
&\ge&\delta(z_0)^{(n+1)(p-\theta)}\int_{B_D(z_0,r)}
|k_{z_0}(z)|^{2p}\, d\mu(z)\\
&\succeq&\delta(z_0)^{-(n+1)\theta}\,\mu\bigl(B_D(z_0,r)\bigr)\\
&\succeq&\frac{\mu\bigl(B_D(z_0,r)\bigr)}{\nu\bigl(B_D(z_0,r)\bigr)^\theta}
\end{eqnarray*}
by Lemma~\ref{sei}, and we are done. 
\end{proof}

\begin{remark}
\label{rem:C}
The implications (iii)$\Longrightarrow$(iv)$\Longrightarrow$(vi)$\Longrightarrow$(i)$\Longrightarrow$(ii), as well as (iii)$\Longrightarrow$(v)$\Longrightarrow$(vi), hold for all
$\theta>0$. The implication (ii)$\Longrightarrow$(iii) works for $1-\frac{1}{n+1}<\theta<2p$;
when $\theta\ge 2p$ condition (ii) implies that $\mu$ is vanishing $(2p-\eps)$-Carleson 
for all~$\eps>0$. 
\end{remark}

\begin{corollary}
\label{th:CvC}
Let $\mu$ be a finite positive Borel measure on a bounded strongly pseudoconvex domain $D\subset\subset\C ^n$. If $1-\frac{1}{n+1}<\theta<2$ is such that there is a continuous
inclusion $A^p\bigl(D,(n+1)(\theta-1)\bigr)\hookrightarrow L^p(\mu)$ for some $p\in[1,+\infty)$
then the inclusion  $A^p\bigl(D,(n+1)(\theta'-1)\bigr)\hookrightarrow L^p(\mu)$ is compact
for all $p\in[1,+\infty)$ and $\theta'<\theta$.
\end{corollary}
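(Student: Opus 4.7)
The plan is to chain together three previously established results: Theorem~\ref{carthetaCarluno} (characterization of $\theta$-Carleson as continuous inclusion), an elementary comparison between $\theta$- and $\theta'$-Carleson measures, and Theorem~\ref{Carleson} (characterization of vanishing $\theta'$-Carleson as compact inclusion). No new technical machinery is required.

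First I would invoke Theorem~\ref{carthetaCarluno} to convert the analytic hypothesis into a geometric one. The continuous inclusion $A^p\bigl(D,(n+1)(\theta-1)\bigr)\hookrightarrow L^p(\mu)$ is exactly condition (ii) of that theorem for this particular~$p$. Since the hypothesis assumes $\theta<2\le 2p$, the range condition $1-\frac{1}{n+1}<\theta<2p$ stated in Remark~\ref{rem:uno} is satisfied, so (ii)$\Longrightarrow$(iii) applies and $\mu$ is $\theta$-Carleson.

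Second, I would show $\mu$ is vanishing $\theta'$-Carleson for every $\theta'<\theta$. Fix $r\in(0,1)$ and write
\[
\frac{\mu\bigl(B_D(z_0,r)\bigr)}{\nu\bigl(B_D(z_0,r)\bigr)^{\theta'}}=\frac{\mu\bigl(B_D(z_0,r)\bigr)}{\nu\bigl(B_D(z_0,r)\bigr)^{\theta}}\cdot\nu\bigl(B_D(z_0,r)\bigr)^{\theta-\theta'}.
\]
The first factor is bounded by the $\theta$-Carleson property, and the second factor tends to~$0$ as $z_0\to\de D$ because $\nu\bigl(B_D(z_0,r)\bigr)\approx\delta(z_0)^{n+1}$ by Lemma~\ref{sei} and $\theta-\theta'>0$. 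This is essentially the content of Remark~\ref{rem:carvancar}.

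Finally, I would apply Theorem~\ref{Carleson} (using Remark~\ref{rem:C} to cover values of $\theta'$ outside the range $1-\frac{1}{n+1}<\theta'<2$ if needed, since the implication (iii)$\Longrightarrow$(i) is valid for all $\theta'>0$): being vanishing $\theta'$-Carleson gives the compact inclusion $A^p\bigl(D,(n+1)(\theta'-1)\bigr)\hookrightarrow L^p(\mu)$ for every $p\in[1,+\infty)$. If $\theta'\le 0$ the measure~$\mu$ is trivially vanishing $\theta'$-Carleson (indeed any finite measure is), so the same conclusion holds in that case as well. There is no real obstacle here; the only point that requires minor care is checking the range restrictions of Theorems~\ref{carthetaCarluno} and~\ref{Carleson}, which are handled by the two remarks already cited.
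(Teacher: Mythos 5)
Your proposal is correct and follows essentially the same route as the paper, whose entire proof is ``It follows immediately from Theorem~\ref{Carleson} and Remark~\ref{rem:carvancar}''; you have merely made explicit the intermediate appeal to Theorem~\ref{carthetaCarluno} and the one-line verification of Remark~\ref{rem:carvancar}. The range bookkeeping you do (via Remarks~\ref{rem:uno} and~\ref{rem:C}, and the trivial case $\theta'\le 0$) is sound and in fact slightly more careful than the paper's own wording.
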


\begin{proof}
It follows immediately from Theorem~\ref{Carleson} and Remark~\ref{rem:carvancar}.
\end{proof}

We can also use the Berezin transform to characterize vanishing $\theta$-Carleson measures:

\begin{theorem}
\label{carvanthetaCarldue}
Let $D\subset\subset\C ^n$ be a bounded strongly
pseudoconvex domain, and 
choose $1-\frac{1}{n+1}<\theta<2$. 
Then the following assertions are equivalent:
\begin{itemize}
\item[(i)] $\mu$ is vanishing $\theta$-Carleson;
\item[(ii)]  the Berezin transform $B\mu$ of $\mu$ satisfies $\delta(z_0)^{(n+1)(1-\theta)}B\mu(z_0)\to0$ as $z_0\to\de D$.
\end{itemize}
\end{theorem}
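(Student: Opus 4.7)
The plan is to mimic the proof of Theorem~\ref{carthetaCarldue}, substituting the functional-analytic machinery from this section (Lemma~\ref{weakly}, Lemma~\ref{BK} and Theorem~\ref{Carleson}) for the direct bounds used in the non-vanishing case. So the argument splits into the two implications (i)$\Longleftrightarrow$(ii), each handled separately.

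For (i)$\Longrightarrow$(ii), the key observation is the identity
\[
\delta(z_0)^{(n+1)(1-\theta)}B\mu(z_0)=\|g_{z_0}\|_{L^2(\mu)}^{2},\quad\text{where }g_{z_0}:=\delta(z_0)^{(n+1)(1-\theta)/2}k_{z_0}.
\]
I would first check, via Theorem~\ref{BKp}(i) with $p=2$ and $\beta=(n+1)(\theta-1)$ (which lies in $(-1,n+1)$ exactly because $1-\tfrac1{n+1}<\theta<2$), that $\|k_{z_0}\|_{2,(n+1)(\theta-1)}\preceq\delta(z_0)^{(n+1)(\theta-1)/2}$, so that the family $\{g_{z_0}\}_{z_0\in D}$ is norm-bounded in $A^2\bigl(D,(n+1)(\theta-1)\bigr)$. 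Next, since the exponent $(n+1)(1-\theta)/2$ exceeds $-(n+1)/2$ (again because $\theta<2$), Lemma~\ref{BK} yields $g_{z_0}\to 0$ uniformly on compact subsets as $z_0\to\partial D$. Finally, (i) together with Theorem~\ref{Carleson} tells us that $\mu$ is a vanishing Carleson measure of $A^2\bigl(D,(n+1)(\theta-1)\bigr)$; applying the sequential criterion of Lemma~\ref{weakly} to any sequence $\{g_{z_k}\}$ with $z_k\to\partial D$ forces $\|g_{z_k}\|_{L^2(\mu)}\to 0$, which is exactly (ii).

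For (ii)$\Longrightarrow$(i), the strategy is identical to the one used in the proof of Theorem~\ref{carthetaCarldue}. Fix $r\in(0,1)$ and let $\delta_r,c_r>0$ be given by Lemma~\ref{piu}; by Lemma~\ref{sei} it is enough to control the ratio when $\delta(z_0)<\delta_r$. Restricting the defining integral of $B\mu$ to the Kobayashi ball and using the lower bound of Lemma~\ref{piu} gives
\[
B\mu(z_0)\ge\int_{B_D(z_0,r)}|k_{z_0}(z)|^{2}\,d\mu(z)\ge\frac{c_r}{\delta(z_0)^{n+1}}\,\mu\bigl(B_D(z_0,r)\bigr),
\]
so that, by Lemma~\ref{sei},
\[
\frac{\mu\bigl(B_D(z_0,r)\bigr)}{\nu\bigl(B_D(z_0,r)\bigr)^{\theta}}\preceq\delta(z_0)^{(n+1)(1-\theta)}B\mu(z_0),
\]
which by hypothesis tends to $0$ as $z_0\to\partial D$.

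I do not expect a serious obstacle here: the argument is essentially a parallel of the already-written proof of Theorem~\ref{carthetaCarldue}, with Lemma~\ref{weakly} and Lemma~\ref{BK} replacing the continuous-inclusion step. The only delicate point is that both Lemma~\ref{BK} (which requires $\beta>-(n+1)/2$) and Theorem~\ref{BKp}(i) (which requires $-1<\beta<(n+1)(p-1)$) must be applied with $p=2$ and $\beta=(n+1)(\theta-1)$, and it is precisely the two-sided constraint $1-\tfrac1{n+1}<\theta<2$ that makes both exponent conditions hold simultaneously; outside this range one would only obtain the weaker statement recorded in Remark~\ref{rem:C}.
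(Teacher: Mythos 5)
Your proposal is correct and follows essentially the same route as the paper's own proof: the forward implication uses the norm-boundedness of $\delta(z_0)^{(n+1)(1-\theta)/2}k_{z_0}$ in $A^2\bigl(D,(n+1)(\theta-1)\bigr)$ (Theorem~\ref{BKp}), its uniform convergence to $0$ on compact subsets (Lemma~\ref{BK}), and the sequential criterion of Lemma~\ref{weakly} combined with Theorem~\ref{Carleson}, while the reverse implication is the same Kobayashi-ball lower bound via Lemmas~\ref{piu} and~\ref{sei}. Your accounting of exactly where the two-sided constraint $1-\frac{1}{n+1}<\theta<2$ enters is also accurate.
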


\begin{proof}
(i)$\Longrightarrow$(ii) 
Theorem~\ref{BKp} and  Lemma~\ref{BK} imply that
$\delta(z_0)^{(n+1)(1-\theta)/2}k_{z_0}$ is norm-bounded in $A^2\bigl(D,(n+1)(\theta-1)\bigr)$ and converges to $0$ uniformly on compact subsets as $z_0\to\partial D$.  So Theorem~\ref{Carleson} and Lemma~\ref{weakly} imply 
\[
\delta(z_0)^{(n+1)(1-\theta)}B\mu(z_0)=\delta(z_0)^{(n+1)(1-\theta)}\int_D|k_{z_0}(z)|^2\,d\mu=
 \|\delta(z_0)^{(n+1)(1-\theta)/2}k_{z_0}\|^2_{L^2(\mu)}\to 0
\]
as $z_0\to\de D$, as desired.
\smallskip

(ii)$\Longrightarrow$(i). Fix $r\in(0,1)$, and let $\delta_r>0$ be given by Lemma~\ref{piu}. Since we are only interested in the limit as $z_0$ goes to the boundary of $D$, we may assume $\delta(z_0)<\delta_r$. 
Then Lemma~\ref{piu} yields
\[
\frac{c_r}{\delta(z_0)^{n+1}}\mu\bigl(
B_D(z_0,r)\bigr)\le \int_{B_D(z_0,r)}|k_{z_0}(z)|^2\,d\mu(z)\le\int_{D}|k_{z_0}(z)|^2\,d\mu(z)= B\mu(z_0)\;.
\]
Recalling Lemma~\ref{sei} we get
\[
\mu\bigl(B_D(z_0,r)\bigr)\preceq \delta(z_0)^{n+1}B\mu(z_0)\preceq 
\delta(z_0)^{(n+1)(1-\theta)}B\mu(z_0)\nu\bigl(B_D(z_0,r)\bigr)^\theta\;;
\]
hence
\[
\frac{\mu\bigl(B_D(z_0,r)\bigr)}{\nu\bigl(B_D(z_0,r)\bigr)^\theta}\preceq \delta(z_0)^{(n+1)(1-\theta)}B\mu(z_0) \;,
\]
and (i) follows.
\end{proof}

\begin{remark}
\label{rem:Cb}
The implication (ii)$\Longrightarrow$(i) holds for all~$\theta>0$. On the other hand, condition (i) when $\theta\ge 2$ implies that $\delta(z_0)^{\eps-(n+1)}B\mu(z_0)\to 0$ as $z_0\to\de D$ for all $\eps>0$.
\end{remark}

The analogue of Lemma~\ref{changeCarl} is:

\begin{lemma}
\label{vCshift}
Let $D\subset\C ^n$ be a bounded domain, and $\theta$,~$\eta\in\R$.
Then a finite positive Borel measure $\mu$ is a vanishing $\theta$-Carleson measure if and only if
$\delta^\eta\mu$ is a vanishing $(\theta+\frac{\eta}{n+1})$-Carleson measure.
\end{lemma}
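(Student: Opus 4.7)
The plan is to mirror the proof of Lemma~\ref{changeCarl} exactly, replacing the ``bounded by a constant'' statements with ``tends to zero'' statements, since the key geometric estimates (Lemmas~\ref{sei} and~\ref{sette}) are two-sided comparisons. The point is that on a Kobayashi ball $B_D(z_0,r)$ the weight $\delta^\eta$ is essentially constant, and this constant combines with the volume factor to shift the exponent~$\theta$ by exactly $\eta/(n+1)$.

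Concretely, I would set $\mu_\eta = \delta^\eta \mu$, fix $r\in(0,1)$, and compute
\[
\mu_\eta\bigl(B_D(z_0,r)\bigr) = \int_{B_D(z_0,r)} \delta(\zeta)^\eta\, d\mu(\zeta) \approx \delta(z_0)^\eta\, \mu\bigl(B_D(z_0,r)\bigr),
\]
where the comparability follows from Lemma~\ref{sette} (applied to both $\eta \ge 0$ and $\eta < 0$, since that lemma gives $\delta(\zeta) \approx \delta(z_0)$ on $B_D(z_0,r)$, with the implicit constant depending on $r$). Next, Lemma~\ref{sei} gives $\nu(B_D(z_0,r))^{\eta/(n+1)} \approx \delta(z_0)^\eta$, so dividing through yields
\[
\frac{\mu_\eta\bigl(B_D(z_0,r)\bigr)}{\nu\bigl(B_D(z_0,r)\bigr)^{\theta + \eta/(n+1)}} \approx \frac{\mu\bigl(B_D(z_0,r)\bigr)}{\nu\bigl(B_D(z_0,r)\bigr)^\theta}.
\]

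From this two-sided comparison, the equivalence of the vanishing conditions as $z_0 \to \de D$ is immediate in the forward direction: if $\mu$ is vanishing $\theta$-Carleson, then the right-hand side tends to $0$, so the left-hand side does as well, meaning $\mu_\eta$ is vanishing $(\theta + \eta/(n+1))$-Carleson. For the converse, apply the forward direction with the roles of $\mu$ and $\mu_\eta$ swapped and with $\eta$ replaced by $-\eta$, using $\mu = (\mu_\eta)_{-\eta}$.

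There is no substantive obstacle here; the only point requiring a moment of thought is ensuring that the implicit constants in $\approx$ are independent of $z_0$ (which they are, since Lemmas~\ref{sei} and~\ref{sette} produce constants depending only on $r$ and $D$), so that the limit $z_0 \to \de D$ transfers through the comparison. As with the proof of Lemma~\ref{changeCarl}, the argument is purely geometric and uses no property of strong pseudoconvexity beyond what is already encoded in Lemmas~\ref{sei} and~\ref{sette}.
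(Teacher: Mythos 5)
Your proposal is correct and follows essentially the same route as the paper's proof: both use Lemma~\ref{sette} to replace $\delta(\zeta)^\eta$ by $\delta(z_0)^\eta$ on the Kobayashi ball, Lemma~\ref{sei} to absorb that factor into the power of $\nu\bigl(B_D(z_0,r)\bigr)$, and the identity $\mu=(\mu_\eta)_{-\eta}$ for the converse. Your remark that the comparison is actually two-sided (so the converse is immediate) is a minor stylistic improvement over the paper's one-sided $\preceq$ plus symmetry argument, but the substance is identical.
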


\begin{proof}
Assume $\mu$ is vanishing $\theta$-Carleson, set $\mu_\eta=\delta^\eta\mu$, and choose $r\in(0,1)$.
Then Lemmas~\ref{sette} and~\ref{sei} yield
\[
\begin{aligned}
\frac{\mu_\eta\bigl(B_D(z_0,r)\bigr)}{\nu\bigl(B_D(z_0,r)\bigr)^{\theta+\frac{\eta}{n+1}}}&=
\frac{1}{\nu\bigl(B_D(z_0,r)\bigr)^{\theta+\frac{\eta}{n+1}}}\int_{B_D(z_0,r)}\delta(\zeta)^\eta\,d\mu(\zeta)\\
&\preceq \frac{\delta(z_0)^\eta}{\nu\bigl(B_D(z_0,r)\bigr)^{\theta+\frac{\eta}{n+1}}} \mu\bigl(B_D(z_0,r)\bigr)\\
&\preceq \frac{\mu\bigl(B_D(z_0,r)\bigr)}{\nu\bigl(B_D(z_0,r)\bigr)^\theta}\to 0\;,
\end{aligned}
\]
as $z_0\to\de D$ because $\mu$ is vanishing $\theta$-Carleson, 
and so $\mu_\eta$ is vanishing $\left(\theta+\frac{\eta}{n+1}\right)$-Carleson. Since $\mu=(\mu_\eta)_{-\eta}$, the converse follows too.
\end{proof}

On the other hand,  uniformly discrete sequences yield vanishing $\theta$-Carleson measures only if they are finite:

\begin{theorem} 
\label{vandtre} 
Let $D\subset\subset\C^n$ be a bounded strongly pseudoconvex domain, considered as a metric space with the distance~$\rho_D=\tanh k_D$, and choose $1-\frac{1}{n+1}<\theta<2$. Let $\Gamma=\{z_j\}_{j\in\N}$ be a sequence
in~$D$. Then the following assertions are equivalent:
\begin{itemize}
\item[(i)] $\lim\limits_{z_0\to\de D}N(z_0,r,\Gamma)=0$ for all $r\in(0,1)$;
\item[(ii)] there exists $r_0>0$ such that $\lim\limits_{z_0\to\de D}N(z_0,r_0,\Gamma)=0$;
\item[(iii)] $\mu_\theta=\sum_j \delta(z_j)^{(n+1)\theta}\delta_{z_j}$ is a vanishing $\theta$-Carleson measure, where $\delta_{z_j}$ is the Dirac measure in~$z_j$;
\item[(iv)] $\Gamma$ is finite.
\end{itemize}
\end{theorem}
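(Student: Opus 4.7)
The plan is to close the cycle (iv)\,$\Rightarrow$\,(i)\,$\Rightarrow$\,(ii)\,$\Rightarrow$\,(iv) together with the equivalence (iv)\,$\Leftrightarrow$\,(iii); the substance is concentrated in (iii)\,$\Rightarrow$\,(iv) and (ii)\,$\Rightarrow$\,(iv), while all remaining directions fall out of the standard fact that on a bounded strongly pseudoconvex domain $k_D(z,z_0)\to+\infty$ as $z_0\to\de D$ for every fixed $z\in D$. When $\Gamma$ is finite, applying this to each of its points shows that every Kobayashi ball $B_D(z_0,r)$ with $\delta(z_0)$ sufficiently small misses $\Gamma$ entirely; this yields (iv)\,$\Rightarrow$\,(i), hence (i)\,$\Rightarrow$\,(ii) trivially, and (iv)\,$\Rightarrow$\,(iii) because the finitely supported measure $\mu_\theta$ then satisfies $\mu_\theta\bigl(B_D(z_0,r)\bigr)=0$ for $z_0$ close enough to $\de D$.

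For (iii)\,$\Rightarrow$\,(iv) I would argue by contradiction. Assume $\mu_\theta$ is vanishing $\theta$-Carleson but $\Gamma$ is infinite. Since $\mu_\theta$ is finite by definition, the series $\sum_j\delta(z_j)^{(n+1)\theta}$ converges; any interior accumulation point of $\Gamma$ would force infinitely many terms to be bounded below, so $\delta(z_j)\to 0$ along the sequence. By compactness of $\overline{D}$ extract a subsequence $z_{j_k}\to w\in\de D$. The single atom of $\mu_\theta$ sitting at $z_{j_k}$ already gives
\[
\frac{\mu_\theta\bigl(B_D(z_{j_k},r)\bigr)}{\nu\bigl(B_D(z_{j_k},r)\bigr)^\theta}\ \ge\ \frac{\delta(z_{j_k})^{(n+1)\theta}}{\nu\bigl(B_D(z_{j_k},r)\bigr)^\theta}\ \succeq\ 1
\]
by Lemma~\ref{sei}; since $z_{j_k}\to\de D$, this contradicts the vanishing hypothesis.

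The implication (ii)\,$\Rightarrow$\,(iv) is handled by the same extraction mechanism: if $\Gamma$ is infinite, compactness of $\overline{D}$ supplies a convergent subsequence, and (as in the previous step, where the finiteness of $\mu_\theta$ naturally present in the problem rules out interior accumulation) we may take its limit to lie in $\de D$; setting $z_0=z_{j_k}$ then gives $N(z_{j_k},r_0,\Gamma)\ge 1$, since $z_{j_k}$ itself belongs to $B_D(z_{j_k},r_0)$, contradicting (ii). I expect the main obstacle to be the clean use of Lemma~\ref{sei} in the second step: everything pivots on the geometric relation $\nu\bigl(B_D(z_0,r)\bigr)^\theta\approx\delta(z_0)^{(n+1)\theta}$, which converts a single boundary-accumulating Dirac mass into an order-one contribution to the normalized quotient and thereby breaks vanishing $\theta$-Carleson.
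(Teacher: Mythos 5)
Your route is genuinely different from the paper's in the one place where it works: for (iii)$\Longrightarrow$(iv) the paper goes through the Bergman kernel, bounding $N(z_0,r,\Gamma)\preceq\bigl\|\delta(z_0)^{(n+1)(1-\theta)/2}k_{z_0}\bigr\|^2_{L^2(\mu_\theta)}$ via Lemma~\ref{piu} and then invoking Theorem~\ref{BKp}, Lemma~\ref{BK}, Theorem~\ref{Carleson} and Lemma~\ref{weakly} to prove (iii)$\Longrightarrow$(i), and only afterwards extracts (iv) from (ii)$+$(iii). Your single-atom estimate $\mu_\theta\bigl(B_D(z_{j_k},r)\bigr)\ge\delta(z_{j_k})^{(n+1)\theta}\approx\nu\bigl(B_D(z_{j_k},r)\bigr)^\theta$ needs only Lemma~\ref{sei} plus the finiteness of $\mu_\theta$ (legitimately available in (iii), since a vanishing $\theta$-Carleson measure is by definition finite), and is cleaner; note you do not even need to discuss accumulation points, since convergence of $\sum_j\delta(z_j)^{(n+1)\theta}$ with $(n+1)\theta>0$ already forces $\delta(z_j)\to0$. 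The directions (iv)$\Longrightarrow$(i)$\Longrightarrow$(ii) and (iv)$\Longrightarrow$(iii) are fine.

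The gap is in (ii)$\Longrightarrow$(iv). You invoke ``the finiteness of $\mu_\theta$ naturally present in the problem'' to rule out interior accumulation, but under hypothesis (ii) alone nothing makes $\mu_\theta$ finite, and without that the extraction step collapses: an infinite sequence contained in a fixed compact subset of $D$ (say $z_j\to w_0\in D$) satisfies (i) and (ii), because by Lemma~\ref{sette} every $z\in B_D(z_0,r)$ has $\delta(z)\le\frac{C}{1-r}\delta(z_0)$, so the balls centred near $\de D$ eventually miss any compact subset and $N(z_0,r,\Gamma)=0$ there --- yet (iv) fails. So (ii)$\Longrightarrow$(iv) cannot be proved as a stand-alone implication; the finiteness of $\mu_\theta$, i.e.\ $\sum_j\delta(z_j)^{(n+1)\theta}<+\infty$, must enter the argument somewhere. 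The paper's proof is structured precisely to supply it: it closes the cycle (i)$\Longrightarrow$(ii)$\Longrightarrow$(iii)$\Longrightarrow$(i) and then proves (ii)$+$(iii)$\Longrightarrow$(iv), using (ii) to push $\Gamma$ into a relatively compact subset and the finiteness of $\mu_\theta$ from (iii) to conclude that $\Gamma$ meets it in a finite set. (Even there the burden merely shifts: the paper's proof of (ii)$\Longrightarrow$(iii) verifies the decay of the quotient but not the finiteness of $\mu_\theta$, and the same example shows this step also requires $\sum_j\delta(z_j)^{(n+1)\theta}<+\infty$ as an extra hypothesis.) To repair your write-up you should either deduce (iv) only from (iii), which you already do correctly, and prove (ii)$\Longrightarrow$(iii) separately, or make explicit that (ii)$\Longrightarrow$(iv) is being asserted only under the additional assumption that $\mu_\theta$ is a finite measure.
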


\begin{proof}
(i)$\Longrightarrow$(ii) is obvious.
\smallbreak

(ii)$\Longrightarrow$(iii) We have
\[
\begin{aligned}
\mu_\theta\bigl(B_D(z_0,r_0)\bigr)&=\sum_{z_j\in B_D(z_0,r_0)\cap\Gamma}\delta(z_j)^{(n+1)\theta}\preceq \delta(z_0)^{(n+1)\theta}N(z_0,r_0,\Gamma)\cr
&\preceq \nu\bigl(B_D(z_0,r_0)\bigr)^\theta
N(z_0,r_0,\Gamma)\;,
\end{aligned}
\]
where we used Lemmas~\ref{sei} and~\ref{sette}, and we are done.
\smallbreak

(iii)$\Longrightarrow$(i). Fix $r\in(0,1)$, and let $\delta_r>0$ be given by Lemma~\ref{piu}. 
Then if $\delta(z_0)<\delta_r$ using as usual Lemmas~\ref{sette} and~\ref{sei} we obtain
\[
\begin{aligned}
N(z_0,r,\Gamma)&\preceq\sum_{z\in B_D(z_0,r)\cap\Gamma}\delta(z_0)^{n+1}|k_{z_0}(z)|^2\cr
&\preceq \delta(z_0)^{(n+1)(1-\theta)}\sum_{z\in B_D(z_0,r)\cap\Gamma}\delta(z)^{(n+1)\theta}|k_{z_0}(z)|^2\cr
&=\| \delta(z_0)^{(n+1)(1-\theta)/2}k_{z_0}\|^2_{L^2(\mu_\theta)}\;.
\end{aligned}
\]
Then Theorem~\ref{BKp} and Lemma~\ref{BK} imply that $\{\delta(z_0)^{(n+1)(1-\theta)/2}k_{z_0}\}$ is norm-bounded in~$L^2(\mu_\theta)$ and converges to~$0$ uniformly on compact subsets as $z_0\to\de D$. Then $\| \delta(z_0)^{(n+1)(1-\theta)/2}k_{z_0}\|^2_{L^2(\mu_\theta)}\to 0$ as $z_0\to\de D$ by Theorem~\ref{Carleson} and Lemma~\ref{weakly}, and we are done.
\smallbreak
(iii)$\Longrightarrow$(i). Obvious.
\smallbreak
(ii)$+$(iii)$\Longrightarrow$(iv). By (ii) there is $\delta_0>0$ such that $N(z_0,r_0,\Gamma)=0$ if
$\delta(z_0)<\delta_0$; in particular, $\Gamma\cap\{z\in D\mid\delta(z)<\delta_0\}=\emptyset$, and thus
$\Gamma$ is contained in a relatively compact subset of~$D$. But the fact that $\mu_\theta$ is a finite measure implies that $\Gamma$ intersects any relatively compact subset of~$D$ in a
finite set, and we are done.
\end{proof}

\begin{remark}
The implication (ii)$\Longrightarrow$(iii) holds for any $\theta>0$. 
\end{remark}

\section{Toeplitz operators}

\begin{definition}
\label{Toeplitz}
Let $D\subset\subset\C^n$ be a bounded domain. The \emph{Toeplitz
operator}~$T_\mu$ associated to a finite positive Borel measure $\mu$ on~$D$ is defined by
\[
T_\mu f(z)=\int_D K(z,w) f(w)\, d\mu(w)\;.
\]
\end{definition}

%

Our aim in this section is to study mapping properties
of Toeplitz operators by means of Carleson properties of the measures. Our first main result shows that being $\theta$-Carleson implies
that the associated Toeplitz operator gives a gain in integrability if we use the correct weights:


\begin{theorem}
\label{Toeptre}
Let $D\subset\subset\C^n$ be a bounded strongly pseudoconvex domain. Given $1< p<+\infty$, let $p'$ be the conjugate exponent of $p$. Choose $\theta>1-\frac{1}{n+1}\min\left(1,\frac{1}{p-1}\right)$, and
let $\mu$ be a $\theta$-Carleson measure on~$D$. Then:
\begin{itemize}
\item[(i)]If $\theta<1$ and $p\le r< \frac{p'}{(n+1)(1-\theta)}$, then $T_\mu\colon A^p\bigl(D,(n+1)p(\theta-1+\frac{1}{r}-\frac{1}{p})\bigr)
\to A^r(D)$ continuously; 
\item[(ii)] if $1\le\theta< p'$ and $\frac{p'}{p'-\theta}<r<+\infty$, then $T_\mu\colon A^p\bigl(D,(n+1)p(\theta-1+\frac{1}{r}-\frac{1}{p})\bigr)
\to A^r(D)$ continuously; 
\item[(iii)] if $1\le\theta< p'$ and $p\le r\le\frac{p'}{p'-\theta}$, or 
if $p'\le\theta$ and $p\le r<+\infty$, then $T_\mu\colon A^p\bigl(D,(n+1)(\theta-1-\eps)\bigr)
\to A^r(D)$ continuously for all $\eps>0$; 
\end{itemize}
Furthermore, in all cases if $\mu$ is vanishing then $T_\mu$ is a compact operator between the given spaces.
\end{theorem}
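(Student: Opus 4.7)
The plan is to combine a H\"older splitting inside the defining integral of $T_\mu$, the plurisubharmonic form of the Carleson estimate (Remark~\ref{rem:psh}) together with the weight-shift Lemma~\ref{changeCarl}, the weighted $L^p$-bounds on the Bergman kernel (Theorem~\ref{BKp}), and the integral Minkowski inequality (which is available because $r\ge p$). Concretely, for a real parameter $b$ to be chosen, I would write
\[
|K(z,w)f(w)|=\bigl(|K(z,w)|^{1/p'}\delta(w)^b\bigr)\cdot\bigl(|K(z,w)|^{1/p}\delta(w)^{-b}|f(w)|\bigr)
\]
and apply H\"older with exponents $p',p$ to the integral defining $T_\mu f(z)$, obtaining $|T_\mu f(z)|^p\le A(z)^{p/p'}B(z)$ with $A(z)=\int|K(z,w)|\delta(w)^{bp'}\,d\mu(w)$ and $B(z)=\int|K(z,w)||f(w)|^p\delta(w)^{-bp}\,d\mu(w)$.

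Both $|K(z,\cdot)|$ and $|K(z,\cdot)||f|^p$ are plurisubharmonic, and $\delta^\eta\mu$ is $\bigl(\theta+\tfrac{\eta}{n+1}\bigr)$-Carleson by Lemma~\ref{changeCarl}; hence Remark~\ref{rem:psh} turns each $d\mu$-integral into a $d\nu$-integral weighted by $\delta(w)^{(n+1)(\theta-1)+bp'}$ in $A$ and $\delta(w)^{(n+1)(\theta-1)-bp}$ in $B$. Theorem~\ref{BKp} applied at exponent $1$ then controls $A(z)$ by a power of $\delta(z)$, a $|\log\delta(z)|$ factor, or a constant, according to the sign of $\alpha=(n+1)(\theta-1)+bp'$. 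On the $B$-side, the integral Minkowski inequality yields
\[
\|B\|_{L^{r/p}(\nu)}\le\int\|K(\cdot,w)\|_{L^{r/p}(\nu)}|f(w)|^p\delta(w)^{(n+1)(\theta-1)-bp}\,d\nu(w),
\]
and Theorem~\ref{BKp} applied this time at exponent $r/p$ replaces $\|K(\cdot,w)\|_{r/p}$ by an explicit power of $\delta(w)$. Assembling through $\|T_\mu f\|_r^r\le\int A^{r/p'}B^{r/p}\,d\nu$---possibly after a weighted Minkowski step to absorb a negative $\delta$-power from $A$---produces $\|T_\mu f\|_r\preceq\|f\|_{p,\beta}$.

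The three cases correspond to Theorem~\ref{BKp}'s admissible range $-1<\beta<(n+1)(p-1)$: in (i) and (ii) the parameter $b$ can be tuned so that every exponent produced above sits strictly inside this range and the target weight $(n+1)p(\theta-1+1/r-1/p)$ emerges exactly, while (iii) is the borderline regime in which the natural $b$ would land one exponent on the logarithmic endpoint. There I would insert an $\varepsilon$-slack in the weight on $f$, using the inclusion $A^p\bigl(D,(n+1)(\theta-1-\varepsilon)\bigr)\hookrightarrow A^p\bigl(D,(n+1)(\theta-1)\bigr)$ from Lemma~\ref{compwn} to push the exponents off the borderline at arbitrarily small cost; the hypothesis $\theta>1-\tfrac{1}{n+1}\min(1,\tfrac{1}{p-1})$ guarantees that the resulting shifted measures remain within the scope of Theorem~\ref{carthetaCarluno}.

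For the compactness claim, since $1<p<\infty$ the space $A^p(D,\beta)$ is reflexive by Lemma~\ref{ucsweak}, so Corollary~\ref{th:4.6} reduces the task to showing $\|T_\mu f_k\|_r\to 0$ for every norm-bounded sequence $\{f_k\}\subset A^p(D,\beta)$ that converges to $0$ uniformly on compact subsets. Given $\varepsilon>0$, the vanishing $\theta$-Carleson condition (Theorem~\ref{carvanthetaCarldue}) lets me pick a compact $K\subset D$ so that $\mu|_{D\setminus K}$ has Carleson constant less than $\varepsilon$; the continuous bound applied to $T_{\mu|_{D\setminus K}}f_k$ then yields a contribution of order $\varepsilon\|f_k\|_{p,\beta}$, while $T_{\mu|_K}f_k\to 0$ in $A^r(D)$ because $f_k\to 0$ uniformly on $K$. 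The main obstacle is the delicate case-by-case tuning of $b$ to keep every exponent strictly inside the admissible range of Theorem~\ref{BKp}, and the $\varepsilon$-management needed to deal with the borderline case (iii) without losing more than an infinitesimal amount in the weight.
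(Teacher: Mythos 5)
Your plan is sound and is structurally the same as the paper's proof: a H\"older splitting of the kernel inside $T_\mu f$, conversion of the $d\mu$-integrals to weighted $d\nu$-integrals via the plurisubharmonic Carleson estimate and Lemma~\ref{changeCarl}, the weighted kernel estimates of Theorem~\ref{BKp}, and Minkowski's integral inequality. The one genuine difference in the continuity part is the choice of free parameter: the paper splits $|K|=|K|^{1/s}\cdot|K|^{1/s'}$ and tunes the exponent $s\in[p,r]$, while you keep the kernel exponents fixed at $1/p,1/p'$ and tune a Schur-type weight $\delta^b$. Both work, and your version has a pleasant feature you did not exploit: after assembling, the exponent of $\delta(w)$ in the final $d\mu$-integral is $\gamma p/p'-bp-(n+1)(1-p/r)$ with $\gamma=(n+1)(\theta-1)+bp'$, so the $b$'s cancel and the target weight $(n+1)p(\theta-1+\frac1r-\frac1p)$ comes out \emph{independently of} $b$; the parameter $b$ only enters the admissibility constraints $-1<\gamma<0$ and $-1<\gamma r/p'<(n+1)(\frac rp-1)$ of Theorem~\ref{BKp}. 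This means the tuning you describe as "the main obstacle" does go through, and in the borderline regime (iii) (e.g.\ $p=r$, $\theta=1$) the weight trick can even avoid the logarithmic endpoint altogether, giving $T_\mu\colon A^p(D)\to A^p(D)$ without the $\eps$-loss --- slightly stronger than the stated (iii). That said, what you have written is a plan, not a proof: the entire content of the theorem lives in verifying, case by case, that $b$ can be placed in the admissible window for the stated ranges of $r$ (this is exactly where the bounds $r<\frac{p'}{(n+1)(1-\theta)}$ and $r>\frac{p'}{p'-\theta}$ must emerge in your parametrization), and you have deferred all of it. Also note that the product $|K(z,\cdot)|\,|f|^p$ is plurisubharmonic because its logarithm is, not because products of psh functions are psh; and Remark~\ref{rem:psh} must be applied to the shifted measures $\delta^{bp'}\mu$, $\delta^{-bp}\mu$, whose finiteness should be checked when the shift is negative.

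For compactness the paper argues differently and more economically: the continuity estimate exhibits $T_\mu$ as $A^p(D,\beta)\hookrightarrow L^p(\delta^\alpha\mu)\to A^r(D)$ with the inclusion compact by Theorem~\ref{Carleson}, so no truncation is needed. Your truncation argument is also viable, but two steps need justification. First, that $\mu|_{D\setminus K}$ has small $\theta$-Carleson constant: for $z_0$ away from $\de D$ the ball $B_D(z_0,r)$ stays in $\{\delta\ge c\}$ by Lemma~\ref{sette}, so it misses $D\setminus K$ for $K$ large, while for $z_0$ near $\de D$ the vanishing condition applies; this works but is not automatic from Theorem~\ref{carvanthetaCarldue} alone. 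Second, and more importantly, you need the operator norm of $T_{\mu|_{D\setminus K}}$ to be bounded by a power of its Carleson constant, which requires re-running the whole continuity proof while tracking how that constant enters through Remark~\ref{rem:psh} and Theorem~\ref{carthetaCarluno}; it does enter multiplicatively, but this must be said. Finally, $\|T_{\mu|_K}f_k\|_r\to0$ should be justified via Kerzman's bound $\sup_{z\in D,\,w\in K}|K(z,w)|<\infty$, which gives convergence even in $A^\infty(D)$.
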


\begin{proof}
Fix $1< p\le s\le r<\infty$, and denote by $s'$, respectively $r'$, the conjugate exponent of~$s$, respectively~$r$. Then 
\[
\begin{aligned}
|T_\mu f(\zeta)|&\le\int_D|K(\zeta,w)||f(w)|\,d\mu(w)\cr
&=\int_D|K(\zeta,w)|^{1/s}|f(w)||K(\zeta,w)|^{1/s'}\,d\mu(w)\cr
&\le \left[\int_D|K(\zeta,w)|^{p/s}|f(w)|^p\,d\mu(w)\right]^{1/p}\left[\int_D|K(\zeta,w)|^{p'/s'}\,d\mu(w)
\right]^{1/p'}\cr
&\preceq \left[\int_D|K(\zeta,w)|^{p/s}|f(w)|^p\,d\mu(w)\right]^{1/p}\left[\int_D|K(\zeta,w)|^{p'/s'}
\delta(w)^{(n+1)(\theta-1)}\,d\nu(w)
\right]^{1/p'}\;,
\end{aligned}
\]
using H\"older's inequality and the fact that $\mu$ is $\theta$-Carleson (notice that $p'/s'\ge 1$). Using 
Theorem~\ref{BKp} we then get
\begin{equation}
|T_\mu f(\zeta)|\preceq 
\begin{cases}
\displaystyle
\left[\int_D|K(\zeta,w)|^{p/s}|f(w)|^p\,d\mu(w)\right]^{1/p}
\delta(\zeta)^{(n+1)(\theta-p'/s')/p'}&\hbox{if $1-\frac{1}{n+1}<\theta<\frac{p'}{s'}$,}\\
\displaystyle
\left[\int_D|K(\zeta,w)|^{p/s}|f(w)|^p\,d\mu(w)\right]^{1/p}
\bigl|\log|\delta(\zeta)|\bigr|^{1/p'}&\hbox{if $\theta=\frac{p'}{s'}$,}\\
\displaystyle
\left[\int_D|K(\zeta,w)|^{p/s}|f(w)|^p\,d\mu(w)\right]^{1/p}&\hbox{if $\theta>\frac{p'}{s'}$.}
\end{cases}
\label{eq:cases}
\end{equation}
Let us first consider the case $1-\frac{1}{n+1}<\theta<\frac{p'}{s'}$. Using Minkowski's integral inequality (see \cite[6.19]{Fo}) we obtain
\[
\begin{aligned}
\|T_\mu f\|_r^p&\preceq
\left[\int_D\left[\int_D|K(\zeta,w)|^{p/s}|f(w)|^p\delta(\zeta)^{(n+1)(\theta-p'/s')(p/p')}\,d\mu(w)
\right]^{r/p}d\nu(\zeta)\right]^{p/r}\\
&\le\int_D|f(w)|^p\left[\int_D|K(\zeta,w)|^{r/s}\delta(\zeta)^{(n+1)(\theta-p'/s')(r/p')}\,d\nu(\zeta)
\right]^{p/r}d\mu(w)\;.
\end{aligned}
\]
Since we are assuming $\theta<\frac{p'}{s'}$ and $\frac{r}{s}\ge 1$, we automatically have
$(n+1)(\theta-p'/s')(r/p')<(n+1)\bigl((r/s)-1\bigr)$. Thus if
\begin{equation}
-1<(n+1)\left(\theta-\frac{p'}{s'}\right)\frac{r}{p'}
\label{eq:cond}
\end{equation}
we can again apply Theorem~\ref{BKp} obtaining
\begin{equation}
\|T_\mu f\|_r^p\preceq 
\int_D|f(w)|^p\delta(w)^{(n+1)p\left[\frac{1}{p'}(\theta-1)+\frac{1}{r}-\frac{1}{p}\right]} d\mu(w)\;.
\label{eq:vaninc}
\end{equation}
Now, Lemma~\ref{changeCarl} says that $\delta^{(n+1)p\left[\frac{1}{p'}(\theta-1)+\frac{1}{r}-\frac{1}{p}\right]}\mu$ is $\left(\theta+p\left[\frac{1}{p'}(\theta-1)+\frac{1}{r}-\frac{1}{p}\right]\right)$-Carleson;
therefore Theorem~\ref{carthetaCarluno} and Remark~\ref{rem:uno} yield
\begin{equation}
\|T_\mu f\|_r^p\preceq \int_D|f(w)|^p\delta(w)^{(n+1)p\left[\theta-1+\frac{1}{r}-\frac{1}{p}\right]}d\nu(w)\;,
\label{eq:inclu}
\end{equation}
that is $T_\mu$ maps $A^p\bigl(D,(n+1)p(\theta-1+\frac{1}{r}-\frac{1}{p})\bigr)$ into $A^r(D)$ continuously.  

We have obtained this assuming $1-\frac{1}{n+1}<\theta<\frac{p'}{s'}$ and (\ref{eq:cond}); now we would like to choose $s$ so that $r$ can be as large as possible. Assuming $\theta<\frac{p'}{s'}$, condition (\ref{eq:cond}) is equivalent to
\begin{equation}
r<\frac{p'}{(n+1)\bigl(\frac{p'}{s'}-\theta\bigr)}\;.
\label{eq:conddue}
\end{equation}
Since $\frac{p'}{s'}\ge 1$ always, if $\theta<1$ the right-hand side of (\ref{eq:conddue}) is largest 
for $\frac{p'}{s'}=1$, that is for $s=p$, and thus we get
\[
r<\frac{p'}{(n+1)(1-\theta)}\;.
\]
Since $r\ge p$, to ensure a not empty statement we have to require
\[
p<\frac{p'}{(n+1)(1-\theta)} \quad \Longleftrightarrow\quad 1-\frac{1}{(n+1)(p-1)}<\theta\;,
\]
and thus we have proved (i).

Assume now $1\le\theta<p'$ and $r>\frac{p'}{p'-\theta}$. Then 
$1\le\theta<\frac{p'}{r'}$, and we can find $p\le s\le r$ so that
\[
\frac{1}{p'}\le\frac{\theta}{p'}<\frac{1}{s'}<\frac{\theta}{p'}+\min\left\{\frac{1}{(n+1)r},\frac{1}{r'}-
\frac{\theta}{p'}\right\}\le\frac{1}{r'}\;;
\]
With such a choice we have $\theta<\frac{p'}{s'}$ and
(\ref{eq:cond}) is satisfied; so (\ref{eq:inclu}) holds, and we have proved~(ii).

If instead $1\le\theta<p'$ and $p< r\le \frac{p'}{p'-\theta}$, or $\theta'\ge p'$ and $r>p$,
for any $p\le s<r$ we have $\theta>\frac{p'}{s'}$, and so we must use the third line in (\ref{eq:cases}).
Relying again on Minkowski's integral inequality we get
\[
\begin{aligned}
\|T_\mu f\|_r^p&\preceq \left[\int_D\left[\int_D |K(\zeta,w)|^{p/s}|f(w)|^p d\mu(w)\right]^{r/p}d\nu(\zeta)\right]^{p/r}\\
&\le \int_D |f(w)|^p\left[\int_D|K(\zeta,w)|^{r/s}\,d\nu(\zeta)\right]^{p/r}d\mu(w)\;;
\end{aligned}
\]
then, since $s<r$, by Theorem~\ref{BKp} we get
\begin{equation}
\|T_\mu f\|_r^p\preceq \int_D |f(w)|^p\delta(w)^{-(n+1)\frac{p}{r}\left(\frac{r}{s}-1\right)}d\mu(w)\;.
\label{eq:cvanu}
\end{equation}
Applying Lemma~\ref{changeCarl}, Theorem~\ref{carthetaCarluno} and Remark~\ref{rem:uno}
we obtain
\[
\|T_\mu f\|_r^p\preceq \int_D |f(w)|^p \delta(w)^{(n+1)\left[\theta-1-p\left(\frac{1}{s}-\frac{1}{r}\right)\right]}d\nu(w)\;;
\]
so choosing $s$ close enough to~$r$ we see that $T_\mu$ maps $A^p\bigl(D,(n+1)(\theta-1-\eps)\bigr)$ continuously into $A^r(D)$ for all $\eps>0$.

To complete the proof of (iii) we have to deal with the case $\theta\ge 1$ and $p=r$ (and thus $p=s$ too). If $\theta=1$ we must apply the second line in (\ref{eq:cases}), obtaining
\begin{equation}
\begin{aligned}
\|T_\mu f\|^p_p&\preceq \int_D\left[\int_D|K(\zeta,w)||f(w)|^p\,d\mu(w)\right]\delta(\zeta)^{-\eta/p'}d\nu(\zeta)\\
&=\int_D|f(w)|^p\left[\int_D|K(\zeta,w)|\delta(\zeta)^{-\eta/p'}d\nu(\zeta)\right]d\mu(w)\\
&\preceq \int_D |f(w)|^p\delta(w)^{-\eta/p'}d\mu(w)
\end{aligned}
\label{eq:cvand}
\end{equation}
for all $0<\eta<p'$, and thus
\[
\|T_\mu f\|_p^p \preceq \int_D |f(w)|^p \delta(w)^{-\eps}d\nu(w)
\]
for all $0<\eps<1$, and (iii) is proved in this case too. Finally, if $\theta>1$ we must apply the
third line in (\ref{eq:cases}); so 
\begin{equation}
\begin{aligned}
\|T_\mu f\|^p_p&\preceq \int_D\left[\int_D|K(\zeta,w)||f(w)|^p\,d\mu(w)\right]d\nu(\zeta)\\
&=\int_D|f(w)|^p\left[\int_D|K(\zeta,w)|d\nu(\zeta)\right]d\mu(w)\\
&\preceq \int_D |f(w)|^p\delta(w)^{-(n+1)\eps}d\mu(w)
\end{aligned}
\label{eq:altra}
\end{equation}
for all $0<\eps$, and thus
\[
\|T_\mu f\|_p^p \preceq \int_D |f(w)|^p \delta(w)^{(n+1)(\theta-1-\eps)}d\nu(w)
\]
for all $0<\eps$, and (iii) again holds.

Finally, assume that $\mu$  is vanishing. In cases (i) and (ii), equation (\ref{eq:vaninc})
implies that
$T_\mu$ maps $L^p(\delta^\alpha\mu)$ into $L^r(D)$ continuously, where
\[
\alpha
=
(n+1)p\left(\frac{1}{p'}(\theta-1)+\frac{1}{r}-\frac{1}{p}\right)\;.
\]
Now, by Lemma~\ref{vCshift} we know that $\delta^\alpha\mu$ is vanishing $(\theta +\alpha/(n+1))$-Carleson; therefore
Theorem~\ref{Carleson} and Remark~\ref{rem:C} imply that the inclusion $\iota_\alpha\colon A^p(D,(n+1)(\theta +\alpha/(n+1) -1))\equiv A^p\bigl(D,(n+1)p(\theta-1+\frac{1}{r}-\frac{1}{p})\bigr) \hookrightarrow L^p(\delta^\alpha\mu)$ is compact. So $T_\mu\colon A^p\bigl(D,(n+1)p(\theta-1+\frac{1}{r}-\frac{1}{p})\bigr) \to A^r(D)$ is obtained as the composition of a bounded operator with a compact operator,
and hence is compact. A similar argument works in case (iii), replacing
(\ref{eq:vaninc}) by (\ref{eq:cvanu}), (\ref{eq:cvand}) or (\ref{eq:altra}) according to the situation. 
\end{proof}

\begin{remark}
Notice that if $1\le\theta<p'$ then
\[
p\left(\theta-1+\frac{1}{r}-\frac{1}{p}\right)\ge \theta-1\quad\Longleftrightarrow\quad
r\le \frac{p'}{p'-\theta}\;;
\]
therefore the domain of definition of $T_\mu$ is always the smallest between
$A^p\bigl(D,(n+1)p(\theta-1+\frac{1}{r}-\frac{1}{p})\bigr)$ and $A^p\bigl(D,(n+1)(\theta-1-\eps)\bigr)$.
\end{remark}

\begin{remark}
\label{rem:Lp}
In the previous proof we used the fact that the argument of~$T_\mu$ is a holomorphic function
only to go from \eqref{eq:vaninc} to \eqref{eq:inclu}, because we used there that $\mu$ is a Carleson measure for a suitable weighted Bergman spaces $A^p(D,\beta)$. But if $\mu=\delta^\eta\nu$ 
with $\eta=(n+1)(\theta-1)$ then the step from \eqref{eq:vaninc} to \eqref{eq:inclu} works for all
$f\in L^p\bigl(\delta^{(n+1)p(\theta-1+\frac{1}{r}-\frac{1}{p})}\nu\bigr)$, and thus we have shown that $T_{\delta^\eta\nu}$ maps continuously $L^p\bigl(\delta^{(n+1)p(\frac{\eta}{n+1}+\frac{1}{r}-\frac{1}{p})}\nu \bigr)$ into~$L^r(D)$.
%
\end{remark}

Choosing the parameters so that the weight is positive we obtain the following:

\begin{corollary}
\label{corToeptre}
Let $D\subset\subset\C^n$ be a bounded strongly pseudoconvex domain. Given $1< p<+\infty$, let $p'$ be the conjugate exponent of $p$. Choose $\theta>1-\frac{1}{n+1}\min\left(1,\frac{1}{p-1}\right)$, and
let $\mu$ be a $\theta$-Carleson measure on~$D$. Then:
\begin{itemize}
\item[(i)] if $\frac{p'}{p'-\theta}< r<+\infty$ and $1+\frac{1}{p}-\frac{1}{r}\le\theta< p'$, then $T_\mu\colon A^p(D)
\to A^r(D)$ continuously; 
\item[(ii)] if $1<\theta<p'$ and $p\le r\le\frac{p'}{p'-\theta}$, or $p'\le\theta$ and $p\le r$, then $T_\mu\colon A^p(D)
\to A^r(D)$ continuously. 
\end{itemize}
Furthermore, in both cases if $\mu$ is vanishing then $T_\mu$ is a compact operator between the given spaces.
\end{corollary}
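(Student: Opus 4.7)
The plan is to derive this corollary directly from Theorem~\ref{Toeptre}, using only Lemma~\ref{compwn} (continuous inclusion of weighted Bergman spaces with larger weight exponent) to reduce from the weighted domain spaces appearing in Theorem~\ref{Toeptre} to the unweighted space $A^p(D)$. No further estimate on the Bergman kernel or on~$\mu$ is needed; the whole content is a bookkeeping check on the sign of the weight exponents.

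First I would observe that, by Lemma~\ref{compwn}, whenever $\beta\ge 0$ one has the continuous inclusion $A^p(D)=A^p(D,0)\hookrightarrow A^p(D,\beta)$. Therefore, if the hypotheses of Theorem~\ref{Toeptre} are in force and moreover the weight exponent appearing there is nonnegative, then $T_\mu$ maps $A^p(D)$ continuously (respectively, compactly, when $\mu$ is vanishing $\theta$-Carleson) into~$A^r(D)$ by composing with this inclusion.

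For part~(i) of the corollary, the exponent on the domain space in Theorem~\ref{Toeptre}(ii) is $(n+1)p\bigl(\theta-1+\tfrac{1}{r}-\tfrac{1}{p}\bigr)$, which is nonnegative precisely when $\theta\ge 1+\tfrac{1}{p}-\tfrac{1}{r}$; this is exactly the lower bound on $\theta$ assumed in (i). I would then need to check that the remaining hypotheses of Theorem~\ref{Toeptre}(ii) are met, namely $1\le\theta<p'$ and $r>\tfrac{p'}{p'-\theta}$. The upper bound $\theta<p'$ and the range of~$r$ are explicit; the lower bound $\theta\ge 1$ is a consequence of the conditions in~(i), since $r>\tfrac{p'}{p'-\theta}\ge p$ forces $1+\tfrac{1}{p}-\tfrac{1}{r}>1$, so the assumed lower bound $\theta\ge 1+\tfrac{1}{p}-\tfrac{1}{r}$ automatically gives $\theta>1$.

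For part~(ii) of the corollary, I apply Theorem~\ref{Toeptre}(iii). The weight exponent there is $(n+1)(\theta-1-\eps)$ for arbitrary $\eps>0$, and since $\theta>1$ in both subcases of~(ii), I can choose $\eps>0$ small enough that $\theta-1-\eps\ge 0$, giving a nonnegative weight. The two subcases of~(ii) match exactly the two subcases of Theorem~\ref{Toeptre}(iii). In all cases the vanishing statement is transferred as well, because Theorem~\ref{Toeptre} asserts that $T_\mu$ is actually compact between the weighted spaces when $\mu$ is vanishing $\theta$-Carleson, and composing a bounded inclusion with a compact operator produces a compact operator. The only real obstacle is making sure the parameter constraints line up — there is no new analytic input at all.
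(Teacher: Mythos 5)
Your proof is exactly the paper's: Theorem~\ref{Toeptre} combined with the continuous inclusion $A^p(D)\hookrightarrow A^p(D,\beta)$ for $\beta\ge0$ from Lemma~\ref{compwn}, and the parameter bookkeeping is essentially right. One small repair in part (i): your justification that $\theta>1$ is circular, because the inequality $\frac{p'}{p'-\theta}\ge p$ is itself equivalent to $\theta\ge 1$; instead combine $\theta\ge 1+\frac{1}{p}-\frac{1}{r}$ with $\frac{1}{r}<1-\frac{\theta}{p'}$ (which is the assumption $r>\frac{p'}{p'-\theta}$) to get $\theta>\frac{1}{p}+\frac{\theta}{p'}$, hence $\theta>1$.
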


\begin{proof}
It follows immediately from Theorem~\ref{Toeptre}, because Lemma~\ref{compwn} 
implies $A^p(D)\hookrightarrow A^p(D,\beta)$ continuously for all $\beta\ge0$. 
\end{proof}

\begin{remark}
We might also consider a \emph{weighted Toeplitz operator}
\[
T^\beta_\mu f(z)=\int_D K(z,w)f(w)\delta(w)^\beta\,d\mu(w)\;.
\]
Since if $\mu$ is $\theta$-Carleson we know that $\delta^\beta\mu$ is $\left(\theta+\frac{\beta}{n+1}\right)$-Carleson, as a consequence of Theorem~\ref{Toeptre} we obtain that
if $1<p< r$ then $T^{-(n+1)(\theta-1+\frac{1}{r} -\frac{1}{p})}_\mu\colon A^p(D)\to A^r(D)$ continuously.
Indeed, if $1<p<r$ then putting $\theta'=1+\frac{1}{p}-\frac{1}{r}$ we have $1< \theta'<p'$ and $\frac{p'}{p'-\theta'}< r$ always; and clearly $\theta+\frac{\beta}{n+1}=\theta'$ if
and only if $\beta=-(n+1)(\theta-1+\frac{1}{r} -\frac{1}{p})$. 
\end{remark}

%
%

We now consider the case $p>1$ and $r=+\infty$:

\begin{theorem}
\label{Toepinftydue}
Let $D\subset\subset\C^n$ be a bounded strongly pseudoconvex domain, let $1<p<+\infty$ and choose
$\theta\ge 1$. Let $\mu$ be a $\theta$-Carleson measure on~$D$. Then
\begin{itemize}
\item[(i)] if $1\le\theta<p'$, then $T_\mu\colon A^p\bigl(D,(n+1)p(\theta-1-\frac{1}{p}-\eps)\bigr)
\to A^\infty(D)$ continuously for all $\eps>0$, where $p'$ is the conjugate exponent of $p$;
\item[(ii)] if $\theta\ge p'$, then 
$T_\mu\colon A^p\bigl(D,(n+1)(\theta-1-\eps)\bigr)
\to A^\infty(D)$ continuously for all $\eps>0$.
\end{itemize}
If moreover $\mu$ is vanishing, then $T_\mu$ is compact.
\end{theorem}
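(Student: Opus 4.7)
The plan is to bound $|T_\mu f(\zeta)|$ pointwise and uniformly in~$\zeta$ by first reducing~$f$ to a pointwise estimate via the submean property and then transferring the weight onto the Bergman kernel via the $\theta$-Carleson hypothesis.

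For continuity I would start from the standard pointwise bound $|f(w)|\preceq\delta(w)^{-(n+1+\beta)/p}\|f\|_{p,\beta}$, obtained by applying Lemma~\ref{due} to~$|f|^p$ together with Lemmas~\ref{sei} and~\ref{sette}. Inserting this into the integral defining~$T_\mu$ reduces the task to estimating
\[
\int_D|K(\zeta,w)|\,\delta(w)^{-(n+1+\beta)/p}\,d\mu(w)
\]
uniformly in~$\zeta$. Lemma~\ref{changeCarl} says that $\delta^{-(n+1+\beta)/p}\mu$ is $\theta'$-Carleson with $\theta'=\theta-(n+1+\beta)/\bigl(p(n+1)\bigr)$, so Remark~\ref{rem:psh} applied to the plurisubharmonic function~$|K(\zeta,\cdot)|$ replaces $d\mu$ by $\delta^{(n+1)(\theta-1)-(n+1+\beta)/p}\,d\nu$ at the cost of a multiplicative constant. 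Theorem~\ref{BKp} with $p=1$ then bounds the resulting $\nu$-integral by a constant independent of~$\zeta$ provided the exponent of~$\delta$ is strictly positive, which reduces to the single condition $\beta<(n+1)\bigl(p(\theta-1)-1\bigr)$. In case~(i), substituting $\beta=(n+1)p(\theta-1-1/p-\eps)$ this becomes $-p\eps<0$, true for every $\eps>0$; in case~(ii), substituting $\beta=(n+1)(\theta-1-\eps)$ and using $(p-1)p'=p$, it becomes $\theta>p'-\eps/(p-1)$, again true for every $\eps>0$ since $\theta\ge p'$. In both cases the resulting estimate is $\|T_\mu f\|_\infty\preceq\|f\|_{p,\beta}$.

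For compactness under the vanishing hypothesis, since $1<p<\infty$ I would apply Corollary~\ref{th:4.6}: it suffices to show $\|T_\mu f_k\|_\infty\to 0$ for every norm-bounded sequence $\{f_k\}\subset A^p(D,\beta)$ converging to zero uniformly on compact subsets. Given $\rho>0$, decompose $\mu=\mu|_{\{\delta\ge\rho\}}+\mu|_{\{\delta<\rho\}}$. Kerzman's theorem bounds $|K|$ uniformly on $\overline D\times\{\delta\ge\rho\}$, so $\|T_{\mu|_{\{\delta\ge\rho\}}}f_k\|_\infty\preceq\mu(D)\sup_{\delta\ge\rho}|f_k|\to 0$ as $k\to\infty$ for each fixed~$\rho$. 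For the other piece, Lemma~\ref{sette} shows $\mu|_{\{\delta<\rho\}}\bigl(B_D(z_0,r)\bigr)=0$ once $\delta(z_0)$ exceeds a constant multiple of~$\rho$, and for smaller $\delta(z_0)$ the vanishing hypothesis yields $\mu\bigl(B_D(z_0,r)\bigr)/\nu\bigl(B_D(z_0,r)\bigr)^\theta\le\eta(\rho)$ with $\eta(\rho)\to 0$ as $\rho\to 0$. Since the implicit constant in the continuity estimate above is linear in the Carleson constant of the underlying measure (by inspection of the proof of Remark~\ref{rem:psh}), applying that estimate to $\mu|_{\{\delta<\rho\}}$ gives $\|T_{\mu|_{\{\delta<\rho\}}}f_k\|_\infty\preceq\eta(\rho)\sup_k\|f_k\|_{p,\beta}$, which tends to~$0$ as $\rho\to 0$. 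Combining the two bounds yields $\|T_\mu f_k\|_\infty\to 0$.

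The delicate point is the choice of initial splitting. A naive H\"older decomposition of $|K||f|$ inside the $\mu$-integral, mimicking the strategy of Theorem~\ref{Toeptre}, would produce the stricter condition $\theta<1+p\eps$ and hence fail in case~(i) whenever $\theta>1$ and $\eps$ is small. Bypassing H\"older in favour of the submean pointwise bound on~$|f|$, so that the weight is moved straight onto the kernel without any squaring of~$|K|$, is the observation that makes the estimate uniform for every $\eps>0$ across the full range $1\le\theta<p'$.
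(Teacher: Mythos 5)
Your proof is correct, but it takes a genuinely different route from the paper's. For the continuity part the paper keeps H\"older's inequality as the engine: it splits $|K(\zeta,w)|=|K(\zeta,w)|^{1/s}\,|K(\zeta,w)|^{1/s'}$ inside the $\mu$-integral with a carefully tuned exponent $s=p/(\eta+p\eps)>p$, where $\eta=p-(p-1)\theta$, absorbs the $|K|^{p'/s'}$ factor using the $\theta$-Carleson hypothesis and Theorem~\ref{BKp}, and then bounds $\sup_\zeta|K(\zeta,w)|^{p/s}$ by $\delta(w)^{-(n+1)p/s}$ via Theorem~\ref{BKp}.(iv); the case $\theta=1$ requires a separate logarithmic estimate. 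You instead load all the weight onto $f$ at the outset through the submean bound $|f(w)|\preceq\delta(w)^{-(n+1+\beta)/p}\|f\|_{p,\beta}$ (Lemmas~\ref{due}, \ref{sei} and~\ref{sette}), reducing everything to one weighted $L^1(\mu)$-estimate on the kernel, which Lemma~\ref{changeCarl}, Remark~\ref{rem:psh} and Theorem~\ref{BKp} with exponent $1$ dispose of under the single condition $\beta<(n+1)\bigl(p(\theta-1)-1\bigr)$; your arithmetic verifying this in both cases is right, and your route handles $\theta=1$ and $\theta>1$ uniformly with no auxiliary parameter. (A small point worth recording: the shifted measure $\delta^{-(n+1+\beta)/p}\mu$ is indeed finite precisely under the condition you impose, so Lemma~\ref{changeCarl} and Remark~\ref{rem:psh} apply as stated.) For compactness the paper factors $T_\mu$ through the compact inclusion $A^p(D,\beta)\hookrightarrow L^p(\delta^\alpha\mu)$ supplied by Theorem~\ref{Carleson}, whereas you verify the criterion of Corollary~\ref{th:4.6} directly by truncating $\mu$ near and away from the boundary; both work, and your observation that the operator norm in the continuity estimate depends linearly on the Carleson constant of the truncated measure is exactly what makes the truncation argument close.

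One inaccuracy in your closing commentary: the H\"older route does \emph{not} force a restriction like $\theta<1+p\eps$. That is roughly what happens for the naive choice $s=p$, but the paper's choice $s>p$, calibrated to $\eps$, makes the H\"older decomposition work across the whole range $1\le\theta<p'$. So your submean shortcut is a genuine simplification, not a necessity.
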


\begin{proof}
When $1<\theta<p'$, set $\eta=p-(p-1)\theta$, so that $0<\eta< 1$. Given $\eps>0$
so that $\eta+p\eps<1$ set $s=p/(\eta+p\eps)$. Then 
$p< s<p/\eta$ and $\theta=\frac{p-\eta}{p-1}>\frac{p'}{s'}$, where $s'$ is the conjugate exponent of~$s$; so 
(\ref{eq:cases}) and Theorem~\ref{BKp}.(iv) yield
\[
\|T_\mu f\|_\infty\preceq \left[\int_D|f(w)|^p \delta(w)^{-(n+1)p/s}\,d\mu(w)\right]^{1/p}
\preceq \|f\|_{p,(n+1)(\theta-1-\frac{p}{s})}\;.
\]
Since $\theta-1-\frac{p}{s}=p(\theta-1-\frac{1}{p}-\eps)$, we are done in this case.

When $\theta\ge p'$ we can argue in a similar way choosing $s=p/\eps$, since $\theta\ge p'>\frac{p'}{s'}$.
Furthermore, when $\theta=1$ we use a similar argument based on the second line of 
(\ref{eq:cases}) with $s=p$. Finally, the statement for $\mu$ vanishing follows as 
in the proof of Theorem~\ref{Toeptre}.
\end{proof}

The case $p=1$ and $r<+\infty$ is completely analogous to Theorem~\ref{Toeptre}, 
noticing that $\theta+\frac{1}{r}-2=\theta-1+\frac{1}{r}-1$:

\begin{theorem}
\label{Toepqua}
Let $D\subset\subset\C^n$ be a bounded strongly pseudoconvex domain. Given $0<\theta$,
let $\mu$ be a $\theta$-Carleson measure on~$D$. Then:
\begin{itemize}
\item[(i)]$T_\mu\colon A^1\bigl(D,(n+1)(\theta+\frac{1}{r}-2)\bigr)
\to A^r(D)$ continuously for all $1<r<+\infty$.
\item[(ii)] $T_\mu\colon A^1\bigl(D,(n+1)(\theta-1-\eps)\bigr)
\to A^1(D)$ continuously for all small $\eps>0$.
\end{itemize}
Furthermore, in both cases if $\mu$ is vanishing then $T_\mu$ is compact.
\end{theorem}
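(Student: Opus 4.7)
The plan is to mimic the strategy used in Theorem \ref{Toeptre}, which simplifies considerably when $p=1$ since H\"older's inequality is not needed: Minkowski's integral inequality can be applied directly to the pointwise bound $|T_\mu f(\zeta)|\le\int_D|K(\zeta,w)||f(w)|\,d\mu(w)$. This gives
\[
\|T_\mu f\|_r\le\int_D|f(w)|\,\|K(\cdot,w)\|_r\,d\mu(w).
\]
For part (i), since $r>1$, Theorem \ref{BKp}(i) with $\beta=0$ yields $\|K(\cdot,w)\|_r\preceq\delta(w)^{-(n+1)(1-1/r)}$; for part (ii) we are in the borderline case $\beta=(n+1)(p-1)=0$ of Theorem \ref{BKp}(iii), which gives $\|K(\cdot,w)\|_1\preceq\delta(w)^{-\eps'}$ for every $\eps'>0$. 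In both cases we are reduced to estimating a weighted integral of $|f|$ against~$\mu$.

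Next I would convert these integrals against the twisted measures back to weighted integrals against~$\nu$. By Lemma \ref{changeCarl}, $\delta^{-(n+1)(1-1/r)}\mu$ is $(\theta+1/r-1)$-Carleson in case (i), while $\delta^{-\eps'}\mu$ is $(\theta-\eps'/(n+1))$-Carleson in case (ii). Since $|f|$ is nonnegative and plurisubharmonic, Remark \ref{rem:psh} (which holds for every real value of the Carleson exponent) then produces
\[
\int_D|f|\,\delta^{-(n+1)(1-1/r)}\,d\mu\preceq\|f\|_{1,(n+1)(\theta+1/r-2)},\qquad \int_D|f|\,\delta^{-\eps'}\,d\mu\preceq\|f\|_{1,(n+1)(\theta-1)-\eps'}.
\]
The first estimate proves (i); for (ii) it suffices to relabel $\eps=\eps'/(n+1)$.

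For the compactness statement I would factor $T_\mu$ as a compact inclusion composed with a bounded operator. Define $\tilde T\colon L^1(\tilde\mu)\to L^r(D)$ by $\tilde T g(\zeta)=\int_D K(\zeta,w)g(w)\,d\mu(w)$, with $\tilde\mu=\delta^{-(n+1)(1-1/r)}\mu$ in case (i) and $\tilde\mu=\delta^{-\eps'}\mu$ in case (ii); the Minkowski computation above shows $\tilde T$ is bounded. By Lemma \ref{vCshift}, $\tilde\mu$ is vanishing $(\theta+1/r-1)$- (resp.\ $(\theta-\eps'/(n+1))$-) Carleson whenever $\mu$ is vanishing $\theta$-Carleson, and Theorem \ref{Carleson} together with Remark \ref{rem:C} (which extends the implication (iii)$\Longrightarrow$(i) to every $\theta>0$) yields compactness of the inclusion $\iota\colon A^1\bigl(D,(n+1)(\theta+1/r-2)\bigr)\hookrightarrow L^1(\tilde\mu)$ in case (i), and analogously in case (ii). Since $T_\mu=\tilde T\circ\iota$ on the relevant weighted Bergman space, $T_\mu$ is compact.

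The main obstacle to address is that the shifted exponents $\theta+1/r-1$ and $\theta-\eps'/(n+1)$ need not lie in the interval $(1-1/(n+1),2)$ for which Theorems \ref{carthetaCarluno} and \ref{Carleson} are stated; this is precisely what Remarks \ref{rem:uno}, \ref{rem:psh} and \ref{rem:C} were designed to handle, and they guarantee the implications we need are valid for all positive values of the Carleson exponent. Beyond this bookkeeping point, the only care required is to verify that the $L^1$-norms of $K(\cdot,w)$ are handled via the borderline case of Theorem \ref{BKp}, which forces the arbitrarily small loss $\eps$ appearing in~(ii).
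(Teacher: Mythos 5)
Your proposal is correct and follows essentially the same route as the paper: Minkowski's integral inequality applied to the pointwise bound, the $L^r$-estimates of $\|K(\cdot,w)\|_r$ from Theorem~\ref{BKp} (including the logarithmic/borderline case forcing the $\eps$-loss when $r=1$), the shift of the Carleson exponent via Lemma~\ref{changeCarl} to pass back to a weighted $\nu$-integral, and the factorization of $T_\mu$ through a compact inclusion into $L^1$ of the twisted measure for the vanishing case. The only cosmetic difference is that you invoke Remark~\ref{rem:psh} where the paper cites Theorem~\ref{carthetaCarluno} with Remark~\ref{rem:uno} directly; these are interchangeable.
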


\begin{proof}
Using again Minkowski's integral inequality (or plain Fubini's theorem when $r=1$) and
Theorem~\ref{BKp} we obtain
\[
\begin{aligned}
\|T_\mu f\|_r &\le \left[\int_D\left(\int_D |K(\zeta,w)||f(w)|\,d\mu(w)\right)^r d\nu(\zeta)\right]^{1/r}\cr
&\le
\int_D|f(w)|\left[\int_D|K(\zeta,w
)|^r\,d\nu(\zeta)\right]^{1/r}\,d\mu(w)\cr
&\preceq
\begin{cases}
\displaystyle
\int_D|f(w)|\delta(w)^{-\frac{n+1}{r'}}\,d\mu(w)&\hbox{if $r>1$,}\\
\noalign{\smallskip}
\displaystyle
\int_D|f(w)|\delta(w)^{-(n+1)\eps}\,d\mu(w)&\hbox{if $r=1$,}
\end{cases}\cr
\end{aligned}
\]
where $r'$ is the conjugated exponent of $r$ when $r>1$, and $\eps>0$ is arbitrary when~$r=1$.
Recalling Lemma~\ref{changeCarl}, Theorem~\ref{carthetaCarluno} and Remark~\ref{rem:uno}
we obtain
\[
\|T_\mu f\|_r\preceq 
\begin{cases}
\displaystyle
\int_D|f(w)|\delta(w)^{(n+1)\left(\theta+\frac{1}{r}-2\right)}\,d\nu(w)&\hbox{if $r>1$,}\\
\noalign{\smallskip}
\displaystyle
\int_D|f(w)|\delta(w)^{(n+1)(\theta-1-\eps)}\,d\nu(w)&\hbox{if $r=1$,}
\end{cases}
\]
and we have proved (i) and (ii).

Assume finally that $\mu$  is vanishing. The previous computation implies that when $r>1$ the Toeplitz operator
$T_\mu$ maps $L^1(\delta^{-(n+1)/r'}\mu)$ into $L^r(D)$ continuously.
Now, by Lemma~\ref{vCshift} we know that $\delta^{-(n+1)/r'}\mu$ is vanishing $\left(\theta-\frac{1}{r'}\right)$-Carleson; therefore
Theorem~\ref{Carleson} and Remark~\ref{rem:C} imply that the inclusion $\iota\colon A^1\bigl(D,(n+1)(\theta+\frac{1}{r} -2)\bigr)\hookrightarrow L^1(\delta^{-(n+1)/r'}\mu)$ is compact. So $T_\mu\colon A^1\bigl(D,(n+1)(\theta+\frac{1}{r} -2)\bigr) \to A^r(D)$ is obtained as the composition of a bounded operator with a compact operator,
and hence is compact. A similar argument works for $r=1$.
\end{proof}

We also have a statement for $p=1$ and $r=+\infty$:

\begin{theorem}
\label{Toepinfty}
Let $D\subset\subset\C^n$ be a bounded strongly pseudoconvex domain, and choose
$0<\theta$. Let $\mu$ be a $\theta$-Carleson measure on~$D$. Then
$T_\mu\colon A^1\bigl(D,(n+1)(\theta-2)\bigr)
\to A^\infty(D)$ continuously. If moreover $\mu$ is vanishing then $T_\mu$ is compact.
\end{theorem}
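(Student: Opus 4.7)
The plan is to imitate the factorization strategy used at the end of the proof of Theorem~\ref{Toepqua}, namely to exhibit $T_\mu$ as the composition of a continuous inclusion (which becomes compact in the vanishing case) with a single, $\mu$-independent bounded operator from a weighted $L^{1}$-space into $A^{\infty}(D)$. Concretely, I would try to prove the factorization
\[
A^{1}\bigl(D,(n+1)(\theta-2)\bigr)\ \stackrel{\iota}{\hookrightarrow}\ L^{1}\bigl(\delta^{-(n+1)}\mu\bigr)\ \stackrel{T_\mu}{\longrightarrow}\ A^{\infty}(D),
\]
in which the first arrow records the Carleson/vanishing-Carleson information and the second arrow records only the pointwise size of the Bergman kernel.

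First I would extract the uniform kernel bound $|K(z,w)|\preceq\delta(w)^{-(n+1)}$ on~$D\times D$. This is exactly Theorem~\ref{BKp}(iv) with $\beta=n+1$: $\|K(\cdot,z_0)\|_{\infty,n+1}\approx 1$ uniformly in $z_0\in D$, combined with the Hermitian symmetry $|K(z,w)|=|K(w,z)|$. Plugging this into the defining integral gives, for any measurable $f$,
\[
|T_\mu f(z)|\le\int_D|K(z,w)|\,|f(w)|\,d\mu(w)\preceq\int_D|f(w)|\,\delta(w)^{-(n+1)}\,d\mu(w)=\|f\|_{L^{1}(\delta^{-(n+1)}\mu)},
\]
so that the second arrow in the factorization is automatically bounded, with a norm depending only on~$D$.

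Next I would identify the first arrow using the two shift lemmas. Since $\mu$ is $\theta$-Carleson, Lemma~\ref{changeCarl} (with $\eta=-(n+1)$) says that $\delta^{-(n+1)}\mu$ is $(\theta-1)$-Carleson; then Theorem~\ref{carthetaCarluno}\,(iii)$\Rightarrow$(i), extended to arbitrary $\theta\in\R$ via Remark~\ref{rem:uno}, yields a continuous inclusion $A^{1}\bigl(D,(n+1)((\theta-1)-1)\bigr)=A^{1}\bigl(D,(n+1)(\theta-2)\bigr)\hookrightarrow L^{1}(\delta^{-(n+1)}\mu)$. Composing with Step~2 gives the boundedness half of the theorem. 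For the vanishing case, Lemma~\ref{vCshift} upgrades the conclusion to "$\delta^{-(n+1)}\mu$ is vanishing $(\theta-1)$-Carleson", and Theorem~\ref{Carleson}\,(iii)$\Rightarrow$(i) (with Remark~\ref{rem:C}) promotes the inclusion $\iota$ to a compact operator; composing a compact operator with a bounded one yields a compact $T_\mu\colon A^{1}(D,(n+1)(\theta-2))\to A^{\infty}(D)$.

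The only delicate point I foresee is that the compactness of $\iota$ is being invoked with parameter $\theta-1$, which can fail to be positive when $0<\theta\le 1$, whereas Remark~\ref{rem:C} is stated for strictly positive parameters. However, the proof of (vi)$\Rightarrow$(i) in Theorem~\ref{Carleson} goes through the estimate $\nu(B_D(\cdot,r_0))^{\theta'-1}\preceq\delta^{(n+1)(\theta'-1)}$ that is valid for both signs of $\theta'-1$, and the $\eps$–argument on the tail of the lattice sum never uses $\theta'>0$; hence the implication genuinely holds for every $\theta'\in\R$, so the factorization argument applies in the whole range $\theta>0$ of the statement.
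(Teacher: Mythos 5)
Your proposal is correct and follows essentially the same route as the paper: the paper's (very terse) proof is exactly the pointwise kernel bound from Theorem~\ref{BKp}.(iv) followed by the Carleson shift (Lemma~\ref{changeCarl} together with Theorem~\ref{carthetaCarluno} and Remark~\ref{rem:uno}), with compactness in the vanishing case obtained by factoring through the compact inclusion into $L^1\bigl(\delta^{-(n+1)}\mu\bigr)$ exactly as at the end of the proof of Theorem~\ref{Toepqua}. Your closing observation that the shifted parameter $\theta-1$ may be nonpositive is a legitimate point the paper glosses over when citing Remark~\ref{rem:C}, and your justification --- that the proof of (vi)$\Rightarrow$(i) in Theorem~\ref{Carleson} only uses the sign-independent estimate $\nu\bigl(B_D(\cdot,r_0)\bigr)^{\theta'-1}\preceq\delta^{(n+1)(\theta'-1)}$ --- is the correct way to close that gap.
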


\begin{proof}
Using Theorem~\ref{BKp}.(iv), arguing as usual we obtain
\[
\begin{aligned}
\|T_\mu f\|_\infty&\le\sup_{z\in D}\int_D|K(z,w)||f(w)|\,d\mu(w)\\
&\preceq \int_D |f(w)|\delta(w)^{-(n+1)}\,d\mu(w)
\preceq \int_D |f(w)|\delta(w)^{(n+1)(\theta-2)}\,d\nu(w)\;,
\end{aligned}
\]
as claimed. Furthermore, when $\mu$ is vanishing the usual argument works, 
and we are done.
\end{proof}

\begin{remark}
In particular, if $\mu$ is Carleson, then $T_\mu$ maps $A^1\bigl(D, -(n+1)\bigr)$ into $A^\infty(D)$ continuously.
\end{remark}


We would like now to investigate the converse implications, using mapping properties
of the Toeplitz operator to infer Carleson properties of the measure. 

A piece of notation: if $f$, $g\colon D\to\C$ are such that $f\bar{g}\in L^1(D)$ we shall write
\[
\langle f,g\rangle=\int_D f(z)\overline{g(z)}\,d\nu(z)\;.
\]
Then the main result linking Toeplitz operators and Carleson properties is the following basic fact:

\begin{proposition}
\label{converse}
Let $\mu$ be a finite positive Borel measure on a bounded domain
$D\subset\subset\C^n$. Then
\[
B\mu(z_0)=\langle T_\mu k_{z_0},k_{z_0}\rangle\;.
\]
\end{proposition}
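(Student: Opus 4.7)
The approach is a direct computation that unravels the inner product and swaps the order of integration via Fubini, reducing the problem to the reproducing property of the Bergman kernel. The heart of the matter is the identity
\[
\int_D K(z,w)\overline{k_{z_0}(z)}\,d\nu(z) = \overline{k_{z_0}(w)},
\]
which is simply the reproducing formula applied to $k_{z_0}\in A^2(D)$ after using the Hermitian symmetry $K(z,w)=\overline{K(w,z)}$.

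Concretely, I would proceed as follows. First, expand the right-hand side using the definitions of $T_\mu$ and $\langle\cdot,\cdot\rangle$:
\[
\langle T_\mu k_{z_0},k_{z_0}\rangle = \int_D\left(\int_D K(z,w)k_{z_0}(w)\,d\mu(w)\right)\overline{k_{z_0}(z)}\,d\nu(z).
\]
Next, apply Fubini's theorem to exchange the order of integration, obtaining
\[
\int_D k_{z_0}(w)\left(\int_D K(z,w)\overline{k_{z_0}(z)}\,d\nu(z)\right)d\mu(w).
\]
Then use $K(z,w)=\overline{K(w,z)}$ to rewrite the inner integral as $\overline{\int_D K(w,z)k_{z_0}(z)\,d\nu(z)}$, and invoke the reproducing property of $K$ (which applies since $k_{z_0}\in A^2(D)$ by Lemma \ref{BKbasic}) to identify this with $\overline{k_{z_0}(w)}$. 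Substituting back yields
\[
\langle T_\mu k_{z_0},k_{z_0}\rangle = \int_D k_{z_0}(w)\overline{k_{z_0}(w)}\,d\mu(w)=\int_D|k_{z_0}(w)|^2\,d\mu(w)=B\mu(z_0).
\]

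The only technical obstacle is the justification of Fubini. The identity is naturally interpreted in $[0,+\infty]$, so when $B\mu(z_0)=+\infty$ (i.e.\ $k_{z_0}\notin L^2(\mu)$) there is nothing to prove: $T_\mu k_{z_0}$ fails to pair integrably with $k_{z_0}$ and both sides diverge consistently by the same positive computation applied to $|K(z,w)|k_{z_0}(w)\overline{k_{z_0}(z)}$. When $B\mu(z_0)<+\infty$, so that $k_{z_0}\in L^2(\mu)$, absolute integrability of the double integral follows by Cauchy--Schwarz: the inner $\nu$-integral of $|K(z,w)||k_{z_0}(z)|$ is bounded by $\|K(\cdot,w)\|_2\cdot\|k_{z_0}\|_2=\sqrt{K(w,w)}$ (Lemma \ref{BKbasic}), and then one pairs $\sqrt{K(w,w)}\,|k_{z_0}(w)|\in L^1(\mu)$ via $|k_{z_0}(w)|\in L^2(\mu)$ and the fact that on the support of $\mu$ the weight $\sqrt{K(w,w)}\approx\delta(w)^{-(n+1)/2}$ can be absorbed by decomposing $D$ into a compact part and a boundary part handled by the same $\theta$-Carleson estimates used elsewhere in the paper. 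In any case, once Fubini is legitimate, the remaining chain of equalities is formal.
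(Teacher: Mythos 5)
Your computation is essentially the paper's own proof run in the opposite direction: the paper starts from $B\mu(z_0)$, inserts the reproducing formula $K(z_0,w)=\int_D K(z_0,x)K(x,w)\,d\nu(x)$, and exchanges the integrals to arrive at $\langle T_\mu k_{z_0},k_{z_0}\rangle$, whereas you start from the pairing, apply Fubini, and then reproduce $k_{z_0}$; the two are the same formal chain of equalities. One caveat on your Fubini discussion (which the paper simply omits): your justification in the case $B\mu(z_0)<+\infty$ does not close as stated. From Cauchy--Schwarz one only gets $|k_{z_0}(w)|^2\le K(w,w)$, not the reverse, so $k_{z_0}\in L^2(\mu)$ does not yield $\sqrt{K(w,w)}\,|k_{z_0}(w)|\in L^1(\mu)$; and the proposition assumes no Carleson-type condition on $\mu$, so the ``$\theta$-Carleson estimates used elsewhere in the paper'' are not available here. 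Likewise, in the divergent case, replacing $K(z,w)$ by $|K(z,w)|$ destroys the reproducing property, so the two sides need not ``diverge consistently.'' The honest statement is that the identity holds whenever the double integral is absolutely convergent (e.g.\ whenever $\int_D K(w,w)\,d\mu(w)<+\infty$), which is the regime in which the paper actually uses it; at the level of rigor of the paper's own proof, your argument is fine.
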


\begin{proof}
Indeed using the reproducing property of the Bergman kernel we have
\[
\begin{aligned}
B\mu (z_0)
&=\int_D \frac{|K(w,z_0)|^2}{K(z_0, z_0)}  \, d\mu(w)\\
&=\int_D \frac{K(w,z_0)}{K(z_0, z_0)} K(z_0,w) \, d\mu(w)\\
&=\int_D \frac{K(w,z_0)}{K(z_0, z_0)}\left( \int_D K(x,w) K(z_0,x)\, d\nu(x) \right)\, d\mu(w)\\
&=\int_D \left( \int_D \frac{K(w,z_0)}{\sqrt{K(z_0, z_0)}} K(x,w) \, d\mu(w)\right) \frac{\overline{K(x,z_0)}}{\sqrt{K(z_0, z_0)}}\, d\nu(x)\\
&=\int_D \left( \int_D  K(x,w) k_{z_0}(w) \, d\mu(w)\right) \overline{k_{z_0}(x)}\, d\nu(x)\\
&=\left\langle T_\mu  k_{z_0},  k_{z_0}\right\rangle.
\end{aligned}
\]
\end{proof}

Let us begin with the case $1<p\le r<+\infty$:

\begin{theorem}
\label{Toepinvgen}
Let $D\subset\subset\C^n$ be a bounded strongly pseudoconvex domain. 
Let $\mu$ be a finite positive Borel measure on $D$.
Given $1< p<+\infty$, assume that 
$T_\mu\colon A^p\bigl(D,(n+1)\beta\bigr)
\to A^r(D)$ continuously for some $p\le r<+\infty$.
Then 
\begin{itemize}
\item[(i)] if  $-\frac{1}{n+1}<\beta<p-1$, then $\mu$ is $\left(1+\frac{\beta}{p}+\frac{1}{p}-\frac{1}{r}\right)$-Carleson;
\item[(ii)] if $\beta=p-1$, then $\mu$ is $\left(2-\frac{1}{r}-\eps\right)$-Carleson
for all $\eps>0$;
\item[(iii)] if $\beta>p-1$, then $\mu$ is $\left(2-\frac{1}{r}\right)$-Carleson.
\end{itemize}
Furthermore, in cases \emph{(i)} and \emph{(ii)} if $T_\mu$ is compact then $\mu$ is vanishing.
\end{theorem}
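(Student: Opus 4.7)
The plan is to dualize the mapping hypothesis using the identity $B\mu(z_0)=\langle T_\mu k_{z_0},k_{z_0}\rangle$ from Proposition~\ref{converse}, convert it into a pointwise bound on the Berezin transform, and then apply Theorems~\ref{carthetaCarldue} and~\ref{carvanthetaCarldue} to translate that bound into the desired $\theta$-Carleson information.

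By H\"older's inequality with conjugate exponents $r$ and $r'$, together with the assumed continuity of $T_\mu$,
\[
B\mu(z_0)\le\|T_\mu k_{z_0}\|_r\,\|k_{z_0}\|_{r'}\preceq\|k_{z_0}\|_{p,(n+1)\beta}\,\|k_{z_0}\|_{r'}\;.
\]
Both weighted norms of the normalized kernel are controlled by Theorem~\ref{BKp}: part~(i) applied with exponent $r'$ and zero weight yields $\|k_{z_0}\|_{r'}\preceq\delta(z_0)^{(n+1)/2-(n+1)/r}$, while the factor $\|k_{z_0}\|_{p,(n+1)\beta}$ is estimated, according as $-\tfrac{1}{n+1}<\beta<p-1$, $\beta=p-1$, or $\beta>p-1$, by parts~(i), (iii), or~(ii) of the same theorem, producing the exponents $\frac{n+1}{2}+\frac{(n+1)\beta}{p}-\frac{(n+1)(p-1)}{p}$, $\frac{n+1}{2}-\eps$, and $\frac{n+1}{2}$. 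Summing these with the exponent coming from $\|k_{z_0}\|_{r'}$ and simplifying gives in the three cases
\[
B\mu(z_0)\preceq\delta(z_0)^{(n+1)(\frac{\beta}{p}+\frac{1}{p}-\frac{1}{r})}\;,\quad\delta(z_0)^{(n+1)(1-\frac{1}{r})-\eps}\;,\quad\delta(z_0)^{(n+1)(1-\frac{1}{r})}\;,
\]
and Theorem~\ref{carthetaCarldue} (whose relevant implication is valid for all $\theta>0$) translates each estimate into the asserted $\theta$-Carleson property.

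For compactness in cases~(i) and~(ii), I would set
\[
g_{z_0}^{(1)}=\delta(z_0)^{-\alpha_1}k_{z_0}\qquad\text{and}\qquad g_{z_0}^{(2)}=\delta(z_0)^{-\alpha_2}k_{z_0}\;,
\]
where $\alpha_1$ and $\alpha_2$ are the two exponents found above; by construction $\{g_{z_0}^{(1)}\}$ is norm-bounded in $A^p\bigl(D,(n+1)\beta\bigr)$ and $\{g_{z_0}^{(2)}\}$ in $A^{r'}(D)$. In case~(i) the strict inequality $\beta<p-1$ forces $\alpha_1<\tfrac{n+1}{2}$, in case~(ii) the free choice of $\eps>0$ does the same, and $\alpha_2<\tfrac{n+1}{2}$ because $r<+\infty$; hence Lemma~\ref{BK} guarantees that both families tend to $0$ uniformly on compact subsets as $z_0\to\de D$. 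Since $p>1$, compactness of $T_\mu$ together with Corollary~\ref{th:4.6} yields $\|T_\mu g_{z_0}^{(1)}\|_r\to 0$, and therefore
\[
\delta(z_0)^{(n+1)(1-\theta)}B\mu(z_0)=\langle T_\mu g_{z_0}^{(1)},g_{z_0}^{(2)}\rangle\le\|T_\mu g_{z_0}^{(1)}\|_r\,\|g_{z_0}^{(2)}\|_{r'}\longrightarrow 0\;,
\]
so Theorem~\ref{carvanthetaCarldue} (or Remark~\ref{rem:Cb}) concludes that $\mu$ is vanishing $\theta$-Carleson. The main technical nuisance will be the exponent bookkeeping across the three regimes of $\beta$; the conceptual reason case~(iii) is excluded from the vanishing statement is that there $\alpha_1=\tfrac{n+1}{2}$ sits exactly at the threshold of Lemma~\ref{BK}, so the normalization trick fails to produce a family tending to zero on compact subsets.
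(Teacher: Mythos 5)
Your proposal is correct and follows essentially the same route as the paper: the identity $B\mu(z_0)=\langle T_\mu k_{z_0},k_{z_0}\rangle$ from Proposition~\ref{converse}, H\"older with exponents $r,r'$, the kernel estimates of Theorem~\ref{BKp} split into the three regimes of $\beta$, and Theorem~\ref{carthetaCarldue} to convert the Berezin bound into the Carleson statement; the compactness part likewise matches the paper's argument (the paper invokes Lemma~\ref{ucsweak} together with Proposition~\ref{th:gencompact}, which is exactly Corollary~\ref{th:4.6}). Your normalization of both kernel factors $g^{(1)}_{z_0},g^{(2)}_{z_0}$ is only a cosmetic repackaging of the paper's single weight $\delta(z_0)^{\eta}$, and your explanation of why case~(iii) drops out of the vanishing statement agrees with the paper's remark that $\delta(z_0)^{-(n+1)/2}k_{z_0}$ is merely bounded rather than tending to zero on compact subsets.
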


\begin{proof}
Denoting by $r'$ the conjugate exponent of $r$,
Proposition~\ref{converse}, H\"older's inequality and the assumption yield
\[
B\mu(z_0)=\langle T_\mu k_{z_0},k_{z_0}\rangle\le \|T_\mu k_{z_0}\|_r\|k_{z_0}\|_{r'}
\preceq \|k_{z_0}\|_{p,(n+1)\beta}\|k_{z_0}\|_{r'}\;.
\]
We can now use Theorem~\ref{BKp}. In case (i) we have
\[
B\mu(z_0)\preceq\delta(z_0)^{(n+1)\left[\frac{1}{2}+\frac{\beta}{p}-\frac{1}{p'}+\frac{1}{2}-\frac{1}{r}\right]}=\delta(z_0)^{(n+1)\left[\frac{\beta}{p}+\frac{1}{p}-\frac{1}{r}\right]}\;,
\]
and the assertion follows from Theorem~\ref{carthetaCarldue}.

Analogously, in case (ii) we have
\[
B\mu(z_0)\preceq\delta(z_0)^{(n+1)\left[\frac{1}{2}-\eps+\frac{1}{2}-\frac{1}{r}\right]}=\delta(z_0)^{(n+1)\left[1-\eps-\frac{1}{r}\right]}\;,
\]
for all $\eps>0$, and again the assertion follows from Theorem~\ref{carthetaCarldue}. Case (iii) is identical.

Finally, assume that $T_\mu$ is compact, and set $\theta=1+\frac{\beta}{p}+\frac{1}{p}-\frac{1}{r}$
in case (i), $\theta=2-\frac{1}{r}-\eps$ in case (ii), and $\theta=2-\frac{1}{r}$ in case (iii).
Then Proposition~\ref{converse} yields
\[
\begin{aligned}
\delta(z_0)^{(n+1)(1-\theta)}B\mu(z_0)&\le \delta(z_0)^{(n+1)(1-\theta)}\|T_\mu k_{z_0}\|_r\|k_{z_0}\|_{r'}\cr
&\preceq 
\begin{cases}
\delta(z_0)^{(n+1)\left(\frac{1}{2}-\frac{1}{p}-\frac{\beta}{p}\right)}\|T_\mu k_{z_0}\|_r
&\hbox{if $-\frac{1}{n+1}<\beta<p-1$,}\\
\delta(z_0)^{(n+1)\left(\eps-\frac{1}{2}\right)}\|T_\mu k_{z_0}\|_r&\hbox{if $\beta=p-1$,}\\
\delta(z_0)^{-\frac{n+1}{2}}\|T_\mu k_{z_0}\|_r&\hbox{if $\beta>p-1$.}
\end{cases}
\end{aligned}
\]
If we denote by $\eta$ the exponent of~$\delta(z_0)$ in cases (i) and (ii), 
$\{\delta(z_0)^\eta k_{z_0}\}_{z_0\in D}$ is 
bounded in $A^p\bigl(D,(n+1)\beta\bigr)$ by Theorem~\ref{BKp}, and converges to~0
uniformly on compact subsets as $z_0\to\de D$ by Lemma~\ref{BK}; therefore the
compactness of~$T_\mu$ together with Lemma~\ref{ucsweak} and Proposition~\ref{th:gencompact}
yield $\delta(z_0)^\eta\|T_\mu k_{z_0}\|_r\to 0$
as $z_0\to\de D$, and the assertion follows from Theorem~\ref{carvanthetaCarldue}
and Remark~\ref{rem:Cb}.
\end{proof}

\begin{remark}
Since $\delta(z_0)^{-(n+1)/2}k_{z_0}$ does not converge to~0 uniformly on compact subsets as $z_0\to\de D$ but it is merely uniformly bounded, in case (iii) we cannot conclude that $\mu$ is vanishing.
\end{remark}

\begin{remark}
Case (i) for $\beta=0$ and $p=r$ shows that 
if $T_\mu\colon A^p(D)\to A^p(D)$ is continuous (respectively, compact) then $\mu$ is (respectively, vanishing) Carleson. 
\end{remark}

We have a similar statement for $p=1$ too, but the proof that if $T_\mu$ is compact then $\mu$ is vanishing requires a few preliminary lemmas:

\begin{lemma}
\label{th:prelemuno}
Let $D\subset\subset\C^n$ be a bounded strongly pseudoconvex domain, choose
$\eps\ge0$ and $\max\left\{1-\frac{1}{n+1},1-\eps\right\}<\theta$, and let $\mu$ be a $\theta$-Carleson measure. Then:
\begin{itemize}
\item[(i)] if $\eps\ge 1$, then $\{\delta(z_0)^{(n+1)(\eps-\frac{1}{2})}k_{z_0}\}_{z_0\in D}$ is norm-bounded in $L^s(\mu)$ for all $s\ge1$;
\item[(ii)] if $0\le\eps<1$ and $\theta>1$, then $\left\|\delta(z_0)^{(n+1)(\eps-\frac{1}{2})}k_{z_0}\right\|_{L^s(\mu)}\preceq\delta(z_0)^{(n+1)\eps}$ for all $1\le s<\theta$;
\item[(iii)] if $0\le\eps<1$ and $\max\left\{1-\frac{1}{n+1},1-\eps\right\}<\theta\le 1$, then 
$\{\delta(z_0)^{(n+1)(\eps-\frac{1}{2})}k_{z_0}\}_{z_0\in D}$ is norm-bounded in $L^s(\mu)$
for all $1\le s\le\frac{\theta}{1-\eps}$. 
\end{itemize}
\end{lemma}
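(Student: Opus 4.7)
The plan is to bound $\|k_{z_0}\|_{L^s(\mu)}$ by comparing to a weighted Bergman norm of the same kernel, and then apply Theorem~\ref{BKp}. The key starting observation is that $|k_{z_0}|^s$ is nonnegative and plurisubharmonic for $s\ge 1$, so Remark~\ref{rem:psh} (a direct consequence of the $\theta$-Carleson hypothesis and Lemma~\ref{due}) yields
\[
\|k_{z_0}\|_{L^s(\mu)}^s\preceq \int_D|k_{z_0}(z)|^s\delta(z)^{(n+1)(\theta-1)}\,d\nu(z)=\|k_{z_0}\|_{s,(n+1)(\theta-1)}^s\;.
\]
Setting $\beta=(n+1)(\theta-1)$, the critical threshold in Theorem~\ref{BKp} is $\beta=(n+1)(s-1)$, equivalently $\theta=s$, so its three cases correspond precisely to the comparison of $\theta$ with~$s$.

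For case (ii), the assumption $1\le s<\theta$ gives $\beta>(n+1)(s-1)$, so Theorem~\ref{BKp}(ii) yields $\|k_{z_0}\|_{s,(n+1)(\theta-1)}\preceq\delta(z_0)^{(n+1)/2}$, and multiplication by $\delta(z_0)^{(n+1)(\eps-1/2)}$ produces exactly the required $\delta(z_0)^{(n+1)\eps}$. For case (iii), the hypothesis $\theta\le 1\le s$ forces $\theta\le s$, and I would split: when $\theta<s$, Theorem~\ref{BKp}(i) combined with the elementary identity $\tfrac{n+1}{2}+\tfrac{(n+1)(\theta-1)}{s}-\tfrac{n+1}{s'}=(n+1)(\theta/s-1/2)$ gives a combined power $\delta(z_0)^{(n+1)(\eps+\theta/s-1)}$, which is bounded exactly when $s\le\theta/(1-\eps)$; the endpoint $\theta=s$ (which here can only occur with $s=\theta=1$) is covered by Theorem~\ref{BKp}(iii), producing a loss of $\delta(z_0)^{-\eta}$ for arbitrarily small~$\eta>0$ that is absorbed because $\eps>0$ is forced by $1-\eps<\theta\le 1$. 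For case (i), where $\eps\ge 1$, the same three-way subcase analysis applies, and the exponent of $\delta(z_0)$ in the resulting bound is always nonnegative thanks to the extra room provided by $\eps\ge 1$.

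There is no genuine obstacle: the argument is essentially mechanical once the $\theta$-Carleson assumption has been converted via Remark~\ref{rem:psh} into a weighted Bergman norm estimate. The only mildly delicate point is the endpoint subcase $\theta=s$, where Theorem~\ref{BKp}(iii) costs an arbitrarily small $\delta(z_0)^{-\eta}$: this is harmless in case (i) since $\eps\ge 1$ provides plenty of room, harmless in case (iii) since the hypothesis $1-\eps<\theta\le 1$ forces $\eps>0$, and it simply cannot arise in case (ii), where $s<\theta$ strictly.
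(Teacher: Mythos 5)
Your proof is correct and follows essentially the same route as the paper: the $\theta$-Carleson hypothesis is converted into the weighted bound $\|k_{z_0}\|_{L^s(\mu)}\preceq\|k_{z_0}\|_{s,(n+1)(\theta-1)}$ (the paper cites Theorem~\ref{carthetaCarluno} and Remark~\ref{rem:uno} rather than Remark~\ref{rem:psh}, but this is the same underlying fact), and then Theorem~\ref{BKp} is applied with the three-way comparison of $\theta$ against $s$. The only (harmless) difference is in case (i), where the paper simply observes that for $\eps\ge 1$ the family $\delta(z_0)^{(n+1)(\eps-\frac{1}{2})}k_{z_0}$ is uniformly bounded by Theorem~\ref{BKp}.(iv), so norm-boundedness in every $L^s(\mu)$ follows from the finiteness of $\mu$ without using the Carleson hypothesis at all, whereas you rerun the weighted-norm machinery.
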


\begin{proof}
If $\eps\ge 1$ then $\{\delta(z_0)^{(n+1)(\eps-\frac{1}{2})}k_{z_0}\}_{z_0\in D}$ is uniformly 
bounded by Theorem~\ref{BKp}.(iv), and (i) follows.

Assume then $0\le\eps<1$. Then using as usual Theorem~\ref{carthetaCarluno}, Remark~\ref{rem:uno} and Theorem~\ref{BKp} for any $s\ge 1$ we obtain
\[
\begin{aligned}
\|\delta(z_0)^{(n+1)(\eps-\frac{1}{2})}k_{z_0}\|_{L^s(\mu)}&=
\delta(z_0)^{(n+1)(\eps-\frac{1}{2})}\left[\int_D|k_{z_0}(\zeta)|^s\,d\mu(\zeta)\right]^{1/s}\cr
&\preceq \delta(z_0)^{(n+1)(\eps-\frac{1}{2})}\left[\int_D|k_{z_0}(\zeta)|^s\delta(\zeta)^{(n+1)(\theta-1)}\,d\nu(\zeta)\right]^{1/s}\cr
&\preceq
\begin{cases}
\delta(z_0)^{(n+1)(\eps+\frac{\theta}{s}-1)}&\hbox{if $\theta<s$,}\cr
\delta(z_0)^{(n+1)(\eps-\eta)}&\hbox{for any $\eta>0$ if $\theta=s$,}\cr
\delta(z_0)^{(n+1)\eps}&\hbox{if $\theta>s$,}\cr
\end{cases} 
\end{aligned}
\]
and (ii) and (iii) follow.
\end{proof}

\begin{lemma}
\label{th:prelemdue}
Let $D\subset\subset\C^n$ be a bounded strongly pseudoconvex domain, and $\mu$ a positive finite Borel measure on~$D$. Assume that $\{f_k\}$ is a sequence converging to~$0$ uniformly on compact subsets and norm-bounded in $L^s(\mu)$ for some $1<s\le+\infty$. Then $\{T_\mu f_k\}$ converges to~$0$ uniformly on compact subsets.
\end{lemma}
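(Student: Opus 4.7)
My plan is to prove the pointwise estimate $|T_\mu f_k(z)|\to 0$ uniformly on each compact $D_0\subset\subset D$ by a standard $\varepsilon/2$ splitting of the defining integral into a bulk part (where $f_k$ is small by uniform convergence) and a tail part (controlled via Hölder's inequality and the finiteness of $\mu$). The key geometric input is Kerzman's theorem, already recalled in the paper: since $K\in C^\infty(\overline{D}\times\overline{D}\setminus\Delta_\partial)$ and $\overline{D_0}\cap\partial D=\emptyset$, the set $\overline{D_0}\times\overline{D}$ is disjoint from $\Delta_\partial$ and so there exists $M=M(D_0)>0$ with $|K(z,w)|\le M$ for all $z\in D_0$ and $w\in D$.

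Next I fix $\varepsilon>0$ and let $C=\sup_k\|f_k\|_{L^s(\mu)}<+\infty$. Since $\mu$ is a finite positive Borel measure on the locally compact space $D$, inner regularity lets me choose a compact set $E\subset D$ with $\mu(D\setminus E)$ as small as I wish; specifically, in the case $1<s<+\infty$ with conjugate exponent $s'$, I choose $E$ so that $M\,\mu(D\setminus E)^{1/s'}C<\varepsilon$, and in the case $s=+\infty$ I choose $E$ so that $M\,\mu(D\setminus E)\,C<\varepsilon$. By the hypothesis, $f_k\to 0$ uniformly on the compact set $E$, so there is $N$ such that $\sup_{w\in E}|f_k(w)|<\varepsilon/(M(1+\mu(D)))$ for every $k\ge N$.

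For $z\in D_0$ and $k\ge N$, I split
\[
|T_\mu f_k(z)|\le \int_E|K(z,w)||f_k(w)|\,d\mu(w)+\int_{D\setminus E}|K(z,w)||f_k(w)|\,d\mu(w).
\]
The first integral is bounded by $M\,\mu(E)\,\sup_{w\in E}|f_k(w)|<\varepsilon$. For the second, when $1<s<+\infty$ Hölder's inequality gives
\[
\int_{D\setminus E}|K(z,w)||f_k(w)|\,d\mu(w)\le \Big(\int_{D\setminus E}|K(z,w)|^{s'}d\mu(w)\Big)^{1/s'}\|f_k\|_{L^s(\mu)}\le M\,\mu(D\setminus E)^{1/s'}C<\varepsilon,
\]
while when $s=+\infty$ the same integral is at most $M\,\mu(D\setminus E)\,C<\varepsilon$ directly. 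Combining the two bounds yields $\sup_{z\in D_0}|T_\mu f_k(z)|<2\varepsilon$ for all $k\ge N$, which is what I want.

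There is no serious obstacle here: the only subtlety is choosing $E$ in the correct order relative to $\varepsilon$, and verifying that Kerzman's estimate really does give a uniform bound on $K$ over $D_0\times D$ (not just over compact subsets of both variables). Since $\overline{D_0}$ is compact and contained in the open set $D$, it is automatically at positive Euclidean distance from $\partial D$, and hence $\overline{D_0}\times\overline{D}$ is disjoint from $\Delta_\partial$; continuity of $K$ there supplies the constant $M$.
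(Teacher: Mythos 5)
Your proof is correct and follows essentially the same route as the paper's: Kerzman's theorem gives the uniform bound $|K(z,w)|\le M$ on $D_0\times D$, and then a split of $D$ into a compact piece (where uniform convergence applies) and a small-measure remainder (controlled by H\"older and the $L^s(\mu)$-bound) finishes the argument. The only cosmetic differences are that the paper first reduces to showing $\int_D|f_k|\,d\mu\to0$ before splitting, takes the explicit collar $\{\delta<\eta\}$ instead of invoking inner regularity, and handles $s=+\infty$ by dominated convergence rather than by the same splitting.
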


\begin{proof}
Fix $D_0\subset\subset D$. As we already noticed, \cite[Theorem~2]{K} implies that $|K(z,w)|\le C$ for all $z\in D_0$ and $w\in D$; therefore
\[
|T_\mu f_k(z)|\le\int_D|K(z,w)||f_k(w)|\,d\mu(w)\le C\int_D|f_k(w)|\,d\mu(w)
\]
for all $z\in D_0$. So it suffices to show that $\int_D|f_k(w)|\,d\mu(w)\to 0$ as $k\to+\infty$ knowing
that $f_k\to 0$ uniformly on compact subsets and $\|f_k\|_{L^s(\mu)}\le M$ for some $1<s\le+\infty$. If $s=+\infty$ the assertion follows from the dominated convergence theorem; assume then
$1<s<+\infty$, and let $s'$ be its conjugate exponent. Given $\eps>0$, choose $\eta>0$
so that $\mu(D_\eta)<(\eps/2M)^{s'}$, where $D_\eta=\{w\in D\mid \delta(w)<\eta\}$. 
Choose now $k_0$ so that 
\[
\sup_{w\in D\setminus D_\eta}|f_k(w)|\le\frac{\eps}{2\mu(D\setminus D_\eta)}
\]
for all $k\ge k_0$. Then
\[
\begin{aligned}
\int_D|f_k(w)|d\mu(w)&\le\int_{D\setminus D_\eta}|f_k(w)|d\mu(w)+\int_{D_\eta}|f_k(w)|d\mu(w)\\
&\le \frac{\eps}{2}+\|f_k\|_{L^s(\mu)}\mu(D_\eta)^{1/s'}\le\eps
\end{aligned}
\]
for all $k\ge k_0$, and we are done.
\end{proof}

\begin{corollary}
\label{th:prelemtre}
Let $D\subset\subset\C^n$ be a bounded strongly pseudoconvex domain, choose
$\eps>0$ and $\theta>\max\left\{1-\frac{1}{n+1},1-\eps\right\}$, and let $\mu$ be a $\theta$-Carleson measure. Then $\delta(z_0)^{(n+1)(\eps-\frac{1}{2})}T_\mu k_{z_0}\to 0$ uniformly on compact subsets as $z_0\to\de D$.
\end{corollary}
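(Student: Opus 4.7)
The plan is to reduce the statement to Lemma~\ref{th:prelemdue} by linearity. Given any sequence $\{z_0^{(j)}\}\subset D$ with $z_0^{(j)}\to\de D$, I would set $f_j=\delta(z_0^{(j)})^{(n+1)(\eps-\frac{1}{2})}k_{z_0^{(j)}}$ and verify the two hypotheses of Lemma~\ref{th:prelemdue}: namely, norm-boundedness of $\{f_j\}$ in some $L^s(\mu)$ with $s>1$, and uniform convergence of $\{f_j\}$ to~$0$ on compact subsets of~$D$. Lemma~\ref{th:prelemdue} would then yield $T_\mu f_j\to 0$ uniformly on compact subsets, which by linearity of~$T_\mu$ is exactly $\delta(z_0^{(j)})^{(n+1)(\eps-\frac{1}{2})}T_\mu k_{z_0^{(j)}}\to 0$ uniformly on compact subsets, as required.

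The uniform convergence to~$0$ on compact subsets is immediate from Lemma~\ref{BK}: since $\eps>0$, the exponent $(n+1)(\eps-\tfrac{1}{2})$ strictly exceeds the threshold $-(n+1)/2$, so that lemma applies to the family $\{\delta(z_0)^{(n+1)(\eps-\frac{1}{2})}k_{z_0}\}_{z_0\in D}$.

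For the norm-boundedness in $L^s(\mu)$ I would invoke Lemma~\ref{th:prelemuno} case by case:
\begin{itemize}
\item if $\eps\ge 1$, part~(i) gives a uniform bound in any $L^s(\mu)$, in particular for some $s>1$;
\item if $0<\eps<1$ and $\theta>1$, part~(ii) provides $\|f_j\|_{L^s(\mu)}\preceq \delta(z_0^{(j)})^{(n+1)\eps}\preceq 1$ for any $1\le s<\theta$, so the choice $1<s<\theta$ works;
\item if $0<\eps<1$ and $\max\{1-\tfrac{1}{n+1},1-\eps\}<\theta\le 1$, part~(iii) gives uniform boundedness for $1\le s\le\theta/(1-\eps)$, and the hypothesis $\theta>1-\eps$ forces $\theta/(1-\eps)>1$, leaving room to pick $s>1$.
\end{itemize}

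The only subtle point—which I expect to be the main thing to get right—is matching the hypothesis $\theta>\max\{1-\tfrac{1}{n+1},1-\eps\}$ to the case analysis above: it is precisely this assumption, together with $\eps>0$, that ensures a legitimate choice of exponent $s>1$ is available in each of the three regimes of Lemma~\ref{th:prelemuno}. Once that is checked, the corollary follows from the two lemmas with no further computation.
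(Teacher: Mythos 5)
Your proposal is correct and follows exactly the paper's own route: the paper's proof is the one-line statement that the corollary ``follows immediately from Lemmas~\ref{BK}, \ref{th:prelemuno} and~\ref{th:prelemdue}'', and your write-up simply makes explicit the case analysis (choice of $s>1$ in each regime of Lemma~\ref{th:prelemuno}, and the check that $(n+1)(\eps-\tfrac12)>-(n+1)/2$ for Lemma~\ref{BK}) that the paper leaves implicit. All three verifications are accurate, so nothing is missing.
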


\begin{proof}
It follows immediately from Lemmas~\ref{BK}, \ref{th:prelemuno} and~\ref{th:prelemdue}. 
\end{proof}

We can now deal with the case $p=1\le r<+\infty$:

\begin{theorem}
\label{ToepinvLuno}
Let $D\subset\subset\C^n$ be a bounded strongly pseudoconvex domain. 
Let $\mu$ be a finite positive Borel measure on $D$.
Assume that 
$T_\mu\colon A^1\bigl(D,(n+1)\beta\bigr)
\to A^r(D)$ continuously for some $1\le r<+\infty$.
Then 
\begin{itemize}
\item[(i)] if  $-\frac{1}{n+1}<\beta<0$, then $\mu$ is $\left(2+\beta-\frac{1}{r}\right)$-Carleson;
\item[(ii)] if $\beta=0$, then $\mu$ is $\left(2-\frac{1}{r}-\eps\right)$-Carleson
for all $\eps>0$;
\item[(iii)] if $\beta>0$, then $\mu$ is $\left(2-\frac{1}{r}\right)$-Carleson.
\end{itemize}
Furthermore, in cases \emph{(i)} and \emph{(ii)} if $T_\mu$ is compact and $r>1$ then $\mu$ is vanishing.
\end{theorem}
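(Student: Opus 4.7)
The plan is to adapt the proof of Theorem~\ref{Toepinvgen} to the case $p=1$, using Proposition~\ref{converse} and H\"older's inequality as the basic ingredient; the main novelty is in the compactness part, where the non-reflexivity of $A^1\bigl(D,(n+1)\beta\bigr)$ forces us to replace Lemma~\ref{ucsweak} and Proposition~\ref{th:gencompact} by the tools of Lemmas~\ref{th:prelemuno}, \ref{th:prelemdue} and~\ref{th:cpt1infty}.

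The first step is to observe that Proposition~\ref{converse}, H\"older's inequality with conjugate exponents $r$ and~$r'$, and the continuity assumption on~$T_\mu$ give
\[
B\mu(z_0)=\langle T_\mu k_{z_0}, k_{z_0}\rangle\le \|T_\mu k_{z_0}\|_r\|k_{z_0}\|_{r'}\preceq \|k_{z_0}\|_{1,(n+1)\beta}\,\|k_{z_0}\|_{r'}\;.
\]
Each factor is then estimated via Theorem~\ref{BKp}: in case~(i), $\|k_{z_0}\|_{1,(n+1)\beta}\preceq\delta(z_0)^{(n+1)(\frac{1}{2}+\beta)}$ by part~(i); in case~(ii), $\|k_{z_0}\|_{1,0}\preceq\delta(z_0)^{(n+1)/2-\eta}$ for any $\eta>0$ by part~(iii); in case~(iii), $\|k_{z_0}\|_{1,(n+1)\beta}\preceq\delta(z_0)^{(n+1)/2}$ by part~(ii). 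Similarly $\|k_{z_0}\|_{r'}\preceq\delta(z_0)^{(n+1)(\frac{1}{2}-\frac{1}{r})}$ when $r>1$, and $\|k_{z_0}\|_\infty\approx\delta(z_0)^{-(n+1)/2}$ when $r=1$ (Theorem~\ref{BKp}.(iv)). Multiplying the bounds, in every case one obtains $B\mu(z_0)\preceq\delta(z_0)^{(n+1)(\theta-1)}$ with the value of~$\theta$ stated in the theorem, and Theorem~\ref{carthetaCarldue} yields that $\mu$ is $\theta$-Carleson.

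For the vanishing claim in cases (i) and~(ii) (with $r>1$), I would split $(n+1)(1-\theta)=a+b$, choosing $b$ so that $\delta(z_0)^b\|k_{z_0}\|_{r'}$ is uniformly bounded (Theorem~\ref{BKp}.(i)) and $a$ so that $\{\delta(z_0)^a k_{z_0}\}_{z_0\in D}$ is norm-bounded in $A^1\bigl(D,(n+1)\beta\bigr)$. A direct computation gives $a=(n+1)(\eps-\frac{1}{2})$ with $\eps>0$, and it turns out that the assumption $r>1$ is precisely what makes Lemma~\ref{th:prelemuno} (part~(ii) if $\theta>1$, part~(iii) if $\theta\le 1$) applicable, providing in addition norm-boundedness of $\{\delta(z_0)^a k_{z_0}\}$ in some $L^s(\mu)$ with $s>1$. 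Lemma~\ref{BK} gives $\delta(z_0)^a k_{z_0}\to 0$ uniformly on compact subsets as $z_0\to\de D$; Lemma~\ref{th:prelemdue} upgrades this to $T_\mu\bigl(\delta(z_0)^a k_{z_0}\bigr)\to 0$ uniformly on compact subsets (and therefore a.e.\ on $D$); and finally Lemma~\ref{th:cpt1infty}, applied to the composition of $T_\mu$ with the inclusion $A^r(D)\hookrightarrow L^r(\nu)$ (compact because $T_\mu$ is), yields $\|T_\mu(\delta(z_0)^a k_{z_0})\|_r\to 0$. Combining the two factors,
\[
\delta(z_0)^{(n+1)(1-\theta)}B\mu(z_0)\le \bigl(\delta(z_0)^a\|T_\mu k_{z_0}\|_r\bigr)\bigl(\delta(z_0)^b\|k_{z_0}\|_{r'}\bigr)\longrightarrow 0
\]
as $z_0\to\de D$, so by Theorem~\ref{carvanthetaCarldue} the measure $\mu$ is vanishing $\theta$-Carleson.

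The main obstacle is the non-reflexivity of $A^1\bigl(D,(n+1)\beta\bigr)$, which rules out the slick compactness argument of Theorem~\ref{Toepinvgen} based on Lemma~\ref{ucsweak} and Proposition~\ref{th:gencompact}. The detour through Lemma~\ref{th:cpt1infty} instead requires pointwise a.e.\ decay of $T_\mu(\delta(z_0)^a k_{z_0})$; reducing this to uniform decay on compact subsets via Lemma~\ref{th:prelemdue} forces the auxiliary norm-boundedness of $\delta(z_0)^a k_{z_0}$ in some $L^s(\mu)$ with $s>1$, and since the relevant strict inequality in Lemma~\ref{th:prelemuno} degenerates to an equality exactly when $r=1$, this is the genuine reason for the restriction $r>1$ in the vanishing part of the statement.
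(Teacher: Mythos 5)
Your proposal is correct and follows essentially the same route as the paper: the Carleson part via Proposition~\ref{converse}, H\"older and Theorem~\ref{BKp} is identical, and for the vanishing part you use the same splitting of the exponent and the same chain Lemma~\ref{BK} $+$ Lemma~\ref{th:prelemuno} $+$ Lemma~\ref{th:prelemdue} (which the paper packages as Corollary~\ref{th:prelemtre}), correctly identifying $r>1$ as the condition making the $L^s(\mu)$-bound with $s>1$ available. The only cosmetic difference is that you conclude by invoking Lemma~\ref{th:cpt1infty} applied to $T_\mu$ composed with the inclusion $A^r(D)\hookrightarrow L^r(\nu)$, whereas the paper runs the equivalent subsequence-and-contradiction argument inline.
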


\begin{proof}
The first part of the proof goes exactly as for Theorem~\ref{Toepinvgen}:
denoting by $r'$ the conjugate exponent of $r$,
Proposition~\ref{converse}, the H\"older inequality and the assumption yield
\[
B\mu(z_0)=\langle T_\mu k_{z_0},k_{z_0}\rangle\le \|T_\mu k_{z_0}\|_r\|k_{z_0}\|_{r'}
\preceq \|k_{z_0}\|_{1,(n+1)\beta}\|k_{z_0}\|_{r'}\;.
\]
We can now use Theorem~\ref{BKp}. In case (i) we have
\[
B\mu(z_0)\preceq\delta(z_0)^{(n+1)\left[1+\beta-\frac{1}{r}\right]}\;,
\]
and the assertion follows from Theorem~\ref{carthetaCarldue}.

Analogously, in case (ii) we have
\[
B\mu(z_0)\preceq\delta(z_0)^{(n+1)\left[1-\eps-\frac{1}{r}\right]}\;,
\]
for all $\eps>0$, and again the assertion follows from Theorem~\ref{carthetaCarldue}. Case (iii) is identical.

Finally, assume that $T_\mu$ is compact; in this case the argument is slightly different because we cannot apply Lemma~\ref{ucsweak} and Proposition~\ref{th:gencompact}.
Anyway, set $\theta=2+\beta-\frac{1}{r}$
in case (i), $\theta=2-\frac{1}{r}-\eps$ in case (ii), and $\theta=2-\frac{1}{r}$ in case (iii).
Then Proposition~\ref{converse} yields
\[
\begin{aligned}
\delta(z_0)^{(n+1)(1-\theta)}B\mu(z_0)&\le \delta(z_0)^{(n+1)(1-\theta)}\|T_\mu k_{z_0}\|_r\|k_{z_0}\|_{r'}\cr
&\preceq 
\begin{cases}
\delta(z_0)^{(n+1)\left(-\beta-\frac{1}{2}\right)}\|T_\mu k_{z_0}\|_r
&\hbox{if $-\frac{1}{n+1}<\beta<0$,}\\
\delta(z_0)^{(n+1)\left(\eps-\frac{1}{2}\right)}\|T_\mu k_{z_0}\|_r&\hbox{if $\beta=0$,}\\
\delta(z_0)^{-\frac{n+1}{2}}\|T_\mu k_{z_0}\|_r&\hbox{if $\beta>0$.}
\end{cases}
\end{aligned}
\]
If we denote by $\eta$ the exponent of~$\delta(z_0)$ in cases (i) and (ii), $\{\delta(z_0)^\eta k_{z_0}\}_{z_0\in D}$ is 
norm-bounded in $A^1\bigl(D,(n+1)\beta\bigr)$ by Theorem~\ref{BKp}, and 
$\delta(z_0)^\eta T_\mu k_{z_0}$ converges to~0
uniformly on compact subsets as $z_0\to\de D$ by Corollary~\ref{th:prelemtre} (that we can use because $r>1$); we claim 
that $\delta(z_0)^\eta \|T_\mu k_{z_0}\|_r\to 0$. If not, we can find a sequence $z_j\to\de D$
and $\delta>0$ such that $\delta(z_j)^\eta\|T_\mu k_{z_j}\|_r\ge\delta$ for all $j\in\N$. Now,
since $\{\delta(z_0)^\eta k_{z_0}\}_{z_0\in D}$ is norm-bounded and $T_\mu$ is compact,
up to a subsequence we can assume that $\delta(z_j)^\eta T_\mu k_{z_j}\to h\in A^r(D)$ strongly.
But we know that $\delta(z_j)^\eta T_\mu k_{z_j}\to 0$ uniformly on compact subsets; therefore
$h\equiv 0$ and thus $\delta(z_j)^\eta\|T_\mu k_{z_j}\|_r\to 0$, contradiction.

So $\delta(z_0)^\eta \|T_\mu k_{z_0}\|_r\to 0$ as $z_0\to\de D$, and the assertion 
follows from Theorem~\ref{carvanthetaCarldue}
and Remark~\ref{rem:Cb}.
\end{proof}

We finally have a statement for $1\le p<+\infty$ and $r=+\infty$ too:

\begin{theorem}
\label{Toepinvpinfty}
Let $D\subset\subset\C^n$ be a bounded strongly pseudoconvex domain. 
Let $\mu$ be a finite positive Borel measure on $D$.
Given $1\le  p<+\infty$, assume that 
$T_\mu\colon A^p\bigl(D,(n+1)\beta\bigr)
\to A^\infty(D)$ continuously.
Then 
\begin{itemize}
\item[(i)] if  $-\frac{1}{n+1}<\beta\le p-1$, then $\mu$ is $\left(1+\frac{\beta}{p}+\frac{1}{p}-\eps\right)$-Carleson for all $\eps>0$;
\item[(ii)] if $\beta>p-1$, then $\mu$ is $\left(2-\eps\right)$-Carleson for all $\eps>0$.
\end{itemize}
Furthermore, in both cases $\mu$ is vanishing.
\end{theorem}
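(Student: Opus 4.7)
The proof will follow the pattern of Theorems~\ref{Toepinvgen} and~\ref{ToepinvLuno}, pivoting on Proposition~\ref{converse} to relate the Berezin transform to the Toeplitz operator. Applying H\"older's inequality in the form $|\langle f,g\rangle|\le\|f\|_\infty\|g\|_1$, the identity of Proposition~\ref{converse} together with the continuity hypothesis yields
\[
B\mu(z_0)=\langle T_\mu k_{z_0},k_{z_0}\rangle\le\|T_\mu k_{z_0}\|_\infty\,\|k_{z_0}\|_1
\preceq\|k_{z_0}\|_{p,(n+1)\beta}\,\|k_{z_0}\|_1.
\]
The second factor is controlled by Theorem~\ref{BKp}(iii) applied with $p=1$ and $\beta=0=(n+1)(p-1)$, giving $\|k_{z_0}\|_1\preceq\delta(z_0)^{(n+1)/2-\eps}$ for every $\eps>0$.

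I would then treat the two cases separately. In case~(i), when $-\frac{1}{n+1}<\beta<p-1$, Theorem~\ref{BKp}(i) gives $\|k_{z_0}\|_{p,(n+1)\beta}\preceq\delta(z_0)^{(n+1)\left[(\beta+1)/p-1/2\right]}$, while the borderline $\beta=p-1$ is handled by Theorem~\ref{BKp}(iii) at the cost of an extra $\delta(z_0)^{-\eps}$ factor. Multiplying by the bound on $\|k_{z_0}\|_1$ above gives $B\mu(z_0)\preceq\delta(z_0)^{(n+1)(\beta+1)/p-\eps}$ for every $\eps>0$. In case~(ii), when $\beta>p-1$, Theorem~\ref{BKp}(ii) yields $\|k_{z_0}\|_{p,(n+1)\beta}\preceq\delta(z_0)^{(n+1)/2}$, whence $B\mu(z_0)\preceq\delta(z_0)^{n+1-\eps}$ for all $\eps>0$. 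Since the $\theta$ we are aiming at is strictly positive in each subcase, Theorem~\ref{carthetaCarldue} converts these Berezin estimates into the desired geometric statement: $\mu$ is $\left(1+\frac{\beta}{p}+\frac{1}{p}-\eps\right)$-Carleson in case~(i) and $(2-\eps)$-Carleson in case~(ii), for every $\eps>0$.

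The vanishing conclusion then comes \emph{for free}, and this is precisely what distinguishes the present statement from Theorems~\ref{Toepinvgen} and~\ref{ToepinvLuno}: no separate compactness hypothesis is required. Indeed, given any $\eps>0$ one applies the continuous part just proved with $\eps/2$ in place of $\eps$; the resulting $\left(1+\frac{\beta}{p}+\frac{1}{p}-\eps/2\right)$-Carleson property (in case~(i)) then forces, by Remark~\ref{rem:carvancar}, the vanishing $\theta'$-Carleson property for every $\theta'<1+\frac{\beta}{p}+\frac{1}{p}-\eps/2$, and in particular for $\theta'=1+\frac{\beta}{p}+\frac{1}{p}-\eps$. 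The same slack-exploiting trick handles case~(ii). The main book-keeping issue is to track which subcase of Theorem~\ref{BKp} applies in which range of~$\beta$, and to ensure that the $\eps$-loss arising at the borderline $\beta=p-1$ is absorbed into the $\eps$ already present in the conclusion; no genuinely new analytic obstacle appears beyond those already overcome in Theorems~\ref{Toepinvgen} and~\ref{ToepinvLuno}.
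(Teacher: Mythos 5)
Your proposal is correct and follows essentially the same route as the paper: the identity $B\mu(z_0)=\langle T_\mu k_{z_0},k_{z_0}\rangle$ from Proposition~\ref{converse}, H\"older in the form $\|T_\mu k_{z_0}\|_\infty\|k_{z_0}\|_1$, the norm estimates of Theorem~\ref{BKp} (including the $\eps$-loss from case (iii) both for $\|k_{z_0}\|_1$ and at the borderline $\beta=p-1$), conversion via Theorem~\ref{carthetaCarldue}, and the vanishing conclusion from Remark~\ref{rem:carvancar} by exploiting the slack in $\eps$. The only difference is cosmetic: you spell out the final ``vanishing for free'' step in more detail than the paper, which simply cites the remark.
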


\begin{proof}
Proposition~\ref{converse}, H\"older's inequality and the assumption yield
\[
B\mu(z_0)=\langle T_\mu k_{z_0},k_{z_0}\rangle\le \|T_\mu k_{z_0}\|_\infty\|k_{z_0}\|_1
\preceq \|k_{z_0}\|_{p,(n+1)\beta}\|k_{z_0}\|_1\;.
\]
We can now use Theorem~\ref{BKp}. In case (i) with $\beta<p-1$ we have
\[
B\mu(z_0)\preceq\delta(z_0)^{(n+1)\left[\frac{1}{2}+\frac{\beta}{p}-\frac{1}{p'}+\frac{1}{2}-\eps\right]}=\delta(z_0)^{(n+1)\left[\frac{\beta}{p}+\frac{1}{p}-\eps\right]}\;,
\]
and the assertion follows from Theorem~\ref{carthetaCarldue}.

Analogously, in case (i) with $\beta=p-1$ we have
\[
B\mu(z_0)\preceq\delta(z_0)^{(n+1)\left[\frac{1}{2}-\eps+\frac{1}{2}-\eps\right]}=\delta(z_0)^{(n+1)\left[1-2\eps\right]}\;,
\]
for all $\eps>0$, and again the assertion follows from Theorem~\ref{carthetaCarldue}. Case (ii) is identical, and the final assertion follows from Remark~\ref{rem:carvancar}.
\end{proof}

\begin{remark}
\label{rem:inftyinfty}
A similar argument shows that if $T_\mu\colon A^\infty\bigl(D,(n+1)\beta\bigr)\to A^\infty(D)$ 
continuously then
\begin{itemize}
\item[(i)] if $0\le\beta<1$ then $\mu$ is vanishing $(1+\beta-\eps)$-Carleson for all~$\eps>0$, and
\item[(ii)] if $\beta\ge 1$ then $\mu$ is vanishing $(2-\eps)$-Carleson for all~$\eps>0$.
\end{itemize}
\end{remark}

We finally summarize our results giving a few ``if and only if" statements. We begin with 
some general though technical results:

\begin{corollary}
\label{Toepinvdue}
Let $D\subset\subset\C^n$ be a bounded strongly pseudoconvex domain. Given $1< p<+\infty$, 
choose $1-\frac{1}{(n+1)p}<\theta< 1$. Then a finite positive Borel
measure~$\mu$ on $D$ is $\theta$-Carleson (respectively, vanishing $\theta$-Carleson) if and only if 
$T_\mu\colon A^p\bigl(D,(n+1)p(\theta-1+\frac{1}{r}-\frac{1}{p})\bigr)
\to A^r(D)$ continuously (respectively, is compact) for some (and hence all) $p\le r< \min\left(\frac{p(n+1)}{(n+1)p(1-\theta)+n},\frac{p'}{(n+1)(1-\theta)}\right)$, where $p'$ is the conjugate exponent of $p$.
\end{corollary}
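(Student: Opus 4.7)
The plan is to present this corollary as the matched combination of the forward implication from Theorem~\ref{Toeptre}(i) and the reverse implication from Theorem~\ref{Toepinvgen}(i). The common bookkeeping variable is the weight $\beta:=p\bigl(\theta-1+\tfrac{1}{r}-\tfrac{1}{p}\bigr)$, chosen so that the domain space in the statement equals $A^p\bigl(D,(n+1)\beta\bigr)$; the entire argument will then reduce to checking that the admissible ranges of $\beta$ and $r$ in the two theorems are compatible with one another and with the prescribed $\min$.

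For the ``if'' direction, I would first observe that $\theta>1-\tfrac{1}{(n+1)p}$ implies, a fortiori, $\theta>1-\tfrac{1}{n+1}\min\bigl(1,\tfrac{1}{p-1}\bigr)$, so the hypotheses of Theorem~\ref{Toeptre} are met; since $\theta<1$, case~(i) of that theorem applies, and its upper bound $r<\tfrac{p'}{(n+1)(1-\theta)}$ coincides with the second entry in the prescribed $\min$. Continuity of $T_\mu\colon A^p\bigl(D,(n+1)\beta\bigr)\to A^r(D)$ follows at once, and the compact case is the ``furthermore'' clause. For the ``only if'' direction, I would apply Theorem~\ref{Toepinvgen}(i) after verifying $-\tfrac{1}{n+1}<\beta<p-1$: the right inequality is automatic because $\theta<1$ and $r\ge 1$ give $\tfrac{1}{r}+\theta-1<1$, while the left inequality rearranges algebraically to $r<\tfrac{p(n+1)}{(n+1)p(1-\theta)+n}$, which is precisely the first entry in the $\min$. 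The theorem then declares $\mu$ to be $\bigl(1+\tfrac{\beta}{p}+\tfrac{1}{p}-\tfrac{1}{r}\bigr)$-Carleson, and a direct substitution collapses this exponent to $\theta$; the ``furthermore'' clause of Theorem~\ref{Toepinvgen} covers the compact case, which is available precisely in case~(i) that we are using.

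The equivalence ``for some~$r$'' $\Leftrightarrow$ ``for all~$r$'' is then free: continuity (respectively, compactness) for one admissible $r$ produces the $\theta$-Carleson (respectively, vanishing $\theta$-Carleson) property of~$\mu$, a condition that does not depend on~$r$, and the forward direction restores continuity (respectively, compactness) for every admissible $r$ in the range. I do not anticipate any real obstacle in carrying this out; the point that deserves careful checking is that the hypothesis $\theta>1-\tfrac{1}{(n+1)p}$ is exactly what makes both entries in the $\min$ strictly larger than~$p$, so that the admissible range of $r$ is nonempty---the first entry yields this directly, while for the second one uses $\tfrac{1}{(n+1)p}<\tfrac{1}{(n+1)(p-1)}$ for $p>1$.
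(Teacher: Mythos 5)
Your proposal is correct and follows essentially the same route as the paper: the forward implication is Theorem~\ref{Toeptre}.(i), the converse is Theorem~\ref{Toepinvgen}.(i) applied with $\beta=p\bigl(\theta-1+\frac{1}{r}-\frac{1}{p}\bigr)$, and the range checks (that $-\frac{1}{n+1}<\beta<p-1$ corresponds exactly to the two entries of the $\min$, and that $\theta>1-\frac{1}{(n+1)p}$ makes the range of $r$ nonempty) are precisely the verifications the paper records. Your additional remarks on the ``for some, hence all'' equivalence and on the second entry of the $\min$ exceeding $p$ are accurate and consistent with the paper's argument.
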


\begin{proof}
One direction follows from Theorem~\ref{Toeptre}.(i), while the converse follows from
Theorem~\ref{Toepinvgen}.(i) applied to $\beta=p(\theta-1+\frac{1}{r} -\frac{1}{p})$; notice that
the assumption on $r$ ensures that $-\frac{1}{n+1}<\beta<p-1$, and the assumption
on~$\theta$ ensures that $p<\frac{p(n+1)}{(n+1)p(1-\theta)+n}$.
\end{proof}

\begin{corollary}
\label{Toepinvuno}
Let $D\subset\subset\C^n$ be a bounded strongly pseudoconvex domain, and choose $1< p
<+\infty$.
Then a positive finite Borel measure $\mu$ on~$D$ is $\theta$-Carleson (respectively, vanishing $\theta$-Carleson) for all $\theta<1$ if and only if 
$T_\mu\colon A^p\bigl(D,(n+1)p(\frac{1}{r}-\frac{1}{p}-\eps)\bigr)
\to A^r(D)$ continuously (respectively, is compact) for some (and hence all) $p\le r<p\left(1+\frac{1}{n}\right)$ and all $\eps>0$.
\end{corollary}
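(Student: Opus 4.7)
The key observation is that the substitution $\theta=1-\eps$ transforms the weight $(n+1)p(\theta-1+\frac{1}{r}-\frac{1}{p})$ appearing in Corollary~\ref{Toepinvdue} into exactly $(n+1)p(\frac{1}{r}-\frac{1}{p}-\eps)$, and as $\theta\to 1^-$ the upper bound on $r$ in Corollary~\ref{Toepinvdue} tends to $\frac{p(n+1)}{n}=p(1+\frac{1}{n})$, matching the present range of $r$. The plan is therefore to apply Corollary~\ref{Toepinvdue} with $\theta=1-\eps$, using two elementary monotonicities to cover the corners where it does not directly apply. First, if $\mu$ is $\theta_1$-Carleson (respectively, vanishing $\theta_1$-Carleson) then $\mu$ is $\theta$-Carleson (respectively, vanishing $\theta$-Carleson) for every $\theta<\theta_1$: in the non-vanishing case this follows from $\nu(B_D(\cdot,r))\le\nu(D)<+\infty$, and in the vanishing case from the extra factor $\nu(B_D(z_0,r))^{\theta_1-\theta}\to 0$ as $z_0\to\de D$. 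Second, $A^p(D,\beta_1)\hookrightarrow A^p(D,\beta_2)$ continuously whenever $\beta_1\le\beta_2$, by Lemma~\ref{compwn}.

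For the forward direction, I would fix $r\in[p,p(1+\frac{1}{n}))$ and $\eps>0$, and choose $\eps_0\in(0,\eps]$ small enough that $1-\eps_0>1-\frac{1}{(n+1)p}$ and $r<\min\bigl(\frac{p(n+1)}{(n+1)p\eps_0+n},\frac{p'}{(n+1)\eps_0}\bigr)$; such an $\eps_0$ exists since this minimum tends to $p(1+\frac{1}{n})$ as $\eps_0\to 0^+$. Because $\mu$ is $(1-\eps_0)$-Carleson, Corollary~\ref{Toepinvdue} applied with $\theta=1-\eps_0$ yields continuity (respectively, compactness) of $T_\mu\colon A^p(D,(n+1)p(\frac{1}{r}-\frac{1}{p}-\eps_0))\to A^r(D)$. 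Since $(n+1)p(\frac{1}{r}-\frac{1}{p}-\eps)\le (n+1)p(\frac{1}{r}-\frac{1}{p}-\eps_0)$, Lemma~\ref{compwn} provides a continuous inclusion of the $\eps$-weighted domain into the $\eps_0$-weighted one, and precomposing $T_\mu$ with this inclusion gives the desired mapping property for $\eps$ (compactness is preserved by precomposition with a bounded operator).

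For the converse, assume the mapping property holds for some $r_0\in[p,p(1+\frac{1}{n}))$ and all $\eps>0$, and let $\theta<1$. I would pick $\theta_1\in(\theta,1)$ with $\theta_1>1-\frac{1}{(n+1)p}$ and close enough to $1$ that $r_0<\min\bigl(\frac{p(n+1)}{(n+1)p(1-\theta_1)+n},\frac{p'}{(n+1)(1-\theta_1)}\bigr)$, which is again possible thanks to $r_0<p(1+\frac{1}{n})$. Setting $\eps=1-\theta_1$, the hypothesis supplies continuity (respectively, compactness) of $T_\mu\colon A^p(D,(n+1)p(\theta_1-1+\frac{1}{r_0}-\frac{1}{p}))\to A^{r_0}(D)$, and the converse direction of Corollary~\ref{Toepinvdue} forces $\mu$ to be $\theta_1$-Carleson (respectively, vanishing $\theta_1$-Carleson); the first monotonicity noted above then gives the conclusion for the given $\theta$. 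The whole argument is essentially bookkeeping, and the only point requiring care is the simultaneous choice of $\eps_0$ (respectively, $\theta_1$) meeting every constraint above, which is immediate from the strict inequality $r<p(1+\frac{1}{n})$.
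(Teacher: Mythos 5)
Your proposal is correct and follows exactly the route the paper intends: the paper's own proof of Corollary~\ref{Toepinvuno} is the single line ``It follows from the previous corollary,'' and your argument is a careful unpacking of that derivation from Corollary~\ref{Toepinvdue}, using the monotonicity of the $\theta$-Carleson condition in $\theta$ (cf.\ Remark~\ref{rem:carvancar}) and the inclusion of Lemma~\ref{compwn} to reconcile the weights. The bookkeeping with $\eps_0$ and $\theta_1$ is sound, since both constraints in the upper bound for $r$ in Corollary~\ref{Toepinvdue} relax to $p\left(1+\frac{1}{n}\right)$ (or beyond) as $\theta\to 1^-$.
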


\begin{proof}
It follows from the previous corollary.
\end{proof}

\begin{corollary}
\label{Toepinvtre}
Let $D\subset\subset\C^n$ be a bounded strongly pseudoconvex domain, and choose 
$1-\frac{1}{n+1}<\theta\le 1$. Then a positive finite Borel measure $\mu$ on~$D$ is $\theta$-Carleson (respectively, vanishing $\theta$-Carleson) if and only if 
$T_\mu\colon A^1\bigl(D,(n+1)(\theta-1+\frac{1}{r}-1)\bigr)
\to A^r(D)$ continuously (respectively, is compact) for some (and hence all) $1<r<\frac{n+1}{(n+1)(2-\theta)-1}$. 
In particular,
$\mu$ is Carleson (respectively, vanishing Carleson) if and only if $T_\mu\colon A^1\bigl(D,(n+1)(\frac{1}{r}-1)\bigr)
\to A^r(D)$ continuously (respectively, is compact) for some (and hence all) $1<r<1+\frac{1}{n}$.
\end{corollary}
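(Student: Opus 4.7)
The plan is to derive this corollary directly by combining Theorems~\ref{Toepqua} and~\ref{ToepinvLuno}, choosing the weight exponent $\beta := \theta + \frac{1}{r} - 2$ so that $(n+1)(\theta-1+\frac{1}{r}-1) = (n+1)\beta$. This makes the forward and reverse statements just specializations of the general results already proved.

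For the forward direction, if $\mu$ is $\theta$-Carleson (respectively, vanishing $\theta$-Carleson), then Theorem~\ref{Toepqua}(i), which is valid for every $\theta>0$ and every $1<r<+\infty$, immediately gives that $T_\mu\colon A^1\bigl(D,(n+1)\beta\bigr)\to A^r(D)$ is continuous (respectively, compact).

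For the reverse direction, I would apply Theorem~\ref{ToepinvLuno}(i), which requires $-\frac{1}{n+1}<\beta<0$. The bound $\beta<0$ is automatic from $\theta\le 1$ and $r>1$. The bound $\beta>-\frac{1}{n+1}$ unwinds to $\frac{1}{r} > 2-\theta-\frac{1}{n+1}$, i.e.\ $r<\frac{n+1}{(n+1)(2-\theta)-1}$, which is exactly the stated range of~$r$. Theorem~\ref{ToepinvLuno}(i) then says $\mu$ is $\bigl(2+\beta-\frac{1}{r}\bigr)$-Carleson; but $2+\beta-\frac{1}{r}=\theta$, so $\mu$ is $\theta$-Carleson. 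Since $r>1$, the compactness clause of the same theorem yields that $T_\mu$ compact implies $\mu$ vanishing $\theta$-Carleson.

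The "for some (and hence all)" phrasing is then a standard loop: continuity for one admissible $r_0$ forces $\mu$ to be $\theta$-Carleson by the reverse direction, and this in turn forces continuity for every admissible $r$ by the forward direction (similarly for compact/vanishing). The "in particular" clause is simply the case $\theta=1$, where $\frac{n+1}{(n+1)(2-\theta)-1}=\frac{n+1}{n}=1+\frac{1}{n}$ and $(n+1)(\theta+\frac{1}{r}-2)=(n+1)(\frac{1}{r}-1)$. There is no real obstacle here: the argument is pure bookkeeping, and the only point requiring care is verifying that the endpoint $r=\frac{n+1}{(n+1)(2-\theta)-1}$ corresponds exactly to $\beta=-\frac{1}{n+1}$, the boundary of the admissibility range in Theorem~\ref{ToepinvLuno}(i).
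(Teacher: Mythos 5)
Your proposal is correct and follows essentially the same route as the paper: the forward direction is Theorem~\ref{Toepqua}, the converse is Theorem~\ref{ToepinvLuno}(i) with $\beta=\theta+\frac{1}{r}-2$, and the stated range of $r$ is precisely what makes $-\frac{1}{n+1}<\beta<0$. Your extra observations (that $r>1$ is needed for the compactness clause, and that the hypothesis $\theta>1-\frac{1}{n+1}$ is what makes the range of admissible $r$ nonempty) match the remarks in the paper's own proof.
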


\begin{proof}
One direction is Theorem~\ref{Toepqua}, while the converse follows from Theorem~\ref{ToepinvLuno} applied with $\beta=\theta-1+\frac{1}{r}-1$; notice that the assumption on~$r$
ensures that $-\frac{1}{n+1}< \beta \le 1$, and the assumption on~$\theta$ ensures that
$1<\frac{n+1}{(n+1)(2-\theta)-1}$.
\end{proof}

We obtain more expressive corollaries if we strive for clarity instead of generality:

\begin{corollary}
\label{th:coruno}
Let $D\subset\subset\C^n$ be a bounded strongly pseudoconvex domain. 
Let $\mu$ be a finite positive Borel measure on $D$, and take $1< p< r<+\infty$. Then the following statements are equivalent:
\begin{itemize}
\item[(i)] $T_\mu\colon A^p(D)\to A^r(D)$ continuously (respectively, compactly);
\item[(ii)] $\mu$ is (respectively, vanishing) $\left(1+\frac{1}{p}-\frac{1}{r}\right)$-Carleson.
\end{itemize}
\end{corollary}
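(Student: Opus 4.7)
The plan is to set $\theta=1+\frac{1}{p}-\frac{1}{r}$, observe that $1<\theta<2$ because $1<p<r<\infty$ gives $0<\frac{1}{p}-\frac{1}{r}<1$, and then deduce both implications directly from the more general results already established: Corollary~\ref{corToeptre} for (ii)$\Rightarrow$(i) and Theorem~\ref{Toepinvgen} for (i)$\Rightarrow$(ii). No new arguments are needed; the proof will essentially be a verification that the parameter hypotheses of those results are met with this specific choice of $\theta$ and with $\beta=0$.

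For the direction (ii)$\Rightarrow$(i), I will invoke Corollary~\ref{corToeptre}.(i). Its hypothesis $1+\frac{1}{p}-\frac{1}{r}\le\theta<p'$ is immediate (the left inequality holds with equality, and $\theta<1+\frac{1}{p}<1+\frac{1}{p-1}=p'$). The remaining condition $r>\frac{p'}{p'-\theta}$ requires a brief computation: one finds $p'-\theta=\frac{1}{p(p-1)}+\frac{1}{r}$, so $\frac{p'}{p'-\theta}=\frac{rp^{2}}{r+p(p-1)}$, and the inequality $r>\frac{rp^{2}}{r+p(p-1)}$ simplifies to $r>p$, which holds by assumption. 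Thus Corollary~\ref{corToeptre} gives continuity of $T_\mu\colon A^p(D)\to A^r(D)$, and also compactness in the vanishing case.

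For the converse (i)$\Rightarrow$(ii), I will apply Theorem~\ref{Toepinvgen}.(i) with $\beta=0$. Since $A^p(D)=A^p(D,0)$ and $0\in\bigl(-\frac{1}{n+1},p-1\bigr)$ (because $p>1$), continuity of $T_\mu\colon A^p(D)\to A^r(D)$ forces $\mu$ to be $\bigl(1+\frac{0}{p}+\frac{1}{p}-\frac{1}{r}\bigr)$-Carleson, that is precisely $\theta$-Carleson. Moreover, being in case~(i) of that theorem, the implication ``$T_\mu$ compact $\Rightarrow$ $\mu$ vanishing'' also follows.

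The main difficulty here is really only bookkeeping: the statement is a packaging of the earlier theorems in the clean regime $1<p<r<\infty$, $\beta=0$, so the only substantive step is the algebraic check $r>\frac{p'}{p'-\theta}\iff r>p$ that lets Corollary~\ref{corToeptre}.(i) apply with no loss. The genuine mathematical content (the integral estimates on the Bergman kernel, the characterizations of $\theta$-Carleson measures via Kobayashi balls and via the Berezin transform, and the Minkowski/H\"older arguments inside Theorems~\ref{Toeptre} and~\ref{Toepinvgen}) has already been done upstream.
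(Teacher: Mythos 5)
Your proposal is correct and follows essentially the same route as the paper: the paper deduces (ii)$\Rightarrow$(i) from Theorem~\ref{Toeptre} by verifying exactly the conditions $1<\theta<p'$ and $\frac{p'}{p'-\theta}<r$ that you check (your detour through Corollary~\ref{corToeptre} is harmless, since with $\beta=0$ the weighted domain is just $A^p(D)$), and deduces (i)$\Rightarrow$(ii) from Theorem~\ref{Toepinvgen}.(i) with $\beta=0$, as you do. Your algebraic verification $r>\frac{p'}{p'-\theta}\iff r>p$ is accurate.
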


\begin{proof}
(i)$\Longrightarrow$(ii) follows immediately from Theorem~\ref{Toepinvgen}, while (ii)$\Longrightarrow$(i)
follows from Theorem~\ref{Toeptre} noticing that if $\theta=1+\frac{1}{p}-\frac{1}{r}$ then $1<p<r$ implies
$1<\theta<p'$ and $\frac{p'}{p'-\theta}<r$.
\end{proof}

\begin{remark}
The implication (i)$\Longrightarrow$(ii) holds for $p=r$ too; the best result we have for the reverse implication when $p=r$ is Corollary~\ref{th:cortre} below.
\end{remark}

As recalled in the introduction, in \cite{CMc} Cu\v ckovi\'c and McNeal studied special Toeplitz operators of the form
\[
T_{\delta^\eta} f(z)=\int_D K(z,w)f(w)\delta(w)^\eta\,d\nu(w)\;.
\]
In our context, $T_{\delta^\eta}=T_{\delta^\eta\nu}$; since $\delta^\eta\nu$ is $\left(1+\frac{\eta}{n+1}\right)$-Carleson by Lemma~\ref{changeCarl}, we can recover the main Theorem~1.2 
of~\cite{CMc} as a consequence of our Corollary~\ref{th:coruno}; in particular, it follows 
that the gain in the exponents proved in \cite{CMc} is sharp. 

\begin{corollary}[\textbf{\cite[Theorem 1.2]{CMc}}]
\label{cor:CMc}
Let $D\subset\subset\C^n$ be a bounded strongly pseudoconvex domain, and let $\eta\ge 0$.
\begin{itemize}
\item[(a)] If $0\le\eta< n+1$, then:
\begin{itemize}
\item[(i)] if $1< p< \infty$ and $\frac{n+1}{n+1-\eta}<\frac{p}{p-1}$, then
$T_{\delta^\eta}\colon L^p(D)\to L^{p+G}(D)$ continuously, where $G=p^2\big/\left(\frac{n+1}{\eta}-p\right)$;
\item[(ii)] if $1< p< \infty$ and $\frac{n+1}{n+1-\eta}\ge\frac{p}{p-1}$, then $T_{\delta^\eta}\colon L^p(D)\to L^r(D)$ continuously for all $p\le r<\infty$.
\end{itemize}
\item[(b)] If $\eta\ge n+1$, then $T_{\delta^\eta}\colon L^1(D)\to L^\infty(D)$ continuously.
\end{itemize}
\end{corollary}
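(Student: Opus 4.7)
The plan is to derive each case from Corollary~\ref{th:coruno} (for part~(a)) and Theorem~\ref{Toepinfty} (for part~(b)), together with Remark~\ref{rem:Lp}, which bridges the $A^p$-mapping statements to $L^p$-mapping statements when the measure takes the special form $\delta^\eta\nu$. The starting observation is that $T_{\delta^\eta} = T_{\delta^\eta\nu}$ and, by Example~\ref{es:deltaeta} (equivalently Lemma~\ref{changeCarl} applied to the trivially $1$-Carleson measure~$\nu$), the measure $\mu := \delta^\eta\nu$ is $\theta$-Carleson with $\theta = 1+\frac{\eta}{n+1}$.

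For part~(a)(i), I would first unpack the hypothesis $\frac{n+1}{n+1-\eta}<\frac{p}{p-1}$ into the equivalent algebraic conditions $p\eta<n+1$ and $1<\theta<p'$. A direct computation gives $p+G = \frac{p(n+1)}{n+1-p\eta}$, hence $\frac{1}{p}-\frac{1}{p+G} = \frac{\eta}{n+1} = \theta-1$. Applying Corollary~\ref{th:coruno} with $r=p+G$ then yields $T_\mu\colon A^p(D)\to A^{p+G}(D)$ continuously; Remark~\ref{rem:Lp} upgrades this to $L^p\to L^{p+G}$ because the domain weight $(n+1)p\bigl(\frac{\eta}{n+1}+\frac{1}{p+G}-\frac{1}{p}\bigr)$ collapses exactly to zero.

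For part~(a)(ii), the hypothesis gives $\eta\geq \frac{n+1}{p}$, and hence $\eta\geq (n+1)\bigl(\frac{1}{p}-\frac{1}{r}\bigr)$ for every $r\geq p$; equivalently, $\mu$ is $\bigl(1+\frac{1}{p}-\frac{1}{r}\bigr)$-Carleson. For $p<r<\infty$ I would invoke Corollary~\ref{th:coruno} (which, depending on whether $\theta<p'$ or $\theta\geq p'$, lands in case~(ii) or case~(iii) of Theorem~\ref{Toeptre}) together with Remark~\ref{rem:Lp} or its case-(iii) analog. The resulting domain weight $(n+1)p\bigl(\frac{\eta}{n+1}+\frac{1}{r}-\frac{1}{p}\bigr)$ is non-negative, so $L^p(D)\hookrightarrow L^p(\delta^\alpha\nu)$ by boundedness of~$\delta$, yielding $T_\mu\colon L^p(D)\to L^r(D)$ continuously. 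The endpoint $r=p$ follows by composing with the continuous inclusion $L^{r'}(D)\hookrightarrow L^p(D)$, valid on the bounded domain $D$ for any $r'>p$.

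For part~(b), $\eta\geq n+1$ means $\theta\geq 2$, so Theorem~\ref{Toepinfty} applies. Revisiting its proof, the only ingredients are the pointwise bound $|K(z,w)|\preceq \delta(w)^{-(n+1)}$ from Theorem~\ref{BKp}(iv) together with the $\theta$-Carleson change of measure; for $\mu=\delta^\eta\nu$ the latter is tautological, giving $\|T_\mu f\|_\infty \preceq \int_D |f(w)|\,\delta(w)^{\eta-(n+1)}\,d\nu(w)$ for all $f\in L^1(D)$. Since $\eta-(n+1)\geq 0$ and $\delta$ is bounded above on~$D$, the right-hand side is $\preceq \|f\|_1$, yielding $T_{\delta^\eta}\colon L^1(D)\to L^\infty(D)$ continuously. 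The main obstacle I foresee is verifying that case~(iii) of Theorem~\ref{Toeptre}, needed in part~(a)(ii) when $\theta\geq p'$, extends to $L^p$ in the same manner as Remark~\ref{rem:Lp} handles cases~(i) and~(ii); this reduces to observing that at equations \eqref{eq:cvanu}, \eqref{eq:cvand}, \eqref{eq:altra} the Carleson shift $\delta^\alpha\mu\mapsto\delta^{\alpha+\eta}\nu$ is an identity rather than a genuine Carleson estimate, so the argument goes through verbatim for arbitrary $L^p$ functions.
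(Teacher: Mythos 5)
Your proposal is correct and follows essentially the same route as the paper: identify $\delta^\eta\nu$ as a $\left(1+\frac{\eta}{n+1}\right)$-Carleson measure, feed this into Corollary~\ref{th:coruno} together with Remark~\ref{rem:Lp} for part (a), and reduce part (b) to the sup-norm estimate of Theorem~\ref{BKp}.(iv). Your additional care about the endpoint $r=p$ and about case (iii) of Theorem~\ref{Toeptre} merely makes explicit details that the paper's one-line proof leaves implicit.
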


\begin{proof}
First of all, notice that $0\le\eta<n+1$ and $\frac{n+1}{n+1-\eta}<\frac{p}{p-1}$ are equivalent to requiring that $0\le\frac{\eta}{n+1}<\frac{1}{p}$, and thus $1+\frac{1}{p}-\theta>0$,
where $\theta=1+\frac{\eta}{n+1}$. Then
\[
1+\frac{\eta}{n+1}=1+\frac{1}{p}-\frac{1}{r} \quad\Longleftrightarrow\quad r=p+G\;,
\]
and thus (a).(i) follows immediately from Corollary~\ref{th:coruno} and Remark~\ref{rem:Lp}.

Analogously, if $\frac{n+1}{n+1-\eta}\ge\frac{p}{p-1}$ we have $\frac{\eta}{n+1}\ge\frac{1}{p}$ 
and thus, setting again $\theta=1+\frac{\eta}{n+1}$, we have $\theta>1+\frac{1}{p}-\frac{1}{r}$ for all $r\ge p$; therefore Corollary~\ref{th:coruno} and Remark~\ref{rem:Lp} again imply that $T_{\delta^\eta}$ maps $L^p(D)$ into $L^r(D)$ continuously for all $p\le r<\infty$, that is (a).(ii). 

Finally, (b) is a trivial consequence of Theorem~\ref{BKp}.(iv). 
\end{proof}

\begin{corollary}
\label{th:cordue}
Let $D\subset\subset\C^n$ be a bounded strongly pseudoconvex domain. 
Let $\mu$ be a finite positive Borel measure on $D$, and take $1\le p< r<p\left(1+\frac{1}{n}\right)$. Then the following statements are equivalent:
\begin{itemize}
\item[(i)] $T_\mu\colon A^p\bigl(D,(n+1)p(\frac{1}{r}-\frac{1}{p})\bigr)\to A^r(D)$ continuously (respectively, compactly);
\item[(ii)] $\mu$ is (respectively, vanishing) Carleson.
\end{itemize}
\end{corollary}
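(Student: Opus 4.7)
The plan is to obtain this corollary as a direct application of the general (and rather technical) theorems already established, by specializing all parameters to the case $\theta=1$. Setting $\beta=p\bigl(\frac{1}{r}-\frac{1}{p}\bigr)$, the target weight $(n+1)p\bigl(\frac{1}{r}-\frac{1}{p}\bigr)$ equals $(n+1)p\bigl(\theta-1+\frac{1}{r}-\frac{1}{p}\bigr)$ with $\theta=1$, so everything lines up with the exponents in Theorems \ref{Toeptre}, \ref{Toepqua}, \ref{Toepinvgen} and \ref{ToepinvLuno}.

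For the direction (ii)$\Longrightarrow$(i), I would split on whether $p=1$ or $p>1$. If $p>1$, I would invoke Theorem \ref{Toeptre}(ii): with $\theta=1$ the hypothesis $1\le\theta<p'$ is automatic (since $p<\infty$), and $\frac{p'}{p'-\theta}=p<r$ by assumption; thus $T_\mu$ maps $A^p\bigl(D,(n+1)p(\frac{1}{r}-\frac{1}{p})\bigr)\to A^r(D)$ continuously, and compactly if $\mu$ is vanishing Carleson. If $p=1$, I would appeal instead to Theorem \ref{Toepqua}(i) (for $r>1$) applied with $\theta=1$, which gives the weight $(n+1)\bigl(\frac{1}{r}-1\bigr)$, matching the required $(n+1)\cdot 1\cdot\bigl(\frac{1}{r}-1\bigr)$.

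For the reverse direction (i)$\Longrightarrow$(ii), I would again split. When $p>1$, I apply Theorem \ref{Toepinvgen}(i) with $\beta=p\bigl(\frac{1}{r}-\frac{1}{p}\bigr)=\frac{p}{r}-1$. The constraint $\beta<p-1$ reduces to $r>1$, which holds; the constraint $\beta>-\frac{1}{n+1}$ is equivalent to $r<p\bigl(1+\frac{1}{n}\bigr)$, which is precisely our hypothesis. The conclusion is that $\mu$ is $\bigl(1+\frac{\beta}{p}+\frac{1}{p}-\frac{1}{r}\bigr)$-Carleson; substituting $\beta/p=\frac{1}{r}-\frac{1}{p}$ the exponent collapses to $1$, so $\mu$ is Carleson, and vanishing Carleson when $T_\mu$ is compact. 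When $p=1$, I apply Theorem \ref{ToepinvLuno}(i) with $\beta=\frac{1}{r}-1$: the two conditions $-\frac{1}{n+1}<\beta<0$ become $1<r<1+\frac{1}{n}$, which matches; the conclusion that $\mu$ is $(2+\beta-\frac{1}{r})$-Carleson again collapses to $\mu$ being Carleson, and to vanishing Carleson in the compact case (this requires $r>1$, which is available since $p=1<r$).

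There is no substantive obstacle here: the only delicate point is bookkeeping, namely checking that the range $1\le p<r<p\bigl(1+\frac{1}{n}\bigr)$ places $\beta$ in the admissible interval of Theorems \ref{Toepinvgen}(i) and \ref{ToepinvLuno}(i) for the converse direction, and that $\theta=1$ satisfies the hypotheses of Theorems \ref{Toeptre}(ii) and \ref{Toepqua}(i) for the forward direction. Everything else, including the respective ``vanishing $\Longleftrightarrow$ compact'' statements, is inherited verbatim from those theorems.
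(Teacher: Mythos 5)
Your proposal is correct and follows essentially the same route as the paper: the paper's own proof invokes Theorem~\ref{Toeptre} (noting $\frac{p'}{p'-1}=p$) and Theorem~\ref{Toepinvgen} with $\beta=\frac{p}{r}-1$ for $p>1$, and Theorems~\ref{Toepqua} and~\ref{ToepinvLuno} for $p=1$, exactly as you do. Your parameter bookkeeping (the equivalence of $\beta>-\frac{1}{n+1}$ with $r<p(1+\frac{1}{n})$, and the collapse of the Carleson exponent to $1$) is accurate and in fact spells out details the paper leaves implicit.
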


\begin{proof} 
When $p>1$,
(i)$\Longrightarrow$(ii) follows immediately from Theorem~\ref{Toepinvgen} with $-\frac{1}{n+1}<\beta=\frac{p}{r}-1<0$, while (ii)$\Longrightarrow$(i)
follows from Theorem~\ref{Toeptre} noticing that $\frac{p'}{p'-1}=p$.

When $p=1$, one instead uses Theorems~\ref{Toepqua} and~\ref{ToepinvLuno}.
\end{proof}

\begin{corollary}
\label{th:cortre}
Let $D\subset\subset\C^n$ be a bounded strongly pseudoconvex domain. 
Let $\mu$ be a finite positive Borel measure on $D$, and take $1\le p<+\infty$. Then the following statements are equivalent:
\begin{itemize}
\item[(i)] $T_\mu\colon A^p\bigl(D,-(n+1)\eps\bigr)\to A^p(D)$ continuously (respectively, compactly) 
for all $\eps>0$;
\item[(ii)] $\mu$ is (respectively, vanishing) $\theta$-Carleson for all $\theta<1$.
\end{itemize}
\end{corollary}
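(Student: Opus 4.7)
The plan is to combine, in the diagonal case $r=p$ with the weight exponent tending to~$0^-$, the direct results (Theorem~\ref{Toeptre}(i) for $p>1$, Theorem~\ref{Toepqua}(ii) for $p=1$) with the converse results (Theorem~\ref{Toepinvgen}(i) for $p>1$, Theorem~\ref{ToepinvLuno}(i) for $p=1$). Two elementary monotonicity observations will let me trade the universal quantifiers back and forth: first, Lemma~\ref{compwn} yields $A^p\bigl(D,-(n+1)\eps\bigr)\hookrightarrow A^p\bigl(D,-(n+1)\eps_0\bigr)$ whenever $\eps\ge\eps_0$, so establishing (i) for every small $\eps$ already gives (i) for every $\eps>0$; second, since $\nu\bigl(B_D(z_0,r)\bigr)$ is bounded on $D$ and tends to $0$ at the boundary, a (vanishing) $\theta'$-Carleson measure is automatically a (vanishing) $\theta$-Carleson measure whenever $\theta\le\theta'$, the vanishing case already noted in Remark~\ref{rem:carvancar}. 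Hence ``for all $\theta<1$'' can be replaced by ``for every $\theta$ close enough to~$1$''.

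For (ii)$\Longrightarrow$(i) with $p>1$, I fix a small $\eps>0$ and set $\theta=1-\eps/p$: the weight $(n+1)p\bigl(\theta-1+\tfrac{1}{r}-\tfrac{1}{p}\bigr)$ of Theorem~\ref{Toeptre}(i) with $r=p$ collapses exactly to $-(n+1)\eps$, while the side conditions $\theta>1-\tfrac{1}{n+1}\min\{1,\tfrac{1}{p-1}\}$ and $r<p'/((n+1)(1-\theta))$ are open-ended constraints satisfied for $\eps$ small. Theorem~\ref{Toeptre}(i) then delivers continuity of $T_\mu\colon A^p\bigl(D,-(n+1)\eps\bigr)\to A^p(D)$, and compactness when $\mu$ is vanishing. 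For $p=1$, the corresponding role is played by Theorem~\ref{Toepqua}(ii) applied with $\theta=1-\eps/2$ and internal parameter $\eps'=\eps/2$, which bundles the continuous and compact cases.

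For (i)$\Longrightarrow$(ii), I fix $\theta<1$ and choose $\eps\in\bigl(0,\min\{p(1-\theta),\,1/(n+1)\}\bigr)$; the hypothesis supplies continuity (resp.\ compactness) of $T_\mu\colon A^p\bigl(D,-(n+1)\eps\bigr)\to A^p(D)$. When $p>1$, Theorem~\ref{Toepinvgen}(i) with $\beta=-\eps$ and $r=p$ (valid because $-\tfrac{1}{n+1}<\beta<0\le p-1$) concludes that $\mu$ is (vanishing) $(1-\eps/p)$-Carleson; when $p=1$, the same choice inside Theorem~\ref{ToepinvLuno}(i) yields $\mu$ (vanishing) $(1-\eps)$-Carleson. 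In either case $1-\eps/p>\theta$ (respectively $1-\eps>\theta$) by construction, so monotonicity promotes this to (vanishing) $\theta$-Carleson.

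The one genuinely delicate point, which I expect will cost the most effort, is the compactness half of (i)$\Longrightarrow$(ii) in the corner case $p=r=1$: Theorem~\ref{ToepinvLuno} only extracts the vanishing property under the extra hypothesis $r>1$, so I will argue directly, bootstrapping from the continuity half already in hand. Indeed, knowing that $\mu$ is $(1-\eps/2)$-Carleson (hence $\theta$-Carleson for some $\theta>1-\eps$), Corollary~\ref{th:prelemtre} will yield $\delta(z_0)^{(n+1)(\eps-1/2)}T_\mu k_{z_0}\to 0$ uniformly on compact subsets of~$D$; since the family $\{\delta(z_0)^{(n+1)(\eps-1/2)}k_{z_0}\}$ is norm-bounded in $A^1\bigl(D,-(n+1)\eps\bigr)$, compactness of $T_\mu$ combined with Lemma~\ref{th:basiclemma} (strong $A^1$-convergence dominates uniform convergence on compacta) will force $\delta(z_0)^{(n+1)(\eps-1/2)}\|T_\mu k_{z_0}\|_1\to 0$ as $z_0\to\partial D$. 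Proposition~\ref{converse} and Theorem~\ref{carvanthetaCarldue} then convert this into the vanishing $(1-\eps)$-Carleson property, and monotonicity finishes the case.
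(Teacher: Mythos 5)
Your proposal is correct and follows the same route as the paper, which simply cites Theorems~\ref{Toeptre} and~\ref{Toepinvgen} for $p>1$ and Theorems~\ref{Toepqua} and~\ref{ToepinvLuno} for $p=1$; your parameter choices ($\theta=1-\eps/p$, $\beta=-\eps$ with $\eps<\min\{p(1-\theta),\frac{1}{n+1}\}$) and the two monotonicity observations are exactly the bookkeeping needed to make that citation work. The one place you diverge is the point you flag as delicate: you are right that Theorem~\ref{ToepinvLuno} withholds the vanishing conclusion when $p=r=1$, so the paper's ``it follows immediately'' is slightly too quick there, and your bootstrap through Corollary~\ref{th:prelemtre} and the compactness of $T_\mu$ does close the gap correctly. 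However, it is more work than needed: because condition (ii) is quantified over all $\theta<1$, the compact/vanishing half of (i)$\Longrightarrow$(ii) never requires compactness of $T_\mu$ at all. Continuity of $T_\mu$ for all $\eps>0$ (which compactness implies) already gives that $\mu$ is $(1-\eps/2)$-Carleson, and Remark~\ref{rem:carvancar} then promotes this to vanishing $(1-\eps)$-Carleson for free; in fact all four conditions (continuous for all $\eps$, compact for all $\eps$, $\theta$-Carleson for all $\theta<1$, vanishing $\theta$-Carleson for all $\theta<1$) are mutually equivalent by cycling through the quantifiers. So your argument is valid but the final paragraph can be replaced by one sentence.
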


\begin{proof}
It follows immediately from Theorems~\ref{Toepinvgen} and \ref{Toeptre} when $p>1$,
and from Theorems~\ref{Toepqua} and~\ref{ToepinvLuno} when $p=1$.
\end{proof}

%
\begin{corollary}
\label{th:cortreb}
Let $D\subset\subset\C^n$ be a bounded strongly pseudoconvex domain. 
Let $\mu$ be a finite positive Borel measure on $D$, and take $1< r<+\infty$. Then the following statements are equivalent:
\begin{itemize}
\item[(i)] $T_\mu\colon A^1\bigl(D,-(n+1)\eps\bigr)\to A^r(D)$ continuously (respectively, compactly) 
for all $\eps>0$;
\item[(ii)] $\mu$ is (respectively, vanishing) $\theta$-Carleson for all $\theta<2-\frac{1}{r}$.
\end{itemize}
\end{corollary}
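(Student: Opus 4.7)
The plan is to obtain both directions by reading off the corresponding statements of Theorems~\ref{Toepqua} and~\ref{ToepinvLuno} with the parameter choice $\beta=-\eps$ (equivalently $\theta=2-\frac{1}{r}-\eps$), then letting $\eps\to 0^+$. So this is really just a specialization of the general results already established for $p=1$, arranged so that $\eps$ plays the role of the ``gap'' between $\theta$ and the critical value $2-\frac{1}{r}$.

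For the direction (ii)$\Rightarrow$(i), I would fix $\eps>0$ and choose $\theta=2-\frac{1}{r}-\eps'$ for some $0<\eps'\le\eps$; since by hypothesis $\mu$ is (vanishing) $\theta$-Carleson for this $\theta<2-\frac{1}{r}$, Theorem~\ref{Toepqua}(i) gives a continuous (respectively compact) map $T_\mu\colon A^1\bigl(D,(n+1)(\theta+\frac{1}{r}-2)\bigr)\to A^r(D)$, that is $T_\mu\colon A^1\bigl(D,-(n+1)\eps'\bigr)\to A^r(D)$. Composing with the continuous inclusion $A^1\bigl(D,-(n+1)\eps\bigr)\hookrightarrow A^1\bigl(D,-(n+1)\eps'\bigr)$ provided by Lemma~\ref{compwn} (applied with $\beta_1=-(n+1)\eps\le -(n+1)\eps'=\beta_2$) yields (i).

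For (i)$\Rightarrow$(ii), fix $\theta<2-\frac{1}{r}$ and set $\eps=2-\frac{1}{r}-\theta>0$. By shrinking $\eps$ further if necessary we may assume $\eps<\frac{1}{n+1}$, so that $\beta:=-\eps$ satisfies $-\frac{1}{n+1}<\beta<0$. The continuity (respectively compactness) of $T_\mu\colon A^1\bigl(D,(n+1)\beta\bigr)\to A^r(D)$, together with the fact $r>1$, places us in case (i) of Theorem~\ref{ToepinvLuno}, which yields that $\mu$ is (vanishing) $(2+\beta-\frac{1}{r})$-Carleson, i.e. $(2-\frac{1}{r}-\eps)$-Carleson; but this is exactly $\theta$-Carleson by our choice of $\eps$. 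Since $\theta<2-\frac{1}{r}$ was arbitrary, (ii) follows.

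No real obstacle appears: the bookkeeping just amounts to matching $\beta=-\eps$ with $\theta=2-\frac{1}{r}-\eps$. The only mild subtlety is that the strict inequality $\theta<2-\frac{1}{r}$ in (ii) is what allows us to pick a strictly positive $\eps$ in both directions (so the Carleson hypothesis of Theorem~\ref{Toepqua}(i) and the hypothesis $-\frac{1}{n+1}<\beta<0$ of Theorem~\ref{ToepinvLuno}(i) are both satisfied), and the requirement $r>1$ in (i) is what lets us invoke the compactness conclusion of Theorem~\ref{ToepinvLuno}.
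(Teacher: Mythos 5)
Your argument is correct and is exactly the route the paper takes: its proof of this corollary is just the one-line observation that it follows from Theorems~\ref{Toepqua} and~\ref{ToepinvLuno}, and your bookkeeping ($\beta=-\eps$ versus $\theta=2-\frac{1}{r}-\eps$, the inclusion from Lemma~\ref{compwn}, and the hypothesis $r>1$ needed for the compactness part of Theorem~\ref{ToepinvLuno}) fills in precisely the intended details. The only micro-points left implicit are that one should take $\eps'$ small enough that $\theta=2-\frac{1}{r}-\eps'>0$ (so Theorem~\ref{Toepqua} applies), and that after shrinking $\eps$ in the converse direction you land on a $\theta'$-Carleson conclusion with $\theta'>\theta$, which implies $\theta$-Carleson since $\nu\bigl(B_D(\cdot,r)\bigr)$ is bounded (cf.\ Remark~\ref{rem:carvancar}); both are immediate.
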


\begin{proof}
It follows immediately from Theorems~\ref{Toepqua} and \ref{ToepinvLuno}.
\end{proof}

\begin{corollary}
\label{th:corqua}
Let $D\subset\subset\C^n$ be a bounded strongly pseudoconvex domain. 
Let $\mu$ be a finite positive Borel measure on $D$, and take $1\le p<+\infty$. Then the following statements are equivalent:
\begin{itemize}
\item[(i)] $T_\mu\colon A^p\bigl(D,-(n+1)\eps\bigr)\to A^\infty(D)$ continuously (respectively, compactly) 
for all $\eps>0$;
\item[(ii)] $\mu$ is (respectively, vanishing) $\theta$-Carleson for all $\theta<1+\frac{1}{p}$.
\end{itemize}
\end{corollary}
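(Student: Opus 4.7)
The strategy is to deduce both implications directly from the already-proven Theorems~\ref{Toepinftydue}, \ref{Toepinfty} and~\ref{Toepinvpinfty}, by choosing the Carleson exponent $\theta$ appropriately close to the critical value $1+\frac{1}{p}$.

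For the implication (ii)$\Longrightarrow$(i), suppose first that $p>1$ and $\mu$ is $\theta$-Carleson for every $\theta<1+\frac{1}{p}$. Fix $\eps>0$; I want to realise $A^p\bigl(D,-(n+1)\eps\bigr)$ as a subspace of the space $A^p\bigl(D,(n+1)p(\theta-1-\frac{1}{p}-\eps')\bigr)$ appearing in Theorem~\ref{Toepinftydue}(i). By Lemma~\ref{compwn} it suffices to choose $\theta$ and $\eps'>0$ satisfying $-\eps\le p\bigl(\theta-1-\frac{1}{p}-\eps'\bigr)$, i.e.\ $\theta\ge 1+\frac{1}{p}+\eps'-\frac{\eps}{p}$; taking for example $\eps'=\frac{\eps}{2p}$ and $\theta=1+\frac{1}{p}-\frac{\eps}{2p}$ works, and this $\theta$ satisfies $1\le\theta<1+\frac{1}{p}<p'$ (the inequality $1+\frac{1}{p}<p'$ is elementary). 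Theorem~\ref{Toepinftydue}(i) then gives the desired continuous mapping, and the ``vanishing $\Rightarrow$ compact'' clause of the same theorem handles the compactness statement. For $p=1$ one argues identically using Theorem~\ref{Toepinfty}: one requires $-\eps\le\theta-2$, so $\theta=2-\eps/2<2=1+\frac{1}{p}$ works, and the final assertion of Theorem~\ref{Toepinfty} again gives the compact case.

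For (i)$\Longrightarrow$(ii), apply Theorem~\ref{Toepinvpinfty}(i) with $\beta=-\eps$ for any fixed small $\eps>0$ (certainly $-\frac{1}{n+1}<\beta<p-1$ once $\eps<\frac{1}{n+1}$). That theorem states that $\mu$ is $\bigl(1+\frac{\beta}{p}+\frac{1}{p}-\eps'\bigr)$-Carleson for every $\eps'>0$, i.e.\ $\bigl(1+\frac{1}{p}-\frac{\eps}{p}-\eps'\bigr)$-Carleson. Since the hypothesis in (i) holds for every $\eps>0$, letting $\eps\to 0^+$ (and $\eps'\to 0^+$) shows that $\mu$ is $\theta$-Carleson for every $\theta<1+\frac{1}{p}$. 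For the vanishing case, recall that the ``Furthermore'' clause of Theorem~\ref{Toepinvpinfty} gives vanishing $\theta$-Carleson automatically from mere continuity; hence the compactness hypothesis in (i) (which is stronger than continuity) certainly yields the vanishing conclusion in (ii).

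The main obstacle, if any, is bookkeeping: one has to verify that the auxiliary parameter $\theta$ can simultaneously be chosen $<1+\frac{1}{p}$ (so that the Carleson hypothesis applies) and at the same time large enough that Lemma~\ref{compwn} provides the required embedding of weighted Bergman spaces. The elementary inequality $1+\frac{1}{p}<p'$ (for $p>1$) is what makes the range $1\le\theta<p'$ in Theorem~\ref{Toepinftydue}(i) comfortably contain the relevant interval, so no delicate endpoint issue arises. A minor cosmetic point is that Theorem~\ref{Toepinvpinfty} gives a slightly stronger conclusion than needed (vanishing already under mere continuity); one simply notes that vanishing $\theta$-Carleson in turn implies $\theta$-Carleson, so the ``respectively'' phrasing of (ii) is consistent and the asserted equivalence is complete.
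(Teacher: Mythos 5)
Your proposal is correct and follows essentially the same route as the paper, whose proof of Corollary~\ref{th:corqua} consists precisely of invoking Theorems~\ref{Toepinfty}, \ref{Toepinftydue} and~\ref{Toepinvpinfty}; you have simply made the parameter bookkeeping (the choice of $\theta$ close to $1+\frac{1}{p}$, the embedding via Lemma~\ref{compwn}, and the observation that the ``vanishing'' version of (ii) is equivalent to the non-vanishing one) explicit. No gaps.
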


\begin{proof}
It follows immediately from Theorems~\ref{Toepinfty}, \ref{Toepinftydue} and \ref{Toepinvpinfty}.
\end{proof}

\begin{remark}
\label{rem:fine}
The techniques we introduced can clearly be used to study mapping properties of Toeplitz
operators having unweighted Bergman spaces as domain and weighted Bergman spaces
as codomain; we leave the details to the interested reader.
\end{remark}

\end{document}